 \documentclass{article}

\usepackage{amsmath,amsfonts,amsthm,amssymb,amscd,cancel,color}
\usepackage{enumitem}
\usepackage{verbatim}
\usepackage{graphicx}
\usepackage{ulem,xcolor}

\renewcommand{\Re}{\mathop{\rm Re}\nolimits}
\renewcommand{\Im}{\mathop{\rm Im}\nolimits}
\def\S{\mathhexbox278}

\theoremstyle{plain}
\newtheorem{theorem}{Theorem}[section]
\newtheorem{lemma}[theorem]{Lemma}
\newtheorem{proposition}[theorem]{Proposition}

\theoremstyle{definition}

\theoremstyle{remark}
\newtheorem{remark}[theorem]{Remark}

\newtheorem{claim}[theorem]{Claim}
\newtheorem{notation}[theorem]{Notation}

\newcommand{\R}{{\mathbb R}}

\newcommand{\N}{{\mathbb N}}

\def\im{{\rm i}}

\newcommand{\C}{\mathbb{C}}

\def\({\left(}
\def\){\right)}
\def\<{\left\langle}
\def\>{\right\rangle}

\newcommand{\sech}{{\mathrm{sech}}}

\newcommand{\supp}{{\mathrm{supp}\ }}
    \newcommand{\diag}{{\mathrm{diag}}}
\newcommand{\Span}{{\mathrm{Span}}}

\numberwithin{equation}{section}

\begin{document}

\title{Conditional asymptotic stability of solitary waves of the  Euler-Poisson system  on the line}

\author{Junsik Bae, Scipio Cuccagna and Masaya Maeda}
\maketitle

\begin{abstract}   We apply  the idea of using a combination of virial inequalities and Kato smoothing, previously applied to    NLS and generalized KdV  pure power equations to  Euler-Poisson: we assume that a solution remains very close for all times to a soliton in an appropriate space    and then we prove an  asymptotic convergence to a soliton  for $t\to +\infty$.
\end{abstract}

\section{Introduction} 

In this paper, we consider the following Euler-Poisson system:
\begin{align}\label{EP}
    \left\{
        \begin{aligned}
            &\partial_t {n} +\partial_x\left( (1+n)  u \right) = 0 ,  \\
            &\partial_t{u} + \partial_x\( \frac{u^2}{2} + K \log (1+n)  \) = -\partial_x \phi,   \quad (t,x)\in \mathbb{R}_+\times \mathbb{R},   \\& -  \partial_x^2 \phi = 1+n - e^{\phi},
        \end{aligned}
    \right.
\end{align}
where 
  $1+n(t,x),\,u(t,x)$, and $\phi(t,x)$ are real-valued unknown functions that represent the density, velocity of the ions, and the electrostatic potential, respectively. In the model \eqref{EP}, the isothermal pressure $p=K(1+n)$ is considered, where the constant $K>0$ is the ratio of the ion temperature to the electron temperature. The electron density $\rho_e$ is assumed to follow the  Boltzmann relation $\rho_e = e^\phi$. 

The Euler-Poisson system \eqref{EP} is a fundamental fluid model that describes the dynamics of ions in an electrostatic plasma \cite{Ch84}, in particular, in connection with the emergence of traveling solitary waves.  The system \eqref{EP} admits smooth traveling solitary waves in the super-sonic regime \cite{CDMS96,BK19JDE}. In the long wave length scaling, the solitary waves are approximated by the KdV solitary waves \cite{BK19JDE}. While  Bae and Kwon \cite{BK22ARMA} prove a result on linear stability of \eqref{EP} around the solitary waves in a similar fashion to  \cite{PegoSunARMA2016,PegoWei2}, no result on the orbital or asymptotic stability of the nonlinear problem \eqref{EP} seems to be known.
One of the main difficulties is that smooth solutions to \eqref{EP} may develop singularities in  finite time \cite{BCK24}, and the global existence of smooth solutions has not yet been established due to the weak dispersive effect in low dimensions.  Another difficulty comes from the fact that, even formally, the conserved functional $E -c M$ does not have a definite sign at the far field, and hence it appears that one cannot show the orbital stability as in \cite{MR954673}. For the 3D model, see \cite{GP11} for the global existence of smooth solutions, and \cite{RS25} for the transverse nonlinear stability of 1D solitary waves under 3D perturbations.

The goal of this paper is to investigate the conditional asymptotic stability. That is, we study the asymptotic behavior for $t\to +\infty$ of the solutions of \eqref{EP} which  a priori  we  assume very close to the solitary wave for all $t\ge 0$. In the present paper, we only consider the isothermal model. Throughout our analysis, $K>0$ plays a crucial role,  in particular, for obtaining the virial inequality. Essentially it is due to the fact that the pressure energy in the Hamiltonian is approximated by $\tfrac{Kn^2}{2}$, see also later Remark \ref{rem:Kpositive} for more.

We remark that the pressureless Euler-Poisson system (\eqref{EP} with $K=0$) is often considered in the literature on plasma physics under the cold ion assumption. This simplified model also admits traveling solitary waves, and the linear stability has been studied in \cite{HS2002}. On the other hand, the absence of the pressure term makes the system weakly coupled, and, in general, the behavior of solutions is qualitatively different from that in the presence of the pressure term. In the context of structures of singularities in solutions, we refer to \cite{BCK24,BKK24,BKK25,BMW24}. In this regard, the study of the conditional asymptotic stability would be an interesting problem.

\subsection{Main result}


In \cite{BK19JDE}, it is shown that there is a constant  $c_K> \mathsf{V}:=\sqrt{1+K} $ such that for all $c \in (\mathsf{V}, c_K)$ there exist solitary wave solutions $S_c(x-ct):=(n_c,u_c,\phi_c)(x-ct)$ of  \eqref{EP}.
    Furthermore, for $\varepsilon   := c- \mathsf{V}$,  we have
   \begin{equation}\label{soliton1}
        S_c(x)   = \varepsilon  \( 1, \mathsf{V}, 1     \)^\intercal  \psi_{\mathrm{KdV}}\( \sqrt{\varepsilon} x \) + O(\varepsilon^2) \quad \text{as } \varepsilon \to 0^+,
    \end{equation}
    where 
    \[
    \psi_{\mathrm{KdV}}\(  x \)= \frac{3}{\mathsf{V}} \sech ^2 \( \frac{\sqrt{\mathsf{V}}}{\sqrt{2}} x \)
    \]
    is the solitary wave for the Korteweg--De Vries equation (KdV) such that
\begin{align*}
  -\frac{1}{2\mathsf{V}}\partial_x^2\psi_{\mathrm{KdV}}+\psi_{\mathrm{KdV}} - \mathsf{V}   \frac{\psi_{\mathrm{KdV}}^2}{2}  =0.
\end{align*}

The local existence of classical solutions to  \eqref{EP} for the initial data $(n_0,u_0)\in H^s(\mathbb{R})\times H^s(\mathbb{R})$, where $s>3/2$, is studied in \cite{LLS13}. 
For $  U=(n,u,\phi)\in C^0 \left(   [0,+\infty ), H^1(\R )\right)$ we say $U$ is a   solution of \eqref{EP} if  thetime derivative equation in  \eqref{EP} holds in a distributional sense in $\mathcal{D}'(\R _+, L^2(\R) )$.
From this definition, 
 we have $  U\in C^1\(   [0,+\infty ), L^2(\R )\)$, see   \cite[Ch. 1]{CazHarbook}.
The following   is the main result of this paper.

\begin{theorem}
  \label{thm:main} There exists a $\varepsilon _0\in (0, c_K-\mathsf{V}) $  such that for any $a>0$, $c_0\in (\mathsf{V}, \mathsf{V} + \varepsilon _0)$ and     $\epsilon >0$   there exists a  $\delta  >0$   such that for any
\begin{align}
  \label{eq:mainreg} U=(n,u,\phi)\in C^0\(   [0,+\infty ), H^1(\R, \R^3 )\)
\end{align}
  solution of \eqref{EP} which satisfies
  \begin{align}
    \label{eq:main1} \sup _{t\ge 0} \inf_{x_0\in \R} \|  (n,u, \phi)(t,\cdot) -(n_{c_0},u_{c_0}, \phi _{c_0})(\cdot -x_0) \| _{H^1(\R )}  \le \delta,
  \end{align}
 there exist a function  $ (c, D) \in C^1 \( [0,+\infty ) ,  (c_0-\epsilon , c_0+\epsilon) \times \R \)$ and   a $c_+>0$
     such that
  \begin{align} \label{eq:asstab1}
    & U(t)=    S _{c (t)} (\cdot -D(t) ) + V(t, \cdot -D(t) ) \text{   with}
 \\&  \label{eq:asstab2}   \int _{\R _+  }  dt \int _{   \R }  e^{- 2a\< x\>}  | V (t , x) | ^2  dx <  \epsilon  \text{  where }\< x\>:=\sqrt{1+x^2} , \\&  \label{eq:asstab20}
   \lim _{t\to +\infty  } \int _{   \R }  e^{- 2a\< x\>}  | V (t , x) | ^2  dx     =0\text{ and} \\&
\lim _{t\to +\infty}c (t)= c _+ . \label{eq:asstab3}
\end{align}

\end{theorem}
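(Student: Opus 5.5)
We follow the Kowalczyk--Martel--Muñoz / Cuccagna--Maeda scheme: combine modulation, two virial estimates and Kato smoothing.

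\emph{Step 1: modulation.} Using \eqref{eq:main1} and the implicit function theorem, we decompose $U(t)=S_{c(t)}(\cdot-D(t))+V(t,\cdot-D(t))$. The parameters $(c,D)$ are $C^1$ and are fixed by orthogonality conditions of $V$ against the generalized kernel of the linearized operator $\mathcal{L}_c$ at $S_c$, namely $\partial_x S_c$ and $\partial_c S_c$, paired through the adjoint (symplectic) structure. Substituting into \eqref{EP} gives an equation $\partial_t V=\mathcal{L}_{c}V+(\dot D-c)\partial_xS_c+\dot c\,\partial_cS_c+\text{(nonlinear)}$, together with the modulation bounds $|\dot c|+|\dot D-c|\lesssim \|e^{-a\langle x\rangle}V\|_{L^2}^2$. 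The bound is quadratic in $V$ because the orthogonality conditions kill the linear terms. We also get $\|V\|_{H^1}\lesssim\delta$.

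\emph{Step 2: the wide virial.} For $\phi$ obtained from $\eqref{EP}$, eliminating it through the elliptic equation $(-\partial_x^2+e^{\phi})$, we use a functional $\mathcal{I}=\int \psi_A(x)\,(1+n)u\,dx$, or a momentum-type density adapted to the Hamiltonian $E$ with $\psi_A'\approx\chi(x/A)$. We compute $\dot{\mathcal{I}}$ in the moving frame, where the transport term $-c\,\partial_x$ is exactly the mechanism that produces positivity in the far field. Here $K>0$ enters: the quadratic form is roughly $\int\psi_A'\,(Kn^2+u^2+\dots-2c\,nu)$, and one needs a coercive combination modulo the soliton region. We expect this to yield
\[
\int_0^T\|\mathrm{sech}(x/A)\,V\|_{H^1}^2\,dt\lesssim \delta+\int_0^T\|e^{-a\langle x\rangle}V\|^2\,dt+\text{error}.
\]
Since the system is not of Schrödinger type, we would actually run the virial in a transformed variable obtained by diagonalizing the hyperbolic part into Riemann-like variables $w_\pm$ with speeds $\pm\mathsf V$. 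In the frame moving at $c>\mathsf V$, both characteristic speeds $\pm\mathsf V-c$ are negative, so the virial sign is favorable for each of them.

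\emph{Step 3: the narrow virial and the main obstacle.} To bound the localized norm $\int_0^T\|e^{-a\langle x\rangle}V\|^2\,dt$ we need a second, tightly localized virial near the soliton. The soliton potential makes its quadratic form indefinite, and positivity requires a spectral statement. The plan is to conjugate by a Darboux/KdV-type transformation, or to use the small-$\varepsilon$ KdV limit \eqref{soliton1} where $\varepsilon_0$ is fixed. Either way, the positivity reduces to the known positivity for the KdV linearization at $\psi_{\mathrm{KdV}}$ in weighted spaces $e^{a x}$, which is Pego--Weinstein type and is used together with the linear stability of \cite{BK22ARMA}. Alternatively, we replace the narrow virial by Kato smoothing: write $V=P_cV+$(generalized kernel part), and estimate the localized norm via weighted resolvent bounds for $\mathcal{L}_c$ on $e^{-a\langle x\rangle}$-weighted spaces. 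These bounds follow from absence of embedded eigenvalues and resonances (from \cite{BK22ARMA}) plus small-$\varepsilon$ perturbation from KdV. We expect this spectral/smoothing estimate to be the hardest step, because the weighted resolvent bound must be uniform in $\varepsilon$ despite the singular long-wave scaling. The first attempt is the rescaling $y=\sqrt\varepsilon x$, reducing to a perturbation of the KdV operator, whose weighted resolvent estimates are classical.

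\emph{Step 4: closing.} Combining Steps 2--3 with $\delta$ small gives \eqref{eq:asstab2}, with the constant at most $\epsilon$. Then $\dot c\in L^1$, which yields the limit \eqref{eq:asstab3} and $c(t)\in(c_0-\epsilon,c_0+\epsilon)$. Finally, $\frac{d}{dt}\|e^{-a\langle x\rangle}V\|^2$ is bounded using the equation and $H^1$ control. Together with integrability this gives \eqref{eq:asstab20}.
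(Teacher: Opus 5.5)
Your skeleton matches the paper's: modulation, a wide virial that uses the fact that all radiation moves left relative to a supersonic wave, Kato smoothing, and a bootstrap. But both places where you commit to specifics fail.

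\textbf{The virial gives no derivative control.} Step~2 cannot yield $\int_0^T\|\sech(x/A)V\|_{H^1}^2dt$.
\begin{itemize}
\item The energy density $e(U)$ contains no derivatives of $n,u$, only $\phi'$. Any virial built from it, or from the momentum, controls $(V_n,V_u)$ only in weighted $L^2$ and $V_\phi$ in weighted $H^1$.
\item The linearized flow is hyperbolic at high frequency, so $e^{t\mathcal L_c}$ gains no derivatives either.
\end{itemize}
This breaks Step~3. Kato smoothing needs an autonomous operator, so you must use $\mathcal L_{c_0}$. The Duhamel source $\zeta_B(\mathcal L_c-\mathcal L_{c_0})\widetilde V$ then contains $(c-c_0)\partial_x\widetilde V$ linearly. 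You cannot place this in $L^{1,1}(\R,L^2_t)$, so smoothing cannot reach the localized norm of $\widetilde V$ itself.

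The paper avoids this in two moves:
\begin{itemize}
\item It applies smoothing only to $V_\phi=R_{h_c}(-1)V_n+O(V_\phi^2)$. The operator $R_{h_c}(-1)\mathcal L_{c_0}$ is bounded, so the elliptic inverse absorbs the derivative.
\item It adds a third virial with weight $\psi'=\sech^2(\varepsilon\kappa x)$. The cross term $V_\phi V_u$ is not symmetrized but moved to the right-hand side, which leaves order-one coercivity and gives $\|(V_n,V_u,V_\phi')\|_{\widetilde\Sigma}^2\lesssim\|V_\phi\|_{\widetilde\Sigma}^2+\dots$.
\end{itemize}
This is what absorbs the error $\varepsilon\|V\|^2_{\widetilde\Sigma}$ of the wide virial. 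That virial's coercivity is only $c-\mathsf V=\varepsilon$, the same size as soliton terms such as $\varepsilon\sqrt{K+1}\,\psi_{\mathrm{KdV}}(\sqrt\varepsilon x)(KV_n^2+V_u^2)$.

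\textbf{The smoothing estimate is not a consequence of spectral non-degeneracy.} It does not follow from ``absence of embedded eigenvalues and resonances''. Here $\sigma(\mathcal L_c)=\im\R$ and $\lambda=0$ \emph{is} an embedded eigenvalue with a two-dimensional generalized kernel. The Evans function vanishes to second order there, and $\mu_2,\mu_3$ coalesce at $0$.

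Projecting by $Q$ does not by itself make $R_{\mathcal L_c}(\im\tau)Q$ bounded near $\tau=0$. The paper Taylor-expands the Jost functions at $\lambda=0$ and cancels the singular terms using $P_{[c]}\widetilde F=0$ and \eqref{eq:imprelat}. Even then it only gets boundedness from $L^{1,1}$ into $\widetilde\Sigma$. Residual pieces ($R^{(1)}_{233}$, $R^{(1)}_{333}$) are not bounded from $L^1$.

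Applied to the commutator source $\zeta_B'\widetilde V$, this extra weight $\langle y\rangle$ costs $A_1^{3/2}B^{-1}$ on $|x|\lesssim A_1$ and $B^{1/2}$ beyond. That is why the paper splits the wide virial by $\vartheta_{1A_1},\vartheta_{2A_1}$ with $A_1=B^{3/5}$. The far-zone estimate $\|V\|_{L^2\Sigma_{2AA_1}}\lesssim\sqrt[3]{\delta}+A_1^{-1}\|V\|_{L^2\Sigma_{1AA_1}}$ then beats the $B^{1/2}$ loss, since $B^{1/2}A_1^{-1}=B^{-1/10}$. Nothing in your plan anticipates this.

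Rescaling to KdV does not help, for three reasons:
\begin{itemize}
\item The KdV linearization has the same embedded zero eigenvalue.
\item Pego--Weinstein bounds live in $L^2_a$ with a fixed rate $a$, while $\zeta_B'\widetilde V$ decays only at rate $1/B$.
\item Euler--Poisson is not uniformly close to KdV at high frequency.
\end{itemize}
Conversely, uniformity in $\varepsilon$ is unnecessary: $c_0$ is fixed, and $A$, $B$, $\delta$ may depend on $\varepsilon$.

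\textbf{A smaller point on modulation.} $\eta_1[c]\notin L^2$, since it tends to a nonzero constant as $x\to+\infty$. So orthogonality must be imposed against $\zeta_B\eta_1[c]$. As a result $|\dot D-c|$ is bounded only linearly, by $B^{-1/2}(\|V\|_{\Sigma_{1AA_1}}+\|V\|_{\Sigma_{2AA_1}})$, not quadratically as you claim.
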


\begin{remark} \label{rem:exisphi} For any  given  $n\in L^2(\R )$ there is exists and is     unique a $\phi \in H ^1(\R )$ which satisfies  the third equation in \eqref{EP}.
This can be shown  by considering the functional
\begin{align*}
  F(\phi ) = 2^{-1}\| \phi ' \| _{L^2(\R )} ^2+ \int _{\R}\( e^{\phi } -\phi -1  -n \phi     \)  dx ,
\end{align*}
where  $F\in C^1 (H ^1(\R ), \R  )$  is convex and
\begin{align}\label{eq:Fcoerciv}
    \lim _{\| \phi \| _{H ^1(\R )}\to +\infty } F(\phi ) =+\infty .
\end{align}
Here  \eqref{eq:Fcoerciv} can be shown by a contradiction argument as follows.
First, notice that we have $\Phi(t):=e^{t}-1-t\gtrsim \min(|t|,t^2)$.
Suppose there exists $\{\phi_n\}\subset H^1$ s.t. $\|\phi_n\|_{H^1}\to \infty$ and $\sup_n F(\phi_n)=M<\infty$.
Then, by the Schwarz inequality,
$
2^{-1}\|\phi_n'\|_{L^2}^2+\int \Phi(\phi_n)\lesssim M+\|n\|_{L^2} \|\psi_n\|_{L^2}.
$
Thus, we have
\begin{align*}
\|\phi_n'\|_{L^2}^2+\int \Phi(\phi_n)\lesssim  \|\phi_n\|_{L^2}.
\end{align*}
Therefore  we have a contradiction   from
\begin{align*}
\|\phi_n\|_{L^2}^2&=\int_{|\phi_n|\leq 1}\phi_n^2+\int_{|\phi_n|>1}\phi_n^2\lesssim \int \Phi(\phi_n) + \|\phi_n\|_{L^\infty} \int_{|\phi_n|>1}|\phi_n|\\&\lesssim
\|\phi_n\|_{L^2}+\|\phi_n'\|_{L^2}^{1/2}\|\phi_n\|_{L^2}^{1/2}\int \Phi(\phi_n)
\lesssim \|\phi_n\|_{L^2} + \|\phi_n\|_{L^2}^{\frac{1}{4}+\frac{1}{2}+1}.
\end{align*}
From the convexity and \eqref{eq:Fcoerciv}, $F$ has a point of absolute minimum, see \cite[p.71]{brezisbook}, which is a critical point of $F$, i.e. the third equation in \eqref{EP} is satisfied:
\begin{align*}
 0=   \nabla F(\phi  )  = -  \partial_x^2 \phi +e^{\phi} -1-n.
\end{align*}
  If $\phi _1$ and $\phi _2$  are two distinct  critical points,  we obtain the following contradiction,
\begin{align*}
 0= \< \nabla F(\phi _2)- \nabla F(\phi _1) , \phi _2 - \phi _1 \> =  \| \phi '_2   -  \phi '_1\| _{L^2(\R )} ^2 +  \<      e^{\phi _2}- e^{\phi _1}, \phi _2 - \phi _1 \> >  \| \phi '_2   -  \phi '_1\| _{L^2(\R )} ^2    > 0.
\end{align*}

\end{remark}

We prove  a slightly stronger result than \eqref{eq:asstab2}, see \eqref{eq:asstab2strong}, with  the coordinates of \eqref{eq:ansatz}. 
The proof of Theorem \ref{thm:main}  is inspired by \cite{CM25D1}. The key   to prove asymptotic stability  is  showing the spacial  escape of the error term from the ground state, which is due to the dispersive nature of the equation and in particular of its linear part. There is a large literature, with \cite{PegoWei2,SW1,SW2} some of the classical contributions,  based on  decay inequalities like \eqref{eq:pwdisp1}. In the context of KdV type equations this has led to  results of asymptitic stability in weighted space, see \cite{PegoWei2, MizuKdV2001}.  A different approach based on virial inequalities
   initiated by Martel and Merle
\cite{MaMeARMA2001,MaMeNonlinearity2005,MaMegeneral2008,MaMeRefined2008}, which appears closely related to the Mourre Inequalities,   in turn   inspired by simple finite dimensional arguments of scattering trajectories,  see \cite[Ch. 2.3]{MR1459161}, has proven to be particularly robust,  leading to important breakthroughs in the theory of asymptotic stability of solitons, such as \cite{KMM2017,KM22,Martelcubquint1,Martelcubquint2} and has been used also for a a variety of other models such as good Boussinesq equation, \cite{MR4629757, maulen2025asymptoticstabilitystablegood},  one dimensional Gross--Pitaevskii equation,  \cite{kowalczyk2025newproofliouvilletheorem}, and in higher dimension, \cite{MR3461359}.  These methods were  modified in a series of papers   \cite{CM24D1,CM243,CM24D4}  devoted to the asymptotic stability of the ground states of the  pure NLS in dimension 1, by proving the virial inequalities in a two steps. The first step consisted in the proof of virial inequalities at high energies, which is  rather simple in many examples.  For \eqref{EP}, the fact that the $\partial _x$ derivative   originates from the symplectic form,  that is the energy in \eqref{eq:energy} does not involve derivatives, has the effect that the norms in \eqref{eq:normA} do not control norms of $\partial _x\widetilde{V}$.  In this sense, here it is somewhat improper to call these  estimates \textit{high energy}.
As in the work by Martel and Merle it is very important to control the term $\|  {V }  \|_{  \widetilde{\Sigma}    }^2$ in \eqref{eq:lem:1stV11}. For the generalized KdV equations in \cite{CM25D1} we treated the corresponding term using Kato smoothing. However, the derivative in the right hand side \eqref{eq:prekato1} and the apparent lack of smoothing of the semigroup  $e^{t\mathcal{L}  _{c } }$  require     to introduce    an additional virial inequality,   Lemma \ref{lem:2stV1}, not needed in  \cite{CM25D1}. This additional virial inequality allows to replace  
$\|  {V }  \|_{  \widetilde{\Sigma}    }^2$ with  $\|  {V } _\phi \|_{  \widetilde{\Sigma}    }^2$. Since ${V } _\phi$ solves an elliptic equation  and is more regular than $\widetilde{V}$, we neutralize the effect of  the derivative in the right hand side \eqref{eq:prekato1}.

The second step  of our  proof    
consists in replacing the second virial inequalities  of     Kowalczyk et al.  or the analysis of the positivity of a quadratic form in Martel and Merle, 
which can be a   complicated, with a smoothing estimate  inspired by Mizumachi \cite{MR2511047}.  In fact the   $\|   V  _\phi    \|_{   L^2( I ,\widetilde{\Sigma})    } $ in \eqref {eq:sec:1virial11} is a typical Kato smoothing term, see \cite{katoMA1966}.
Apart from the issue of loss of derivative, which in  \cite{CM25D1} was automatically solved by smoothing and which, as we discussed above, is here solved by the smoothing effect of the elliptic equation in \eqref{EP2}, the main difficulty here as in   \cite{CM25D1} is with the commutator term between the weight $\zeta  _B $ and the linearized operator in \eqref{eq:prekato1}.     By  inverse Laplace transform,  we are reduced  to an analysis of the resolvent of the linearization.  This is nontrivial because the spectrum,   which coincides  with the imaginary axis $\im \R$, contains in its interior the eigenvalue $\lambda =0$. Thanks to \cite{BK22ARMA} we know that 0 is the only eigenvalue embedded in $\im \R$. In turn, the resolvent is expressed in terms of Jost functions, which play the role of the  plane waves    for  differential operators  with constant coefficients.    The   use of Jost functions in problems on the line is  standard, see for example Weder \cite{MR1980087,MR1729096}. The theory is especially well developed  in the case of Schr\"odinger operators, \cite{DT1979}, and for  more general linear systems of ODE's in the line, especially in the context of the direct scattering  \cite{BealsCoif1987, Bealsbook1988}. Here the heart of the paper  are the  detailed    estimates,   in Sect. \ref{sec:jost},   for the Jost functions near the point   $\lambda=0$. 
Armed with  these estimates, after   expressing the resolvent of the linearization in terms of the Jost functions, we are able to obtain estimates for the resolvent.
In particular, after a Taylor expansion of the Jost functions at 0, we are able to project away the singularity at $\lambda=0$ and   to bound the operator involving the error terms of    Jost functions   in the Taylor expansions using the estimates of  Sect. \ref{sec:jost}. The arguments from  Sect. \ref{sec:jost} largely   parallel those of \cite{CM25D1}.  Plausibly  our method
could be very robust and    amenable to significant generalizations.

\section{Notation }\label{sec:notation}

\begin{notation}\label{not:notation}
Throughout this paper, we will use the following list of   notations and definitions.

\begin{enumerate}

\item We use the notation $\dot u=  \partial _t  u$ and $  u'=  \partial _x  u$.

\item We set $\C _+=\{ \lambda \in \C: \Re \lambda >0\} $  and  $\overline{\C _+}  $ the closure of  $\C _+  $ in $\C$.


\item In analogy to     Kowalczyk et al.   \cite{KMM2022},      we   consider constants  $A, B,  A_1, \epsilon , \delta  >0$ satisfying
 \begin{align}\label{eq:relABg}
\log(\delta ^{-1})\gg\log(\epsilon ^{-1}) \gg     A    \gg   B^2\gg B    \gg 1  \text{ and $ A_1:= B ^{3/5}$.}
 \end{align} All these constants depend on the $\varepsilon >0$   in \eqref{soliton1}.
 
 \item The notation    $o_{y}(1)$  means a function in  $y$ such that
 $o_{y}(1) \xrightarrow {y  \to 0^+   }0.$

\item
Following Pego and Weinstein  \cite{PegoWei2} for $a\in \R $ we set
\begin{align}\label{eq:pwweigh0}&
   L^2_{a}=\{ v: \ e^{ax} v\in L^2(\R )   \}  \text{ with }\|  v  \| _{L^2_{a}}=\| e^{ax} v  \| _{L^2 }.
\end{align}  We also define    \begin{align}\label{eq:Lebwight}&
   L ^{p,s}=\{ v: \ \< x \> ^{s}   v\in L^p(\R )   \}  \text{ with }\|  v  \| _{ L ^{p,s}}:=\| \< x \> ^{s} v  \| _{L^p(\R ) }.
\end{align}

\item Given two Banach spaces $X$ and $Y$ we denote by $\mathcal{L}(X,Y)$ the space of continuous linear operators from $X$ to $Y$. We write $\mathcal{L}(X ):=\mathcal{L}(X,X)$.

\item Following  Kowalczyk et al. \cite{KMM2022} we   fix an even function $\chi\in C_c^\infty(\R , [0,1])$ satisfying
\begin{align}  \label{eq:chi} \text{$1_{[-1,1]}\leq \chi \leq 1_{[-2,2]}$ and $x\chi'(x)\leq 0$ and set $\chi_C:=\chi(\cdot/C)$  for a  $C>0$}.
\end{align}

\item We consider a decreasing function $\vartheta _1\in C^\infty ( \R , [0,1])$ with $\vartheta _1(x) =\left\{
                                                                                                     \begin{array}{ll}
                                                                                                       1, & \hbox{ for $x\le 1$ } \\
                                                                                                       0, & \hbox{for $x\ge 2$  }
                                                                                                     \end{array}                                                                                         \right. $,   we set $ \vartheta _2=1-\vartheta _1 $ and consider the partition of unity $1= \vartheta _{1A_1} + \vartheta _{2A_1} $  where $ \vartheta _{iA_1}(x):=  \vartheta _{i }\(\frac{x}{A_1}\)$.

\item Following   Kowalczyk et al. \cite{KMM2022}  and
using the  $\chi$  in \eqref{eq:chi}
we  consider for   $C>0$  the   function
\begin{align}\label{def:zetaC}
\zeta_C(x):=\exp\(-\frac{|x|}{C}(1-\chi(x))\)   \text {  and   }  \varphi_C (x):=\int_0^x \zeta_C^2(y)\,dy.
\end{align}
We similarly define
\begin{align}\label{def:phiAi}   \varphi _{iAA_1}  (x):=\int_0^x \zeta_A^2(y) \vartheta _{iA_1} ^2 (y)\,dy.
\end{align}

\item For $x\in \R$ we set $x^\pm  :=\max (0, \pm x) $.

\item  For vector valued functions $ V=( V_n, V_u, V_\phi ) ^\intercal $  and  $\widetilde{V}=( V_n, V_u  ) ^\intercal $
 we  will consider the norms
\begin{align}
&
\| V  \|_{     { \Sigma}    _{iA A_1}} :=\left \|   \vartheta _{iA_1} \zeta_A V   \right \|_{L^2(\R )} + \left \|   \( \vartheta _{iA_1} \zeta_A V  _\phi\) '       \right\|_{L^2(\R )}    \text{ for $i=1,2$   } \label{eq:normA}\end{align}
and for functions $f$ and  for a fixed sufficiently small  $\kappa  >0$
\begin{align}
& \|  f \|_{ \widetilde{\Sigma} } :=\left \| \sech \(     \varepsilon \kappa  x\)  f\right \|_{L^2(\R )}      \quad .  \label{eq:normk}
\end{align}

\item In $\C^n$ we will consider the inner product $\< \mathbf{x}, \mathbf{y}\> _{\C ^n}=  \sum _{j=1}^{n}x_j {y}_j$, for $\mathbf{z}=(z_1,...,z_n)^\intercal $. All vectors will be column vectors.

\item Given  a linear operator $A \in \mathcal{L}(\C ^n)$ then considered for $1\le k \le n $   the space $  \wedge ^k (\C ^n)$ we denote by $A ^{(k)} \in \mathcal{L}(\wedge ^k (\C ^n))$,  see \cite[Proposition 3.2]{Bealsbook1988},
the operator
\begin{align*}
   A ^{(k)}( a_1 \wedge ...\wedge a_k) =A a_1 \wedge a_2 \wedge  ...\wedge a_k + a_1 \wedge A a_2 \wedge  ...\wedge a_k +...+a_1 \wedge   a_2 \wedge  ...\wedge Aa_k .
\end{align*}
 For an introduction to exterior algebras   see \cite[Definition 2.3.6]{AbrMarsdFound}.

\item  The Hodge star operator $ *:\wedge ^k (\C ^n) \to   \wedge  ^{n-k} (\C ^n)$ is such that
$\<  \alpha , \beta \> _{\wedge ^k (\C ^n)}   =\det ( \alpha \wedge *\beta ) $ where $\alpha \wedge *\beta = \det ( \alpha \wedge *\beta ) e_1 \wedge ...\wedge e_n $, with  $( e_i ) _{i=1}^{n}$ the standard basis of $\C^n$  and where the inner product  $\<  \cdot  , \cdot  \> _{\wedge ^k (\C ^n)} $ is such that $\{  e_{i_1}\wedge ...\wedge e_{i_k} \} $ for all $i_1<...<i_k$ is an orthonormal basis of
$\wedge ^k (\C ^n)$. Notice that for $n$ even we have $ * ^2 =(-1)^k$.  Notice that for vectors in $\C ^4$
 \begin{equation*}
 \det (   X _1 \wedge X_2\wedge X _3\wedge X_4 )=\det (X_1, X_2, X _3, X_4)    ,
 \end{equation*}
with in   the right hand side   the   determinant of the matrix and     for all $ X_1, X_2, X _3, X_4 \in \C ^4$
 \begin{equation}\label{eq:hodgestar}
   *:\wedge  ^3(\C ^4) \to \C ^4 \text { we have  }  \det (X_1, X_2, X _3, X_4)=-\<  X_1, * (  X_2\wedge X _3\wedge X_4)\> _{\C^4}  .
 \end{equation}

\item  Give an operator $T$ for us the resolvent is  $R_{T }(z)=( T-z)^{-1}$.

 \end{enumerate}

\end{notation} 

\section{Solitary waves, Linearization and  modulation  }\label{sec:SLM}

\subsection{Solitary waves}


The following  detailed description of solitary waves is in \cite{BK19JDE}.
\begin{theorem}[Theorem 1.2 of \cite{BK19JDE}]
  \label{thm:solwave} 
  Let $S_c(x-ct)=(n_c,u_c,\phi_c)(x-ct)$ be the solitary wave solutions to \eqref{EP}. Then, for all 
  $c \in (\mathsf{V}, c_K)$, 
  the components of $S_c$ are even  smooth functions strictly decreasing for  $x\in \R _+$.  
  Furthermore, for 
  $\varepsilon   = c- \mathsf{V}$, setting 
  \begin{align*}
     {S}^{\epsilon} _{R }( \sqrt{\varepsilon}  x )
  :=S_c(x)   - \varepsilon  \( 1, \mathsf{V}, 1     \)^\intercal  \psi_{\mathrm{KdV}}\left( \sqrt{\varepsilon} x \right)  
  \end{align*}
  there are constants $\alpha >0$   and, for any $k\in \N \cup \{  0  \}$, $C_k>0$ such that
  
\begin{align}
  \label{eq:solwave2}   &        \sup _{\xi \in \R }    | e^{\alpha    |\xi|}       \partial _\xi ^k {S}^{\varepsilon} _{R } (\xi ) |   \le C_k   \varepsilon    ^{2 }, \quad \xi=\sqrt{\varepsilon}x.
\end{align}

\end{theorem}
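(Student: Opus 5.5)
The plan is to reduce the travelling wave problem to a planar Hamiltonian system, extract the solitary wave as a homoclinic orbit, and then pass to KdV scaling to get the expansion with exponentially weighted remainder bounds. Insert the ansatz $(n,u,\phi)(x-ct)$ into \eqref{EP} and integrate the first two equations using decay at infinity. The continuity equation gives $u=cn/(1+n)$. The momentum equation becomes
\begin{align*}
\frac{u^2}{2}-cu+K\log(1+n)+\phi=0,
\end{align*}
which by the implicit function theorem (valid while $c^2>K$ and $n$ is small) determines $n=N_c(\phi)$ smoothly, with $N_c(\phi)=\phi/(c^2-K)+O(\phi^2)$. The Poisson equation then becomes the scalar Newton equation
\begin{align*}
\phi''=e^\phi-1-N_c(\phi)=:-\partial_\phi \mathcal{V}_c(\phi),
\end{align*}
with first integral $\tfrac12(\phi')^2+\mathcal{V}_c(\phi)=0$. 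The linear coefficient is $1-1/(c^2-K)$, which is negative exactly when $c>\mathsf V$, so $0$ is a saddle.

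The next step is a phase plane argument. For $c\in(\mathsf V,c_K)$, where $c_K$ is the first speed at which the first positive zero of $\mathcal V_c$ stops existing inside the domain of $N_c$, there is a homoclinic orbit. Using translation and reflection symmetry we can make it even, with $\phi_c'<0$ on $\R_+$. Strict monotonicity of $n_c=N_c(\phi_c)$ follows because $N_c'>0$ along the orbit. For $u_c$ we use $u=cn/(1+n)$, which is strictly increasing in $n$. Smoothness follows from the ODE.

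For the asymptotics, set $\varepsilon=c-\mathsf V$, $\phi=\varepsilon\Psi(\xi)$ with $\xi=\sqrt\varepsilon x$, and expand $e^\phi-1-N_c(\phi)$ to second order in $\varepsilon$. Since $c^2-K=1+2\mathsf V\varepsilon+O(\varepsilon^2)$, the leading terms give
\begin{align*}
\Psi''=2\mathsf V\Psi-\mathsf V^2\Psi^2+\varepsilon R(\varepsilon,\Psi),
\end{align*}
where $R$ is smooth. The unperturbed equation is the stated KdV profile equation scaled by $2\mathsf V$, so its even homoclinic solution is $\psi_{\mathrm{KdV}}$. In the $\phi$-component, the remainder $S^\varepsilon_R$ is then controlled in two steps. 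First, the explicit first integral shows that the homoclinic orbit of the perturbed planar system is $O(\varepsilon)$ close to the unperturbed one. Alternatively, one can apply the implicit function theorem in the space of even functions with weight $e^{\alpha|\xi|}$ for any $\alpha<\sqrt{2\mathsf V}$. Here the linearization $-\partial_\xi^2+2\mathsf V-2\mathsf V^2\psi_{\mathrm{KdV}}$ is invertible on even functions, because its kernel is spanned by the odd function $\psi_{\mathrm{KdV}}'$. Second, once the remainder is $O(\varepsilon)$ in $\Psi$, it is $O(\varepsilon^2)$ in $\phi$. The estimates on $\partial_\xi^k$ follow by bootstrapping through the ODE. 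The $n$ and $u$ components follow from $n=N_c(\phi)$ and $u=cn/(1+n)$ by Taylor expansion. Their leading coefficients are $1$ and $\mathsf V$ at order $\varepsilon$, with remainders $O(\phi^2+\varepsilon\phi)=O(\varepsilon^2 e^{-\alpha|\xi|})$.

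The main obstacle is the uniform exponential weight. It requires $\alpha$ to be uniform in $\varepsilon$, and invertibility of the linearized operator to hold uniformly in weighted spaces. I would handle this by choosing $\alpha$ strictly below $\sqrt{2\mathsf V}$ and using the perturbed decay rate $\sqrt{2\mathsf V+O(\varepsilon)}$ of the saddle.
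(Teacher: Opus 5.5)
Your outline is correct, but it takes a different route from the one behind the statement. The paper gives no proof of Theorem~\ref{thm:solwave}; it quotes \cite{BK19JDE}, and Appendix~\ref{app:A} shows how that argument is organized.

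There the density $n$ is the unknown and $\phi=H(n,c)$ is explicit, so nothing has to be inverted. The amplitude $n^*_c=n_c(0)$ is fixed by the algebraic relation $g(n^*_c,c)=g(0,c)$. The KdV remainder is then estimated through the system \eqref{eq:4.37} for $(n^\varepsilon_R,u^\varepsilon_R,\phi^\varepsilon_R)$.

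Your pseudo-potential formulation in $\phi$ is equivalent to this:
\begin{itemize}
\item $N_c$ is the inverse of $H(\cdot,c)$ on $-1<n<c/\sqrt K-1$, where $\partial_nH=J/(1+n)>0$.
\item A direct computation gives $\mathcal V_c(H(n,c))=g(0,c)-g(n,c)$, so your turning-point condition is exactly theirs.
\end{itemize}

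What your version buys is a scalar semilinear equation whose KdV rescaling is literally the linearized KdV operator plus $O(\varepsilon)$. The implicit function theorem in even $e^{\alpha|\xi|}$-weighted spaces, with a fixed $\alpha<\sqrt{2\mathsf{V}}$, then gives the $O(\varepsilon^2)$ bound with uniform $\alpha$. It also gives smooth dependence on $\varepsilon$. From that, the $\partial_c$ bounds of Lemma~\ref{lem:solwave4} follow almost directly, whereas the paper has to derive them separately in Appendix~\ref{app:A}. The $n$-formulation, in turn, gives explicit information on the whole interval $(\mathsf{V},c_K)$, for instance smoothness of $c\mapsto n^*_c$ via the implicit function theorem applied to $g$.

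Three small points.
\begin{itemize}
\item For $c>\mathsf{V}$ the coefficient $1-1/(c^2-K)$ is positive, not negative. That positivity is what makes $0$ a saddle; it is $\mathcal V_c''(0)$ that is negative.
\item Closeness of orbits in the phase plane does not by itself control the parametrization near the saddle, so rely on the implicit function theorem route. There you should add the identification step. The solution you construct is even, positive and homoclinic to $0$. Since the positive branch of the unstable manifold of the saddle is unique, it coincides with $\varepsilon^{-1}\phi_c(\varepsilon^{-1/2}\xi)$.
\item Your argument yields \eqref{eq:solwave2} for $0<\varepsilon<\varepsilon_0$, which is the form used in the paper (cf.~\eqref{soliton1}). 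With your description of $c_K$, the turning point reaches the edge of the domain of $N_c$, i.e. the sonic point, where $N_c'=1/\partial_nH$ blows up. So constants uniform up to $c_K$ should not be expected for $k\ge 2$.
\end{itemize}
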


\begin{remark}\label{rem:expnS}  $\alpha = \varepsilon ^{-1/2}\mu_4 (0, \varepsilon)$   by  \eqref{eq:defeta2},  \eqref{eq:bigeta12}, \eqref{eq:mu40}, \eqref{eq:oscz4}, \eqref{eq:z4g1} and \eqref{eq:Z14at0}, see below.  
\end{remark}

In Appendix \ref{app:A} we  prove the following,     outlined in    Bae and Kwon \cite[Remark 1]{BK22ARMA}.

\begin{lemma}  \label{lem:solwave4} There are constants $\beta  >0$,   and, for any $k\in \N \cup \{  0  \}$, $C_k>0$ such that for $j=1,2$,
    \begin{align}
   \label{eq:solwave3}    &       \sup _{\xi\in \R }    | e^{\beta     |\xi |}      \partial _\xi ^k   \partial _c^j {S}^{\varepsilon} _{R } (\xi ) |   \le C_k   \varepsilon^{2-j}       .
\end{align}
\end{lemma}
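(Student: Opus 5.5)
We prove Lemma \ref{lem:solwave4} by reducing the traveling-wave problem to a planar ODE for $\phi$ and differentiating its solution in $c$, in the spirit of \cite{BK19JDE}.

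\emph{Reduction.} Insert $S_c(x-ct)$ into \eqref{EP} and integrate the first two equations once, using decay at infinity. This gives $(1+n)(c-u)=c$ and $\frac{(c-u)^2}{2}-\frac{c^2}{2}+K\log(1+n)=\phi$. Hence $u=cn/(1+n)$, and $N:=1+n$ is determined by
\begin{align*}
\frac{c^2}{2}\(N^{-2}-1\)+K\log N=\phi .
\end{align*}
In the supersonic regime $c>\mathsf{V}$ the $N$-derivative of the left side, $(K-c^2N^{-2})/N$, is nonzero near $N=1$. So the implicit function theorem gives $N=\mathcal N(\phi,c)$, analytic in $(\phi,c)$ for $|\phi|$ small and $c$ near $\mathsf V$. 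The problem then becomes the Newtonian ODE $\phi''=e^{\phi}-\mathcal N(\phi,c)$. In KdV scaling, $\phi(x)=\varepsilon\Psi(\sqrt\varepsilon x)$, this reads
\begin{align*}
\Psi''=G(\Psi,\varepsilon),
\end{align*}
where $G$ is smooth (indeed analytic) in $(\Psi,\varepsilon)$ near $\varepsilon=0$, and $G(\Psi,0)$ is the KdV nonlinearity whose even homoclinic orbit is $\psi_{\mathrm{KdV}}$.

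\emph{Smooth dependence on $\varepsilon$.} I would write $\Psi=\psi_{\mathrm{KdV}}+\varepsilon R$ and solve for $R$ in the space of even functions with weight $e^{\beta|\xi|}$, for some $\beta<\alpha$. The linearized operator is $-\partial_\xi^2+\partial_\Psi G(\psi_{\mathrm{KdV}},0)$, a Schr\"odinger operator $\mathcal H=-\partial_\xi^2+\mu^2-q(\xi)$ with $q$ exponentially decaying. Its kernel is spanned by the odd function $\psi'_{\mathrm{KdV}}$, so on even functions it is invertible between exponentially weighted spaces $H^2_\beta\to L^2_\beta$, provided $\beta<\mu$ with $\mu=\sqrt{\partial_\Psi G(0,0)}$. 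The implicit function theorem in $(R,\varepsilon)$ then yields $R=R(\varepsilon)$, $C^\infty$ in $\varepsilon\in[0,\varepsilon_0)$ with values in these weighted spaces. By uniqueness of the even homoclinic orbit (the planar ODE has a single separatrix through the saddle at $0$), this solution is the solitary wave of Theorem \ref{thm:solwave}. Differentiating in $\varepsilon$ is the same as differentiating in $c$.

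\emph{Bounds and main obstacle.} Since $\partial_c=\partial_\varepsilon$, the remainder $S^\varepsilon_R=\varepsilon^2\widetilde R(\xi,\varepsilon)$ has $\widetilde R$ smooth in $\varepsilon$. Then $\partial_\varepsilon S^\varepsilon_R=O(\varepsilon)$ and $\partial^2_\varepsilon S^\varepsilon_R=O(1)$ in the weighted norm, which is \eqref{eq:solwave3}. The $n,u$ components follow from smoothness of $\mathcal N$ and of $u=cn/(1+n)$. Bounds on $\xi$-derivatives of all orders come from the ODE by bootstrapping, since $\partial^k_\xi$ commutes with $\partial_c$. For pointwise sup bounds, one can alternatively use the equation for $\partial_c^j\Psi$, which is $\mathcal H w=f$ with $f$ exponentially decaying, together with an explicit Green's function built from the Jost solutions of $\mathcal H$; this gives $|w|\lesssim e^{-\beta|\xi|}$.

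The delicate point is uniformity in $\varepsilon$ near $0$. The decay rate of $S_c$ itself is $\mu(\varepsilon)$, which depends on $\varepsilon$ (Remark \ref{rem:expnS}). Differentiating $e^{-\mu(\varepsilon)|\xi|}$ in $\varepsilon$ produces factors $|\xi|$, which is why we must settle for a slightly smaller $\beta<\alpha$ and absorb $|\xi|^j$ into the exponential. I would check that $G$ is a smooth function of $\varepsilon$, with no singular $\varepsilon^{-1}$ terms after scaling, by Taylor expanding $\mathcal N$ to third order. That expansion is the main computation.
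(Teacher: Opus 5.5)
Your route is sound, and it differs from the paper's. In Appendix~\ref{app:A} the paper works as follows:
\begin{itemize}
\item It takes the Bae--Kwon wave as given.
\item It gets smoothness of $(x,c)\mapsto S_c(x)$ from an implicit-function argument for the peak value $n^*_c$ (via $g(n^*_c,c)=g(0,c)$ and $\partial_n g\neq 0$), together with smooth dependence of the profile ODE on its data.
\item It obtains the bounds by differentiating the remainder system \eqref{eq:4.37} in $\varepsilon$ and $\xi$ and feeding in \eqref{eq:solwave2}.
\item Since it differentiates at fixed $x$, it must also bound the chain-rule term $\tfrac{x}{2\sqrt{\varepsilon}}\partial_\xi S^\varepsilon_R$. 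It does this directly from \eqref{eq:solwave2}, at the cost of halving the decay rate.
\end{itemize}
You instead build the whole family by the implicit function theorem at $(\psi_{\mathrm{KdV}},0)$ in even exponentially weighted spaces. You then identify it with the Bae--Kwon wave by uniqueness of the small even homoclinic orbit. This gives smoothness in $\varepsilon$ up to $\varepsilon=0$, bounds for every $j$, and \eqref{eq:solwave2} itself as a by-product. It also makes explicit the solvability input, namely invertibility of $-\partial_\xi^2+2\mathsf{V}-2\mathsf{V}^2\psi_{\mathrm{KdV}}$ on even functions. The paper's appeal to \eqref{eq:4.37} leaves this step implicit; note that the constant matrix there is singular, since its third row is $\mathsf{V}$ times the first plus the second. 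The paper's route is shorter because it reuses the existing expansion.

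There are two things to fix.

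\textbf{The sign in the integrated momentum equation.} Integrating the momentum equation gives
\[
\phi=cu-\tfrac{u^2}{2}-K\log(1+n)=\tfrac{c^2}{2}(1-N^{-2})-K\log N ,
\]
which is the paper's $H(n,c)$ and the opposite of what you wrote. This matters for exactly the step you flagged. With your sign, $\mathcal N(\phi,c)-1\approx-\phi/(c^2-K)$, so $e^\phi-\mathcal N\approx 2\phi$ and $G$ would contain a singular term $2\varepsilon^{-1}\Psi$. With the correct sign,
\[
\partial_\phi(e^\phi-\mathcal N)(0,c)=1-(c^2-K)^{-1}=\varepsilon(2\mathsf{V}+\varepsilon)/(c^2-K).
\]
Hence $G(\Psi,\varepsilon)=b(\varepsilon)\Psi+\Psi^2h(\varepsilon\Psi,\varepsilon)$ with $b,h$ analytic, $b(0)=2\mathsf{V}$ and $h(0,0)=-\mathsf{V}^2$. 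So $G(\Psi,0)=2\mathsf{V}\Psi-\mathsf{V}^2\Psi^2$ is exactly the profile equation of $\psi_{\mathrm{KdV}}$, and a second-order expansion suffices.

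\textbf{Fixed $\xi$ versus fixed $x$.} Your estimates are for $\partial_\varepsilon$ at fixed $\xi$, whereas the paper uses $\partial_c S_c$ at fixed $x$ (e.g.\ in $\xi_2[c]$). Add the line $\partial_c|_x=\partial_\varepsilon|_\xi+\tfrac{\xi}{2\varepsilon}\partial_\xi$. The resulting factors $\varepsilon^{-1}$, and $\varepsilon^{-2}$ when $j=2$, are absorbed by $S^\varepsilon_R=\varepsilon^2\widetilde R$. The polynomial factors in $\xi$ are absorbed by lowering $\beta$. So the same $\varepsilon^{2-j}$ bounds hold.
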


\subsection{Linearized operator}

Following  Bae and Kwon \cite{BK22ARMA} we set
\begin{align}
  \label{eq:lineariz} \mathcal{L}_c :=- \partial _x  \left [   L _c    +    (-\partial ^2_x +e^{\phi _c}) ^{-1}  \left(
                                                  \begin{array}{cc}
                                                    0 & 0 \\
                                                   1 & 0 \\
                                                  \end{array}
                                                \right)
                                          \right ]    \text{ where }L_c=  \left(
                                  \begin{array}{cc}
                                    u_c-c & 1+n_c \\
                                     \frac{K}{1+n_c} & u_c-c \\
                                  \end{array}
                                \right)  .
\end{align}
 Then for
 \begin{align}\label{eq:xici}
   \xi _1 [c](x )   := \partial _x (n_c,u_c)^\intercal \text{ and } \xi _2 [c](x )   := \partial _c (n_c,u_c)^\intercal
 \end{align}
we have
\begin{align}\label{eq:gerker11}
   \mathcal{L}_c \xi _1 [c]=0 \text{ and  }\mathcal{L}_c \xi _2 [c]=- \xi _1 [c].
\end{align}
Notice that, trivially, $  \mathcal{L}_c ^* (a_0,b_0)^\intercal  =0$  for any fixed $a_0,b_0\in \R ^2$ and that it is easy to show that
 \begin{align}\label{eq:adjker}
    \mathcal{L}_c ^* (u_c,n_c)^\intercal  =0 .
 \end{align}
 
 \begin{proposition}\label{prop:Proposition 1.1}Consider the operator $\mathcal{L}_c: (L^2)^2 \to (L^2)^2$ with dense domain $(H^1)^2$. Then, for all sufficiently small $\epsilon>0$, there holds
\begin{equation}\label{eq:spectr}
 \sigma(\mathcal{L}_c)=\sigma_{\mathrm{ess}}(\mathcal{L})=\im \R.
\end{equation}
 Furthermore,
 \begin{equation}\label{eq:spectr1}
  \text{ $\lambda =0$ is the only eigenvalue of $ \mathcal{L}_c$ in }\overline{\C _+}.
\end{equation}

\end{proposition}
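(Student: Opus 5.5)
The proof splits into two independent parts. The first is the identification $\sigma(\mathcal{L}_c)=\sigma_{\mathrm{ess}}=\im\R$, obtained by a compact-perturbation and Weyl argument combined with a symmetry argument. The second is the exclusion of eigenvalues in $\overline{\C_+}\setminus\{0\}$, which I would take from the linear stability analysis of Bae and Kwon \cite{BK22ARMA}.

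\textbf{Step 1: the constant-coefficient limit.} Write $\mathcal{L}_c=\mathcal{L}_c^\infty+\mathcal{K}_c$. Here $\mathcal{L}_c^\infty$ is obtained by freezing the coefficients at $x=\pm\infty$, where $n_c=u_c=\phi_c=0$:
\begin{align*}
\mathcal{L}_c^\infty=-\partial_x\left[\begin{pmatrix} -c & 1\\ K & -c\end{pmatrix}+(1-\partial_x^2)^{-1}\begin{pmatrix}0&0\\1&0\end{pmatrix}\right].
\end{align*}
By Fourier transform, $\mathcal{L}_c^\infty$ is unitarily equivalent to multiplication by $-\im k\,M(k)$, where
\begin{align*}
M(k)=\begin{pmatrix} -c&1\\ K+(1+k^2)^{-1}&-c\end{pmatrix}.
\end{align*}
The eigenvalues of $M(k)$ are $-c\pm\sqrt{K+(1+k^2)^{-1}}$, which are real. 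Hence $\sigma(\mathcal{L}_c^\infty)$ is the closure of the set $\{-\im k(-c\pm\sqrt{K+(1+k^2)^{-1}}):k\in\R\}$.

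The branch $-c-\sqrt{\cdot}$ ranges over values bounded away from $0$, so $k(-c-\sqrt{\cdot})$ covers all of $\R$. Therefore $\sigma(\mathcal{L}_c^\infty)=\im\R$.

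\textbf{Step 2: transfer to $\mathcal{L}_c$.} The difference $\mathcal{K}_c=\mathcal{L}_c-\mathcal{L}_c^\infty$ has coefficients that decay exponentially, by Theorem \ref{thm:solwave}. It does contain a first-order term $-\partial_x[(L_c-L_c^\infty)\,\cdot\,]$. The nonlocal piece $(-\partial_x^2+e^{\phi_c})^{-1}-(1-\partial_x^2)^{-1}$ is smoothing and localized, so that part is relatively compact.

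For the first-order term I would not argue relative compactness directly. Instead I would use Weyl sequences built from plane waves translated to $|x|\to\infty$. This shows $\im\R\subset\sigma_{\mathrm{ess}}(\mathcal{L}_c)$.

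For the reverse inclusion, fix $\Re\lambda\ne0$. The equation $(\mathcal{L}_c-\lambda)V=f$ reduces to a first-order ODE system of the form $Y'=A(x,\lambda)Y$, with $Y=(V_n,V_u,V_\phi,V_\phi')$ in $\C^4$. By the same dispersion relation, the limits $A(\pm\infty,\lambda)$ are hyperbolic with no purely imaginary eigenvalues. The operator $\mathcal{L}_c-\lambda$ is then Fredholm, and its index is $0$ by homotopy in $\lambda$ to large $|\lambda|$. Consequently the only spectrum off $\im\R$ consists of isolated eigenvalues of finite multiplicity.

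\textbf{Step 3: symmetry.} The system is Hamiltonian, $\mathcal{L}_c=-\partial_x\mathcal{H}_c$ with $\mathcal{H}_c$ symmetric. Together with reality and the reflection $x\mapsto-x$ (the solitons are even), this makes the spectrum symmetric under $\lambda\mapsto-\lambda$ and $\lambda\mapsto\bar\lambda$. It therefore suffices to study $\overline{\C_+}$.

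\textbf{Step 4: eigenvalues in $\overline{\C_+}$.} This is the content of \cite{BK22ARMA}, and I would invoke it for all small $\epsilon$. I expect this to be the main obstacle if it has to be reproved.

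The scheme follows Pego and Weinstein \cite{PegoWei2}. One constructs an Evans function $D(\lambda)$ as a Wronskian (using the Hodge-star pairing of Notation \ref{not:notation}) of the decaying Jost solutions on $\overline{\C_+}$.

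In the KdV scaling $x\to\sqrt\varepsilon x$, $\lambda\to\varepsilon^{3/2}\Lambda$, Theorem \ref{thm:solwave} and Lemma \ref{lem:solwave4} give $D(\varepsilon^{3/2}\Lambda)\to D_{\mathrm{KdV}}(\Lambda)$ locally uniformly. The KdV Evans function is known to vanish in $\overline{\C_+}$ only at $\Lambda=0$, with multiplicity two, consistent with \eqref{eq:gerker11}.

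Away from the KdV scale, for $|\lambda|\gg\varepsilon^{3/2}$, a direct energy estimate rules out eigenvalues.

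On $\im\R\setminus\{0\}$, embedded eigenvalues are excluded because solutions decaying at both ends would force $D(\lambda)=0$. A nonvanishing limit of the Evans function from $\C_+$ to the imaginary axis then gives the contradiction.

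Finally, Rouché's theorem shows that the two zeros near $0$ both sit exactly at $0$. This yields \eqref{eq:spectr1}.
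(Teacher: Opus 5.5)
Your proposal is correct. For \eqref{eq:spectr1} it follows the same logic as the paper: both rely on Bae and Kwon for two facts, namely that the Evans function $D(\lambda,\varepsilon)$ does not vanish on $\overline{\C_+}\setminus\{0\}$, and that in $\C_+$ an eigenvalue occurs exactly where $D=0$. Both then dispose of embedded eigenvalues $\lambda\in\im\R\setminus\{0\}$ by showing that an $L^2$ eigenfunction forces $D(\lambda,\varepsilon)=0$.

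The paper proves this last point (Claim \ref{claim:noeig}) through the Wronskian identity \eqref{eq:wronsk1} together with $\det\mathbf V\neq 0$, which uses that $\mu_2\neq\mu_3$ on $\im\R\setminus\{0\}$. Your phrase ``decaying at both ends forces $D=0$'' can be made precise even more directly:
\begin{itemize}
\item At $+\infty$ an $H^1$ solution must be a multiple of $f_1$, since the modes $e^{\mu_{2}x}$ and $e^{\mu_{3}x}$ are oscillatory.
\item Decay at $-\infty$ then gives $\lim_{x\to-\infty}e^{-\mu_1x}f_1=0$, which is $D=0$ by \eqref{eq:limm1-infty}.
\end{itemize}
The KdV-scaling, energy-estimate and Rouch\'e material in your Step 4 summarizes Bae--Kwon's internal proof. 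The paper simply cites that proof and does not redo it.

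Where you genuinely differ is \eqref{eq:spectr}. The paper quotes it verbatim from Bae--Kwon. You instead derive it from the constant-coefficient symbol, Weyl sequences, Fredholm theory for $\frac{d}{dx}-A(x,\lambda)$, and a symmetry argument. This route is sound, but it is not independent of Step 4, contrary to your opening sentence:
\begin{itemize}
\item Steps 1--2 give only $\sigma_{\mathrm{ess}}=\im\R$ and that $\lambda-\mathcal L_c$ is Fredholm of index $0$ off the axis.
\item Index $0$ is in fact immediate because $A(+\infty,\lambda)=A(-\infty,\lambda)$, so no homotopy is needed.
\item It is Step 4 combined with the symmetry that removes eigenvalues from $\C_\pm$, and only then does $\sigma(\mathcal L_c)=\im\R$ follow.
\end{itemize}

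A minor correction: $\mathcal L_c=-\partial_x\sigma_1\mathcal H_c$, where $\mathcal H_c$ is the symmetric reduced Hessian of $E-cM$; it is not $-\partial_x\mathcal H_c$. You do not need the Hamiltonian structure at all, though. Evenness of $S_c$ gives directly that $\widetilde U\mapsto\widetilde U(-\cdot)$ anticommutes with $\mathcal L_c$; this is the operator-level version of Lemma \ref{lem:symjost1}. Hence $\lambda\in\rho(\mathcal L_c)\iff-\lambda\in\rho(\mathcal L_c)$, and together with reality this yields the symmetry you need.

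What your route buys is a self-contained account of the essential spectrum. What the paper's citation buys is brevity: its only new ingredient is the embedded-eigenvalue claim.
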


\begin{proof}
    While \eqref{eq:spectr}  is verbatim from Bae and Kwon \cite[Proposition 1.1]{BK22ARMA}, \eqref{eq:spectr1} also follows  from  the fact that the Evans function $D(\lambda, \varepsilon ) $, see \eqref{eq:evans}, vanishes only at $\lambda=0$ for all $\lambda \in \overline{\C} _+$. In  Bae and Kwon \cite {BK22ARMA} it is shown that   $\lambda \in {\C} _+$  is an eigenvalue of $\mathcal{L}_c$ if and only if
$D(\lambda , \varepsilon )=0 $. In the case $  \Re \lambda =0$ it is elementary to show,  see later {Claim} \ref{claim:noeig}, that if  $\lambda  $  is an eigenvalue we have $D(\lambda , \varepsilon )=0 $. From this we derive  \eqref{eq:spectr1}.
\end{proof}

 In analogy to Pego and Weinstein \cite {PegoWei2}, setting
 \begin{align}\label{eq:defeta2}
   \theta _3=\theta_3(c)= \frac{1}{\partial _c \< n_c , u _c\>} \text{ and }\eta  _2 [c]=  \theta _3  (u_c,n_c)^\intercal
 \end{align}
 we have, using  the Kronecker delta,
 \begin{align}\label{orth:xieta2}
    \<  \xi _i [c], \eta  _2 [c] \> = \delta _{i2}.
 \end{align}
 Setting
\begin{align}& \label{eq:ker2}
   \eta  _1 [c] (x) = \theta _1\int _{-\infty}^x  \partial _c (u_c(x'),n_c(x') )^\intercal   dx' +  \theta _2 (u_c(x ),n_c(x ) )^\intercal \text{ with} \\& \theta _1 =- \theta _3 \text{ and }
   \theta _2 = \theta _3^2 \int_\R \partial_c n_c\,dx \int_{\R}\partial_c u_c\,dx \nonumber
\end{align}
 we have
 \begin{align}\label{orth:xieta}
    \<  \xi _i [c], \eta  _1 [c] \> = \delta _{i1}.
 \end{align}
 We notice that from Theorem \ref{thm:solwave} and Lemma \ref{lem:solwave4}, we have $ \theta_3=-\theta_1\sim \varepsilon^{-1/2}$ and $\theta_2\sim \varepsilon^{-2}$ 
 and that
 \begin{align}
   \label{eq:dualkernel}  \mathcal{L}_c ^* \eta  _2 [c] =0 \text{   and }\mathcal{L}_c ^* \eta  _1 [c] =\eta  _2 [c].
 \end{align}

\subsection{Modulation}

We will write $ \widetilde{U}= (n , u    )^\intercal $   and seek an  ansatz
 \begin{align}
   \label{eq:ansatz}U= S_c   (\cdot -D) + V (\cdot -D)  \text{ where } V=( \widetilde{V}, V_\phi ) ^\intercal  \text{ and } \widetilde{V}=( V_n, V_u  ) ^\intercal.
 \end{align}
For  $c  _0 ,r \in \R_+:=(0,\infty)$ and for   $D _{L^2  (\R , \R ^2 )}(\widetilde{U},r):=\{ \widetilde{\Psi } \in L^2  (\R , \R ^2 ) \ |\ \|\widetilde{U}  -\widetilde{\Psi} \| _{ L^2   (\R )}<r \}$,  we set     $$\mathcal{U} (c _0 ,r  ) := \bigcup _{  x_0\in \R }  e^{-x_0 \partial _x}  D_{L^2  (\R , \R ^2 )}(\widetilde{S}_{c _0},r ). $$
The following can be proved  exactly like in \cite[Lemma 3.3] {CM25D1}. See Appendix \ref{app:B}    for the proof.
 
\begin{lemma}[Modulation]\label{lem:mod1} For $c_0\in ( \mathsf{V}, c_K)$ there exist     $\delta _0 >0$  and  $ B_0>0$   with $\delta _0 \ll B_0 ^{-1}$
such that for $  \delta _0 ^{-1 }\gg    B\ge B_0$ and for the  $\zeta _{B}$ introduced in \eqref{def:zetaC} there are
$(c,D)     \in C^1\(  \mathcal{U} (c_0 ,\delta _0 ), \R _+  \times \R   \) $
 such that for  any $\widetilde{U} \in \mathcal{U} (c_0 ,\delta _0   )$ we have
\small \begin{align}\label{61} &  \widetilde{V} (\widetilde{U} ):=e^{   D(\widetilde{U}) \partial _x } \widetilde{U} -  \widetilde{S}  _{c (\widetilde{U})}     \text{ satisfies }   \\& \nonumber \<   \widetilde{V} (\widetilde{U}) , \zeta _{B} \eta _1[c (\widetilde{U})]   \> = \<   \widetilde{V} (\widetilde{U}) ,   \eta _2[c (\widetilde{U})]   \> =0. \end{align} \normalsize
 For any $c_1$ with $ \widetilde{S}_{c_1}\in \mathcal{U}(c_0,\delta_0)$   and $D_1\in \R$         we have the identities
\begin{align}& c  (  \widetilde{S}   _{c_1}  )  =c_1  \text{, }   D  (  \widetilde{S}   _{c_1}  )=0
  \text{, }   D  (  e^{  - D _1 \partial _x }  \widetilde{U})    = D  (    \widetilde{U} )    + D_1 \text{ and } c  (  e^{  - D _1 \partial _x }  \widetilde{U})    =  c  (    \widetilde{U}).\label{eq:equivariance}
\end{align}
Furthermore  there exists a constant $C >0$ such that
\begin{align}
  \label{eq:mod2}  B ^{-\frac{1}2}|D(\widetilde{U})  -D_0| + |c(\widetilde{U} )- c_0|\le C       \delta _0 \text{ for all  } \widetilde{U}\in  e^{-D_0 \partial _x}  D_{L^2  (\R , \R ^2 )}(\widetilde{S}_{c _0}, \delta  _0    ).
\end{align}
\end{lemma}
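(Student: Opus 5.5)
The plan is to apply the implicit function theorem to the map
\begin{align*}
\mathcal{F}(\widetilde{U},c,D):=\Big( \< e^{D\partial_x}\widetilde{U}-\widetilde{S}_c , \zeta_B\eta_1[c]\> ,\ \< e^{D\partial_x}\widetilde{U}-\widetilde{S}_c , \eta_2[c]\> \Big),
\end{align*}
defined for $\widetilde{U}\in L^2(\R,\R^2)$ and $(c,D)$ near $(c_0,0)$. Since $\zeta_B\eta_1[c]$ and $\eta_2[c]$ are smooth in $x$ and decay at infinity, we may move $e^{D\partial_x}$ onto the test functions: $\<e^{D\partial_x}\widetilde{U},f\>=\<\widetilde{U},e^{-D\partial_x}f\>$. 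This shows that $\mathcal{F}$ is $C^1$ in $(\widetilde{U},c,D)$, with $\partial_D$ producing $-\<\widetilde{U},e^{-D\partial_x}f'\>$. The $c$-regularity comes from Lemma \ref{lem:solwave4}.

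First I check that $\mathcal{F}(\widetilde{S}_{c_0},c_0,0)=0$. Then I compute the Jacobian at that point. Differentiating in $c$ gives $-\<\xi_2,\zeta_B\eta_1\>$ and $-\<\xi_2,\eta_2\>=-1$. Differentiating in $D$ gives $\<\xi_1,\zeta_B\eta_1\>$ and $\<\xi_1,\eta_2\>=0$, using \eqref{orth:xieta2}. By \eqref{orth:xieta}, $\<\xi_i,\zeta_B\eta_1\>=\delta_{i1}+\<\xi_i,(\zeta_B-1)\eta_1\>$. Since $\xi_i$ decays exponentially while $\eta_1$ is bounded and $1-\zeta_B=O(|x|/B)$ on the support, this correction is $O(B^{-1})$ (with $\varepsilon$-dependent constants). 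The Jacobian is therefore close to $\begin{pmatrix}0&1\\-1&0\end{pmatrix}$, uniformly for $B\ge B_0$, and hence invertible.

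\textbf{Main obstacle.} The implicit function theorem is local in $\widetilde{U}$, but the lemma is stated on all of $\mathcal{U}(c_0,\delta_0)$, which is a union over translates. To handle this, I apply the theorem at $\widetilde{S}_{c_0}$ to get $(c,D)$ on the ball $D_{L^2}(\widetilde{S}_{c_0},\delta_0)$. I then extend to the whole set by the rule $D(e^{-x_0\partial_x}\widetilde{U}):=D(\widetilde{U})+x_0$, $c(e^{-x_0\partial_x}\widetilde{U}):=c(\widetilde{U})$.

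The difficulty is well-definedness when translated balls overlap. For this I need a uniqueness statement: solutions $(c,D)$ with $|c-c_0|$ small and $|D|\lesssim B^{1/2}\delta_0$ are unique. The key is that the implicit function theorem neighborhood must be quantitatively large. The second derivatives of $\mathcal{F}$ in $D$ are $O(1)$, and the $D$-dependence through $\zeta_B$ is only $B^{-1}$-sensitive. So I use a quantitative implicit function theorem (a contraction argument) with explicit constants, giving a uniqueness radius in $D$ of order one. Two representations $\widetilde{U}=e^{-x_0\partial_x}\Psi_0=e^{-x_1\partial_x}\Psi_1$ with $\Psi_i$ $\delta_0$-close to $\widetilde{S}_{c_0}$ force $|x_0-x_1|$ small, because $\widetilde{S}_{c_0}$ is not close to its own nontrivial translates. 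Uniqueness then gives consistency of the extension.

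The remaining claims follow directly:
\begin{itemize}
\item The estimate \eqref{eq:mod2} comes from the bound $|\mathcal{F}(\widetilde{U},c_0,0)|\lesssim \|\widetilde{U}-\widetilde{S}_{c_0}\|_{L^2}\,\|\zeta_B\eta_1\|_{L^2}$. Since $\eta_1$ is bounded, $\|\zeta_B\eta_1\|_{L^2}\lesssim B^{1/2}$, which produces the $B^{-1/2}$ weight on $D$.
\item The identities \eqref{eq:equivariance} follow from the construction together with uniqueness, since $(c_1,0)$ solves $\mathcal{F}(\widetilde{S}_{c_1},\cdot,\cdot)=0$.
\end{itemize}
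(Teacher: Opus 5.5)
Your proposal is correct, but it is organized differently from the paper's proof.

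\textbf{The paper's route.} The paper applies the implicit function theorem to a rescaled map $F_B(\tilde D,c,\widetilde U)$. It sets $D=B^{1/2}\tilde D$ and multiplies the $\zeta_B\eta_1$-component by $B^{-1/2}$. This makes two things uniform in $B$: the $\widetilde U$-Lipschitz constant of $F_B$, and the $(\tilde D,c)$-Jacobian, which is close to the identity. The theorem then gives directly $|\tilde D|+|c-c_0|\lesssim\|\widetilde U-\widetilde S_{c_0}\|_{L^2}$, which is \eqref{eq:mod2}. This holds on a neighborhood of size $\delta_1\ll B^{-1}$, the restriction coming from the $B^{1/2}$-growth of the Jacobian's variation. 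The extension to $\mathcal U(c_0,\delta_0)$ is only asserted, from the symmetry $F_B(\tilde D,c,\widetilde U)=F_B(0,c,e^{B^{1/2}\tilde D\partial_x}\widetilde U)$.

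\textbf{Your route and the step to tighten.} You keep $D$ unscaled. Your Jacobian, close to $\begin{pmatrix}0&1\\-1&0\end{pmatrix}$, is invertible uniformly in $B$. However, the residual $\mathcal F(\widetilde U,c_0,0)$ is of size $B^{1/2}\delta_0$ in its first component. A uniform bound on the inverse Jacobian therefore only yields $|c-c_0|+|D|\lesssim B^{1/2}\delta_0$. To get $|c-c_0|\le C\delta_0$ in \eqref{eq:mod2} you need two further facts, which your bullet point skips:
\begin{itemize}
\item the $c$-row of the inverse Jacobian sees only $\mathcal F_2$, whose residual is $O(\delta_0)$;
\item the nonlinear error in the $\eta_2$-equation is quadratic, namely $O(|D|^2+|c-c_0|^2+\delta_0|D|)=O(B\delta_0^2)\ll\delta_0$.
\end{itemize}
The second point is exactly where $\delta_0\ll B^{-1}$ enters.

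\textbf{What your version adds.} You make explicit two points the paper leaves implicit:
\begin{itemize}
\item $C^1$ dependence on $D$ for merely $L^2$ data, obtained by moving $e^{D\partial_x}$ onto the test functions;
\item well-definedness of the translation extension, via a $B$-independent uniqueness radius and the observation that overlapping translated balls differ only by shifts of size $O(\delta_0)$.
\end{itemize}
The paper's own uniqueness ball, of radius $\delta_1\gg\delta_0$ in $\tilde D$, would also suffice for that consistency check.
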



By taking $\delta \in (0, \delta _0/2 )$ in \eqref{eq:main1}, there exists a function $t\mapsto x_0(t)$ such that
\begin{align*}
   \text{$\widetilde{U} (t)\in  e^{-x_0(t) \partial _x}  D_{L^2  (\R , \R ^2 )}(\widetilde{S}_{c _0}, 2\delta     )$ for all $t\ge 0$}
\end{align*}
and so, for $D(t)=D(\widetilde{U} (t)) $,   $c(t)=c(\widetilde{U} (t)) $  and $\widetilde{V}(t)=\widetilde{V} (\widetilde{U} (t)) $, we have
\begin{align*} &
   2\delta > \|  \widetilde{U} (t) - e^{-x_0(t) \partial _x} \widetilde{S}_{c _0} \| _{H^1}=  \|  e^{-D(t) \partial _x}\widetilde{V} (t)    +    e^{-D(t) \partial _x} \widetilde{S}_{c(t)}    - e^{-x_0(t) \partial _x} \widetilde{S}_{c _0} \| _{H^1} \\& \ge  \|   \widetilde{V} (t) \| _{H^1}    - \|    e^{-D(t) \partial _x} \widetilde{S}_{c(t)}    - e^{-x_0(t) \partial _x} \widetilde{S}_{c _0} \| _{H^1}
\end{align*}
so that by \eqref{eq:mod2}
\begin{align}\label{eq:h1errorbounds} 
   \|   \widetilde{V} (t) \| _{H^1} \lesssim \delta + |D(t) -  x_0(t) |+ |c(t) -  c_0  |  \lesssim  B ^{ \frac{1}2} \delta \text{ and }|c(t) -  c_0  |\lesssim \delta.
\end{align}

\section{Main estimates} \label{sec:mainest}

The proof of Theorem \ref{thm:main} is mainly  based on the following continuation argument.

\begin{proposition}\label{prop:continuation}
For any small $ \epsilon >0$ and for large constants $A$ and $B$ as in formula \eqref{eq:relABg}
there exists  a    $\delta _0= \delta _0(\epsilon )   $ s.t.   if  for an interval   $I=[0,T]$ for some $T>0$ the following inequality holds,
\begin{align}
  \label{eq:main2} \|V \| _{L^2(I,  { \Sigma }_{1A A_1} )} +
\|V \| _{L^2(I,  { \Sigma }_{2A A_1} )} +  \|V \| _{L^2(I, \widetilde{\Sigma}  )}  +   \| \dot D - c \| _{L^2(I  )} + \| \dot c \| _{L^2(I  )}\le \epsilon
\end{align}
   and if  the constant in \eqref{eq:main1} satisfies  $\delta  \in (0, \delta _0)$
then     \eqref{eq:main2} holds   for   $\epsilon$ replaced by $   o _{ B ^{-\frac{1}{2}} }(1) \epsilon $.
\end{proposition}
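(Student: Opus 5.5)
We will prove Proposition \ref{prop:continuation} by chaining a sequence of estimates, each valid on $I$ under the bootstrap hypothesis \eqref{eq:main2} and the smallness $\delta\in(0,\delta_0)$ from \eqref{eq:main1}, with the $H^1$ bound \eqref{eq:h1errorbounds} controlling all nonlinear terms. The first ingredient is the system satisfied by $V$ in the coordinates \eqref{eq:ansatz}:
\begin{align*}
\dot{\widetilde V}=\mathcal{L}_c\widetilde V+(\dot D-c)\,\partial_x(\widetilde S_c+\widetilde V)-\dot c\,\xi_2[c]+\partial_x N(V),
\end{align*}
with $V_\phi$ determined by the linearized elliptic equation. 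Differentiating the orthogonality conditions \eqref{61} and using \eqref{orth:xieta2}, \eqref{orth:xieta} and \eqref{eq:dualkernel}, we expect the modulation bounds
\begin{align*}
|\dot D-c|+|\dot c|\lesssim \|V\|_{\widetilde\Sigma}+\|V\|_{\Sigma_{1AA_1}}\bigl(B^{-1/2}+\delta\bigr)+\text{quadratic}.
\end{align*}
Integrating in time, this reduces the last two terms of \eqref{eq:main2} to the norms of $V$.

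Next we prove the first virial inequality (the \emph{high energy} step). We use the functional built from $\varphi_{iAA_1}$ in \eqref{def:phiAi} paired with the quadratic energy density. Since $K>0$, the pressure contributes $\tfrac K2 V_n^2$ and the commutator yields coercivity. After integration over $I$ and bounding the functional by $A\,\|\widetilde V\|_{L^2}^2\lesssim A B\delta^2$, we aim for
\begin{align*}
\|V\|_{L^2(I,\Sigma_{1AA_1})}+\|V\|_{L^2(I,\Sigma_{2AA_1})}\lesssim A\delta+\|V\|_{L^2(I,\widetilde\Sigma)}+o_{B^{-1/2}}(1)\epsilon .
\end{align*}
The region $|x|\lesssim A_1$ (the $\vartheta_{1A_1}$ piece) requires care because there the soliton potential is not negligible. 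Since $\varepsilon$ is small, the soliton is $O(\varepsilon)$ and spread over scale $\varepsilon^{-1/2}$, so we treat it perturbatively. The second virial inequality, in the spirit of Lemma \ref{lem:2stV1}, uses a weight at scale $(\varepsilon\kappa)^{-1}$. Its purpose is to replace $\|V\|_{L^2(I,\widetilde\Sigma)}$ by $\|V_\phi\|_{L^2(I,\widetilde\Sigma)}$ plus $o(1)\epsilon$, exploiting the elliptic equation to gain the derivative.

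The last step is the Kato smoothing estimate for $\|V_\phi\|_{L^2(I,\widetilde\Sigma)}$, which we expect to be the main obstacle. We set $w=\zeta_B\widetilde V$ and write its equation as $\dot w=\mathcal{L}_c w+[\zeta_B,\mathcal{L}_c]\widetilde V+\dots$, projecting onto the continuous spectrum via the $\eta_j$. By Laplace transform and Plancherel, the smoothing bound reduces to a uniform bound on $\sech(\varepsilon\kappa x)\,(-\partial_x^2+e^{\phi_c})^{-1}R_{\mathcal{L}_c}(\lambda)\,\partial_x\,\sech(\varepsilon\kappa x)$ for $\Re\lambda>0$, $\lambda\to \im\R$. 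We would express the resolvent through Jost functions, using Proposition \ref{prop:Proposition 1.1} (no embedded eigenvalues other than $0$). Near $\lambda=0$ we Taylor expand the Jost functions so that the pole is cancelled by the projection. The commutator term is then small, of order $B^{-1}$ against the $\Sigma_{AA_1}$ norms. Collecting all estimates and choosing the constants as in \eqref{eq:relABg}, every term is bounded by $o_{B^{-1/2}}(1)\epsilon$.
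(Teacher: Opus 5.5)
Your outline has the paper's architecture: modulation, a localized virial, a third virial at scale $(\varepsilon\kappa)^{-1}$ trading $\|V\|_{\widetilde\Sigma}$ for $\|V_\phi\|_{\widetilde\Sigma}$, and Kato smoothing via Jost functions. But the step on which the bootstrap hinges is asserted incorrectly. The commutator $[\zeta_B,\mathcal{L}_{c_0}]\widetilde V$ does \emph{not} contribute $O(B^{-1})$ against the $\Sigma_{iAA_1}$ norms.

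Its main part $\zeta_B'\widetilde V$ already satisfies only $\|\zeta_B'\widetilde V\|_{L^1}\lesssim\|\zeta_B'/\zeta_A\|_{L^2}\|\zeta_A\widetilde V\|_{L^2}\sim B^{-1/2}\|\zeta_A\widetilde V\|_{L^2}$. More seriously, projecting with $Q_{[c_0]}$ removes only the first Taylor terms of the resolvent at $\lambda=0$ (the cancellations in the proof of Lemma \ref{lem:smooth2small}). What survives are the terms $R^{(1)}_{233}$, $R^{(1)}_{333}$. Schematically they are $\Xi_1[c](x)\,\lambda^{-1}\int_{\R}(e^{-\mu_2(\lambda)y}-1)\widetilde\phi(y)F(y)\,dy$ with $\widetilde\phi\to1$ as $y\to+\infty$. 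At low frequency this kernel grows like $|y|$, so it is controlled only by $\|F\|_{L^1(\R_-)}+\|\langle y\rangle F\|_{L^1(\R_+)}$. For $F=\zeta_B'\widetilde V$ the weight costs $\|\langle y\rangle\zeta_B'/\zeta_A\|_{L^2(\R_+)}\sim B^{1/2}$, and the honest outcome is Lemma \ref{prop:smooth11}:
\[
\|V_\phi\|_{L^2(I,\widetilde\Sigma)}\lesssim A_1^{3/2}B^{-1}\|V\|_{L^2(I,\Sigma_{1AA_1})}+B^{1/2}\|V\|_{L^2(I,\Sigma_{2AA_1})}+o_{B^{-1}}(1)\epsilon ,
\]
which has a \emph{large} coefficient on the far region $\{x\gtrsim A_1\}$.

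This is why the paper inserts the partition $1=\vartheta_{1A_1}+\vartheta_{2A_1}$ and proves two separate virial inequalities instead of your lumped one. On $\supp\vartheta_{2A_1}\subseteq[A_1,+\infty)$ the soliton is negligible, so no $\widetilde\Sigma$ error appears and $\|V\|_{L^2(I,\Sigma_{2AA_1})}\lesssim\delta^{1/3}+A_1^{-1}\|V\|_{L^2(I,\Sigma_{1AA_1})}$. Separately, $\|V\|_{L^2(I,\Sigma_{1AA_1})}\lesssim\delta^{1/3}+\|V_\phi\|_{L^2(I,\widetilde\Sigma)}+B^{-1/2}\|V\|_{L^2(I,\Sigma_{2AA_1})}$. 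The choice $A_1=B^{3/5}$ makes both cross factors $A_1^{3/2}B^{-1}$ and $B^{1/2}A_1^{-1}$ equal to $B^{-1/10}$, and that is what closes the loop.

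Your virial bound puts $\|V\|_{L^2(I,\widetilde\Sigma)}$ with an $O(1)$ coefficient in front of $\|V\|_{L^2(I,\Sigma_{2AA_1})}$ as well. With the true smoothing estimate you then get only $\|V_\phi\|_{L^2(I,\widetilde\Sigma)}\lesssim B^{1/2}\|V_\phi\|_{L^2(I,\widetilde\Sigma)}+\dots$, which does not close. Your final sentence (``collecting all estimates\dots'') is precisely the missing argument.

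Along the same lines, your modulation bound keeps $\|V\|_{\widetilde\Sigma}$ with an $O(1)$ coefficient. This would feed the same circularity through the modulation terms in the Duhamel formula, since Lemma \ref{lem:gKdV22} bounds them by $\|\dot D-c\|_{L^2(I)}+\|\dot c\|_{L^2(I)}$ with an $O(1)$ constant. The paper's Lemma \ref{lem:lemdscrt} avoids this through two cancellations you did not use:
\begin{enumerate}
\item $\mathcal{L}_c^*\eta_2[c]=0$, which makes $\dot c$ quadratic;
\item $\langle\widetilde V,\eta_2[c]\rangle=0$, which turns $\langle\widetilde V,\zeta_B\eta_2[c]\rangle$ into $-\langle\widetilde V,(1-\zeta_B)\eta_2[c]\rangle=O(B^{-1})$.
\end{enumerate}
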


We will split the proof of Proposition \ref{prop:continuation}  into three lemmas obtained assuming the hypotheses of Proposition  \ref{prop:continuation}. The first is  proved in \S  \ref{sec:discr}, the second in  \S  \ref{sec:virial}, the last in the rest of the paper.
\begin{lemma}\label{lem:lemdscrt} We have the estimates
 \begin{align} \label{eq:discrest1D}
   &     |\dot D -c |    \lesssim  B ^{-\frac{1}{2}}   (\|V \| _{  { \Sigma }_{1A A_1}  } +
\|V \| _{   { \Sigma }_{2A A_1}  } )  \text{  and }\\& |\dot c   |     \lesssim  \|V \| _{   { \Sigma }_{1A A_1}  } ^2 +
\|V \| _{   { \Sigma }_{2A A_1} } ^2 . \label{eq:discrest1c}
\end{align}

\end{lemma}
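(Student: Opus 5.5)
We will derive the modulation equations by differentiating in time the two orthogonality conditions \eqref{61}, after writing the equation for $\widetilde V$. Substituting the ansatz \eqref{eq:ansatz} into \eqref{EP} and moving to the frame $x\mapsto x-D$, the traveling wave equation for $S_c$ gives
\begin{align*}
\dot{\widetilde V} = \mathcal{L}_c \widetilde V + (\dot D - c)\,\big(\xi_1[c] + \partial_x \widetilde V\big) - \dot c\, \xi_2[c] + \partial_x \mathcal{N}(V),
\end{align*}
with $\mathcal N$ at least quadratic in $V$. Here $\mathcal N$ collects the quadratic remainder of $\frac{u^2}{2}$, $(1+n)u$ and $K\log(1+n)$, together with the nonlinear part of $\phi$, namely $V_\phi$ minus its linearization $(-\partial_x^2+e^{\phi_c})^{-1}V_n$. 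Since $\delta$ is small, \eqref{eq:h1errorbounds} and Remark \ref{rem:exisphi} give $\|V\|_{L^\infty}\lesssim B^{1/2}\delta\ll 1$, so $\log(1+n)$ and $e^{\phi}$ can be Taylor expanded pointwise.

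First we pair with $\eta_2[c]$. Differentiating $\langle \widetilde V,\eta_2[c]\rangle=0$ gives $\langle \dot{\widetilde V},\eta_2\rangle + \dot c\langle \widetilde V,\partial_c\eta_2\rangle=0$. By \eqref{eq:dualkernel} we have $\langle \mathcal L_c\widetilde V,\eta_2\rangle=0$, and \eqref{orth:xieta2} kills the $\xi_1$ term while giving $\langle\xi_2,\eta_2\rangle=1$. The result is
\begin{align*}
\dot c\,\big(1+O(\langle \widetilde V,\partial_c\eta_2\rangle)\big) = (\dot D-c)\langle \partial_x\widetilde V,\eta_2\rangle - \langle \mathcal N(V),\partial_x\eta_2\rangle .
\end{align*}
Second, we differentiate $\langle \widetilde V,\zeta_B\eta_1\rangle=0$. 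Now $\langle \mathcal L_c \widetilde V,\zeta_B\eta_1\rangle=\langle \widetilde V,\zeta_B\eta_2\rangle+\langle\widetilde V,[\mathcal L_c^*,\zeta_B]\eta_1\rangle$. The first term equals $\langle \widetilde V,(\zeta_B-1)\eta_2\rangle$ by orthogonality, and it is supported where $|x|\gtrsim B$, where $\eta_2$ decays like $e^{-\alpha\sqrt\varepsilon|x|}$. The commutator contains $\zeta_B'=O(B^{-1})\zeta_B$. The $\xi_1$ coefficient is $\langle\xi_1,\zeta_B\eta_1\rangle=1+O(e^{-cB})$. Everything is then controlled by $\|\zeta_A\widetilde V\|_{L^2}$ restricted to regions where $\eta_j$ is exponentially small. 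Since $\zeta_A\vartheta_{iA_1}$ sum to $\zeta_A$, this is bounded by $\|V\|_{\Sigma_{1AA_1}}+\|V\|_{\Sigma_{2AA_1}}$. The nonlocal parts of $\mathcal L_c$ involve $(-\partial_x^2+e^{\phi_c})^{-1}$, which has an exponentially localized kernel, and this is harmless.

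Combining the two identities gives a $2\times2$ linear system for $(\dot D-c,\dot c)$ with matrix $\mathrm{Id}+O(\|V\|)$, which is invertible for small $\delta$. Solving it yields \eqref{eq:discrest1D}, where the linear source is small thanks to the $B^{-1/2}$ gain, and \eqref{eq:discrest1c}, where only quadratic terms survive. The nonlinear terms are quadratic and tested against localized functions. They are bounded by $\|e^{\gamma|x|}\eta\,\partial_x\|\cdot\|\zeta_A V\|^2$, using $\zeta_A^{-2}e^{-\beta\sqrt\varepsilon|x|}$ bounded when $A\gg\varepsilon^{-1/2}$. The $V_\phi$ part is handled through the elliptic equation, and $\|(\vartheta_{iA_1}\zeta_AV_\phi)'\|$ is part of the norm.

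The main obstacle is the $\eta_1$ equation. The constants are degenerate: $\theta_2\sim\varepsilon^{-2}$, and $\eta_1$ contains $\int_{-\infty}^x\partial_c(u_c,n_c)$, which does not decay as $x\to+\infty$. This is exactly why the cutoff $\zeta_B$ is needed. The linear term $\langle \widetilde V,[\mathcal L_c^*,\zeta_B]\eta_1\rangle$ must then be shown to be $O(B^{-1/2})\|\zeta_A \widetilde V\|$. We would attempt this via $|\zeta_B'|\lesssim B^{-1}\zeta_B$, and the estimate $\|\zeta_B\eta_1/\zeta_A\|_{L^2}\lesssim B^{1/2}$, which holds because $\eta_1$ is bounded and $\zeta_B/\zeta_A$ decays on scale $\sim B$. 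This gives the factor $B^{-1}\cdot B^{1/2}=B^{-1/2}$, as in \cite[Lemma 3.3]{CM25D1}. The $\varepsilon$-dependent constants are absorbed since $B$ is chosen large depending on $\varepsilon$.
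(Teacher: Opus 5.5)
Your proposal is correct and follows the paper's route. Like the paper, you pair the $\widetilde V$ equation with $\zeta_B\eta_1[c]$ and $\eta_2[c]$, use $\mathcal L_c^*\eta_2[c]=0$, $\mathcal L_c^*\eta_1[c]=\eta_2[c]$ and the orthogonality conditions, bound the commutator $[\mathcal L_c^*,\zeta_B]\eta_1[c]$ by $B^{-1}\|\zeta_B/\zeta_A\|_{L^2}\lesssim B^{-1/2}$, and solve the resulting nearly diagonal $2\times 2$ system.

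One small correction: $1-\zeta_B$ is not supported in $|x|\gtrsim B$, since $\zeta_B=e^{-|x|/B}$ already for $|x|\ge 2$. So terms like $\langle \widetilde V,(\zeta_B-1)\eta_2[c]\rangle$ and $\langle \xi_1[c],(\zeta_B-1)\eta_1[c]\rangle$ are $O(B^{-1})$ via $|1-\zeta_B|\le |x|/B$, as in the paper, rather than exponentially small. This weaker bound is still enough for the conclusion.
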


\begin{lemma}[Virial Inequality]\label{prop:1virial}
 We have
 \begin{align}\label{eq:sec:1virial11}
   \|  V  \|_{ L^2( I ,  \Sigma  _{1A A_1} )} &\lesssim \sqrt[3]{ {\delta }} + \|   V  _\phi    \|_{   L^2( I ,\widetilde{\Sigma})    }  +  B ^{-1/2}
\|V \| _{  L^2( I ,    \Sigma  _{2A A_1} ) }   \text{ and}\\ \|  V  \|_{ L^2( I ,  \Sigma  _{2A A_1}) } &\lesssim \sqrt[3]{ {\delta }}   +    A_1 ^{-1}
\|  V  \|_{ L^2( I ,  \Sigma  _{1A A_1} )}. \label{eq:sec:1virial12}
 \end{align}
 \end{lemma}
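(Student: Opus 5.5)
We prove both estimates by virial (localized momentum) identities in the style of Kowalczyk--Martel--Mu\~noz and \cite{CM25D1}. We use functionals built from the Hamiltonian structure of \eqref{EP}. Writing the energy as $E(n,u)=\int \big(\tfrac{(1+n)u^2}{2}+K((1+n)\log(1+n)-n)+\ldots\big)$, with the electrostatic part expressed through $\phi$, the system is $\partial_t \widetilde U=-\partial_x \nabla E$. The natural virial functional is a weighted momentum
\begin{align*}
\mathcal I_{i}(t):=\int_{\R} \varphi_{iAA_1}(x)\, V_n(t,x)V_u(t,x)\,dx ,\qquad i=1,2,
\end{align*}
possibly corrected by a term quadratic in $V_\phi$. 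Here $\varphi_{iAA_1}$ is the weight of \eqref{def:phiAi}, so that $\varphi_{iAA_1}'=\zeta_A^2\vartheta_{iA_1}^2$. By \eqref{eq:h1errorbounds} we have $|\mathcal I_i|\lesssim A\|\widetilde V\|_{L^2}^2\lesssim AB\delta^2$. Integrating $\dot{\mathcal I}_i$ over $I$ therefore produces boundary terms bounded by $\sqrt[3]{\delta}$ thanks to \eqref{eq:relABg}.

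The first step is to compute $\dot{\mathcal I}_i$ using the equation for $V$ obtained by inserting the ansatz \eqref{eq:ansatz} into \eqref{EP}. The equation has the form $\dot{\widetilde V}=\mathcal L_c\widetilde V+(\dot D-c)\partial_x(\widetilde S_c+\widetilde V)-\dot c\,\partial_c\widetilde S_c+\partial_x N(V)$. In the linear constant-coefficient part (drop $n_c,u_c,\phi_c$) we have $-\partial_x$ acting on $(-cV_n+V_u,\,KV_n-cV_u+V_\phi)$. After integration by parts, the main term is
\begin{align*}
\tfrac12\int \varphi_{iAA_1}'\big(V_u^2+KV_n^2+2V_nV_\phi\big) .
\end{align*}
The identity $V_n=-V_\phi''+V_\phi+O(V_\phi^2)$ from the Poisson equation rewrites $2\int\varphi' V_nV_\phi$ as $2\int\varphi'(V_\phi'^2+V_\phi^2)$ plus commutator terms with $\varphi''$. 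This gives a coercive quadratic form $\gtrsim \|V\|_{\Sigma_{iAA_1}}^2$. This is exactly where $K>0$ is used (Remark \ref{rem:Kpositive}), and the $c$-dependent cross terms $-c\int\varphi'V_nV_u$ cancel since the $c\partial_x$ transport is absorbed by the modulation.

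The error terms are handled as follows. Commutator terms with $\varphi''$ and $\zeta_A'$ are $O(A^{-1})$ times the coercive quantity. Nonlinear terms are $O(\|V\|_{H^1})=O(B^{1/2}\delta)$ times the same quantity, and we absorb them. Modulation terms are controlled through Lemma \ref{lem:lemdscrt} together with \eqref{eq:main2}. The two estimates then differ only in where the soliton interaction lives.

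\emph{Estimate \eqref{eq:sec:1virial12}, $i=2$.} Since $\vartheta_{2A_1}$ is supported in $|x|\ge A_1$, the potential terms involving $n_c,u_c,\phi_c$ are bounded by $e^{-\beta\sqrt{\varepsilon}A_1}$ times norms. The only remaining loss comes from $\vartheta_{2A_1}'$, supported in $A_1\le|x|\le2A_1$, which is $\lesssim A_1^{-1}\|V\|_{\Sigma_{1AA_1}}^2$. Integrating in time and taking square roots gives \eqref{eq:sec:1virial12}.

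\emph{Estimate \eqref{eq:sec:1virial11}, $i=1$.} Here the soliton potential is present, and this is the main obstacle: the quadratic form $\int \varphi_{1AA_1}'(\ldots)$ with the terms from $L_c$ and $(-\partial_x^2+e^{\phi_c})^{-1}$ is not manifestly positive. I will not prove positivity. Instead, I will bound the potential contributions, which are localized on the scale $\varepsilon^{-1/2}$ of the soliton, by $\varepsilon$ times weighted norms. These are then controlled by the $\widetilde\Sigma$ norm, whose weight $\sech(\varepsilon\kappa x)$ is comparable to $1$ there.

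The difficulty is that such terms involve $V_n,V_u$ and not only $V_\phi$, while the statement allows only $\|V_\phi\|_{\widetilde\Sigma}$ on the right. I would first try to use the Poisson equation and the structure of $\mathcal L_c$ to rewrite the localized cross terms: $V_n$ is replaced by $-V_\phi''+e^{\phi_c}V_\phi$, integrated by parts onto $V_\phi$, and $V_u$ terms are paired using the first equation. The result should be expressions bounded by $\|V_\phi\|_{\widetilde\Sigma}\|V\|_{\Sigma_{1AA_1}}$ plus small multiples of $\|V\|^2_{\Sigma_{1AA_1}}$. The tail region contributes $B^{-1/2}\|V\|_{\Sigma_{2AA_1}}$ through modulation terms, via \eqref{eq:discrest1D}. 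Young's inequality then closes \eqref{eq:sec:1virial11}.
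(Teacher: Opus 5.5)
There is a genuine gap, in two places.

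\textbf{The functional.} Take $\mathcal I_i=\int\varphi_{iAA_1}V_nV_u$ and the constant-coefficient part of \eqref{EP2}, that is $\dot V_n=-V_u'+cV_n'$ and $\dot V_u=-(KV_n+V_\phi)'+cV_u'$, together with $V_n=-V_\phi''+V_\phi+\dots$. Then
\begin{align*}
\frac{d}{dt}\int\varphi_{iAA_1}V_nV_u=\frac12\int\varphi_{iAA_1}'\big(V_u^2+KV_n^2-2cV_nV_u+V_\phi^2-V_\phi'^2\big)+\dots .
\end{align*}
The $(V_n,V_u)$ block of this form has determinant $K-c^2<0$, so the form is indefinite for either sign of the weight.

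The cross term $-2c\int\varphi'V_nV_u$ does not cancel. It comes from the transport term $c\widetilde V'$ in \eqref{EP2}, which belongs to the equation in the moving frame; modulation only produces the $(\dot D-c)$ and $\dot c$ terms. A correction quadratic in $V_\phi$ (a smoothed copy of $V_n$) cannot repair the $(V_n,V_u)$ block at high frequencies.

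The paper uses instead the localized energy $\mathcal I^{(i)}=\langle\varphi_{iAA_1},e(S_c+V)-e(S_c)\rangle$, with the linear correction $\langle\varphi_{iAA_1}\nabla_{\widetilde U}e(S_c),\widetilde V\rangle$.
\begin{itemize}
\item The transport term gives $-\frac c2\langle\varphi',KV_n^2+V_u^2+V_\phi'^2+V_\phi^2\rangle$; this is where the Poisson substitution enters.
\item The flux term $\langle\varphi',(KV_n+V_\phi)V_u\rangle$ is at most $\frac{\sqrt{K+1}}2\langle\varphi',KV_n^2+V_u^2+V_\phi^2\rangle$.
\item The net coercivity is therefore only $c-\sqrt{K+1}=\varepsilon$, which explains the factor $\varepsilon^{-1}$ in Lemma \ref{lem:1stV1}.
\end{itemize}

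\textbf{The reduction to $\|V_\phi\|_{\widetilde\Sigma}$.} Because the coercivity is only $O(\varepsilon)$, soliton terms such as $\varepsilon\sqrt{K+1}\langle\vartheta_{1A_1}^2\zeta_A^2\psi_{\mathrm{KdV}}(\sqrt\varepsilon\,\cdot),KV_n^2+V_u^2\rangle$ (see \eqref{eq:B121111}) are of the same order as the positive term and have the opposite sign. So the first virial yields only \eqref{eq:key2}, with $\|V\|_{L^2(I,\widetilde\Sigma)}^2$ on the right.

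Your plan to trade these terms for $V_\phi$ through the Poisson equation cannot work:
\begin{itemize}
\item they are quadratic in $V_n$ and $V_u$ with a nonnegative weight;
\item $V_u$ does not enter the Poisson equation;
\item substituting $V_n=-V_\phi''+\dots$ produces $(V_\phi'')^2$, which nothing on the right-hand side controls.
\end{itemize}
In particular no bound of the form $\|V_\phi\|_{\widetilde\Sigma}\|V\|_{\Sigma_{1AA_1}}$ is available for $\langle w,V_u^2\rangle$.

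The missing idea is the third virial inequality, Lemma \ref{lem:2stV1}. Use $\mathcal J=\langle\psi,e(S_c+V)-e(S_c)\rangle$ with $\psi'=\sech^2(\varepsilon\kappa x)$. In the flux, bound $K\langle\psi',V_nV_u\rangle\le\frac{\sqrt K}2\langle\psi',KV_n^2+V_u^2\rangle$, and bound the remaining cross term separately by $\langle\psi',V_\phi V_u\rangle\le\varepsilon_1\|\widetilde V\|_{\widetilde\Sigma}^2+\varepsilon_1^{-1}\|V_\phi\|_{\widetilde\Sigma}^2$. The coercivity constant then becomes $c-\sqrt K$, which is of order one and beats all the $O(\varepsilon)$ soliton terms. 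This gives
\[
\|(V_n,V_u,V_\phi')\|_{L^2(I,\widetilde\Sigma)}^2\lesssim\varepsilon^{-1}B^{1/2}\delta+B^{-1}\big(\|V\|_{L^2(I,\Sigma_{1AA_1})}^2+\|V\|_{L^2(I,\Sigma_{2AA_1})}^2\big)+\|V_\phi\|_{L^2(I,\widetilde\Sigma)}^2 .
\]
Inserting this into \eqref{eq:key2} gives \eqref{eq:sec:1virial11}.

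A smaller point concerns \eqref{eq:sec:1virial12}. The cutoff loss in the squared inequality must be $A_1^{-2}\|V\|_{\Sigma_{1AA_1}}^2$; it comes from $(\vartheta_{2A_1}^2\zeta_A^2)''$ in the Poisson integration by parts. An $A_1^{-1}$ loss would only give $A_1^{-1/2}$ after taking square roots. Also, $\vartheta_{2A_1}$ is supported on $x\ge A_1$, not on $|x|\ge A_1$.
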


\begin{lemma}[Smoothing Inequality]\label{prop:smooth11}
 We have
 \begin{align}\label{eq:sec:smooth11}
   \|  V  _\phi  \| _{L^2(I, \widetilde{\Sigma}   )} \lesssim       A _1  ^{3/2} B ^{-1}   \| V \| _{L^2(I,  { \Sigma }_{1A A_1}   )} +     B ^{ 1/2}   \| V \| _{L^2(I,  { \Sigma }_{2A A_1}   )} +  o _{B^{-1}} (1)  \epsilon     .
 \end{align}
 \end{lemma}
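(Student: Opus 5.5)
We follow the scheme of \cite{CM25D1}, adapted to the loss of derivative that the introduction identifies. Write the equation for $\widetilde V$ in the moving frame as
\begin{align*}
\partial_t \widetilde V=\mathcal{L}_{c}\widetilde V+(\dot D-c)\partial_x\widetilde V+\text{(modulation terms)}+\partial_x \mathcal{N},
\end{align*}
where $\mathcal{N}$ is at least quadratic in $V$. The modulation terms involve $(\dot D-c)\xi_1[c]$ and $\dot c\,\xi_2[c]$.

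\textbf{Step 1 (splitting).} Set $\widetilde V=\vartheta_{1A_1}\widetilde V+\vartheta_{2A_1}\widetilde V$. The near part is localized where $|x|\lesssim A_1$, so there $\sech(\varepsilon\kappa x)\gtrsim 1$ and $\zeta_A\approx 1$. We do not estimate it directly. Instead we introduce the cut-off variable $W:=\zeta_B\widetilde V$ (or $\chi_{B^2}\zeta_B\widetilde V$). Its equation is
\begin{align*}
\partial_t W=\mathcal{L}_c W+[\zeta_B,\mathcal{L}_c]\widetilde V+\zeta_B\times(\text{modulation and nonlinear terms}).
\end{align*}
Because $\zeta_B'=O(B^{-1})\zeta_B$, the commutator $[\zeta_B,\mathcal{L}_c]$ has size $B^{-1}$. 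It is supported where $\zeta_B$ varies, namely $|x|\gtrsim B$. We also project $W$ onto the continuous spectrum, using the orthogonality conditions \eqref{61} together with the dual kernel $\eta_1,\eta_2$ from \eqref{eq:dualkernel}. The remainder discrete components are bounded by $|\dot D-c|+|\dot c|$, and Lemma \ref{lem:lemdscrt} controls these by the $\Sigma_{iAA_1}$ norms.

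\textbf{Step 2 (smoothing estimate).} By Duhamel and the Laplace/Fourier transform in time, bounding $\|V_\phi\|_{L^2(I,\widetilde\Sigma)}$ reduces to proving an $L^2_t$ estimate of the form
\begin{align*}
\sup_{\Re\lambda>0}\big\|\sech(\varepsilon\kappa x)(-\partial_x^2+e^{\phi_c})^{-1}P\,R_{\mathcal{L}_c}(\lambda)\,\partial_x\, w\big\|_{L^2}\lesssim \|\langle x\rangle^{s}w\|_{L^2}.
\end{align*}
Here $P$ denotes the first component, i.e. the map $\widetilde V\mapsto V_\phi$ is $(-\partial_x^2+e^{\phi_c})^{-1}$ applied to $V_n$ up to lower order terms. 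We also need the Kato-type homogeneous bound for $e^{t\mathcal{L}_c}$. The elliptic gain of $(-\partial_x^2+e^{\phi_c})^{-1}$ absorbs the derivative $\partial_x$ coming from $\partial_x\mathcal N$ and from the commutator. The resolvent is written via the Jost functions, with the kernel built from Wronskian/Hodge-star expressions divided by the Evans function $D(\lambda,\varepsilon)$. Near $\lambda=0$ we Taylor-expand the Jost functions. The leading singular part corresponds to the generalized kernel $\xi_1,\xi_2$ and is removed by the projection $P$. The error terms are bounded using the Jost estimates of Sect.~\ref{sec:jost}, while for $|\lambda|$ away from $0$ we use Proposition \ref{prop:Proposition 1.1} and that no embedded eigenvalues occur.

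\textbf{Step 3 (bookkeeping).} We now apply the smoothing estimate to the forcing terms in turn.
\begin{itemize}[label={}]
\item The commutator terms are supported where $|x|\gtrsim B$. We split them with the partition $\vartheta_{iA_1}$. On the region $A_1\lesssim|x|$ we lose weights of order $\langle x\rangle^{s}$ relative to $\zeta_A$, which costs the factor $B^{1/2}$ in front of $\|V\|_{\Sigma_{2AA_1}}$. On the inner region the gain $B^{-1}$ combines with the weight loss $A_1^{3/2}$ to give the term $A_1^{3/2}B^{-1}\|V\|_{\Sigma_{1AA_1}}$.
\item The modulation terms contribute $\|\dot D-c\|_{L^2}+\|\dot c\|_{L^2}$. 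By Lemma \ref{lem:lemdscrt} and the bootstrap \eqref{eq:main2}, this is at most $B^{-1/2}\epsilon+\epsilon^2$.
\item The nonlinear terms are bounded by $\delta$ times $\epsilon$ via \eqref{eq:h1errorbounds}.
\item The initial-data term gives $\delta$.
\end{itemize}
Since $\delta\ll\epsilon$, these last contributions are $o_{B^{-1}}(1)\epsilon$.

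\textbf{Main obstacle.} The hardest step is the uniform resolvent bound near $\lambda=0$ in Step 2. Zero is an embedded eigenvalue of the spectrum $\im\R$ with a Jordan block, so the resolvent has a genuine singularity there. One must show that after subtracting the projection onto $\operatorname{span}\{\xi_1,\xi_2\}$, the remainder stays bounded with the $\varepsilon$-dependent weights $\sech(\varepsilon\kappa x)$. I would first try the expansion of the Jost functions to second order in $\lambda$ and check the cancellation against $\eta_1,\eta_2$.
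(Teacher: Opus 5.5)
Your architecture matches the paper's: cut off with $\zeta_B$, remove the generalized kernel with $Q$, use Duhamel plus a Laplace-transform resolvent bound built from Jost functions, and do bookkeeping that yields $A_1^{3/2}B^{-1}$ and $B^{1/2}$. Several steps, however, fail as you state them.

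\textbf{Discrete part.} $P_{[c_0]}\zeta_B\widetilde V$ is not bounded by $|\dot D-c|+|\dot c|$. It also cannot be obtained from the linear flow, since on the generalized kernel $e^{t\mathcal{L}_{c_0}}\xi_2[c_0]=\xi_2[c_0]-t\,\xi_1[c_0]$ grows linearly. It must be bounded statically, as in Lemma \ref{lem:disczbv}:
\begin{itemize}
\item \eqref{61} gives $\langle\eta_1[c],\zeta_B\widetilde V\rangle=0$;
\item it also gives $\langle\eta_2[c],\zeta_B\widetilde V\rangle=\langle\eta_2[c],(\zeta_B-1)\widetilde V\rangle=O(B^{-1}\|\zeta_A\widetilde V\|_{L^2})$, by the exponential decay of $\eta_2$;
\item replacing $c$ by $c_0$ costs $|c-c_0|\lesssim\delta$, with the non-decaying $\eta_1$ handled through \eqref{eq:ker2}.
\end{itemize}

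\textbf{Frozen speed and the derivative loss.} You run Duhamel with $\mathcal{L}_c$, $c=c(t)$, which generates no semigroup. Freezing $c_0$ produces $\zeta_B(\mathcal{L}_c-\mathcal{L}_{c_0})\widetilde V=\zeta_B\,\partial_x\big((L_{c_0}-L_c)\widetilde V\big)+\dots$, and this is the real derivative loss. It is a full derivative of $\widetilde V$ with an $O(\delta)$ coefficient that is not square integrable in time. So the $H^1$ bound \eqref{eq:h1errorbounds}, which handles the $\partial_x$ in the nonlinear and modulation terms, does not help. By contrast, the commutator $[\zeta_B,\mathcal{L}_{c_0}]$ does not differentiate $\widetilde V$; its $D_3$ part only produces $V_\phi'$, which the $\Sigma_{iAA_1}$ norms control.

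Your assertion that the elliptic gain of $R_{h_c}(-1)$ absorbs the derivative is not automatic. $R_{h_c}(-1)$ acts on the output and does not commute with $R_{\mathcal{L}_{c_0}}(\lambda)$, while $R_{\mathcal{L}_{c_0}}(\im\tau)\partial_x$ by itself grows like $|\tau|$. What works (Lemma \ref{lem:gKdV23}) exploits the structure $\mathcal{L}_{c_0}=-\partial_x[\,\cdots]$:
\begin{itemize}
\item write $\partial_x g=-\mathcal{L}_{c_0}L_{c_0}^{-1}g$ plus a smoothing term;
\item pull $\mathcal{L}_{c_0}$ out of the Duhamel integral and past $Q$;
\item use that $R_{h_c}(-1)\mathcal{L}_{c_0}$ is bounded on $\widetilde\Sigma$.
\end{itemize}
This is exactly why only $V_\phi$, and not $\widetilde V$, can be estimated, and why the third virial inequality is needed.

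\textbf{Large $|\lambda|$.} Away from $\lambda=0$, the absence of embedded eigenvalues only gives invertibility of $1-R_0(\lambda)q$ for each fixed $\lambda\neq0$, hence bounds on compact sets of $\lambda$. The Plancherel argument needs a bound uniform as $|\lambda|\to\infty$. That requires the separate high-frequency argument of Lemma \ref{lem:smooth2large}. It uses the rescaled eigenbasis $\widetilde{\mathbf V}=\mathbf V\mathbf M$, $\widetilde{\mathbf W}=\mathbf W\mathbf M^{-1}$, in which the $\lambda$-dependent part of the potential becomes $\frac{\lambda}{2\sqrt K}S_1$ with $S_1$ semi-positive, plus an $O(\varepsilon e^{-C\sqrt\varepsilon|x|})$ remainder.

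\textbf{Smaller corrections.}
\begin{itemize}
\item $\zeta_B'$ is supported on $|x|\ge1$, not on $|x|\gtrsim B$. It is already of size $B^{-1}$ on $[1,2A_1]$, and this is precisely the source of the $A_1^{3/2}B^{-1}\|V\|_{\Sigma_{1AA_1}}$ term, via $\|\langle y\rangle\zeta_B'/\zeta_A\|_{L^2(0,2A_1)}$. Your own bookkeeping already uses this, contradicting your support claim.
\item You still need to pass from $\zeta_B\widetilde V$ back to $\widetilde V$. This uses $(1-\zeta_B)\sech(\varepsilon\kappa x)/\zeta_A=O((\varepsilon\kappa B)^{-1})$.
\end{itemize}
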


\textit{Proof of Proposition \ref{prop:continuation}  assuming Lemma \ref{prop:1virial}--\ref{prop:smooth11}.}   Using $A_1 = B ^{3/5}$ like in  \eqref{eq:relABg}
 from  \eqref{eq:sec:smooth11}  and  \eqref{eq:sec:1virial11}-- \eqref{eq:sec:1virial12} we obtain  \small
\begin{align*}
  & \|  V  _\phi  \| _{L^2(I, \widetilde{\Sigma}   )}   \lesssim       A _1  ^{3/2} B ^{-1}   \| V \| _{L^2(I,  { \Sigma }_{1A A_1}   )} +     B ^{ 1/2}   \| V \| _{L^2(I,  { \Sigma }_{2A A_1}   )} +  o _{B^{-1}} (1)  \epsilon 
\\& \lesssim  o _{B^{-1}} (1)  \epsilon  +     A _1  ^{\frac{3}{2}  } B ^{-1}      \|  V  _\phi  \| _{L^2(I, \widetilde{\Sigma}   )} + B ^{ \frac{1}{2} }  A_1 ^{-1} \| V \| _{L^2(I,  { \Sigma }_{1A A_1}   )} +  A _1  ^{\frac{3}{2} } B ^{-\frac{3}{2}}  \| V \| _{L^2(I,  { \Sigma }_{2A A_1}   )}\\& \lesssim  o _{B^{-1}} (1)  \epsilon  +   B ^{-\frac{1}{10}}  \(    \| V  _\phi  \| _{L^2(I, \widetilde{\Sigma}   )} +    \| V \| _{L^2(I,  { \Sigma }_{1A A_1}   )} +   \| V \| _{L^2(I,  { \Sigma }_{2A A_1}   )} \) = o _{B^{-1}} (1)  \epsilon .
\end{align*}
\normalsize
Then $\|  V  _\phi  \| _{L^2(I, \widetilde{\Sigma}   )}  =o _{B^{-1}} (1)  \epsilon . $ Inserting this in  \eqref{eq:sec:1virial11} shows that 
\begin{align*}
    \| V \| _{L^2(I,  { \Sigma }_{1A A_1}   )}  +       \| V \| _{L^2(I,  { \Sigma }_{2A A_1}   )} =o _{B^{-1}} (1)  \epsilon .
\end{align*}
Finally using this information and Lemma  \ref{lem:lemdscrt} the proof of  Proposition \ref{prop:continuation} is completed. \qed

We introduce the invariants $E$ and $M$.
There is an energy associated to \eqref{EP}
 \begin{align}&\label{eq:energy}
   E= E(U) = \int _\R e(U) dx \text{  with energy density} \\&  e(U)= (1+n) \frac{u^2}{2}+ K \( (1+n)\log (1+n)  -n  \) -\frac{\( \phi '\) ^2}{2}+n\phi - \(  e^{\phi} -1-\phi  \) .\nonumber
 \end{align}
 It will be convenient to split
 \begin{align}& \nonumber
   E = E_K + E _P  \text{  with  }  E_L(u) = \int _\R e_L(U) dx \text{ for }L=K,P  \text{  with  } \\& \label{eq:enkinpot}
     e_K(U)=  \frac{u^2}{2}+ K  \frac{n^2}{2}    -\frac{\phi ^{\prime 2}}{2}-\frac{\phi ^{2} }{2}    +n\phi   , \\&              e_P(U)=    n  \frac{u^2}{2}+ K \( (1+n)\log (1+n)  -n  -\frac{n^2}{2} \)   - \(  e^{\phi} -1-\phi -\frac{\phi ^{2} }{2} \) .  \nonumber
 \end{align}
 Equation \eqref{EP}  can be reformulated as
 \begin{align}\label{EP1}
    \left\{
        \begin{aligned}
            &\dot {\widetilde{U}} =-  \partial _x \sigma _1 \nabla_{\widetilde{U}} E(U)   \text{ where } \sigma _1= \left(
                                                                                                                      \begin{array}{cc}
                                                                                                                        0 & 1 \\
                                                                                                                        1 & 0 \\
                                                                                                                      \end{array}
                                                                                                                    \right)
             \\
            & \partial _{\phi }E(U) =0 ,
        \end{aligned}
    \right.
\end{align}
 where the gradient is computed in terms of the standard inner product in $L^2(\R , \R ^d)$.
 Formally, exploiting $\partial _\phi e (U)=0$ , the energy is conserved by  \eqref{EP1} by
 \begin{equation}\label{eq:consen}
  \begin{aligned}
    \frac{d}{dt}E(U ) &=\< \nabla_{\widetilde{U}} E(U), -  \partial _x \sigma _1 \nabla_{\widetilde{U}} E(U) \> +\< \dot \phi , \partial _{\phi }E(U)\> \\& = \frac{1}{2}\int _\R \(  \< \nabla_{\widetilde{U}} E(U), -  \sigma _1 \nabla_{\widetilde{U}} E(U) \> _{\R ^2}\) '  dx =0.
  \end{aligned}
 \end{equation}
Another formal invariant   for \eqref {EP1} is the following which,  unlike  for $E$, we   show is    conserved,
\begin{align}\label{eq:mass}
  M(U) =\int _{\R} m(U) dx \text{ where } m(U) = n u.
\end{align}

\begin{lemma}\label{lem:conserveM}
    We have $M(U(t))=M(U(0))$ for all $t>0$.
\end{lemma}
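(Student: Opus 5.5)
The plan is to differentiate $M(U(t))=\<n(t),u(t)\>$ directly in time and to show that the integrand becomes an exact $x$-derivative once the equations \eqref{EP} are substituted. This requires only the regularity \eqref{eq:mainreg} and the smallness \eqref{eq:main1}, and does not rely on the formal argument \eqref{eq:consen} used for $E$.

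\emph{Step 1: justify differentiation.} Since $U\in C^0([0,+\infty),H^1)$ solves \eqref{EP} in $\mathcal D'(\R_+,L^2)$, we have $U\in C^1([0,+\infty),L^2)$. The time derivatives are
\begin{align*}
\dot n=-\((1+n)u\)' \quad\text{and}\quad \dot u=-\(\tfrac{u^2}{2}+K\log(1+n)+\phi\)'.
\end{align*}
Both lie in $C^0L^2$, because $H^1(\R)$ is an algebra and embeds in $L^\infty$. The map $(n,u)\mapsto\<n,u\>$ is a continuous bilinear form on $L^2\times L^2$. Hence $t\mapsto M(U(t))$ is $C^1$, with
\begin{align*}
\tfrac{d}{dt}M=\<\dot n,u\>+\<n,\dot u\>.
\end{align*}
By \eqref{eq:main1} and Sobolev embedding, $\|n\|_{L^\infty}\lesssim \|n_{c_0}\|_{L^\infty}+\delta<1$. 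Therefore $1+n$ is bounded away from $0$ and $\log(1+n)\in H^1$.

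\emph{Step 2: exact derivatives.} We treat the three groups of terms separately.
\begin{itemize}
\item Transport terms. Integrating by parts gives $-\<((1+n)u)',u\>=\int(1+n)\(\tfrac{u^2}{2}\)'dx$. Adding $-\int n\(\tfrac{u^2}{2}\)'dx$ leaves $\int\(\tfrac{u^2}{2}\)'dx=0$.
\item Pressure term. We have $-K\int n\,\tfrac{n'}{1+n}\,dx=-K\int\(n-\log(1+n)\)'dx=0$.
\item Potential term. Use the third equation of \eqref{EP}, $n=-\phi''+e^\phi-1$. This gives
\begin{align*}
-\int n\phi'\,dx=\int\(\tfrac{(\phi')^2}{2}-e^\phi+1+\phi\)'dx=0.
\end{align*}
\end{itemize}

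\emph{Step 3: regularity of the boundary terms.} Every boundary term above is the integral of $f'$ with $f\in W^{1,1}(\R)$, so it vanishes. For $\phi$ we need one more check. From Remark \ref{rem:exisphi}, $\phi\in H^1$. Then $\phi''=e^\phi-1-n\in H^1$, because $e^\phi-1\in H^1$ for $\phi\in H^1\subset L^\infty$. So $\phi\in H^3$, and $(\phi')^2$, $e^\phi-1-\phi$ and their derivatives are integrable; the last because $|e^\phi-1-\phi|\lesssim\phi^2$ and $|e^\phi-1|\lesssim|\phi|$.

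The only delicate point is Step 1: one must make sure the formal computation is legitimate at the $C^1L^2$ level and that $\log(1+n)$ is well defined. The a priori smallness \eqref{eq:main1} handles the latter. We conclude $\tfrac{d}{dt}M(U(t))=0$, hence $M(U(t))=M(U(0))$.
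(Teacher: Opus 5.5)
Your proposal is correct and follows essentially the same route as the paper: use $U\in C^1([0,+\infty),L^2)$ to differentiate $M$, use the smallness of $n$ from \eqref{eq:main1} to control $\log(1+n)$, and write each term as the integral of the derivative of a $W^{1,1}$ function. The only differences are cosmetic: the paper moves derivatives onto $n$ (obtaining $n'u^2/2$ and $K n'\log(1+n)$) where you move them onto $u$ or write $n n'/(1+n)=(n-\log(1+n))'$ directly, and you make the regularity $\phi''\in L^2$ explicit via the elliptic equation, which the paper leaves implicit.
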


\begin{proof} Above Theorem \ref{thm:main}, we already observed that 
  $U\in C^1([0,\infty),L^2(\R))$. Then $M(U(\cdot))\in C^1([0,\infty),\R)$.
      We show $\frac{d}{dt}M(U(t))=0$.
        Assumption \eqref{eq:main1} implies $\|n\|_{L^\infty}\lesssim \delta+ \varepsilon_0\ll 1$.
    Thus 
    \begin{align}
        \frac{d}{dt}M(U(t))&=-\int_{\R}((1+n)u)'u-\int_{\R}n\left(\frac{1}{2}u^2+K\log(1+n)+\phi\right)'\nonumber\\&
        =\int_{\R}(1+n)uu'+\frac{1}{2}\int_{\R} n'u^2 + K\int_{\R} n'\log(1+n)-\int n\phi'.\label{eq:momentum}
    \end{align}
    Notice that all the integrands in the right hand side of \eqref{eq:momentum} are integrable and the first two terms cancel each other.
    For the 3rd term,  using $(1+n)\log(1+n)-n,\phi'^2 \in W^{1,1}$  we have
    \begin{align*}
        K\int_{\R}n'\log(1+n)=K\int_{\R}((1+n)\log(1+n)-n)'=0,
    \end{align*}
    and for the 4th term of \eqref{eq:momentum},    using $e^\phi-1-\phi  \in W^{1,1}$  we have
    \begin{align*}
        -\int_{\R} n\phi'&=\int_{\R} \phi'' \phi' -\int_{\R}(e^\phi-1)\phi'
        =\frac{1}{2}\int_{\R} (\phi'^2)'-\int_{\R} (e^\phi-1-\phi)'=0. 
    \end{align*} 
\end{proof}

For a solution $  U\in C^0\(   [0,+\infty ), H^1(\R )\)$, 
inserting in \eqref{EP1}    the ansatz   
 using  the identity
\begin{align*}
  \nabla  E( e^{  - D  \partial _x } U ) = e^{  - D  \partial _x } \nabla  E(   U )  \text{ for all $D\in \R$ and $U\in L^2(\R , \R ^3)$, }
\end{align*}
and  after elementary computations and cancellations   we obtain
\begin{align}\label{EP2}
    \left\{
        \begin{aligned}
            &\dot {\widetilde{V}} =-  \partial _x \sigma _1 \( \nabla_{\widetilde{U}} E(S_c+V)      -  \nabla_{\widetilde{U}} E(S_c ) \) +  c\widetilde{V}'+  (\dot D-c) \( \widetilde{S}_c+ \widetilde{V} \)'-\dot c \partial _c \widetilde{S}_c            \\
            &  V _\phi ''+V _n  =   e^{\phi _c} \(  e^{V _\phi}-1 \) .
        \end{aligned}
    \right.
\end{align}
Notice, for subsequent use,  that from the splitting of the energy in \eqref{eq:enkinpot},   we have
\begin{align}\label{eq:graden1}&
    \nabla_{ {U}} E_K(U) =   (K n +  \phi , u,  \phi ''- \phi + n)^\intercal   \\& \label{eq:graden2}
    \nabla_{ {U}} E_P(U) = \(    \frac{u^2}{2}   +   K \( ( \log (1+n)  -n   \)   , nu,  -\(  e^{\phi} -1-\phi \) \)^\intercal
\end{align}
and that
\begin{align}
  \label{eq:graden3}  \nabla_{ {U}}^2 E (U)  = \left(
                                                                        \begin{array}{ccc}
                                                                           \frac{K}{1+n} & u & 1 \\
                                                                          u & 1+n& 0 \\
                                                                          1 & 0 &  \partial _x^2-1-(e^{\phi} -1) \\
                                                                        \end{array}
                                                                      \right) .
\end{align}
In particular we have
\begin{align} &
  \nabla_{\widetilde{U},  U}^2 E(S_c ) =\left(
                                                                        \begin{array}{ccc}
                                                                           \frac{K}{1+n _c} & u_c & 1 \\
                                                                          u_c & 1+n_c& 0  \\
                                                                        \end{array}
                                                                      \right)    \text{ and}
\nonumber \\&
  \label{eq:graden4}  \nabla_{ {U}}^2 E _P(U)  = \left(
                                                                        \begin{array}{ccc}
                                                                          K \(  \frac{1}{1+n} -1\) & u & 0 \\
                                                                          u &  n& 0 \\
                                                                          0 & 0 &  -(e^{\phi} -1) \\
                                                                        \end{array}
                                                                      \right) .
\end{align}

\textit{Proof of Theorem \ref{thm:main}.}   By $U\in C^0([0,+\infty),H^1(\R))$ it is elementary that    in  Proposition \ref{prop:continuation}  we can take $I=\R _+$.   This  implies   the following stronger version of \eqref{eq:asstab2},
\begin{align}    \label{eq:asstab2strong}   \int _{\R _+} \|  e^{- a\< x\>}   \widetilde{V} (t ) \| _{L^2(\R )}^2 dt    +  \int _{\R _+} \|  e^{- a\< x\>}    {V}_\phi  (t ) \| _{H^1(\R )}^2 dt   <  \epsilon   .
\end{align}
Set now for $a>0$  \begin{equation*}
  \begin{aligned}
   \mathbf{a}(t) &:= 2 ^{-1}\<  e^{- a\< x\>}   \widetilde{V} (t), \sigma _1 \widetilde{V} (t)  \>.
  \end{aligned}
  \end{equation*}
By  Lemma \ref{lem:lemdscrt} and \eqref{eq:main1} the following  yields $\frac{d}{dt} \mathbf{a}\in L^\infty (\R _+ )$,
\begin{align} \label{eq:dera}
  \frac{d}{dt} \mathbf{a} &=\<    \dot { \widetilde{V}}  , \sigma _1    e^{- a\< x\>}\widetilde{V}    \>   = \<   e^{- a\< x\>} \dot {\widetilde{V}}  , \sigma _1\widetilde{V}  \> \\ &= (\dot D-c)  \<   e^{- a\< x\>} \xi _1 [c] , \sigma _1\widetilde{V}  \>  +  \dot c    \<   e^{- a\< x\>}   \xi _2 [c] , \sigma _1\widetilde{V}  \>  - 2^{-1} (\dot D-c)  \<  ( e^{- a\< x\>}) ' \widetilde{V}  , \sigma _1\widetilde{V}  \> \nonumber \\& - \<   e^{- a\< x\>}     \partial _x   \( \nabla_{ {U}} E(S_c+V)      -  \nabla_{ {U}} E(S_c ) \)  ,   {V}  \>  . \nonumber
\end{align} 
 Then $\mathbf{a}\in L^1(\R _+ )$   and $\frac{d}{dt} \mathbf{a}\in L^\infty (\R _+ )$ imply
$ \mathbf{a}(t) \xrightarrow{t\to +\infty} 0$, proving \eqref{eq:asstab20}.
Finally  \eqref{eq:asstab3} is an immediate consequence of \eqref{eq:asstab2} and \eqref{eq:discrest1c} which together imply
$\| \dot c \| _{L^1(\R _+ )}\lesssim \epsilon ^2.$

\section{Estimates  on some Schr\"odinger operators}\label{sec:prlest}

We  will need some estimates related to the Schr\"odinger operator with the potential $e^{\phi _c}-1$
\begin{align}
  \label{eq:defhc}h_c=-\partial _x^ 2 +  e^{\phi _c}-1.
\end{align}
\begin{lemma}
  \label{lem:estqc}
There exists a uniform constant $C$ such   that  for  $\mathsf{V}=\sqrt{1+K}$ we have
\begin{align}\label{eq:estpot}
  \| \< x \> (e^{\phi _c}-1)  \|_{L^1(\R )}\le C \text{ for any $c>\mathsf{V}$ close enough to $\mathsf{V}$} .
\end{align}
\end{lemma}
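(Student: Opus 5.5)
We prove \eqref{eq:estpot} by a scaling argument built on the description of $\phi_c$ in Theorem \ref{thm:solwave}. Write $\varepsilon=c-\mathsf{V}$ and $\xi=\sqrt{\varepsilon}x$. By Theorem \ref{thm:solwave},
\begin{align*}
 \phi_c(x)=\varepsilon\,\psi_{\mathrm{KdV}}(\sqrt{\varepsilon}x)+ S^{\varepsilon}_{R,\phi}(\sqrt{\varepsilon}x),\qquad |S^{\varepsilon}_{R,\phi}(\xi)|\le C_0\varepsilon^2 e^{-\alpha|\xi|}.
\end{align*}
Here $\psi_{\mathrm{KdV}}(\xi)\lesssim e^{-\sqrt{2\mathsf{V}}|\xi|}$. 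Hence there is $\gamma=\min(\alpha,\sqrt{2\mathsf{V}})>0$ such that
\begin{align*}
 0\le \phi_c(x)\lesssim \varepsilon e^{-\gamma\sqrt{\varepsilon}|x|}.
\end{align*}
Here $\phi_c\ge 0$ because $\phi_c$ is even, decreasing on $\R_+$ and vanishes at infinity. In particular $\|\phi_c\|_{L^\infty}\lesssim\varepsilon\le 1$.

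The key requirement is that $\gamma$ stays bounded away from zero uniformly in $\varepsilon$. For $\sqrt{2\mathsf{V}}$ this is immediate. For $\alpha$, Theorem \ref{thm:solwave} gives a fixed constant in the rescaled variable, which is exactly what we need. Even if one only had decay of the remainder at the rate $e^{-\alpha|\xi|}$ with $\alpha$ fixed, that still suffices.

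With this bound, the elementary estimate $|e^{s}-1|\le e|s|$ for $|s|\le 1$ gives $|e^{\phi_c}-1|\lesssim \phi_c$. We then change variables $x=\xi/\sqrt{\varepsilon}$:
\begin{align*}
 \int_\R \langle x\rangle\,|e^{\phi_c(x)}-1|\,dx
 &\lesssim \varepsilon\int_\R (1+|x|)e^{-\gamma\sqrt{\varepsilon}|x|}\,dx \\
 &= \varepsilon\Big(\varepsilon^{-1/2}\int_\R e^{-\gamma|\xi|}\,d\xi+\varepsilon^{-1}\int_\R|\xi|e^{-\gamma|\xi|}\,d\xi\Big)\\
 &\lesssim \varepsilon^{1/2}+1.
\end{align*}
This is bounded uniformly for $c$ close to $\mathsf{V}$.

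There is a scaling cancellation at work. The weight $\langle x\rangle$ costs $\varepsilon^{-1}$ on the scale $|x|\sim\varepsilon^{-1/2}$, and the amplitude $\varepsilon$ exactly compensates for it. This is why the bound is $O(1)$ rather than small. The only step needing care is the uniformity of the decay rate of the remainder, which Theorem \ref{thm:solwave} provides directly.
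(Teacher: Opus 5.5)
Your proof is correct and follows the paper's own route. It reduces to $|e^{\phi_c}-1|\lesssim \phi_c$ using smallness of $\phi_c$, then uses the rescaled exponential bound of Theorem~\ref{thm:solwave} and the change of variables $\xi=\sqrt{\varepsilon}x$ to show $\|\langle x\rangle\phi_c\|_{L^1}=O(1)$. The only cosmetic difference is that the paper writes $|e^{\phi_c}-1-\phi_c|\le\phi_c^2\le C_0\phi_c$ where you use $|e^{s}-1|\le e|s|$ for $|s|\le 1$.
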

\begin{proof}
We claim that there exists a uniform constant $C$ such that
\begin{align}\label{eq:estpot1}
  \| \< x \> \phi _c \|_{L^1(\R )}\le C \text{ for any $c>\mathsf{V}$ close enough to $\mathsf{V}$} .
\end{align}
Indeed, if \eqref{eq:estpot1} is true, then since 
\[
| e^{\phi _c(x)}-1-\phi _c(x)| \le \phi _c^2(x) \le  C_0 \phi _c (x),
\]
it is immediate to see that \eqref{eq:estpot1} yields \eqref{eq:estpot}.
By Theorem~\ref{thm:solwave}, we see that
\[
\| \< x \> \phi_c(x) \|_{L^1(\R )} = \int_{\R} \sqrt{1+x^2}\,  |\phi_c(x)|\,dx \leq  \int_{\R} \sqrt{1+ \frac{\xi^2}{\varepsilon}} \,\varepsilon e^{-\alpha|\xi|}\,\varepsilon^{-1/2} d\xi \leq C.
\]
for all sufficiently small $\varepsilon$. We finish the proof of Lemma~\ref{lem:estqc}.
\end{proof}
We recall now the so called Jost solutions  $f_{ (c) \pm } (x,k )=e^{\pm \im k x}m_{(c) \pm } (x,k )$, characterized by
\begin{align*}
  (-\partial _x^ 2 +  e^{\phi _c}-1)f_{ (c) \pm } (\cdot ,k )=k^2 f_{ (c) \pm } (\cdot ,k ) \text{ and }\lim _{x\to \pm \infty }   m_{ (c) \pm  } (x,k)   =1 
\end{align*}
and  the  transmission coefficient $T(c, k)$ defined by
\begin{align*}
  \frac{1}{T(c, k)}=\frac{1}{2\im k }[f_{ (c) + } (\cdot ,k ),f_{ (c) - } (\cdot ,k )]  \text{   where }
[f(x ),g  (x )] := f'(x) g(x)-f(x) g'(x).
\end{align*}
A consequence of Lemma \ref{lem:estqc} is the following lemma, for which we refer to    \cite{DT1979}.
Notice that since $\phi_c(-x)=\phi_c( x)$ it follows $f_{ (c) - } (x ,k )= f_{ (c) + } (-x ,k )  $ so that it yields also estimates for $f_{ (c) - } (x ,k )   $.

\begin{lemma}\label{lerm:jost} 
Let $q_c = e^{\phi_c}-1$. For (only for this lemma)  \small
\begin{align}\gamma (x):=\int _{x} ^{+\infty}(y-x)  |q_{c} (y)| dy, \quad \eta (x):=\int _{x} ^{+\infty}   |q_{c} (y)| dy
\text{ and }
K:= 1+\gamma (0) +\gamma^2 (0) e^{\gamma (0)}  \label{eq:DTnot1}
\end{align}\normalsize
we have,   using from Notation \ref{not:notation} $x^-:=\max ( 0, -x)$, 
 \begin{align}  \label{eq:kernel2} &  | m_{ (c) +  } (x,k) |\le  K \(  1+ x ^{-}\)  \exp \( \int  _{x } ^{+\infty} (1+|y|)  |q_{c} (y)|  dy \)     \ ,   \\ & \label{eq:kernelb2}   | m_{ (c) +  } (x,k) |\le     e^{\gamma (x)}   \ ,   \\ &  |m_{ (c) + }(x, k )-1|\le    |    k|  ^{-1} \eta (x)    \exp \(  |   k|  ^{-1} \eta (x) \)    \text{ and} \label{eq:kernel2aw0}\\& |\partial _xm_{ (c) + }(x, k ) |\le K \int _{0} ^{+\infty}y  |q_{c} '(y)| dy    +  K x^{-} \(  1+ x ^{-}\) e^{ \int  _{x } ^{+\infty} \( (1+|y|)  |q_{c} (y)| + |q_{c} '(y)|\)   dy}   . \label{eq:kernel2der}  \end{align}
Furthermore we have  $|T(c, k)|\le 1$, $T(c, 0) =0$  and
\begin{align}\label{eq:Transm1}
 2 |k|\le |T(c, k)| \( 2 |k| +K \| (1+|x|) q _c \|_{L^1(\R )}\)   \text{ for all }k\in \R .
\end{align}

\end{lemma}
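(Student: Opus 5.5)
The plan is to obtain everything from the Volterra integral equation for $m_{(c)+}$, in the style of Deift--Trubowitz \cite{DT1979}. Lemma \ref{lem:estqc} supplies the first-moment bound $\|\<x\> q_c\|_{L^1}\le C$ uniformly in $c$, which is what makes all constants uniform. The equation is
\begin{align*}
 m_{(c)+}(x,k)=1+\int_x^{+\infty} D_k(y-x)\, q_c(y)\, m_{(c)+}(y,k)\,dy,\qquad D_k(t):=\frac{e^{2\im k t}-1}{2\im k},
\end{align*}
and we will use the two elementary kernel bounds $|D_k(t)|\le t$ and $|D_k(t)|\le |k|^{-1}$ for $t\ge 0$.

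For \eqref{eq:kernelb2}, apply $|D_k(y-x)|\le y-x$ and iterate the Volterra series. The $n$-th term is bounded by $\frac{1}{n!}\big(\int_x^\infty (y-x)|q_c(y)|dy\big)^n=\gamma(x)^n/n!$, since $y\mapsto (y-x)$ is monotone along the ordered simplex; summing gives $e^{\gamma(x)}$. For \eqref{eq:kernel2aw0}, use instead $|D_k|\le |k|^{-1}$. The same iteration bounds $|m-1|$ by $\sum_{n\ge1}(|k|^{-1}\eta(x))^n/n!\le |k|^{-1}\eta(x)e^{|k|^{-1}\eta(x)}$.

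For \eqref{eq:kernel2}, use $y-x\le (1+x^-)(1+|y|)$ for $y\ge x$ and set $h(x)=|m(x,k)|/(1+x^-)$. This gives $h(x)\le 1+\int_x^\infty (1+|y|)|q_c(y)|h(y)\,dy$, and Gronwall yields the exponential. To produce the prefactor $K=1+\gamma(0)+\gamma(0)^2e^{\gamma(0)}$ exactly, we will follow \cite{DT1979}: first bound $m$ on $[0,\infty)$ by $e^{\gamma(0)}$, then for $x<0$ split the integral at $0$ and iterate once more. This bookkeeping is routine.

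For \eqref{eq:kernel2der}, differentiate the integral equation in $x$ to get
\begin{align*}
\partial_x m=-\int_x^\infty e^{2\im k(y-x)}q_c(y)m(y,k)\,dy .
\end{align*}
For $x\ge0$, integrate by parts, or equivalently write $q_c(y)=-\int_y^\infty q_c'$ and exchange the order of integration; together with $|m|\le K$ on $\R_+$ this gives the term $K\int_0^\infty y|q_c'(y)|dy$. For $x<0$, insert \eqref{eq:kernel2} on $[x,0]$. The length of that interval contributes the factor $x^-$, and a Gronwall step on the differentiated equation produces the exponential containing $|q_c'|$.

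The transmission coefficient is the step I expect to be the main obstacle. The bound $|T|\le1$ follows from the unitarity relation $|T|^2+|R|^2=1$ for real $k\ne0$. The identity $T(c,0)=0$ requires that $0$ is not a resonance, i.e.\ that $[f_+,f_-](0)\ne0$, and here we will exploit $q_c>0$. Indeed $e^{\phi_c}-1>0$ because $\phi_c>0$ by Theorem \ref{thm:solwave} and \eqref{soliton1}. Since $q_c>0$, a bounded solution of $-f''+q_cf=0$ is convex where it is positive and so cannot stay bounded at both ends, hence the Wronskian at $0$ is nonzero and $1/T$ blows up like $|k|^{-1}$. For \eqref{eq:Transm1} we will write $2\im k/T=[f_+,f_-]$, evaluate it at $x=0$ using evenness ($f_-(x)=f_+(-x)$), and obtain $|2\im k/T|\le 2|k|+K\|(1+|x|)q_c\|_{L^1}$ from the bounds on $m$ and $\partial_xm$ at $0$. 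This is the estimate of \cite{DT1979}, and we intend to cite it rather than rederive it.
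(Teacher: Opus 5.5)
Your plan follows the paper's route. The paper's proof is only a citation of \cite{DT1979} (Lemma 1 on p.~130 and the estimate on p.~147), plus the remark that Lemma \ref{lem:estqc} makes the quantities in \eqref{eq:DTnot1} uniform in $c$; you reconstruct the Deift--Trubowitz Volterra arguments behind that citation. The following parts are fine:
the iterations giving \eqref{eq:kernelb2} and \eqref{eq:kernel2aw0};
the formula $\partial_x m_{(c)+}=-\int_x^{+\infty}e^{2\im k(y-x)}q_c m_{(c)+}\,dy$;
$|T|\le 1$ from unitarity.

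Your remark that $T(c,0)=0$ needs $k=0$ to be non-resonant, and that $q_c>0$ guarantees this, is a genuine addition, since the bare citation leaves it implicit. A clean version goes as follows. The function $f:=f_{(c)+}(\cdot,0)$ is real, with $f\to1$ and $f'\to0$ at $+\infty$. On the maximal half-line $(x_*,+\infty)$ where $f>0$ you have $f''=q_cf>0$, so $f'<0$ and $f>1$ there; hence $x_*=-\infty$. Then $f'(x)\le-\int_x^{+\infty}q_c$ forces $f\to+\infty$ at $-\infty$, so $f$ is not a multiple of $f_{(c)-}(\cdot,0)$.

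Two steps need repair.

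First, the Gronwall step for \eqref{eq:kernel2}. The integrand contains $|m_{(c)+}(y)|=(1+y^-)h(y)$, so $y-x\le(1+x^-)(1+|y|)$ leaves the weight $(1+|y|)(1+y^-)|q_c|$. Its integral is of order $\varepsilon^{-1/2}$, so uniformity in $c$ is lost. Use instead
$(y-x)(1+y^-)\le(1+x^-)(1+|y|)$ for $y\ge x$,
checking the cases $0\le x$, $x<0\le y$ and $x\le y<0$ separately. Gronwall then gives $|m_{(c)+}(x,k)|\le(1+x^-)\exp\big(\int_x^{+\infty}(1+|y|)|q_c|\,dy\big)$. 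This already implies \eqref{eq:kernel2} because $K\ge1$, so no bookkeeping for $K$ is needed. Likewise \eqref{eq:kernel2der} needs no Gronwall step: on $[x,0]$ insert this bound together with $\sup_{[x,0]}|q_c|\le\int_x^{+\infty}|q_c'|\le e^{\int_x^{+\infty}|q_c'|}$.

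Second, your sketch of \eqref{eq:Transm1} via the Wronskian at $x=0$ does not give the stated inequality. By evenness,
$[f_{(c)+},f_{(c)-}](0)=2m_{(c)+}(0,k)\big(\im k\,m_{(c)+}(0,k)+\partial_xm_{(c)+}(0,k)\big)$.
Its main term is $2|k|\,|m_{(c)+}(0,k)|^2$, and $|m_{(c)+}(0,k)|>1$ for small $k$; indeed $m_{(c)+}(0,0)=f(0)>1$ by the argument above. Bounding the factors separately therefore yields an inequality of the same shape only with some other constant.

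The identity behind \eqref{eq:Transm1} is the Wronskian read off as $x\to-\infty$ from the Volterra equation:
$1/T(c,k)=1-\frac{1}{2\im k}\int_\R q_c\,m_{(c)+}(\cdot,k)\,dx$.
This gives $2|k|\le|T(c,k)|\big(2|k|+\int_\R|q_c|\,|m_{(c)+}(\cdot,k)|\,dx\big)$ at once. Inserting $|m_{(c)+}(x,k)|\le C(1+x^-)$ gives \eqref{eq:Transm1} with $C$ in place of $K$; from \eqref{eq:kernel2} as stated, $C=Ke^{\|(1+|x|)q_c\|_{L^1}}$. A $c$-uniform constant is all the lemma is used for. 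Since you defer to \cite{DT1979} at this point nothing breaks, but the derivation you describe is not the one that works.
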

 \begin{proof}
   Inequality \eqref{eq:Transm1} is in p. 147 \cite{DT1979};  the estimates \eqref{eq:kernel2}--\eqref{eq:kernel2der} are in Lemma 1 p. 130 \cite{DT1979}. Notice that for all $c$ close enough to $\mathsf{V}$ there is a uniform bound in $c$ on the quantities in \eqref{eq:DTnot1}.
 \end{proof}

We recall now   for any $z\not \in [0,+\infty )$  the integral kernel of the resolvent $R _{h_c}(z) = ( h_c -z  )^{-1} $ ,
\begin{align}
  \label{eq:resolvKern}  R _{h_c   }(z ) (x,y)= \left\{
                                                  \begin{array}{ll}
                                                   - \frac{T(k)}{2\im k}     f_- (x, k)   f_+ (y, k), & \hbox{for $x<y$} \\
                                                     -\frac{T(k)}{2\im k}     f_+ (x, k)   f_- (y, k), & \hbox{for $x>y$ }
                                                  \end{array}
                                                \right.
\end{align} where $k ^2=z$ with $\Im k> 0$.  By
 Lemma \ref{lerm:jost}    there exists a fixed constant $C>0$ such that 
\begin{align}
  \label{eq:resolvat1} |\partial _x ^{a}\partial _y ^{b} R _{h_c   }(-1 ) (x,y)|\le C  \< x \> ^a  \< y \> ^b    e^{-|x-y|} \text{ for all $c$ close enough to }\mathsf{V} \text{ and for }a+b\le 1.
\end{align}

\section{Proof of Lemma \ref{lem:lemdscrt} }\label{sec:discr}

 We apply to the first equation  \eqref{EP2}  the inner product $\< \cdot , \zeta_{B} \eta _1[c] \>$.
 Using
\begin{align*}
    \< \dot {\widetilde{V}}  , \zeta_{B} \eta _1[c] \> =-\dot c   \<   {\widetilde{V}}  , \zeta_{B} \partial _c\eta _1[c] \> , 
\end{align*}
which follows from the second line in \eqref{61}, using \eqref{eq:xici} we
  obtain
\begin{align}&
  - ( \dot D -c )   \< \xi _1[c] +  \widetilde{V}' , \zeta_{B} \eta _1[c] \>      + \dot c  \left [ \< \xi _2[c] , \zeta_{B} \eta _1[c] \>    -    \<   {\widetilde{V}}  , \zeta_{B} \partial _c\eta _1[c] \> \right ] \label{eq:seceq}\\& = \<   -  \partial _x \( \sigma _1  \nabla_{\widetilde{U},  U}^2 E(S_c )    V  -c\widetilde{V} \),  \zeta_{B}  \eta _1[c] \>   \nonumber \\& + \<     \sigma _1 \( \nabla_{\widetilde{U}} E(S_c+V)      -  \nabla_{\widetilde{U}} E(S_c ) - \nabla_{\widetilde{U},  U}^2 E(S_c )    V \)   ,\( \zeta_{B} \eta _1[c]\) ' \>  =: A _{1 }+A _{2 }. \nonumber
\end{align}
Notice that  for the  $L_c$  of 
\eqref{eq:lineariz}  and from \eqref{EP2} we have
\begin{align}&  \nonumber
   \sigma _1     \nabla_{\widetilde{U},  U}^2 E(S_c )    V  -c\widetilde{V}  = L_c  \widetilde{V} + ( 0, V_\phi ) ^\intercal \\& =  L_c  \widetilde{V} + \( 0,  (-\partial _x ^2 + e^{\phi _c} )^{-1}     V_n \) ^\intercal   -  \( 0,  (-\partial _x ^2 + e^{\phi _c} ) ^{-1}  e^{\phi _c} \(  e^{V _\phi}-1-V _\phi \)  \) ^\intercal .    \label{eq:appLc}
\end{align}   Then for the $ \mathcal{L}_c$ in
\eqref{eq:lineariz} we have
\begin{align*}
   A _{1 }&=  \<   \mathcal{L}_c   \widetilde{V} ,  \zeta_{B}  \eta _1[c] \>  -    \<   \( 0,  (-\partial _x ^2 + e^{\phi _c} ) ^{-1}  e^{\phi _c} \(  e^{V _\phi}-1-V _\phi \)  \) ^\intercal  , ( \zeta_{B}  \eta _1[c] ) ' \> =:  A _{1 1}+ A _{1 2}.
\end{align*}    
We have
\begin{align*}
  A _{1 1}=  \<     \widetilde{V} ,  \zeta_{B}  \mathcal{L}_c^* \eta _1[c] \> + \<     \widetilde{V} , [  \mathcal{L}_c^*, \zeta_{B} ] \eta _1 [c] \> =:A _{1 11}+ A _{1 12}.
\end{align*}
Now, by \eqref{eq:dualkernel} and \eqref{61},
\begin{align*}
  A _{1 11}=    \<     \widetilde{V} ,  \zeta_{B}  \mathcal{L}_c^* \eta _1[c] \> = -\<     \widetilde{V} , \zeta_{B}  \eta _2[c] \>=\<     \widetilde{V} , (1-\zeta_{B})  \eta _2[c] \>  .
\end{align*}
Then by the exponential decay of $\eta _2[c]$, see \eqref{eq:defeta2} and Theorem \ref{thm:solwave}, 
\begin{align*}
  |A _{1 11}| &\le \| \zeta _A  (\vartheta _{1A_1} + \vartheta _{2A_1}) \widetilde{V} \| _{L^2}  \| \zeta _A ^{-1} \zeta_{B}  \(  e  ^{\frac{|x|}{B}(1-\chi )}    -1\)  \eta _2[c]  \| _{L^2}\\&  \le  (  \| V  \|_{  { \Sigma }_{1A A_1}} + \| V  \|_{  { \Sigma }_{2A A_1}}  )  \|   \zeta_{B}     \frac{|x|}{B}  \eta _2[c]  \| _{L^2}\lesssim B ^{-1}  (  \| V  \|_{  { \Sigma }_{1A A_1}} + \| V  \|_{  { \Sigma }_{2A A_1}}  )  .
\end{align*}
An  elementary computation gives
\begin{align*}
 A _{1 12}&=\<  \widetilde{V} ,  L_c^\intercal \zeta_{B}'  \eta _1[c]     \> + \<  \widetilde{V} ,  \left(
                                                  \begin{array}{cc}
                                                    0 &  (-\partial ^2_x +e^{\phi _c}) ^{-1} \\
                                                   0 & 0 \\
                                                  \end{array}
                                                \right)\zeta_{B}'  \eta _1[c]     \> \\& +  \<  \widetilde{V} ,  \left(
                                                  \begin{array}{cc}
                                                    0 &  [(-\partial ^2_x +e^{\phi _c}) ^{-1}, \zeta_{B} ] \\
                                                   0 & 0 \\
                                                  \end{array}
                                                \right)   \eta _1'[c]     \> = \sum _{k=1}^3 A _{1 12k}.
\end{align*}
By $2B<A$ and inserting the partition of unity $1=\vartheta _{1A_1} + \vartheta _{2A_1}$
we have
\begin{align*}
  |A _{1 121}|&\lesssim \| \zeta _A (\vartheta _{1A_1} + \vartheta _{2A_1})\widetilde{V} \| _{L^2} \frac{1}{B}\|\frac{ \zeta _B}{\zeta _A}   \| _{L^2}    \lesssim B ^{-1/2}(  \| V  \|_{  { \Sigma }_{1A A_1}} + \| V  \|_{  { \Sigma }_{2A A_1}}  ) .
\end{align*}
Abusing notation by treating $ \widetilde{V}$ like a scalar valued function,   schematically  we write
\begin{align*}
  A _{1 122}= \<  \widetilde{V} ,(-\partial ^2_x +e^{\phi _c}) ^{-1}\zeta_{B}'  \eta _1[c]     \>.
\end{align*}
Then
\begin{align*}
  |A _{1 122}|&\lesssim \| \zeta _A (\vartheta _{1A_1} + \vartheta _{2A_1})\widetilde{V} \| _{L^2} \| \zeta _A^{-1} R _{h_c   }(-1 )\zeta_{B}'  \eta _1[c] \| _{L^2} \\& \le  (  \| V  \|_{  { \Sigma }_{1A A_1}} + \| V  \|_{  { \Sigma }_{2A A_1}}  ) \| \zeta _A^{-1} R _{h_c   }(-1 )\zeta _A \|_{L^2\to L^2}    \|  \zeta _A^{-1}\zeta_{B}'  \eta _1[c] \| _{L^2} \\& \lesssim(  \| V  \|_{  { \Sigma }_{1A A_1}} + \| V  \|_{  { \Sigma }_{2A A_1}}  )  B ^{-1}\|\frac{ \zeta _B}{\zeta _A}   \| _{L^2}\lesssim B ^{-1/2}(  \| V  \|_{  { \Sigma }_{1A A_1}} + \| V  \|_{  { \Sigma }_{2A A_1}}  )
,
\end{align*}
where we used the fact that there exists a fixed constant $c_0>0$ such that
\begin{align*}
  \sup _{x\in \R}\int _\R  \zeta _A^{-1}(x)  |R _{h_c   }(-1 )(x,y) | \zeta _A(y) dy +  \sup _{y\in \R}\int _\R  \zeta _A^{-1}(x)  |R _{h_c   }(-1 )(x,y) | \zeta _A(y)  dx \le c_0C,
\end{align*}
which is a straightforward consequence of \eqref{eq:resolvat1} and   in turn, by the Young's inequality, \cite[p. 22]{SoggeFourIntegrals}, yields
$\| \zeta _A^{-1} R _{h_c   }(-1 )\zeta _A \|_{L^2\to L^2} \le c_0C$.
 Writing again schematically, we have
\begin{align*}
  A _{1 123}= \<  \widetilde{V} ,[(-\partial ^2_x +e^{\phi _c}) ^{-1}, \zeta_{B} ]  \eta _1 '[c]     \>
\end{align*}
which in turn splits as
\begin{align*}
  A _{1 123}&= \<  \widetilde{V} , R _{h_c   }(-1 ) [  \partial ^2_x  , \zeta_{B} ] R _{h_c   }(-1 )  \eta _1 '[c]     \> = \<  \widetilde{V} , R _{h_c   }(-1 )  \zeta_{B} '' R _{h_c   }(-1 )  \eta _j '[c]     \> \\&+2 \<  \widetilde{V} , R _{h_c   }(-1 )  \zeta_{B} ' \partial _x R _{h_c   }(-1 )  \eta _1 '[c]     \> =:   A _{1 1231}+ A _{1 1232}.
\end{align*}
We have
\begin{align*}
  |A _{1 1231}|& \lesssim \|   \zeta _A (\vartheta _{1A_1} + \vartheta _{2A_1})\widetilde{V} \| _{L^2} \| \zeta _A^{-1} R _{h_c   }(-1 )\zeta _A \|_{L^2\to L^2}
\| \frac{\zeta_{B} ''}{ \zeta _A }\| _{L^\infty} \|   R _{h_c   }(-1 ) \eta _1 '[c] \|_{L^2 }\\& \lesssim B^{-2}(  \| V  \|_{  { \Sigma }_{1A A_1}} + \| V  \|_{  { \Sigma }_{2A A_1}}  ) \|    \eta _1 '[c] \|_{L^2 }\lesssim B^{-2}(  \| V  \|_{  { \Sigma }_{1A A_1}} + \| V  \|_{  { \Sigma }_{2A A_1}}  )
\end{align*}
where for the last inequality we used Theorem \ref{thm:solwave}.  Next, omitting the irrelevant factor 2,  we consider
\begin{align*}
  A _{1 1232}&= \<  \widetilde{V} , R _{h_c   }(-1 )  \zeta_{B} '   R _{h_c   }(-1 )  \eta _1 ''[c]     \> - \<  \widetilde{V} , R _{h_c   }(-1 )  \zeta_{B} 'R _{h_c   }(-1 )  q'_c R _{h_c   }(-1 ) \eta _1 ' [c]     \> \\&=  A _{1 12321}+ A _{1 12322}.
\end{align*}
We have
\begin{align*}
  |A _{1 12321}| & \lesssim \|   \zeta _A (\vartheta _{1A_1} + \vartheta _{2A_1})\widetilde{V} \| _{L^2}   \| \zeta _A^{-1} R _{h_c   }(-1 )\zeta _A \|_{L^2\to L^2} ^2 \|   \zeta_{B} '\| _{L^\infty} \|   \zeta _A^{-1}\eta _1 ''[c]\| _{L^2}\\& \lesssim B^{-1}(  \| V  \|_{  { \Sigma }_{1A A_1}} + \| V  \|_{  { \Sigma }_{2A A_1}}  )
\end{align*}
and similarly $|A _{1 12322}|\lesssim  B^{-1}(  \| V  \|_{  { \Sigma }_{1A A_1}} + \| V  \|_{  { \Sigma }_{2A A_1}}  )$. We have
\begin{align*}
   |A _{1 2}|&= \| \zeta _A R _{h_c   }(-1 )    \(  e^{V _\phi}-1-V _\phi \) \| _{L^2}  \| \zeta _A ^{-1}(\zeta_{B}  \eta _1 ' [c] + \zeta_{B} ' \eta _1  [c]) \| _{L^2} \\& \lesssim \| \zeta _A R _{h_c   }(-1 )\zeta _A^{-1} \|_{L^2\to L^2}  \| \zeta _A     \(  e^{V _\phi}-1-V _\phi \) \| _{L^2}\lesssim  \| \zeta _A  (\vartheta _{1A_1} + \vartheta _{2A_1})   V _\phi ^2 \| _{L^2}\\& \le  \|      V _\phi  \| _{L^\infty} (  \| V  \|_{  { \Sigma }_{1A A_1}} + \| V  \|_{  { \Sigma }_{2A A_1}}  )\lesssim \delta  B ^{1/2}(  \| V  \|_{  { \Sigma }_{1A A_1}} + \| V  \|_{  { \Sigma }_{2A A_1}}  ) ,
\end{align*}
 where  $\|      V _\phi  \| _{H^1} \lesssim \| R_{h_c}  (-1) \(  O ( V _\phi^2) +V_n\)   \|_{H^1} \lesssim B^{1/2}  \delta$  from \eqref{eq:h1errorbounds}.   Next we look at $ A_{2 }$. We have
\begin{align*}&
  \nabla_{\widetilde{U}} E_P(S_c+V)      -  \nabla_{\widetilde{U}} E_P(S_c ) - \nabla_{\widetilde{U},  U}^2 E_P(S_c )    V  \\&= \left(
                                                                                                                                   \begin{array}{c}
                                                                                                                                     \frac{(u _c+V_u)^2}{2}   +   K \( ( \log (1+n_c+V_n)  - n_c -V_n)   \) \\
                                                                                                                                     (n_c+V_n)(u _c+V_u) \\
                                                                                                                                   \end{array}
                                                                                                                                 \right)   - \left(
                                                                                                                                   \begin{array}{c}
                                                                                                                                     \frac{ u _c ^2}{2}   +   K \( ( \log (1+n_c )  - n_c    \) \\
                                                                                                                                      n_c  u _c  \\
                                                                                                                                   \end{array}
                                                                                                                                 \right)  \\& - \left(
             \begin{array}{ccc}
                                                                          K\( \frac{ 1}{1+n _c}-1 \) & u_c & 0 \\
                                                                          u_c &  n_c& 0  \\
                                                                        \end{array}
                                                                      \right)   \left(
                                                                                  \begin{array}{c}
                                                                                    V_n \\
                                                                                    V_u \\
                                                                                  \end{array}
                                                                                \right)  \\& =  \left(
                                                                                  \begin{array}{c}
                                                                                    \frac{ V_u ^2}{2} +K \( \log (1+n_c ) + \log \(1+\frac{V_n}{1+n_c} \)   - n_c -V_n -\( ( \log (1+n_c )  - n_c    \)   - \frac{ V_n}{1+n _c}  +V_n\) \\
                                                                                    V_nV_u   \\
                                                                                  \end{array}
                                                                                \right) \\& =  \left(
                                                                                  \begin{array}{c}
                                                                                    \frac{ V_u ^2}{2} - K \frac{V_n^2}{2(1+n_c ) ^2}     +K \(   \log \(1+\frac{V_n}{1+n_c} \)      - \frac{ V_n}{1+n _c}    + \frac{V_n^2}{2(1+n_c ) ^2}\) \\
                                                                                    V_nV_u   \\
                                                                                  \end{array}
                                                                                \right)  .
\end{align*}
Hence, using the definition of $A_2$ in \eqref{eq:seceq}, we have
\begin{align*}
  | A_{2 }|&\lesssim   \| \widetilde{V} \| _{L^\infty} (  \| V  \|_{  { \Sigma }_{1A A_1}} + \| V  \|_{  { \Sigma }_{2A A_1}}  )  \| \zeta _A ^{-1}\( \zeta_{B} \eta _1[c]\) ' \| _{L^2} \lesssim \delta  B^{1/2} (  \| V  \|_{  { \Sigma }_{1A A_1}} + \| V  \|_{  { \Sigma }_{2A A_1}}  ) .
\end{align*}
Finally  in \eqref{eq:seceq} for the coefficients of $\dot D -c $ and  $\dot  c $  we have
\begin{align*}&
    \< \xi _1[c] +  \widetilde{V}' , \zeta_{B} \eta _1[c] \>  =1+ O(B^{-1})  \text{   and }\\& \< \xi _2[c] , \zeta_{B} \eta _1[c] \>    -    \<   {\widetilde{V}}  , \zeta_{B} \partial _c\eta _1[c] \> =  O(B^{-1}) .
\end{align*}
From this we derive
\begin{align} \label{eq:discrest1D1}
   |\dot D -c |   & \lesssim B^{-1} |\dot c | +B ^{-1/2}(  \| V  \|_{  { \Sigma }_{1A A_1}} + \| V  \|_{  { \Sigma }_{2A A_1}}  ) .
\end{align}
When we apply to  the first equation  \eqref{EP2}  the inner product
 the inner product $\< \cdot ,   \eta _2[c] \>$  we obtain
\begin{align}&
    ( \dot D -c )   \<   \widetilde{V}  ,  \partial _x \eta _2[c] \>      + \dot c  \left [ 1   -    \<   {\widetilde{V}}  ,  \partial _c\eta _2[c] \> \right ] \label{eq:seceq1}\\& = \<   -  \partial _x \( \sigma _1  \nabla_{\widetilde{U},  U}^2 E(S_c )    V  -c\widetilde{V} \),     \eta _2[c] \>   \nonumber \\& + \<     \sigma _1 \( \nabla_{\widetilde{U}} E(S_c+V)      -  \nabla_{\widetilde{U}} E(S_c ) - \nabla_{\widetilde{U},  U}^2 E(S_c )    V \)   ,  \eta _2[c]  ' \>  =: \widetilde{A} _{1 }+\widetilde{A} _{2 }. \nonumber
\end{align}
Proceeding as above,  we have
\begin{align}
  \label{eq:analysc1} |\widetilde{A} _{j }|\lesssim    \| V  \|_{  { \Sigma }_{1A A_1}}^2 + \| V  \|_{  { \Sigma }_{2A A_1}} ^2 \text{   for $j=1,2$,}
\end{align}
 where the proof, (skipped) is now  simpler because
 all the terms with derivatives of $\zeta _B$ are absent and $\eta _2[c]\xrightarrow{x\to  \infty }    0$ exponentially fast, unlike $\eta _1[c]$. Then by \eqref{eq:h1errorbounds} we obtain the following which with \eqref{eq:discrest1D1} yields \eqref{eq:discrest1D} and \eqref{eq:discrest1c},
\begin{align}
  \label{eq:analysc2}
   |\dot  c |   &  \lesssim  |\dot D -c |^2 +  \|V \| _{ { \Sigma }_{1A A_1}  }^2 +
\|V \| _{ { \Sigma }_{2A A_1}  }^2  .
\end{align}

\section{Virial inequalities: proof of Lemma \ref{prop:1virial} }\label{sec:virial}
Set  $\mathcal{I} ^{(i)}:=  \<  \varphi _{iAA_1}    , e(S_c+ V) -e(S_c) \>$
       for $i=1,2$. We  consider three distinct virial inequalities.

\begin{lemma}[The first two virial inequalities]\label{lem:1stV1}
There exist $ \varepsilon _0>0$ and $C>0$ such that for all $\varepsilon \in ( 0, \varepsilon _0)$ we have   \small
\begin{align}     \label{eq:lem:1stV11}
\|   {V } \|_{   \Sigma  _{1A A_1} }^2   & \le C \varepsilon ^{-1}  \(  -   \frac{d}{dt} \mathcal{I} ^{(1)} +   \frac{d}{dt} \<   \varphi_{1A A_1} \nabla_{ \widetilde{{U}}} e(S_c ) ,   { \widetilde{V}}\>   +B ^{-1}
\|V \| _{   { \Sigma }_{2A A_1}  } ^2  + \varepsilon \|  {V }  \|_{  \widetilde{\Sigma}    }^2\)    \text{ and}
\\     \label{eq:lem:1stV12}
\|   {V } \|_{ { \Sigma }_{2A A_1} }^2   & \le C \varepsilon ^{-1}   \(  -   \frac{d}{dt} \mathcal{I}^{(2)} +    \frac{d}{dt} \<   \varphi_{2A A_1} \nabla_{ \widetilde{{U}}} e(S_c ) ,   { \widetilde{V}}\>     + A_1 ^{-2}
\|V \| _{   { \Sigma }_{1A A_1}  } ^2    \) .
\end{align} \normalsize
\end{lemma}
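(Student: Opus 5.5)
We differentiate the localized energy functional $\mathcal{I}^{(i)}=\<\varphi_{iAA_1}, e(S_c+V)-e(S_c)\>$ in time, using the system \eqref{EP2}. The local conservation law associated with \eqref{eq:consen} does most of the work. Writing $e(S_c+V)-e(S_c)=\<\nabla_{\widetilde U}e(S_c),\widetilde V\>+\mathcal{Q}(V)$, the linear part is precisely the correction term $\<\varphi_{iAA_1}\nabla_{\widetilde U}e(S_c),\widetilde V\>$ subtracted in \eqref{eq:lem:1stV11}--\eqref{eq:lem:1stV12}. Its time derivative is computed separately and combined with $\frac{d}{dt}\mathcal I^{(i)}$, so that only quadratic and higher terms in $V$ remain, together with terms multiplied by $\dot D-c$ and $\dot c$.

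For these remaining terms I will use the flux identity from \eqref{eq:consen}. For $G:=\nabla_{\widetilde U}E(S_c+V)-\nabla_{\widetilde U}E(S_c)$ the time derivative of the energy density equals $-\frac12\(\<G,\sigma_1 G\>_{\R^2}\)'$ plus the transport terms $c\widetilde V'$ and $(\dot D-c)(\widetilde S_c+\widetilde V)'$. The $\phi$-component contributes nothing, because $\partial_\phi e=0$ by the elliptic equation in \eqref{EP2}. Integrating by parts against $\varphi_{iAA_1}$ moves the derivative onto the weight and gives
\begin{align*}
-\frac{d}{dt}\mathcal I^{(i)}+\frac{d}{dt}\<\varphi_{iAA_1}\nabla_{\widetilde U}e(S_c),\widetilde V\> = \int \zeta_A^2\vartheta_{iA_1}^2\Big(\tfrac12\<G,\sigma_1G\>-c\,(e(S_c+V)-e(S_c)-\text{lin})\Big)dx + \text{error}.
\end{align*}
Here the error collects the $(\dot D-c)$ and $\dot c$ terms, which are controlled through Lemma \ref{lem:lemdscrt}, and the cubic terms, which are bounded by $\delta B^{1/2}$ times the norms using \eqref{eq:h1errorbounds}.

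The key computation is the coercivity of the quadratic form at leading order. Ignoring $S_c$, which is $O(\varepsilon)$ by \eqref{soliton1}, we have $G\approx(KV_n+V_\phi,V_u)$ and $\frac12\<G,\sigma_1G\>=(KV_n+V_\phi)V_u$. The energy density $e_K$ from \eqref{eq:enkinpot} is $\frac{V_u^2}2+K\frac{V_n^2}2-\frac{V_\phi'^2}{2}-\frac{V_\phi^2}2+V_nV_\phi$. Multiplying the elliptic equation $V_\phi''-V_\phi+V_n\approx0$ by $V_\phi$ gives $V_nV_\phi\approx V_\phi'^2+V_\phi^2$, so after integration the $\phi$-terms reduce to $+\frac12(V_\phi'^2+V_\phi^2)$. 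The integrand then becomes, up to the weight,
\[
c\Big(\tfrac{V_u^2}2+\tfrac K2V_n^2+\tfrac12(V_\phi'^2+V_\phi^2)\Big)-KV_nV_u-V_\phi V_u .
\]
This form is where $K>0$ is used, following Remark \ref{rem:Kpositive}. I would eliminate $V_\phi$ in terms of $V_n$ via the Fourier symbol $(1+\xi^2)^{-1}$. The symbol of the resulting matrix is then positive definite precisely when $c^2>K+(1+\xi^2)^{-1}$, which holds for all $\xi$ since $c>\mathsf V=\sqrt{1+K}$.

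The smallest eigenvalue is of size $\varepsilon=c-\mathsf V$ at low frequency. This produces the factor $C\varepsilon^{-1}$, and also the $\widetilde\Sigma$ term: the degenerate low-frequency directions near the soliton scale $\varepsilon^{-1/2}$ cannot be absorbed, and the soliton interaction terms of size $\varepsilon\,\sech^2(\sqrt\varepsilon x)$ are only controlled by $\varepsilon\|V\|_{\widetilde\Sigma}^2$, using $\kappa$ small.

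I expect the main obstacle to be making this positivity rigorous in physical space with the weight $\zeta_A^2\vartheta_{iA_1}^2$. Commuting the weight with the nonlocal map $V_n\mapsto V_\phi$ (the resolvent $R_{h_c}(-1)$ together with the estimates \eqref{eq:resolvat1}) produces errors of order $A^{-1}$ and $A_1^{-1}$ times norms on the complementary region. For $i=2$ these give $A_1^{-2}\|V\|^2_{\Sigma_{1AA_1}}$. For $i=1$ the cross term with the region $|x|\gtrsim A_1$ gives $B^{-1}\|V\|_{\Sigma_{2AA_1}}^2$, since $\vartheta_{1A_1}'$ is supported at the scale $A_1=B^{3/5}$. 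My first approach is to write everything in terms of $w=\zeta_A\vartheta_{iA_1}V$ and to absorb the commutators using $A\gg B^2$. For $i=2$ the soliton is exponentially small on the support, so no $\widetilde\Sigma$ term is needed there.
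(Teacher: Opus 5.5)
The architecture of your proposal matches the paper. You differentiate $\mathcal I^{(i)}$ and split off $\frac{d}{dt}\langle\varphi_{iAA_1}\nabla_{\widetilde U}e(S_c),\widetilde V\rangle$. You integrate the flux and the $c\widetilde V'$ transport term by parts against $\varphi_{iAA_1}$. You multiply the elliptic equation by $\vartheta_{iA_1}^2\zeta_A^2V_\phi$ to trade $V_nV_\phi$ for $V_\phi'^2+V_\phi^2$. You handle the $\dot D-c$ and $\dot c$ terms with Lemma \ref{lem:lemdscrt}. You put the localized soliton interactions into $\varepsilon\|V\|^2_{\widetilde\Sigma}$. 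One small slip: the left-hand side of your displayed identity has the wrong sign. It should read $\frac{d}{dt}\mathcal I^{(i)}-\frac{d}{dt}\langle\cdots\rangle$; the integrand you then analyse does have the correct sign.

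You genuinely depart from the paper in the positivity step. You eliminate $V_\phi$ through the symbol $(1+\xi^2)^{-1}$, which forces you to commute the weight with the nonlocal map $V_n\mapsto V_\phi$; you flag this as the main obstacle. The paper never goes nonlocal. Once the elliptic identity has been used, it treats $V_\phi$ as an independent variable. The form $\frac c2(KV_n^2+V_u^2+V_\phi^2)-KV_nV_u-V_\phi V_u$ in $(V_n,V_\phi,V_u)$ has determinant $\frac{cK}{8}(c^2-1-K)$. So it is pointwise positive definite exactly when $c>\mathsf V$, the same threshold your symbol gives at $\xi=0$; nothing is lost.

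Concretely, Young's inequality gives $KV_nV_u+V_\phi V_u\le\frac{\mathsf V}{2}(KV_n^2+V_u^2+V_\phi^2)$. Hence the leading part is bounded by $-\frac{\varepsilon}{2}\langle\vartheta_{iA_1}^2\zeta_A^2,KV_n^2+V_u^2+V_\phi'^2+V_\phi^2\rangle$. The only weight error is then the local second-order term $\frac12\langle(\vartheta_{iA_1}^2\zeta_A^2)'',V_\phi^2\rangle=O(A_1^{-2})(\|V\|^2_{\Sigma_{1AA_1}}+\|V\|^2_{\Sigma_{2AA_1}})$. No weighted bounds on $R_{h_c}(-1)$ are needed, and no comparison of $(-\partial_x^2+e^{\phi_c})^{-1}$ with $(1-\partial_x^2)^{-1}$.

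Your route can still be closed. Its commutators are first order, of relative size $A_1^{-1}$, so after Young with parameter $\varepsilon$ you need $B$ large in terms of $\varepsilon^{-1}$, which \eqref{eq:relABg} permits. It also shows that the degeneracy is purely low-frequency, but that information is never used.

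Finally, the $\widetilde\Sigma$ term is not caused by degenerate low frequencies. It is forced because terms such as $\varepsilon\mathsf V\langle\vartheta_{1A_1}^2\zeta_A^2\psi_{\mathrm{KdV}}(\sqrt\varepsilon\,\cdot),KV_n^2+V_u^2\rangle$ enter with the unfavourable sign. They are of the same order $\varepsilon$ as the coercivity, so they cannot be absorbed.
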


\proof    By   $ {V}\in C^0(I, H^1)$ and  \eqref{EP2} we have $ {V}\in C^1(I, L^2)$.  Then for almost any $t$
\begin{align*}
  \dot {\mathcal{I}}^{(i)} &= \<  \varphi _{iAA_1} \( \nabla_{ \widetilde{{U}}} E(S_c+V)      -  \nabla_{ \widetilde{{U}}} E(S_c ) \),  \dot { \widetilde{V}}\>  + \<   \varphi _{iAA_1} \nabla_{ \widetilde{{U}}} E(S_c )  ,  \dot { \widetilde{V}}\> \\& +\dot c  \<  \varphi _{iAA_1} \( \nabla_{ \widetilde{{U}}} E(S_c+V)      -  \nabla_{ \widetilde{{U}}} E(S_c ) \),   \partial _c \widetilde{S} _c\> =: \mathcal{I} _1^{(i)}+\mathcal{I}_2^{(i)}+\mathcal{I}_3^{(i)} 
\end{align*}
since  ${\mathcal{I}}^{(i)} $ is absolutely continuous,
where we used $\partial _{\phi}e(S_c+V)=\partial _{\phi}e(S_c )=0$. Notice   that
\begin{align*}
\mathcal{I}_2^{(i)}= \< \varphi _{iAA_1} \nabla_{ \widetilde{{U}}} E(S_c )  ,  \dot { \widetilde{V}}\>  =\frac{d}{dt} \<  \varphi _{iAA_1} \nabla_{ \widetilde{{U}}} E(S_c ) ,   { \widetilde{V}}\> - \dot c \<   \varphi _{iAA_1} ,\nabla_{ \widetilde{{U}}}^2 E(S_c ) , (\partial _c S_c,  { \widetilde{V}})\>
\end{align*}
with,  by Lemma \ref{lem:solwave4} and by \eqref{eq:discrest1c}, 
  \begin{align}\label{eq:Ii2}
  \left |  \dot c \<   \varphi _{iAA_1} ,\nabla_{ \widetilde{{U}}}^2 E(S_c ) , (\partial _c S_c,  { \widetilde{V}})\> \right | \lesssim  B ^{ \frac{1}{2}} \delta   \(\|V \| _{  { \Sigma }_{1A A_1} }^2 +
\|V \| _{   { \Sigma }_{2A A_1}  } ^2\).
\end{align}
Similarly,   we have
\begin{align}\nonumber
  |\mathcal{I}_3^{(i)}|&\le |\dot c| \int _{0}^{1} \left |\<   \varphi _{iAA_1} ,  \nabla_{ \widetilde{{U}} U}^2 E(S_c+sV)     ( \partial _c \widetilde{S} _c  , V)   \> \right |ds\\& \lesssim   B ^{ \frac{1}{2}} \delta   (\|V \| _{L^2(I,  { \Sigma }_{1A A_1} )}^2 +
\|V \| _{L^2(I,  { \Sigma }_{2A A_1} )} ^2). \label{eq:Ii3}
\end{align}
Using \eqref{EP2} we have \small
\begin{align*}
    \mathcal{I}_1^{(i)}  =& -\< \varphi _{iAA_1} \( \nabla_{ \widetilde{{U}}} E(S_c+V)      -  \nabla_{ \widetilde{{U}}} E(S_c ) \) ,\sigma _1\partial _x   \( \nabla_{ \widetilde{{U}}} E(S_c+V)      -  \nabla_{ \widetilde{{U}}} E(S_c ) \)  \> \\& + c\<  \varphi _{iAA_1} \( \nabla_{ {{U}}} E(S_c+V)      -  \nabla_{  {{U}}} E(S_c ) \) , {V}'\>  +(\dot D-c)  \<   \varphi _{iAA_1} \( \nabla_{ \widetilde{{U}}} E(S_c+V)      -  \nabla_{ \widetilde{{U}}} E(S_c ) \), \widetilde{V} '  \> \\& +(\dot D-c)  \<  \varphi _{iAA_1} \( \nabla_{ \widetilde{{U}}} E(S_c+V)      -  \nabla_{ \widetilde{{U}}}E(S_c ) \), \widetilde{S}_c '  \>  \\&-\dot c  \<  \varphi _{iAA_1} \( \nabla_{ \widetilde{{U}}} E(S_c+V)      -  \nabla_{ \widetilde{{U}}} E(S_c ) \), \partial _c\widetilde{S}_c    \>      =:\sum _{j=1,...,5} B_j^{(i)}  .
\end{align*}\normalsize
Splitting  $E(U)=E_K(U)+E_P(U)$  like in   \eqref{eq:enkinpot}
we have
\begin{align*}
  B_4^{(i)} &=(\dot D-c)   \<  \varphi _{iAA_1}   \nabla_{ \widetilde{{U}}} E_K( V)    , \widetilde{S}_c '  \> + (\dot D-c)   \<  \varphi _{iAA_1} \( \nabla_{ \widetilde{{U}}}E_P(S_c+V)      -  \nabla_{ \widetilde{{U}}} E_P(S_c ) \), \widetilde{S}_c '  \>\\& =: B _{4,1}^{(i)}+ B_{4,2}^{(i)} \text{ where} \\
  B _{4,1}^{(i)}&= (\dot D-c) \< \zeta _A (\vartheta _{1A_1} + \vartheta _{2A_1})  V , \left(
                          \begin{array}{ccc}
                            K & 0   \\
                            1 & 1   \\
                            0 & 0   \\
                          \end{array}
                        \right)  \zeta _A ^{-1} \varphi _{iAA_1} \widetilde{S}_c '  \> .
\end{align*}
By   \eqref{eq:discrest1D}, by $\supp \vartheta _{2A_1}\subseteq [A_1, +\infty )$, by the exponential decay of $\widetilde{S}_c '$  and    by {Theorem}
  \ref{thm:solwave},  for $k=1$
\small\begin{align}\nonumber
  | B _{4,k}^{(1)}|&\lesssim  \varepsilon ^2 B ^{-1/2}(\|V \| _{  { \Sigma }_{1A A_1}  }  +
\|V \| _{   { \Sigma }_{2A A_1}  }  )  \(  \|V \| _{  { \Sigma }_{1A A_1}  }+ A_1^{-100}
   \|V \| _{  { \Sigma }_{2A A_1}  } \)\\& \nonumber \lesssim   B ^{-1/2} \|V \| _{  { \Sigma }_{1A A_1}  }^2 + B ^{-1/2}A_1^{-100}  \|V \| _{  { \Sigma }_{2A A_1}  }^2  +\varepsilon ^2  B ^{- \frac{1}{2}}  \|V \| _{  { \Sigma }_{1A A_1}  }    \|V \| _{  { \Sigma }_{2A A_1}  }       \\& \lesssim     \varepsilon  ^4 \|V \| _{  { \Sigma }_{1A A_1}  }  ^2 +   B ^{-1} \|V \| _{  { \Sigma }_{2A A_1}  }  ^2  . \label{eq:B4k1}
\end{align}\normalsize
 Using \eqref{eq:discrest1D}, $\supp \varphi _{2A_1}\subseteq [A_1, +\infty )$ and the exponential decay of $\widetilde{S}_c '$ we have for $k=1$
\begin{align}\label{eq:B4k2}
  | B _{4,k}^{(2)}|\lesssim A_1^{-100}   B ^{-1/2}(\|V \| _{  { \Sigma }_{1A A_1}  } ^2 +
\|V \| _{   { \Sigma }_{2A A_1}  } ^2 )    .
\end{align}
By the following,  estimates  \eqref{eq:B4k1} for $i=1$ and \eqref{eq:B4k2}   for $i=2$ are straightforward also  for $k=2$,
\begin{align*}
    B _{4,2}^{(i)}= (\dot D-c) \int _0 ^1 \< \zeta _A (\vartheta _{1A_1} + \vartheta _{2A_1})  V ,  \left [ \nabla_{\widetilde{U},  U}^2 E _P(S_c+sV) \right ] ^\intercal \zeta _A ^{-1} \varphi _{iAA_1} \widetilde{S}_c '  \> ds.
\end{align*}
Notice that then we conclude  that for $\ell =4$
\begin{equation}\label{eq:B4i}
  \begin{aligned}
     &  | B _{\ell }^{(1)}|\lesssim     \varepsilon  ^4 \|V \| _{  { \Sigma }_{1A A_1}  }  ^2 +   B ^{-1} \|V \| _{  { \Sigma }_{2A A_1}  }  ^2 \text{ and} \\&     | B _{\ell }^{(2)}|\lesssim  A_1^{-100}   B ^{-1/2}(\|V \| _{  { \Sigma }_{1A A_1}  } ^2 +
\|V \| _{   { \Sigma }_{2A A_1}  } ^2 ).
  \end{aligned}
\end{equation}
 By a similar argument it is easy to see that \eqref{eq:B4i} is true also for $\ell =5$.
 Next, we have
\begin{align}\label{eq:intpa}
    \<  \varphi_{iA A_1} \( \nabla_{ {{U}}} E(S_c+V)      -  \nabla_{  {{U}}} E(S_c ) \) , {V}'\> &= \<  \varphi_{iA A_1}  \nabla_{ {{U}}} E_K( V)      , {V}'\>\\& +  \<  \varphi_{iA A_1} \( \nabla_{ {{U}}} E_P(S_c+V)      -  \nabla_{  {{U}}}E_P(S_c ) \) , {V}'\>.\nonumber
\end{align}
By elementary differentiation we compute
\begin{align}& \nonumber
   \partial _x \( e_{P}(S_c+V) - e_{P}(S_c )- \nabla _U e_{P}(S_c )\cdot V     \) \\& =  \nabla _UE_P(S_c+V) \cdot(S_c'+V')- \nabla _U E_{P}(S_c ) \cdot S_c' - \nabla _U E_{P}(S_c )\cdot V'- \nabla _U^2 E_{P}(S_c )(S'_c,V).\nonumber
\end{align}
Rearranging the above terms we have
\begin{align}   \nonumber
  \( \nabla_{ {{U}}} E_P(S_c+V)      -  \nabla_{  {{U}}} E_P(S_c ) \)&  {V}'  = \partial _x \( e_{P}(S_c+V) - e_{P}(S_c )- \nabla _U E_{P}(S_c )\cdot V     \) \\& - \left [ \( \nabla_{ {{U}}} E_P(S_c+V)-\nabla_{ {{U}}} E_P(S_c )   \)S'_c  - \nabla _U^2 E_{P}(S_c )(S'_c,V)\right ] \nonumber
\end{align}
so that, integrating by parts and by \eqref{def:phiAi}, we have
\begin{align} \nonumber
  B_3^{(i)}&= -(\dot D-c) \<   \vartheta _{iA_1} ^2 \zeta _A ^2,e_K(V) \>    \\&  -(\dot D-c)  \<  \vartheta _{iA_1} ^2  \zeta _A ^2,  e_{P}(S_c+V) - e_{P}(S_c )- \nabla _UE_{P}(S_c )\cdot V      \>     \label{eq:intparn2}\\& -(\dot D-c) \int _{[0,1]^2}\< \varphi_{iA A_1}  ,   \nabla _U^3 E_{P}(S_c+s \tau V )   (S'_c,V, V)  \>   sdsd\tau=:\sum _{k=1,2,3}   B_{3k}^{(i)} . \nonumber
\end{align}
It is easy to see that we have
\begin{align*}
  | B_{31}^{(i)}|  +  | B_{32}^{(i)}| \lesssim \delta \|V \| _{   { \Sigma }_{iA A_1}  } ^2 \text{ and }  | B_{33}^{(i)}|\lesssim \delta \|V \| _{     \widetilde{\Sigma}     } ^2.
\end{align*}
Proceeding like for $B_3^{(i)}$ we obtain
\begin{align*}
  B_2^{(i)}&= - c\<   \vartheta _{iA_1} ^2\zeta _A ^2,e_K(V) \>  -c  \<     \vartheta _{iA_1} ^2\zeta _A ^2,  e_{P}(S_c+V) - e_{P}(S_c )- \nabla _U E_{P}(S_c )\cdot V      \> \\& -c \int _{[0,1]^2}\<  \varphi_{iA A_1}  ,   \nabla _U^3 E_{P}(S_c+s \tau V )   (S'_c,V, V)  \>   sdsd\tau =:\sum _{k=1,2,3} B _{2k}^{(i)}   .
\end{align*}
By \eqref{eq:enkinpot} we have
\begin{align}\label{eq:B21ifor}
   B_{21}^{(i)}  &=- \frac{c}{2} \<   \vartheta _{iA_1} ^2\zeta _A  ^2 ,      K V_n ^2+V_u^2\> + c  \<   \vartheta _{iA_1} ^2\zeta _A  ^2 ,    \frac{  V_\phi  ^{\prime 2} + V_\phi  ^{  2}}{2}-V_nV_\phi \>   \\& =  - \frac{c}{2} \<  \vartheta _{iA_1} ^2 \zeta _A  ^2 ,      K V_n ^2+V_u^2+V_\phi  ^{\prime 2} + V_\phi  ^{  2}\> + c  \<   \vartheta _{iA_1} ^2\zeta _A  ^2 ,    V_\phi  ^{\prime 2} + V_\phi  ^{  2} -V_nV_\phi \>
    =: B_{211}^{(i)}+B_{212}^{(i)}.\nonumber 
\end{align}
We will deal later with $B_{211}^{(i)}$ and focus on  $B_{212}^{(i)}$ for the moment.
By \eqref{EP2} we have
  \begin{align} \nonumber  &
    \<  \vartheta _{iA_1} ^2\zeta _A  ^2 ,      V_\phi  ^{\prime 2} + V_\phi  ^{  2} -V_nV_\phi \>  =   \<  \vartheta _{iA_1} ^2\zeta _A  ^2  V_\phi ,     - V_\phi  '' -V_n + V_\phi    \> +  2^{-1}\<  (\vartheta _{iA_1} ^2\zeta _A  ^2) ''    ,      V_\phi ^2   \> \\&   \nonumber  =\< \vartheta _{iA_1} ^2 \zeta _A  ^2  V_\phi ,  V_\phi   -  e^{\phi _c} \(  e^{V _\phi}-1 \)      \> +  2^{-1}\<  (\vartheta _{iA_1} ^2\zeta _A  ^2) ''    ,      V_\phi ^2   \> \\& =\<  \vartheta _{iA_1} ^2\zeta _A  ^2    ,  V_\phi ^2   \( 1 -  e^{\phi _c}\)   + O( V_\phi ^3)     \> +  2^{-1}\<  (\vartheta _{iA_1} ^2\zeta _A  ^2) ''    ,      V_\phi ^2   \>
 \label{eq:B212i1} \end{align}
  where from $|\zeta _A ^{(k)}|\lesssim A ^{-k}\zeta _A$ we obtain
 \begin{align*}
 | \<  (\vartheta _{iA_1} ^2\zeta _A  ^2) ''    ,      V_\phi ^2   \> |\lesssim    \<  |(\vartheta _{iA_1} ^2\zeta _A  ^2) '' |   ,      (\vartheta _{1A_1}   + \vartheta _{2A_1}  ) ^2 V_\phi ^2   \> | \lesssim A_1 ^{-2}(\|V \| _{  { \Sigma }_{1A A_1}  }^2  +
\|V \| _{   { \Sigma }_{2A A_1}  } ^2 )
 \end{align*}
  and similarly
  \begin{align*}
  | \<  \vartheta _{iA_1} ^2\zeta _A  ^2    ,   O( V_\phi ^3)     \>|\lesssim B^{1/2}\delta (\|V \| _{  { \Sigma }_{1A A_1}  }^2  +
\|V \| _{   { \Sigma }_{2A A_1}  } ^2 )  .
  \end{align*}
Hence, by $\supp \vartheta _{2A_1}\subseteq [A_1, +\infty )$ and bounding also the first term in \eqref{eq:B212i1},  for $i=1,2$ we conclude
\begin{align*}
  |B_{212}^{(i)}|\lesssim  A_1 ^{-100 (i-1)}  \|  {V }_\phi \|_{  \widetilde{\Sigma}   }^2+      A_1 ^{-2}(\|V \| _{  { \Sigma }_{1A A_1}  }^2  +
\|V \| _{   { \Sigma }_{2A A_1}  } ^2 )
\end{align*}
which yields also
\begin{align*}
  |B_{212}^{(2)}|\lesssim         A_1 ^{-2}(\|V \| _{  { \Sigma }_{1A A_1}  }^2  +
\|V \| _{   { \Sigma }_{2A A_1}  } ^2 ) .
\end{align*}
 Leaving for later  $B_{211}^{(i)}$,     we consider  \small
\begin{align*}
&  B_1^{(i)} = 2^{-1}\< \vartheta _{iA_1} ^2 \zeta _A  ^2   \nabla_{ \widetilde{{U}}} E_K( V)       ,\sigma _1   \nabla_{ \widetilde{{U}}} E_K( V)  \>  \\& +
  \<  \vartheta _{iA_1} ^2\zeta _A  ^2   \nabla_{ \widetilde{{U}}} E_K( V)       ,\sigma _1   \( \nabla_{ \widetilde{{U}}} E_P(S_c+V)      -  \nabla_{ \widetilde{{U}}} E_P(S_c ) \)  \>
  \\& +
  2^{-1}\<  \vartheta _{iA_1} ^2\zeta _A  ^2 \( \nabla_{ \widetilde{{U}}} E_P(S_c+V)      -  \nabla_{ \widetilde{{U}}} E_P(S_c ) \) ,\sigma _1    \( \nabla_{ \widetilde{{U}}} E_P(S_c+V)      -  \nabla_{ \widetilde{{U}}} E_P(S_c ) \)  \> =:B_{11}^{(i)}+B_{12}^{(i)}+B_{13}^{(i)}.
\end{align*} \normalsize
Using \eqref  {eq:h1errorbounds}  and   $\supp \vartheta _{2A_1}\subseteq [A_1, +\infty )$ it is easy to see that
\begin{align}\nonumber
  | B_{13}^{(2)}|+ | B_{12}^{(2)}|&\lesssim  A_1 ^{-100  }  \|  {V } \|_{  \widetilde{\Sigma}   }^2+     B ^{1/2}\delta (\|V \| _{  { \Sigma }_{1A A_1}  } ^2 +
\|V \| _{   { \Sigma }_{2A A_1}  } ^2 )   \\& \lesssim   A_1 ^{-2} (\|V \| _{  { \Sigma }_{1A A_1}  } ^2 +
\|V \| _{   { \Sigma }_{2A A_1}  } ^2 )  . \label{eq:B13212}
\end{align}
Schematically we have
\begin{align*}
B_{13}^{(1)}  &=   \<  \vartheta _{1A_1} ^2\zeta _A  ^2 ,O\( S_c ^2  V^2  \)  + O\(   V^3  \)   \>
\end{align*}
so that by Theorem \ref{thm:solwave}
\begin{align}\label{eq:B131}
|B_{13}^{(1)} | &\lesssim     \| S_c \| _{  L^\infty (\R )  } ^2     \|V \| _{  { \Sigma }_{1A A_1}  } ^2  \lesssim   \varepsilon ^2  \|V \| _{  { \Sigma }_{1A A_1}  } ^2 .
\end{align}
Next we consider
\begin{align*}
  B_{12}^{(1)} &= \< \vartheta _{1A_1} ^2\zeta _A  ^2  \sigma _1 \nabla_{ \widetilde{{U}}} E_K( V)       ,     \nabla_{ \widetilde{{U}} U}^2 E_P(S_c )  \cdot V  \>  \\& + \< \vartheta _{1A_1} ^2\zeta _A  ^2  \sigma _1  \nabla_{ \widetilde{{U}}}  E_K( V)       ,    \nabla_{ \widetilde{{U}}} E_P(S_c+V)      -  \nabla_{ \widetilde{{U}}} E_P(S_c )  -   \nabla_{ \widetilde{{U}} U}^2 E_P(S_c )  \cdot V \> = B_{121}^{(1)}+B_{122} ^{(1)}.
\end{align*}
Then   
\begin{align*}
  B_{122}^{(1)} &=  \int  _{[0,1] ^2}   \< \vartheta _{1A_1} ^2\zeta _A  ^2  \sigma _1  \nabla_{ \widetilde{{U}}} E_K( V)       ,   \nabla ^3_{ \widetilde{{U}}UU} E_P(S_c+ \tau sV) \cdot ( V,      V) \>   s ds  d\tau
\end{align*}
implies by  \eqref  {eq:h1errorbounds}    the following,
\begin{align*}
  | B_{122}^{(1)} |\lesssim  \|   {V } \|_{L^\infty }  \|   {V } \|_{{ \Sigma }_{1A A_1}}^2  \lesssim  B ^{1/2}\delta   \|   {V } \|_{{ \Sigma }_{1A A_1}}^2.
\end{align*}
By \eqref{eq:graden1} and  \eqref{eq:graden3} \begin{align}\nonumber
  B_{121}^{(1)}&=\<  \vartheta _{1A_1} ^2\zeta _A  ^2 \left(
                              \begin{array}{c}
                                 V _{u} \\
                                KV_n +V _{\phi} \\
                              \end{array}
                            \right) , \left(
                                                                        \begin{array}{ccc}
                                                                          K \(  \frac{1}{1+n _c} -1\) & u_c & 0 \\
                                                                          u _c&  n_c& 0  \\
                                                                        \end{array}
                                                                      \right)   V
    \> \\& \nonumber= \< \vartheta _{1A_1} ^2 \zeta _A  ^2,K  u_c V_n^2+ u_c V_u^2      + K  \frac{n_c ^2}{1+n_c}   V_u V_n\> +  \< \vartheta _{1A_1} ^2 \zeta _A  ^2,K   u_c V_n V_\phi + n_c V_u V_\phi     \> \\& =:   B_{1211}^{(1)}+ B_{1212}^{(1)}.\label{eq:B1211for}
\end{align}
Then by Theorem \ref{thm:solwave}   we have
\begin{align}\label{eq:B12121}
|B_{1212}^{(1)}  | &\lesssim    \varepsilon   \|  {V }   \|_{  \widetilde{\Sigma}    }^2.
\end{align}
Using  Theorem \ref{thm:solwave} we have
\begin{align}& \nonumber
 B_{1211}^{(1)} =  B_{12111}^{(1)}+ B_{12112}^{(1)}  \text{   where} \\  & B_{12111}^{(1)}= \varepsilon \sqrt{K+1} \< \vartheta _{1A_1} ^2 \zeta _A  ^2  \psi_{\mathrm{KdV}}\left( \sqrt{\varepsilon} \cdot  \right)   ,K  V_n^2+ V_u^2     \>   \text{   and } \label{eq:B121111} \\  & |B_{12112}^{(1)}|  \lesssim \varepsilon ^2  \|  {V }   \|_{  \widetilde{\Sigma}    }^2. \label{eq:B121112}
\end{align}
   By \eqref{eq:graden1} and an elementary application of Young's inequality we have \begin{align}\label{eq:B11i}
 B_{11}^{(i)}&= \< \vartheta _{iA_1} ^2\zeta _A  ^2 , KV_nV_u+ V_\phi V_u\>  \le 2^{-1}\sqrt{K+1} \< \vartheta _{iA_1} ^2\zeta _A  ^2 , K    V_n ^2 +   V_u ^2  +    V_\phi ^2    \>.
\end{align}
Then by $c=\sqrt{K+1}+\varepsilon$ we have
\begin{align*}
  B_{211} ^{(i)}+B_{11}^{(i)}\le  -2^{-1}\varepsilon   \< \vartheta _{iA_1} ^2 \zeta _A  ^2 ,      K V_n ^2+V_u^2+V_\phi  ^{\prime 2} + V_\phi  ^{  2}\>  .
\end{align*}
So we conclude  the following   for $i=1,2$, completing  the proof of  Lemma  \ref {lem:1stV1}, 
\begin{align*}
  \mathcal{I}_1  ^{(i)}\lesssim & -\varepsilon   \<  \vartheta _{iA_1} ^2\zeta _A  ^2 ,      K V_n ^2+V_u^2+V_\phi  ^{\prime 2} + V_\phi  ^{  2}\> \\& +  A_1 ^{-2}(\|V \| _{  { \Sigma }_{1A A_1}  }^2  +
\|V \| _{   { \Sigma }_{2A A_1}  } ^2 ) + (2-i)  \|  {V } \|_{  \widetilde{\Sigma}    }^2  .
\end{align*}

\qed

We introduce  $\mathcal{J}  :=  \<  \psi     , e(S_c+ V) -e(S_c) \>$
        with  for a sufficiently large constant $C_0\gg 1$ 
        \begin{align}\label{eq:defpsi}
         \psi (x)=   \int _0 ^x       \sech ^2\(     \varepsilon \kappa  x ' \)   dx'.
        \end{align}
\begin{lemma}[Third virial inequality]\label{lem:2stV1} There exist $ \varepsilon _0>0$ and $C>0$ such that for all $\varepsilon \in ( 0, \varepsilon _0)$ we have  \small
\begin{align}     \label{eq:lem:2stV11}
 \|         ( V_n , V_u, V_\phi  ^{\prime  })   \| _{  \widetilde{\Sigma}    }^2   & \le C   \(  -   \frac{d}{dt} \mathcal{J}  +   \frac{d}{dt} \<   \psi  \nabla_{ \widetilde{{U}}} e(S_c ) ,   { \widetilde{V}}\>    +B^{-1 }  \|V \| _{   { \Sigma }_{1A A_1}  } ^2   +B^{-1 }  \|V \| _{   { \Sigma }_{2A A_1}  } ^2 +  \|  {V }_\phi   \|_{  \widetilde{\Sigma}    }^2\)    .
\end{align} \normalsize
\end{lemma}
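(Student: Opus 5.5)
The proof will rerun the computation of Lemma \ref{lem:1stV1} with $\varphi_{iAA_1}$ replaced by $\psi$ from \eqref{eq:defpsi}, whose derivative is $\psi'=\sech^2(\varepsilon\kappa x)$.

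First I differentiate $\mathcal{J}$ in time using \eqref{EP2} and $\partial_\phi e=0$. As before, this splits $\dot{\mathcal J}$ into three pieces: a total derivative $\frac{d}{dt}\<\psi\nabla_{\widetilde U}e(S_c),\widetilde V\>$, the terms carrying $\dot c$, and the analogues $B_j$ of the terms $B_j^{(i)}$. The $\dot c$ and $\dot D-c$ terms ($B_3,B_4,B_5$ and the analogues of \eqref{eq:Ii2}, \eqref{eq:Ii3}) are bounded using Lemma \ref{lem:lemdscrt}. The point is that $|\psi|\lesssim \varepsilon^{-1}\kappa^{-1}$ is bounded, in particular uniformly in $A,B$. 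Whenever $S_c'$ or $\partial_cS_c$ appears, its exponential decay on scale $\varepsilon^{-1/2}\ll(\varepsilon\kappa)^{-1}$ lets me estimate the term by $(B^{-1/2}+\delta B^{1/2})\sum_i\|V\|_{\Sigma_{iAA_1}}\cdot\|V\|_{\widetilde\Sigma}$. I plan to absorb this into $B^{-1}\sum_i\|V\|^2_{\Sigma_{iAA_1}}$ plus a small multiple of $\|(V_n,V_u,V'_\phi)\|^2_{\widetilde\Sigma}+\|V_\phi\|^2_{\widetilde\Sigma}$, the latter going to the left-hand side. The terms quadratic in $(\dot D-c)$ that carry the weight $\psi'$, such as $B_{31},B_{32}$, are $O(\delta)$ times the $\widetilde\Sigma$ norm and are absorbed directly.

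The main terms are $B_1+B_{21}$. Integrating by parts as in \eqref{eq:B21ifor} gives
\begin{align*}
B_{211}=-\frac c2\<\psi',KV_n^2+V_u^2+V_\phi'^2+V_\phi^2\>.
\end{align*}
Young's inequality applied to $B_{11}=\<\psi',KV_nV_u+V_\phi V_u\>$, as in \eqref{eq:B11i}, only produces the gain $\varepsilon$, which is too weak here: no factor $\varepsilon^{-1}$ may appear on the right-hand side of \eqref{eq:lem:2stV11}. So I will estimate $B_{11}$ more crudely. I will use $|KV_nV_u|\le \frac{\sqrt K}{2}(KV_n^2+V_u^2)$ and bound the $V_\phi V_u$ part by $\eta V_u^2+\eta^{-1}V_\phi^2$. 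Since $\sqrt K<\sqrt{K+1}\approx c$, this leaves a coercive term $-\gamma\<\psi',KV_n^2+V_u^2+V_\phi'^2\>$ with $\gamma>0$ independent of $\varepsilon$, at the cost of a multiple of $\|V_\phi\|^2_{\widetilde\Sigma}$, which the statement allows. This is why $V_\phi$ itself appears on the right-hand side, and why $K>0$ is essential.

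Next I treat $B_{212}$ exactly as in \eqref{eq:B212i1}. It produces $\<\psi',V_\phi^2(1-e^{\phi_c})\>$, which is $O(\|V_\phi\|^2_{\widetilde\Sigma})$; a term $\<\psi''',V_\phi^2\>=O(\varepsilon^2)\|V_\phi\|_{\widetilde\Sigma}^2$; and cubic terms. The nonlinear terms $B_{12},B_{13},B_{22},B_{23}$ carry either a factor $S_c=O(\varepsilon)$ or a factor $\|V\|_{L^\infty}\lesssim B^{1/2}\delta$ from \eqref{eq:h1errorbounds}. They are therefore $O(\varepsilon+B^{1/2}\delta)\|V\|^2_{\widetilde\Sigma}$, which is absorbed since $\varepsilon\ll\gamma$. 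The terms $B_{23}$ and $B_{33}$ involve the weight $\psi$ against $S_c'$, and are treated like $B_4$ above.

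The step I expect to be the main obstacle is these $\psi$-weighted (rather than $\psi'$-weighted) terms involving $S_c'$ or $\partial_cS_c$, where $|\psi|\sim(\varepsilon\kappa)^{-1}$ could lose a power of $\varepsilon$. To handle them I will use $|\psi(x)|\le|x|$ together with the decay $e^{-\alpha\sqrt\varepsilon|x|}$ of $S_c'$, whose size is $O(\varepsilon^{3/2})$. This gives $|\psi S_c'|\lesssim\varepsilon^{3/2}|x|e^{-\alpha\sqrt\varepsilon|x|}\lesssim \varepsilon\,\sech^2(\varepsilon\kappa x)$ for $\kappa$ small, so these terms remain $O(\varepsilon)\|V\|^2_{\widetilde\Sigma}$ and can be absorbed.
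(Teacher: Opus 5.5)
Your proposal is correct and follows essentially the same route as the paper. You rerun Lemma \ref{lem:1stV1} with the bounded weight $\psi$ and control $B_{212}$ through the elliptic equation for $V_\phi$. You then replace the $\varepsilon$-gaining Young inequality \eqref{eq:B11i} by the cruder bounds $K|V_nV_u|\le\frac{\sqrt K}{2}(KV_n^2+V_u^2)$ and $|V_\phi V_u|\le \eta V_u^2+\eta^{-1}V_\phi^2$, so the coercivity constant is of order $c-\sqrt K$, uniformly in $\varepsilon$, exactly as in \eqref{eq:key1}. Your explicit bound $|\psi S_c'|\lesssim \varepsilon \sech^2(\varepsilon\kappa x)$ for the $\psi$-weighted cubic terms is a detail the paper leaves implicit, and it is consistent with the paper's argument.
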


\proof  Like in Lemma \ref {lem:1stV1}
\begin{align*}
  \dot {\mathcal{J}}  &= \< \psi  \( \nabla_{ \widetilde{{U}}} E(S_c+V)      -  \nabla_{ \widetilde{{U}}} E(S_c ) \),  \dot { \widetilde{V}}\>  + \<   \psi  \nabla_{ \widetilde{{U}}} E(S_c )  ,  \dot { \widetilde{V}}\> \\& +\dot c  \<  \psi\( \nabla_{ \widetilde{{U}}} E(S_c+V)      -  \nabla_{ \widetilde{{U}}} E(S_c ) \),   \partial _c \widetilde{S} _c\> =: \mathcal{J} _1 +\mathcal{J}_2 +\mathcal{J}_3 .
\end{align*}
Then 
\begin{align*}
\mathcal{J} _2= \< \psi  \nabla_{ \widetilde{{U}}} E(S_c )  ,  \dot { \widetilde{V}}\>  =\frac{d}{dt} \<  \psi \nabla_{ \widetilde{{U}}} E(S_c ) ,   { \widetilde{V}}\> - \dot c \<  \psi  ,\nabla_{ \widetilde{{U}}}^2 E(S_c ) , (\partial _c S_c,  { \widetilde{V}})\>
\end{align*}
with, like in \eqref{eq:Ii2},    
\begin{align*}
  \left |  \dot c \<   \psi ,\nabla_{ \widetilde{{U}}}^2 E(S_c ) , (\partial _c S_c,  { \widetilde{V}})\> \right | \lesssim  B ^{ \frac{1}{2}} \delta   \(\|V \| _{  { \Sigma }_{1A A_1} }^2 +
\|V \| _{   { \Sigma }_{2A A_1}  } ^2\).
\end{align*}
Like in  \eqref{eq:Ii3},
\begin{align*}
  \left | \mathcal{J}_3  \right | \lesssim  B ^{ \frac{1}{2}} \delta   \(\|V \| _{  { \Sigma }_{1A A_1} }^2 +
\|V \| _{   { \Sigma }_{2A A_1}  } ^2\).
\end{align*}
 Like in Lemma  \ref{lem:1stV1} \small
\begin{align*}
    \mathcal{J}_1   =& -\< \psi  \( \nabla_{ \widetilde{{U}}} E(S_c+V)      -  \nabla_{ \widetilde{{U}}} E(S_c ) \) ,\sigma _1\partial _x   \( \nabla_{ \widetilde{{U}}} E(S_c+V)      -  \nabla_{ \widetilde{{U}}} E(S_c ) \)  \> \\& + c\<  \psi  \( \nabla_{ {{U}}} E(S_c+V)      -  \nabla_{  {{U}}} E(S_c ) \) , {V}'\>  +(\dot D-c)  \<  \psi \( \nabla_{ \widetilde{{U}}} E(S_c+V)      -  \nabla_{ \widetilde{{U}}} E(S_c ) \), \widetilde{V} '  \> \\& +(\dot D-c)  \<  \psi \( \nabla_{ \widetilde{{U}}} E(S_c+V)      -  \nabla_{ \widetilde{{U}}}E(S_c ) \), \widetilde{S}_c '  \>  \\&-\dot c  \<  \psi  \( \nabla_{ \widetilde{{U}}} E(S_c+V)      -  \nabla_{ \widetilde{{U}}} E(S_c ) \), \partial _c\widetilde{S}_c    \>      =:\sum _{j=1,...,5} F_j   .
\end{align*}\normalsize
Splitting  $E(U)=E_K(U)+E_P(U)$  
we have
\begin{align*}
  F_4  &=(\dot D-c)   \< \psi    \nabla_{ \widetilde{{U}}} E_K( V)    , \widetilde{S}_c '  \> + (\dot D-c)   \<  \psi  \( \nabla_{ \widetilde{{U}}}E_P(S_c+V)      -  \nabla_{ \widetilde{{U}}} E_P(S_c ) \), \widetilde{S}_c '  \>\\& =: F _{4,1} + F_{4,2}  \text{ where} \\
  F _{4,1} &= (\dot D-c) \< \zeta _A (\vartheta _{1A_1} + \vartheta _{2A_1})  V , \left(
                          \begin{array}{ccc}
                            K & 0   \\
                            1 & 1   \\
                            0 & 0   \\
                          \end{array}
                        \right)  \zeta _A ^{-1}\psi \widetilde{S}_c '  \>
\end{align*}
and   by a variation of  \eqref{eq:B4k1}  we have for $k=1,2$
\small\begin{align}\nonumber
  | F _{4,k} |&\lesssim       \varepsilon  ^4 \|V \| _{  \widetilde{{ \Sigma }}   }  ^2 +     B ^{-1} \(  \|V \| _{  { \Sigma }_{1A A_1}  }  ^2 +  \|V \| _{  { \Sigma }_{2A A_1}  }  ^2 \)    .
\end{align}
It is similarly  easy to see that   in analogy to \eqref{eq:B4i} we have
\begin{equation}\nonumber
  \begin{aligned}
     &  | F _{\ell } |\lesssim       \varepsilon  ^4 \|V \| _{  \widetilde{{ \Sigma }}   }  ^2 +     B ^{-1} \(  \|V \| _{  { \Sigma }_{1A A_1}  }  ^2 +  \|V \| _{  { \Sigma }_{2A A_1}  }  ^2 \)  \text{  for $\ell =4,5$}.
  \end{aligned}
\end{equation}
In analogy to  \eqref{eq:intpa} we write
\begin{align}  \nonumber
    \<  \psi  \( \nabla_{ {{U}}} E(S_c+V)      -  \nabla_{  {{U}}} E(S_c ) \) , {V}'\> &= \< \psi   \nabla_{ {{U}}} E_K( V)      , {V}'\>\\& +  \<  \psi  \( \nabla_{ {{U}}} E_P(S_c+V)      -  \nabla_{  {{U}}}E_P(S_c ) \) , {V}'\> \nonumber
\end{align}
and integrating by parts like in  \eqref{eq:intparn2} we obtain 
 \begin{align*} \nonumber
 F _3 &= -(\dot D-c) \<    \sech ^2\(     \varepsilon \kappa  \cdot  \) ,e_K(V) \>    \\&  -(\dot D-c)  \<   \sech ^2\(     \varepsilon \kappa  \cdot  \)  ,  e_{P}(S_c+V) - e_{P}(S_c )- \nabla _UE_{P}(S_c )\cdot V      \>    \nonumber\\& -(\dot D-c) \int _{[0,1]^2}\< \psi  ,   \nabla _U^3 E_{P}(S_c+s \tau V )   (S'_c,V, V)  \>   sdsd\tau    =:\sum _{k=1 } ^3  F_{3k}  . \nonumber
\end{align*}
It is easy to see that in all cases we have
\begin{align*}
  |  F_{3k}|    \lesssim \delta \|V \| _{   { \Sigma }_{1A A_1}  } ^2+ \delta \|V \| _{   { \Sigma }_{2A A_1}  } ^2+ \delta \|V \| _{     \widetilde{\Sigma}     } ^2.
\end{align*}
Proceeding like for $F_3 $ we obtain
\begin{align*} \nonumber
 F _2 &= -c\<    \sech ^2\(     \varepsilon \kappa  \cdot  \) ,e_K(V) \>    \\&  -c  \<  \sech ^2\(     \varepsilon \kappa  \cdot  \)  ,  e_{P}(S_c+V) - e_{P}(S_c )- \nabla _UE_{P}(S_c )\cdot V      \>    \nonumber\\& -c \int _{[0,1]^2}\< \psi  ,   \nabla _U^3 E_{P}(S_c+s \tau V )   (S'_c,V, V)  \>   sdsd\tau    =:\sum _{k=1 } ^3  F_{2k}  . \nonumber
\end{align*}
Like in \eqref{eq:B21ifor}
 we have
\begin{align*}
   F_{21}   &=  - \frac{c}{2} \<   \sech ^2\(     \varepsilon \kappa  \cdot  \),      K V_n ^2+V_u^2+V_\phi  ^{\prime 2} + V_\phi  ^{  2}\> + c  \<  \sech ^2\(     \varepsilon \kappa  \cdot  \) ,    V_\phi  ^{\prime 2} + V_\phi  ^{  2} -V_nV_\phi \>
    \\& =: F_{211} +F_{212} 
\end{align*}
 where, like in \eqref{eq:B212i1}, 
  \begin{align*} \nonumber  &
   F_{212} =     c\<  \sech ^2\(     \varepsilon \kappa  \cdot  \)   ,  V_\phi ^2   \( 1 -  e^{\phi _c}\)   + O( V_\phi ^3)     \> +  2^{-1}c\<  ( \sech ^2\(     \varepsilon \kappa  \cdot  \)) ''    ,      V_\phi ^2   \> . \end{align*}
  From this and    {Theorem} \ref{thm:solwave} we conclude 
  \begin{align*}
  |F_{212} |\lesssim  \varepsilon  \|  {V }_\phi   \|_{  \widetilde{\Sigma}   }^2 . 
\end{align*}
 Leaving for later  $F_{211} $  like in Lemma  \ref{lem:1stV1}  we consider  \small
\begin{align*}
&  F_1  = 2^{-1}\<  \sech ^2\(     \varepsilon \kappa  \cdot  \)  \nabla_{ \widetilde{{U}}} E_K( V)       ,\sigma _1   \nabla_{ \widetilde{{U}}} E_K( V)  \>  \\& +
  \<   \sech ^2\(     \varepsilon \kappa  \cdot  \)   \nabla_{ \widetilde{{U}}} E_K( V)       ,\sigma _1   \( \nabla_{ \widetilde{{U}}} E_P(S_c+V)      -  \nabla_{ \widetilde{{U}}} E_P(S_c ) \)  \>
  \\& +
  2^{-1}\<   \sech ^2\(     \varepsilon \kappa  \cdot  \)2 \( \nabla_{ \widetilde{{U}}} E_P(S_c+V)      -  \nabla_{ \widetilde{{U}}} E_P(S_c ) \) ,\sigma _1    \( \nabla_{ \widetilde{{U}}} E_P(S_c+V)      -  \nabla_{ \widetilde{{U}}} E_P(S_c ) \)  \>  \\&  
  =:\sum _{k=1}^{3} F_{1k} .
\end{align*} \normalsize
Like  in \eqref{eq:B131}  we have
\begin{align}\nonumber
|F_{13}  | &\lesssim        \varepsilon ^2  \|V \| _{  \widetilde{\Sigma }   } ^2 .
\end{align}
Like for $ B_{12}^{(1)}$  in Lemma \ref{lem:1stV1},  we have
\begin{align*}
  F_{12}  &= F_{121} +F_{122}  \text{ with } F_{121}=
  \< \sech ^2\(     \varepsilon \kappa  \cdot  \)     \sigma _1 \nabla_{ \widetilde{{U}}} E_K( V)       ,     \nabla_{ \widetilde{{U}} U}^2 E_P(S_c )  \cdot V  \>   \text{ and}\\
  F_{122}  &=  \int  _{[0,1] ^2}   \< \sech ^2\(     \varepsilon \kappa  \cdot  \)2\sigma _1  \nabla_{ \widetilde{{U}}} E_K( V)       ,   \nabla ^3_{ \widetilde{{U}}UU} E_P(S_c+ \tau sV) \cdot ( V,      V) \>   s ds  d\tau
\end{align*}
which implies     the following,
\begin{align*}
  |  F_{122}  |\lesssim   B ^{1/2}\delta   \|   {V } \|_{ \widetilde{ \Sigma}   }^2.
\end{align*}
Like in \eqref{eq:B1211for} we have  
\begin{align*}
  F_{121} &  = \< \sech ^2\(     \varepsilon \kappa  \cdot  \) ,K  u_c V_n^2+ u_c V_u^2      + K  \frac{n_c ^2}{1+n_c}   V_u V_n\> +  \< \sech ^2\(     \varepsilon \kappa  \cdot  \) ,K   u_c V_n V_\phi + n_c V_u V_\phi     \> \\& =:   F_{1211} + F_{1212}
\end{align*}
with like in \eqref{eq:B12121},  \eqref{eq:B121111} and  \eqref{eq:B121112}  
\begin{align}\nonumber &
|F_{1212}   | \lesssim    \varepsilon   \|  {V }   \|_{  \widetilde{\Sigma}    }^2\\& \nonumber
 F_{1211}  =  F_{12111}  + F_{12112}   \text{   where} \\  & F_{12111} = \varepsilon \sqrt{K+1} \< \sech ^2\(     \varepsilon \kappa  \cdot  \)2 \psi_{\mathrm{KdV}}\left( \sqrt{\varepsilon} \cdot  \right)   ,K  V_n^2+ V_u^2     \>   \text{   and } \label{eq:F121111} \\  & |F_{12112} |  \lesssim \varepsilon ^2  \|  {V }   \|_{  \widetilde{\Sigma}    }^2.  \nonumber
\end{align}
So far the proof has been similar  to Lemma \ref{lem:1stV1}, but for different and less subtle upper bounds. Now  we  write
\begin{align*}
 F_{11} &= K\< \sech ^2\(     \varepsilon \kappa  \cdot  \)  ,  V_nV_u+ V_\phi V_u\>   +   \< \sech ^2\(     \varepsilon \kappa  \cdot  \)  ,  V_\phi V_u\>     =: F_{111}+F_{112} .
\end{align*}
modifying  the analysis in  \eqref{eq:B11i}. Then, for a small constant $ \varepsilon _1>0$ we write 
 \begin{align*}
   |F_{112}|\lesssim  \varepsilon _1 \|  \widetilde{{V }}   \|_{  \widetilde{\Sigma}    }^2 +  \varepsilon _1 ^{-1}\|  V _\phi   \|_{  \widetilde{\Sigma}    }^2 .
 \end{align*}
We bound 
\begin{align*}
   F_{111} \le 2^{-1} \< \sech ^2\(     \varepsilon \kappa  \cdot  \)  , K ^{3/2} V_n ^2 + K ^{1/2}  V_u \> .
\end{align*}
Then  
\begin{align}\label{eq:key1}
  F_{211}  +F_{111} \le  -2^{-1}   \frac{1+\varepsilon ^2+2\sqrt{1+K}\varepsilon}{ \sqrt{1+K} + \sqrt{ K} + \varepsilon }      \< \sech ^2\(     \varepsilon \kappa  \cdot  \)   ,      K V_n ^2+V_u^2+V_\phi  ^{\prime 2} + V_\phi  ^{  2}\> .
\end{align}
So we conclude  the following, completing  the proof of  Lemma  \ref {lem:2stV1}, 
\begin{align*}
  \mathcal{J}_1   \lesssim & -    \|         ( V_n , V_u, V_\phi  ^{\prime  })   \| _{  \widetilde{\Sigma}    }^2   +  \varepsilon _1 \|  \widetilde{{V }}   \|_{  \widetilde{\Sigma}    }^2 +  \varepsilon _1 ^{-1}\|  V _\phi   \|_{  \widetilde{\Sigma}    }^2   .
\end{align*}
Collecting the above estimates, we obtain 
\begin{align*}
 \|         ( V_n , V_u, V_\phi  ^{\prime  })   \| _{  \widetilde{\Sigma}    }^2   & \le C   \(  -   \frac{d}{dt} \mathcal{J}  +   \frac{d}{dt} \<   \psi  \nabla_{ \widetilde{{U}}} e(S_c ) ,   { \widetilde{V}}\>    +B^{-1 }  \|V \| _{   { \Sigma }_{1A A_1}  } ^2 \right .\\ & \left .   +B^{-1 }  \|V \| _{   { \Sigma }_{2A A_1}  } ^2 +\varepsilon _1 \|         ( V_n , V_u, V_\phi  ^{\prime  })   \| _{  \widetilde{\Sigma}    }^2   +  \|  {V }_\phi   \|_{  \widetilde{\Sigma}    }^2\)        
\end{align*}
which bootstrapping yields  \eqref{eq:lem:2stV11}.

\qed 

\textit{Proof of Lemma \ref {prop:1virial}.} Using $ \| \varphi _{iAA_1} \| _{L^\infty(\R )} \lesssim A $      by   \eqref  {eq:h1errorbounds} we obtain
\begin{equation*}
  \| \< \varphi _{iAA_1}    , e(S_c+ V) -e(S_c) \>  \|_{ L^\infty( I   )}  + \|  \<   \varphi _{iAA_1}\nabla_{ \widetilde{{U}}} e(S_c ) ,   { \widetilde{V}}\>  \|_{ L^\infty( I   )}    \lesssim AB^{1/2}\delta.
\end{equation*}
Then,  integrating in    time \eqref{eq:lem:1stV11} and  \eqref{eq:lem:1stV12}
and bootstrapping we obtain
\begin{align}\label{eq:key2}
   \|  V  \|_{ L^2( I ,  \Sigma  _{1A A_1}) }  ^2&\lesssim  AB^{1/2}\delta +
   B ^{-1}
\|V \| _{  L^2( I ,    \Sigma  _{2A A_1} ) } ^2  + \|   V    \|_{   L^2( I ,\widetilde{\Sigma})    }^2 \text{ and}
\\
   \|  V  \|_{ L^2( I ,{ \Sigma }_{2A A_1})} ^2&\lesssim AB^{1/2}\delta +
   A_1 ^{-2}
\|V \| _{  L^2( I ,  { \Sigma }_{1A A_1} ) } ^2  .\nonumber
\end{align}
Next, we use 
\begin{equation*}
  \| \< \psi     , e(S_c+ V) -e(S_c) \>  \|_{ L^\infty( I   )}  + \|  \<   \psi \nabla_{ \widetilde{{U}}} e(S_c ) ,   { \widetilde{V}}\>  \|_{ L^\infty( I   )}    \lesssim   \varepsilon ^ {-1}    B^{1/2}\delta.
\end{equation*}
 The proof of  Lemma \ref {prop:1virial}  is completed inserting in \eqref{eq:key2} the following consequence of    \eqref{eq:lem:2stV11},
\begin{align*}
  \|      ( V_n , V_u, V_\phi  ^{\prime  })   \|_{   L^2( I ,\widetilde{\Sigma})    }^2  \lesssim \varepsilon ^ {-1}    B^{1/2}\delta   +  B ^{-1}
\|V \| _{  L^2( I ,    \Sigma  _{1A A_1} ) } ^2 +  B ^{-1}
\|V \| _{  L^2( I ,    \Sigma  _{2A A_1} ) } ^2 +  \|     V_\phi    \|_{   L^2( I ,\widetilde{\Sigma})    }^2  ,
\end{align*}
which yields   \eqref{eq:sec:1virial11}  by bootstrap.

\qed

\begin{remark} \label{rem:Kpositive}Here $K>0$ is   crucial.
Firstly, it makes the equation dispersive.
Indeed, linearizing \eqref{EP} around $0$, we have $\ddot{n}=-Kn''+n-(-\partial_x^2+1)^{-1}n$,  which by $K>0$ resembles a Klein-Gordon equation.
Secondarily,  the virial functionals  are akin $I=\<\varphi,e(V)\>$. Positivity is produced
  because $e(V)\sim \frac{1}{2}(u^2+n^2)$ and, when computing $\dot{I}$, the contribution of the $cV'$ term is like $c\<\varphi,e'(V)V'\>=c\<\varphi,\partial_x(e(V))\>=-c\<\varphi',e(V)\>\sim -c(u^2+Kn^2)$.
With  $K=0$ we  lose control  of  $n$. 

\end{remark}

\section{Dispersion and Kato smoothing  for the linearized equation }\label{sec:disp}

We consider the operators
\begin{align}\label{eq:projections}
 P_{[c]}  := \sum _{i=1,2}   \xi _i [c]    \< \eta _i [c],  \cdot \>
 \text{ and } Q_{[c]}=1- P_{[c]}  
\end{align}
  associated to the spectral decomposition  
\begin{align}
  \label{eq:specdec} L^2 _a = N_g\( \mathcal{L}_c \) \oplus N_g^\perp \( \mathcal{L}_c^* \)
\end{align}
where   perpendicularity    with $ N_g  \( \mathcal{L}_c ^*  \) \subseteq  L^2 _{-a}$ is  in terms
of the $L^2$ inner product.  

 \begin{theorem}[Dispersive estimates]\label{thm:pwdisp}For any fixed   $c \in(0, \sqrt{\frac{2\mathsf{V} }{3}})$,  let $a = c  \sqrt{\varepsilon}$.
 Then there exist  constants $C_1( a ) >0$ and $C_2( a ) >0$  s.t.
   \begin{align}\label{eq:pwdisp1}
      \| e^{  t  \mathcal{L}_c }Q_{[c]}   (n_0, u_0) ^\intercal  \| _{L^2_a}\le C_1( a )   e^{-C_1( a ) t}\|    (n_0, u_0) ^\intercal\| _{L^2_a}\text{ for all } (n_0, u_0) ^\intercal\in L^2_a \text{ and }t\ge 0.
   \end{align}

 \end{theorem}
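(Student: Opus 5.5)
The approach follows Pego and Weinstein \cite{PegoWei2}: work in the weighted space $L^2_a$, conjugate by the weight, and reduce the decay estimate \eqref{eq:pwdisp1} to spectral information on $\mathcal{L}_c$ acting on $L^2_a$. Concretely, set $\mathcal{L}_{c,a}:=e^{ax}\mathcal{L}_c e^{-ax}$, which acts on $L^2$ and is obtained from \eqref{eq:lineariz} by replacing $\partial_x$ with $\partial_x-a$. This includes the nonlocal block $(-(\partial_x-a)^2+e^{\phi_c})^{-1}$, which is well defined for $a<1$ by \eqref{eq:resolvat1}-type bounds. The proof then has three steps.

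\emph{Step 1 (essential spectrum).} First I would locate the essential spectrum of $\mathcal{L}_{c,a}$. Its constant-coefficient limit at $x\to\pm\infty$ has symbol
\[
\lambda=-(\im k-a)\Big[\,\text{eigenvalues of } \begin{pmatrix} -c & 1\\ K & -c\end{pmatrix}+\begin{pmatrix}0&0\\ (1-(\im k-a)^2)^{-1}&0\end{pmatrix}\Big],
\]
with dispersion relation $(\lambda/(\im k-a) - c)^2 = K + (1-(\im k-a)^2)^{-1}$. Expanding for small $\mu=\im k-a$ gives branches $\lambda=\mu(c\mp\sqrt{K+1+\mu^2+\dots})$. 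The slow branch reads $\lambda\approx \mu(\varepsilon -\mu^2/(2\mathsf{V}))$, which is the KdV branch. At $\mu=-a$, i.e.\ $k=0$, this gives $\Re\lambda \approx -a(\varepsilon-a^2/(2\mathsf{V}))$, and it is negative exactly when $a^2<2\mathsf{V}\varepsilon$. I would check that $\Re\lambda\le -b(a)<0$ uniformly in $k\in\R$ on both branches when $a=c\sqrt{\varepsilon}$ with $c^2<2\mathsf{V}/3$. The factor $1/3$ arises from maximizing $\Re[\mu(\varepsilon-\mu^2/(2\mathsf{V}))]$ over $\mu=\im k - a$, which is the standard KdV computation. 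The fast branch $\mu(c+\mathsf{V})$ has $\Re\lambda\approx -2\mathsf{V}a<0$. Since the perturbation $\mathcal{L}_{c,a}-\mathcal{L}_{\infty,a}$ is relatively compact, Weyl's theorem then puts $\sigma_{\mathrm{ess}}$ in $\{\Re\lambda\le -b\}$.

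\emph{Step 2 (point spectrum).} Next, I would show that the only eigenvalue of $\mathcal{L}_{c,a}$ with $\Re\lambda>-b$ is $\lambda=0$, with generalized kernel spanned by $e^{ax}\xi_{1},e^{ax}\xi_2$ and of algebraic multiplicity two. Here I would use the Evans function $D(\lambda,\varepsilon)$ of \cite{BK22ARMA}: in the weighted space, eigenvalues in the strip $-b<\Re\lambda$ are exactly zeros of the analytically continued $D$. Proposition \ref{prop:Proposition 1.1} and the KdV limit (via rescaling $\lambda=\varepsilon^{3/2}\Lambda$ and comparing with the Pego–Weinstein KdV Evans function) give that $0$ is the only zero, and that it is a double zero. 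The relations \eqref{eq:gerker11}, \eqref{eq:dualkernel} and \eqref{orth:xieta2}, \eqref{orth:xieta} identify the Riesz projection with $P_{[c]}$. This step is the main obstacle. Excluding zeros of $D$ in $\{-b<\Re\lambda\le 0,\ \lambda\neq 0\}$ requires uniform control of the Jost solutions of the weighted problem both for $|\lambda|\lesssim\varepsilon^{3/2}$ (the KdV regime) and for larger $|\lambda|$, where I would argue that $D$ stays close to its free value. I would first try to import the corresponding estimates from \cite{BK22ARMA} and only re-prove whatever is missing.

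\emph{Step 3 (semigroup bound).} Finally, I would pass from the spectrum to the semigroup. Since $\mathcal{L}_{c,a}$ is not sectorial, a spectral-mapping argument is needed. I would use the Gearhart–Prüss theorem: it suffices to show $\sup_{\Re\lambda\ge -b'}\|R_{\mathcal{L}_{c,a}}(\lambda)Q\|<\infty$ for some $0<b'<b$. For large $|\Im\lambda|$ the operator is a hyperbolic first-order system, $-\partial_x$ times a symmetrizable matrix plus a bounded operator. There a symmetrizer and energy estimate, using the symmetrizer $\begin{pmatrix}K/(1+n_c)&0\\0&1+n_c\end{pmatrix}$ suggested by \eqref{eq:graden3}, give a uniform resolvent bound. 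On the compact remaining set, the bound follows from analyticity together with Steps 1 and 2. Transferring back through $e^{ax}$ then yields \eqref{eq:pwdisp1}.
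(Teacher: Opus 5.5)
The paper gives no argument of its own here: it quotes \cite[Theorem 1.3]{BK22ARMA}. Your three-step Pego--Weinstein architecture is the right one, and Step 2 is delegated to \cite{BK22ARMA} just as the paper does. The genuine gap is the high-frequency part of Step 3. For an operator $-\partial_x(\text{symmetrizable matrix})+B$ with $B$ merely bounded, a symmetrizer energy estimate only gives $\|R(\lambda)\|\lesssim(\Re\lambda-\omega)^{-1}$ for $\Re\lambda>\omega\sim\|B\|$, i.e.\ generation of the semigroup. It cannot give a bound uniform on $\{\Re\lambda\ge -b'\}$: the unweighted $\mathcal{L}_c$ has exactly this structure and its spectrum is $\im\R$. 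Two specific facts are needed instead.

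First, $\mathcal{L}_{c,a}=\mathcal{L}_{\mathrm{loc}}+B$ with $\mathcal{L}_{\mathrm{loc}}:=-\partial_x(L_c\,\cdot)+aL_c$. With your $S=\mathrm{diag}(K/(1+n_c),1+n_c)$, the symmetric matrix $SL_c$ has negative trace and determinant $KJ>0$, where $J=(c-u_c)^2-K$ as in \eqref{Useful_Id3}. So, because the wave is supersonic, the weight produces a damping $\Re\langle aL_cU,SU\rangle\le-\kappa a\|U\|^2$ with $a\sim\sqrt{\varepsilon}$. The contribution of $-\partial_x(L_c\,\cdot)$ is only $O(\varepsilon^{3/2})\|U\|^2$, so $\|R_{\mathrm{loc}}(\lambda)\|\lesssim a^{-1}$ on $\{\Re\lambda\ge-b'\}$.

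Second, the Poisson term $B=-(\partial_x-a)(-(\partial_x-a)^2+e^{\phi_c})^{-1}(\cdots)$ has norm of order $1\gg\kappa a$, so it cannot be absorbed as a bounded perturbation. You must use that it is of order $-1$, so its norm above frequency $N$ is $O(1/N)$. You also need that $\lambda P_{\le N}W=P_{\le N}(\mathcal{L}_{\mathrm{loc}}W+G)$ forces the frequencies $\le N$ of $W=R_{\mathrm{loc}}(\lambda)G$ to be $O\big((1+N/a)/|\lambda|\big)\|G\|$. Taking $N\gg a^{-1}$ and $|\Im\lambda|\gg N/a$ then gives $\|BR_{\mathrm{loc}}(\lambda)\|<1/2$, and a Neumann series closes the estimate. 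Bae--Kwon instead do the large-$\lambda$ analysis in the spatial-dynamics form \eqref{ODE_LinEP}, using the semipositive $S_1$ recalled in Section~\ref{sec:smooth2large}.

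There are also smaller points. In Step 1, $-\partial_x((L_c-L_\infty)\,\cdot)$ is a first-order perturbation. It is not relatively compact with respect to the first-order limit operator, because multiplication by a decaying function is not compact, so Weyl's theorem does not apply as stated. You can fix this by factoring out the invertible matrix $L_c(x)$ (again since $J>0$), after which the perturbation is of order zero with decaying coefficients; alternatively, argue with exponential dichotomies for \eqref{ODE_LinEP}.

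Your explanation of the $1/3$ is off. On the line $\mu=\im k-a$ the maximum of $\Re\lambda$ is at $k=0$ and equals $-a(\varepsilon-a^2/(2\mathsf{V}))$, which only requires $a^2<2\mathsf{V}\varepsilon$. The restriction $a^2<2\mathsf{V}\varepsilon/3$ is the range where this gap, as a function of $a$, is increasing (its maximum sits at the branch point $\partial_\mu\lambda=0$); it plays no role in Step 1.

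Finally, Step 2 is less of an obstacle than you think. Since only some decay rate is claimed, you only need that the weighted operator has no eigenvalue in $\{\Re\lambda\ge0\}\setminus\{0\}$. For $\Re\lambda>0$ these are the $L^2$ eigenvalues, and on $\im\R$ they are exactly the zeros of $D$; both are excluded in \cite{BK22ARMA}, see \eqref{eq:derevans0}. Isolatedness plus the high-frequency bound then give a spectrum-free strip $\{\Re\lambda\ge-b'\}$ on $\mathrm{Ran}\,Q_{[c]}$. Zeros of the continued $D$ inside $-b<\Re\lambda<0$ never need to be ruled out.
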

For the proof  see   \cite[Theorem 1.3]{BK22ARMA}.  
The following  is similar to  \cite[Proposition 6.2]{CM25D1}.
\begin{proposition}[Kato smoothing]\label{prop:smooth}There is a constant $C(  c)>0$ such that
  \begin{align}\label{eq:smooth1} &
      \left \| \int _0 ^t    e^{  (t-s) \mathcal{L}_c  } Q_{[c]} f(s) ds  \right \| _{L^2 \( \R _+ ,    \widetilde{\Sigma}\) }\le C(  c)  \|     f \| _{ L^{ 1, 1} \( \R   ,    L^{ 2}\( \R _+\)\)}\text{ for all }f\in L^{ 1, 1} \( \R   ,    L^{ 2}\( \R _+\)\)   .
   \end{align}
\end{proposition}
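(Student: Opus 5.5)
The plan is to argue as in \cite[Proposition 6.2]{CM25D1}: pass to the Fourier--Laplace side in time, reduce \eqref{eq:smooth1} to a uniform weighted bound on the resolvent along the imaginary axis, and prove that bound through the Jost function representation of the resolvent.

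\textbf{Reduction to a resolvent estimate.} Extend $f$ by zero for $s<0$ and set $u(t)=\int_0^t e^{(t-s)\mathcal{L}_c}Q_{[c]}f(s)\,ds$. Taking the Laplace transform in $t$ gives, for $\Re\lambda>0$,
\begin{align*}
\hat u(\lambda)=-R_{\mathcal{L}_c}(\lambda)Q_{[c]}\hat f(\lambda).
\end{align*}
By Plancherel in time, \eqref{eq:smooth1} would follow from a bound uniform in $\lambda=\gamma+\im\tau$, $\gamma>0$, $\tau\in\R$:
\begin{align*}
\|\sech(\varepsilon\kappa x)\,R_{\mathcal{L}_c}(\lambda)Q_{[c]}g\|_{L^2}\le C(c)\,\|g\|_{L^{1,1}}.
\end{align*}
Here Minkowski's inequality lets the $L^{1,1}_x$ norm sit outside the $L^2_\tau$ norm, and one then lets $\gamma\to0^+$ (Paley--Wiener). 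A duality and kernel argument makes this enough: write $R_{\mathcal{L}_c}(\lambda)Q_{[c]}$ with kernel $G_\lambda(x,y)$. It then suffices that
\begin{align*}
\sup_{\lambda\in\overline{\C_+}}\ \sup_{y}\ \langle y\rangle^{-1}\,\|\sech(\varepsilon\kappa\,\cdot)\,G_\lambda(\cdot,y)\|_{L^2_x}<\infty .
\end{align*}
For $|\lambda|$ large this should be routine. The operator is a first order hyperbolic system, so after diagonalizing $L_c$ the kernel is bounded by oscillatory exponentials, and the $\sech$ weight gives $L^2_x$ integrability. The nonlocal term $(-\partial_x^2+e^{\phi_c})^{-1}$ is treated perturbatively using \eqref{eq:resolvat1}.

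\textbf{Jost representation.} I write $(\mathcal{L}_c-\lambda)V=g$ as a first order $4\times4$ ODE system in $(V_n,V_u,V_\phi,V_\phi')$. For $\lambda\in\overline{\C_+}$ I build Jost solutions decaying at $\pm\infty$, using \eqref{eq:estpot}, Lemma \ref{lerm:jost} and the exterior algebra/Hodge formalism of Notation \ref{not:notation}. This gives a Green's kernel of the form (decaying at $+\infty$)$\wedge$(decaying at $-\infty$) divided by the Evans function $D(\lambda,\varepsilon)$. By Proposition \ref{prop:Proposition 1.1} and Claim \ref{claim:noeig}, $D$ vanishes on $\overline{\C_+}$ only at $\lambda=0$. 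So on compact sets away from $0$ the kernel is uniformly bounded by $C\langle x\rangle\langle y\rangle$, using the Jost bounds analogous to \eqref{eq:kernel2}, and this is absorbed by the $\sech$ weight and by the $L^{1,1}$ norm.

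\textbf{Main obstacle: $\lambda$ near $0$.} Near $0$ the quantity $D(\lambda,\varepsilon)$ vanishes to order two (the generalized kernel \eqref{eq:gerker11}), so $G_\lambda$ has a pole there. The plan is to Taylor expand the Jost functions at $\lambda=0$ to second order, using the estimates of Sect.~\ref{sec:jost}. The singular part of the kernel is then a combination of $\xi_i[c](x)\eta_j[c](y)\lambda^{-k}$, $k=1,2$, which is annihilated exactly by $Q_{[c]}$, because of \eqref{orth:xieta2}, \eqref{orth:xieta} and \eqref{eq:dualkernel}. The remainder terms must then be bounded uniformly by $\langle x\rangle\langle y\rangle$ (or with mild growth, compensated by $\sech(\varepsilon\kappa x)$). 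I expect this cancellation to be the hardest step. It needs precise Taylor remainders in $\lambda$ for the Jost functions, including the slowly decaying ones whose rate $\mu_4(\lambda,\varepsilon)$ tends to zero, and it needs to control the nonuniformity in $\varepsilon$; this is where the weight scale $\varepsilon\kappa$ with $\kappa$ small enters. The non-decaying $\eta_1[c]$ complicates the projection, and this is why the source norm is $L^{1,1}$, which makes $\langle\eta_1[c],g\rangle$ finite.
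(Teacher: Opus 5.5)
Your overall architecture matches the paper's: a time transform, reduction to a weighted resolvent bound $L^{1,1}\to\widetilde\Sigma$, a Jost representation, and a Taylor expansion at $\lambda=0$ with the pole removed by $Q_{[c]}$. The reduction step, however, has a real gap.

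\textbf{The gap in the reduction.} Letting $\gamma\to0^+$ requires the resolvent bound uniformly for $\Re\lambda>0$ near $\lambda=0$, and nothing you describe gives it.
\begin{itemize}
\item The Taylor-remainder estimates at $0$ rest on the boundedness of $\mathbf m_j=e^{-\mu_jx}f_j$ for $j=2,3$, and of $e^{\mu_3x}Z_2$ and $e^{\mu_2x}Z_3$ (Lemmas \ref{lem:bdm2} and \ref{lem:b13at0}). These bounds hold only for $\lambda\in\im\R$.
\item For $\Re\lambda>0$ one has $\Re\mu_3>\Re\mu_2>0$ by \eqref{EigenSpliting1_4}. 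Since $f_2\in\Span\{f_1,F_2,F_3\}$ generically contains $F_3\sim e^{\mu_3x}$, $\mathbf m_2$ grows like $e^{\Re(\mu_3-\mu_2)x}$ as $x\to+\infty$. So the term-by-term bounds fail off the axis.
\item You cannot recover the half-plane bound from the boundary values by a Phragm\'en--Lindel\"of argument either. That would need a priori control of $R_{\mathcal L_c}(\lambda)Q_{[c]}$ as $\lambda\to0$ inside $\C_+$, which is exactly what is missing.
\item Also, $Q_{[c]}$ is not bounded on $L^2$, because $\eta_1[c]\notin L^2$. Its natural home is $L^2_a$, see \eqref{eq:specdec}.
\end{itemize}
The missing ingredient is Theorem \ref{thm:pwdisp}. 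The exponential decay of $e^{t\mathcal L_c}Q_{[c]}$ in $L^2_a$ makes $R_{\mathcal L_c}(z)Q_{[c]}=-\int_0^{\infty}e^{-zt}e^{t\mathcal L_c}Q_{[c]}\,dt$ analytic on the strip $\Re z>-C_1(a)$. Plancherel can then be applied directly on $\im\R$, using $L^2_a\subset\widetilde\Sigma$ for $a<\varepsilon\kappa$ and density. After that, only the imaginary-axis bound \eqref{eq:smooth21} of Proposition \ref{prop:smooth2} is needed.

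\textbf{Inaccuracies in the near-zero analysis.}
\begin{itemize}
\item In the Jost representation the singular part is not visibly of the form $\xi_i[c](x)\eta_j[c](y)\lambda^{-k}$. It contains Volterra pieces such as $\Xi_1[c](x)\int_x^{+\infty}\langle\mathbf F,E_2[c]\rangle\,dy$, and orthogonality alone does not kill them. You need the identity \eqref{eq:imprelat} linking the Taylor coefficients of $f_1,f_4$ to those of $Z_1,Z_4$. You also need Lemma \ref{lem:b13at0} to see that the $\widehat R_2$ and $\widehat R_3$ contributions have no singular part.
\item $L^{1,1}$ is not needed to make $\langle\eta_1[c],g\rangle$ finite: $\eta_1[c]$ is bounded by \eqref{eq:ker2}, so $L^1$ suffices. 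The weight is needed for the leftover terms ${R}_{233}^{(1)}$ and ${R}_{333}^{(1)}$. Their output is a multiple of $\Xi_1[c](x)$, with a $y$-kernel which, up to bounded factors, is $\lambda^{-1}(e^{-\mu_2(\lambda)y}-1)$ and so grows like $|y|$. Everything else is bounded from $L^1$, see \eqref{eq:estRgood} and Lemma \ref{lem:smooth2large}.
\item The large-$|\lambda|$ regime is not routine. The term $\lambda q_2$ grows with $\lambda$, so a perturbative argument fails, and the paper relies on the Bae--Kwon structure \eqref{DecompW0R1V0}. Your $x$-dependent diagonalization of $L_c$ is the right idea, but it has to be carried out.
\end{itemize}
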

\begin{proof}
  Bae and Kwon \cite {BK22ARMA}  show  that     $e^{  t \mathcal{L}_c  }$  for $t\ge 0$  is a $C_0$ semigroup  in $ L^2(\R, \R^2 )$
  and also in  $L^2_a$.  Then  
  \begin{equation*}
    R_{\mathcal{L} _c}(z) Q_{[c]}  w_0 =-\int _{0}^{+\infty} e^{-z t} e^{  t \mathcal{L}_c } Q _{[c]}  w_0 dt \text{ for any }\Re z > - C_1( a ), 
  \end{equation*}  
  defines the resolvent in  $L^2_a$.
   In particular by Theorem    \ref{thm:pwdisp} and by Plancherel identity we have 
  \begin{align*}
  \frac{1}{2\pi}  \|  R_{\mathcal{L} _c}(\im \lambda ) Q w_0  \| _{L^2( \R  , L^2_a) }   =   \| e^{  t \mathcal{L} _c  } Q_{[c]}   w_0  \| _{L^2( \R _+, L^2_a) }\le C(a ) \|  w_0 \| _{ L^2_a} 
  \end{align*}
  for some $C(a )>0$.
   Since  for $a \in (0, \varepsilon \kappa )$
we have $L ^{ 2}   _{a}  \subset \widetilde{\Sigma}$ 
we conclude also
\begin{align*}
  \frac{1}{2\pi}  \|  R_{\mathcal{L} _c }(\im \lambda ) Q_{[c]}   w_0  \| _{L^2( \R  , \widetilde{\Sigma}a) }   =   \| e^{  t \mathcal{L} _c } Q_{[c]}    w_0  \| _{L^2( \R _+, \widetilde{\Sigma}) }\le C(a ) \|  w_0 \| _{ L^2_a}.
  \end{align*}
  Then by Plancherel,   initially for $f(t,x)$ smooth and exponentially  decaying to 0 as $|t|+|x|\to  +\infty$ and then extending by density,  we obtain  
  \begin{equation}\label{eq:fourtran}
     \begin{aligned}
        &
      \left \| \int _0 ^t    e^{  (t-s) \mathcal{L}_c } Q_{[c]}   f(s) ds  \right \| _{L^2 \( \R _+ ,    \widetilde{\Sigma}\) } =  \left \|     R_{\mathcal{L}  _c}  (\im \lambda )   Q_{[c]}  \widehat{f}(\lambda)   \right \| _{L^2 \( \R  ,    \widetilde{\Sigma}\) }\\&\le   \left \|     R_{\mathcal{L}_c }  (\im \lambda )   Q_{[c]}   \right \| _{L^\infty \( \R  ,    \mathcal{L} \( L^{ 1, 1}\( \R\),   \widetilde{\Sigma} \)  \) }   \left \|         \widehat{{f}}    \right \| _{L^2 \( \R _  ,    L^{ 1, 1}\( \R\)\) }\\&\le   \left \|     R_{\mathcal{L}_c }  (\im \lambda )  Q    \right \| _{L^\infty \( \R  ,    \mathcal{L} \( L^{ 1, 1}\( \R\),   \widetilde{\Sigma} \)  \) }  \|     f \| _{ L^{ 1, 1} \( \R   ,    L^{ 2}\( \R _+\)\) } .
     \end{aligned}
  \end{equation}
   The proof  of \eqref{eq:smooth1} is completed by means of Proposition \ref{prop:smooth2}, see below.
\end{proof}

The following is proved in \S \ref{sec:proofsmooth2} using the information on the Jost solutions derived in \S \ref{sec:jost}.
\begin{proposition}\label{prop:smooth2} For   $\varepsilon >0$ small enough    there is a constant $C(  \varepsilon)>0$  such that
  \begin{align}\label{eq:smooth21}&
     \sup  _{\lambda \in \R \backslash \{  0 \}  } \left \|     R_{\mathcal{L} _c} (\im \lambda )  Q_{[c]}   \right \| _{ \mathcal{L}\( L^{ 1,1}\( \R\),   \widetilde{\Sigma} \)   }\le C(  \varepsilon). 
   \end{align}
\end{proposition}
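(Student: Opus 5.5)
We plan to prove \eqref{eq:smooth21} by writing the resolvent $R_{\mathcal{L}_c}(\im\lambda)$ explicitly in terms of Jost solutions of the first order system associated to the eigenvalue problem $\mathcal{L}_c w - z w = g$. Integrating once in $x$ and eliminating $\phi$ through the elliptic equation, we rewrite $(\mathcal{L}_c - z)w=g$ as a first order $4\times 4$ system $Y' = \mathbf{A}(x,z)Y + G$, with $Y=(w_n,w_u,\phi,\phi')$ and $\mathbf{A}(x,z)\to \mathbf{A}_\infty(z)$ exponentially fast as $|x|\to\infty$, by Theorem \ref{thm:solwave}. For $\Re z>0$, $\mathbf{A}_\infty(z)$ has two eigenvalues with positive and two with negative real part, the spatial exponents $\mu_j(z,\varepsilon)$. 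We then define Jost solutions decaying at $+\infty$ (two of them) and at $-\infty$ (two), together with their exterior products in $\wedge^2(\C^4)$ and the Hodge dual construction from Notation \ref{not:notation}. The Green kernel is expressed through the formula \eqref{eq:hodgestar}, divided by the Evans function $D(z,\varepsilon)$ of \eqref{eq:evans}. Since $D$ vanishes in $\overline{\C_+}$ only at $z=0$ (Proposition \ref{prop:Proposition 1.1}), for $\lambda$ in any compact set away from $0$, and for $|\lambda|\to\infty$ where the Jost functions are close to their free counterparts, the kernel $G(x,y,\im\lambda)$ is uniformly bounded by $C(\varepsilon)\<x\>\<y\>$ times bounded factors. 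This already gives boundedness $L^{1,1}\to \widetilde{\Sigma}$, because $\sech(\varepsilon\kappa x)\<x\>\in L^2$ and a kernel with $|G|\lesssim \<x\>\<y\>$ maps $L^{1,1}$ into $\<x\>L^\infty$. The large-$|\lambda|$ regime needs a separate check that the oscillatory Jost functions remain bounded, and we would handle it as for the free operator plus a Volterra perturbation argument.

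The main obstacle is $\lambda\to 0$, where $D(\im\lambda,\varepsilon)$ has a zero of order two, reflecting the generalized kernel \eqref{eq:gerker11}. Here the resolvent alone blows up like $\lambda^{-2}$, and we must use $Q_{[c]}$. Our plan is to Taylor expand the Jost functions and their wedge products in $z$ at $0$ up to second order, using the estimates of \S\ref{sec:jost} to bound the remainders by $|z|^k\<x\>^{k}$ uniformly. In the expansion, the singular terms of orders $z^{-2}$ and $z^{-1}$ have kernels which are finite rank, of the form $\xi_i[c](x)\eta_j[c](y)$. These are exactly the Laurent coefficients of the Riesz projection onto $N_g(\mathcal{L}_c)$, and they are annihilated after composition with $Q_{[c]}$, because $\<\eta_i[c],Q_{[c]}f\>=0$ by \eqref{orth:xieta2} and \eqref{orth:xieta}.

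What remains are regular terms plus remainders. Their kernels involve growth at most polynomial in $x,y$ (e.g. $\<x\>\<y\>$, since $\eta_1[c]$ does not decay and $\mu_4(0)=0$ gives a neutral mode). This is acceptable because $\widetilde{\Sigma}$ has exponential weight in $x$. In $y$ we only have the $\<y\>$ weight of $L^{1,1}$, which forces us to show that after subtracting the singular part the kernel grows at most like $\<y\>$ in $y$. This is the delicate point: we would prove it by writing the difference quotients $(F(z)-F(0))/z$ of the Jost functions as integrals of $\partial_z F$, controlled by \S\ref{sec:jost}. Finally, a compactness/continuity argument in $\lambda$ glues the three regimes and yields the constant $C(\varepsilon)$ in \eqref{eq:smooth21}.
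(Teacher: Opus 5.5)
Your architecture (Green kernel from Jost solutions and the Evans function, three frequency regimes, $Q_{[c]}$ removing the $\lambda^{-2},\lambda^{-1}$ singularity) is the one the paper uses. However, two steps fail as you state them.

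\textbf{The regime $|\lambda|\to\infty$.} The Jost functions are not close to the free ones, and a Volterra perturbation around $A^\infty(\lambda,\varepsilon)$ gives no uniform bound. By \eqref{eq:pertA} the potential is $q=q_1+\lambda q_2$, where $q_2$ comes from the $x$-dependence of $L_c^{-1}$, i.e.\ of the characteristic speeds. So $\|q(\cdot,\lambda)\|_{L^1}$ grows linearly in $|\lambda|$, and Volterra bounds of type $\exp(C\|q(\cdot,\lambda)\|_{L^1})$ blow up. The true Jost functions differ from the free ones by an $O(1)$, $\lambda$-dependent unitary factor that is not close to the identity.

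The missing idea is the structure the paper takes from Bae--Kwon. After conjugation by $\widetilde{\mathbf V},\widetilde{\mathbf W}$ (of norms $O(|\lambda|)$ and $O(|\lambda|^{-1})$), the $O(\lambda)$ part of the perturbation is $\frac{\lambda}{2\sqrt K}S_1$, with $S_1$ real symmetric and supported on the $(\mu_2,\mu_3)$ block; see \eqref{DecompW0R1V0}--\eqref{S1sym}. For $\lambda\in\im\R$, where $\mu_2,\mu_3\in\im\R$, this block generator is skew-Hermitian, so its propagator is bounded uniformly in $\lambda$. Only the remainder, bounded as in \eqref{BdW0R2V0_1}--\eqref{bdR1jk} uniformly in $\lambda$ and with $L^1$ norm $O(\sqrt\varepsilon)$, is treated perturbatively. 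This yields $\|\mathbf U\|_{L^\infty}\le C\|\mathbf F\|_{L^1}$ for $|\lambda|\ge 1$.

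\textbf{The $y$-growth near $\lambda=0$.} You correctly flag this as the delicate point but do not resolve it, and your mode bookkeeping is off. By \eqref{EigenSpliting1_2} there is one exponent with negative and three with positive real part, not two and two. The neutral modes are $\mu_2,\mu_3$, with $\mu_2(0)=\mu_3(0)=0$, whereas $\mu_4(0)=\sqrt{2\mathsf V\varepsilon}(1+O(\varepsilon))$ by \eqref{eq:mu40}.

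The $\mu_1,\mu_4$ contributions carry a factor $e^{-\sqrt{\mathsf V\varepsilon}\,|x-y|}$ that absorbs any polynomial growth. The $\mu_2,\mu_3$ contributions do not, and their $y$-dependence contains $e^{-\mu_{2,3}(z)y}$. Every $z$-difference quotient of a full Jost product therefore costs a factor $\<y\>$. Since $D$ vanishes to second order, your scheme yields at least $\<y\>^2$, which $L^{1,1}$ cannot absorb; in particular the claimed remainder bound $|z|^k\<x\>^k$ is false. In addition, $\widehat f_2,\widehat f_3$ normalized at $-\infty$ grow at $+\infty$.

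The paper fixes this in two steps:
\begin{enumerate}
\item It replaces $\widehat f_2,\widehat f_3$ by $f_j=\lambda\widehat f_j-c_jf_4$ from \eqref{eq:deff2}, which are bounded on $\R$ by Lemma \ref{lem:bdm2}.
\item It factors out the unimodular phase $e^{-\mu_2(\lambda)y}$ and Taylor expands only $e^{\mu_2x}\mathbf m_2(x)\cdot e^{\mu_2y}Z_3(y)$, whose $\lambda$-derivatives are bounded uniformly in $y$ by \eqref{eq:b13_estimate_partial}.
\end{enumerate}
The remainder then grows only in $x$. The single leftover term contains $e^{-\mu_2(\lambda)y}-1=O(|\mu_2(\lambda)|\,|y|)$ exactly once, and against the prefactor $\lambda/D(\lambda)$ this gives one power of $\<y\>$, cf.\ \eqref{eq:r2331}.

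By contrast, your identification of the singular part with $-P_{[c]}/z-\mathcal L_cP_{[c]}/z^2$ through the resolvent on $L^2_a$ (meromorphic at $0$ by Theorem \ref{thm:pwdisp}) is a valid shortcut for the explicit cancellation \eqref{eq:imprelat}--\eqref{eq:smooth21canc}. You would still need to check that the Jost kernel continues meromorphically across $\im\R$ near $0$.
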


\section{Bounding $\| V _\phi \| _{\widetilde{\Sigma}}$  by Kato smoothing}\label{sec:smooth}

We prove Lemma \ref{eq:sec:smooth11} assuming Proposition \ref{prop:smooth2}.
From the second line in \eqref{EP2} we have $V_\phi = R _{h_c}(-1) V_n +  R _{h_c} (-1)O(V_\phi ^2)$. Then by \eqref{eq:resolvat1} \small
\begin{align}  \nonumber
  \| R _{h_c} (-1)O(V_\phi ^2) \| _{\widetilde{\Sigma}}&\lesssim  \| V_\phi ^2  \| _{\widetilde{\Sigma}}\lesssim \| V_\phi    \| _{L^\infty } (  \|V  \|_{  { \Sigma }_{1A A_1}} + \| V  \|_{  { \Sigma }_{2A A_1}}  )\\& \lesssim \| \widetilde{V}     \| _{L^2 } (  \|V  \|_{  { \Sigma }_{1A A_1}} + \| V  \|_{  { \Sigma }_{2A A_1}}  ) \lesssim B ^{1/2}\delta (  \|V  \|_{  { \Sigma }_{1A A_1}} + \| V  \|_{  { \Sigma }_{2A A_1}}  )\label{eq:smooth---1}
\end{align}
\normalsize
where we used again \eqref{EP2} and \eqref{eq:h1errorbounds}.  To prove Lemma \ref{prop:smooth11} we need to bound 
$\| R _{h_c}(-1)  \widetilde{V} \| _{L^2(I, \widetilde{\Sigma}   )}$.  
Multiplying the first equation in \eqref{EP2}  by $\zeta_{B}$ gives
\begin{align}     \nonumber
    \partial _t\( \zeta  _B {\widetilde{V}}\)& =  \mathcal{L}  _{c_0} \zeta  _B {\widetilde{V}} + [ \zeta  _B,\mathcal{L}  _{c_0} ] \widetilde{V}   + \zeta  _B \partial _x\( 0,  (-\partial _x ^2 + e^{\phi _{c }} ) ^{-1}  e^{\phi _{c }} \(  e^{V _\phi}-1-V _\phi \)  \) ^\intercal  \nonumber
\\&
- \zeta  _B  \partial _x \sigma _1 \( \nabla_{\widetilde{U}} E(S_c+V)      -  \nabla_{\widetilde{U}} E(S_c ) - \nabla ^2_{\widetilde{U}U} E(S_c )V \) \label{eq:prekato1}\\&+ ( \mathcal{L}  _{c } - \mathcal{L}  _{c_0}) \zeta  _B {\widetilde{V}}
+  \zeta  _B    (\dot D-c) \( \widetilde{S}_c+ \widetilde{V} \)'- \zeta  _B  \dot c \partial _c \widetilde{S}_c . \nonumber
\end{align}
Next, for $ P=P_{[c_0]}$ and $Q=   Q_{[c_0]} $ we consider the spectral decomposition
 \begin{align}\label{eq:deftildw}
    \zeta_{B}\widetilde{V}=P \zeta_{B}\widetilde{V}+ \widetilde{W} \text { where }  \widetilde{W}:= Q \zeta_{B}\widetilde{V} .
 \end{align}
The first and preliminary observation is the following.

\begin{lemma}
  \label{lem:disczbv} We have
\begin{align}
   \|  P \zeta_{B}\widetilde{V}  \| _{L^2}\lesssim B ^{-1}  (  \|V  \|_{  { \Sigma }_{1A A_1}} + \| V  \|_{  { \Sigma }_{2A A_1}}  ) .\label{eq:disczbv}
\end{align}
\end{lemma}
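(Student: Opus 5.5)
Since $P=P_{[c_0]}=\sum_{i=1,2}\xi_i[c_0]\<\eta_i[c_0],\cdot\>$ and $\|\xi_i[c_0]\|_{L^2}\lesssim_\varepsilon 1$, it suffices to bound the two scalars $\<\zeta_B\widetilde V,\eta_i[c_0]\>$, $i=1,2$, by $B^{-1}(\|V\|_{\Sigma_{1AA_1}}+\|V\|_{\Sigma_{2AA_1}})$. The plan is to use the orthogonality conditions \eqref{61}, which are imposed with $c=c(t)$ rather than $c_0$, and to exploit that $\zeta_B\equiv 1$ on $[-B,B]$, with $|1-\zeta_B(x)|\lesssim |x|/B$ wherever $\zeta_B\neq 1$.

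For $i=2$, I write $\<\zeta_B\widetilde V,\eta_2[c_0]\>=\<\widetilde V,\eta_2[c]\>-\<\widetilde V,(1-\zeta_B)\eta_2[c]\>+\<\zeta_B\widetilde V,\eta_2[c_0]-\eta_2[c]\>$. The first term vanishes by \eqref{61}. The second is handled exactly as $A_{111}$ in \S\ref{sec:discr}: insert $1=\vartheta_{1A_1}+\vartheta_{2A_1}$ and $\zeta_A\zeta_A^{-1}$, and use the exponential decay of $\eta_2[c]$ (Theorem \ref{thm:solwave}) to get $\|\zeta_A^{-1}(1-\zeta_B)\eta_2[c]\|_{L^2}\lesssim B^{-1}$. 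The third term is bounded by $|c-c_0|\,\|\zeta_A^{-1}\partial_c\eta_2\|_{L^2}\lesssim\delta$ times the $\Sigma$ norms, using Lemma \ref{lem:solwave4} and \eqref{eq:h1errorbounds}. Since $\delta\ll B^{-1}$ by \eqref{eq:relABg}, this term is absorbed.

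For $i=1$, orthogonality gives directly $\<\zeta_B\widetilde V,\eta_1[c]\>=0$, so $\<\zeta_B\widetilde V,\eta_1[c_0]\>=\<\zeta_B\widetilde V,\eta_1[c_0]-\eta_1[c]\>$. The difficulty is that $\eta_1$ does not decay at $-\infty$: by \eqref{eq:ker2} it tends to the constant $\theta_1\int_\R\partial_c(u_c,n_c)^\intercal$. The difference $\eta_1[c_0]-\eta_1[c]$ is therefore bounded but not decaying, of size $|c-c_0|$ times a constant that is bounded for fixed $\varepsilon$ by Lemma \ref{lem:solwave4}. I pair it with $\zeta_B\widetilde V=\zeta_A\widetilde V\cdot\zeta_B/\zeta_A$ and use $\|\zeta_B/\zeta_A\|_{L^2}\lesssim B^{1/2}$, which gives a bound $\lesssim\delta B^{1/2}(\|V\|_{\Sigma_{1AA_1}}+\|V\|_{\Sigma_{2AA_1}})$. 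This is $\le B^{-1}(\cdots)$ because $\log\delta^{-1}\gg B$.

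The main obstacle is precisely this non-decay of $\eta_1$, together with keeping track of $\varepsilon$-dependent constants such as $\theta_2\sim\varepsilon^{-2}$. Both are harmless because $\delta$ is chosen after $B$ and $\varepsilon$, and this ordering is the only delicate point of the argument. Collecting the three estimates yields \eqref{eq:disczbv}.
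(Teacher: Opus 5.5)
Your argument is correct and is essentially the paper's. The paper writes $P=P_{[c]}+(P-P_{[c]})$ and, for the $\eta_1[c_0]$ coefficient, splits $\eta_1$ via \eqref{eq:ker2} into its $\eta_2$-part (handled by \eqref{eq:mod111}) and its integral part (handled by orthogonality at $c$ plus a difference term of size $|c-c_0|\,\|\zeta_B/\zeta_A\|_{L^2}\lesssim\delta B^{1/2}$); this is only a repackaging of your direct uniform bound on $\eta_1[c_0]-\eta_1[c]$, which uses Lemma \ref{lem:solwave4} with $j=2$ and the $C^1$ dependence of $\theta_1,\theta_2$ on $c$.

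One harmless slip: $\eta_1[c]$ tends to $0$ at $-\infty$ and to $\theta_1\int_\R\partial_c(u_c,n_c)^\intercal$ at $+\infty$, not at $-\infty$; this does not matter because you only use an $L^\infty$ bound on all of $\R$.
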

\begin{proof} The proof is a simplified version of  similar   one in \cite{CM25D1}. 
   We have
\begin{align*}
  P \zeta_{B}\widetilde{V} =   P_{[c]}\zeta_{B}\widetilde{V} + \(  P -  P_{[c ]}  \) \zeta_{B}\widetilde{V} .
\end{align*}
Since  $\< \eta _1 [c],  \zeta_{B}\widetilde{V} \>=0$  by  \eqref{61}, 
by Theorem \ref{thm:solwave} and by \eqref{eq:defeta2} we obtain  \small
\begin{align*}
  P_{[c]}\zeta_{B}\widetilde{V}  =   \xi _2 [c]    \< \eta _2 [c],  \zeta_{B}\widetilde{V} \> =   \xi _2 [c]    \< \eta _2 [c],  ( \zeta_{B} -1)(\vartheta _{1A_1} + \vartheta _{2A_1}) \widetilde{V}  \> =O\(  B ^{-1} (  \| V  \|_{  { \Sigma }_{1A A_1}} + \| V \|_{  { \Sigma }_{2A A_1}}  ) \) .
\end{align*}\normalsize
 Next we focus on
\begin{align*}
   \(  P_{[c ]} -  P  \) \zeta_{B}\widetilde{V}   = \sum _{i=1,2}\left [  {(\xi _i [c]  -\xi _i[c_0] )  \< \eta _i [c], \zeta_{B}\widetilde{V}\>}
+\xi _i [c_0] \< \eta _i [c]-\eta _i[c_0]   , \zeta_{B}\widetilde{V}\>\right ] .
\end{align*}
Here    \eqref{61} implies   $\< \eta _1 [c], \zeta_{B}\widetilde{V}\> =0$ and \small
\begin{align}\label{eq:mod111}
   |\< \eta _2 [c], \zeta_{B}\widetilde{V}\>|\lesssim   B ^{-1}  \|   \zeta _A \widetilde{V} \| _{ L^2}  =  B ^{-1}  \|   \zeta _A (\vartheta _{1A_1} + \vartheta _{2A_1}) \widetilde{V} \| _{ L^2} \le  B ^{-1}  (  \| \widetilde{V}  \|_{  { \Sigma }_{1A A_1}} + \| \widetilde{V}  \|_{  { \Sigma }_{2A A_1}}  ) .
\end{align} \normalsize
It is elementary, dropping the $[c_0]$ and writing $\xi _j= \xi _j[c_0]$,   that
\begin{align}\label {eq:disczbv1}
   \| \xi _2   \< \eta _2 [c]-\eta _2 [c_0] , \zeta_{B}\widetilde{V}\> \| _{L^2}&\lesssim |\< \eta _2 [c]-\eta _2[c_0]  , \zeta_{B}\widetilde{V}\>|
\\& \lesssim |c-c_0| \| \zeta_{B} \widetilde{V} \| _{L^2}\lesssim \delta  (  \| V  \|_{  { \Sigma }_{1A A_1}} + \| V  \|_{  { \Sigma }_{2A A_1}}  ) . \nonumber
\end{align}
We have the following, where we exploit  \eqref{61},
\begin{align}\label {eq:disczbv2}
   \| \xi _1  \< \eta _1 [c]-\eta _1 [c_0]   , \zeta_{B}\widetilde{V}\> \| _{L^2}\lesssim |\< \eta _1 [c]-\eta _1  [c_0], \zeta_{B}\widetilde{V}\>|  =|\<  \eta _1 [c_0] , \zeta_{B}\widetilde{V}\>| .
\end{align}
Up to this point the proof is verbatim the same of  \cite{CM25D1}. Next   by \eqref{eq:ker2}
\begin{align*}
  \<  \eta _1[c_0] , \zeta_{B}\widetilde{V}\> &= \theta _1[c_0] \<  \int _{-\infty}^x  \left .   \partial _c (u_c(x'),n_c(x') )^\intercal \right |_{c=c_0}   dx'    , \zeta_{B}\widetilde{V}\>  \\& +\frac{\theta _2 [c_0]}{\theta _3 [c_0]} \< \eta _2[c_0] ,   \zeta_{B}\widetilde{V}\> =: \theta _1[c_0] I+II.
\end{align*}
Then like in \eqref{eq:mod111}  and  \eqref{eq:disczbv1} \small
\begin{align}\label {eq:disczbv3}
  |II|\lesssim  |\< \eta _2 [c], \zeta_{B}\widetilde{V}\>|+|\< \eta _2[c_0]   -  \eta _2 [c] ,   \zeta_{B}\widetilde{V}\>| \lesssim B ^{-1}  (  \| V  \|_{  { \Sigma }_{1A A_1}} + \| V  \|_{  { \Sigma }_{2A A_1}}  ) .
\end{align}\normalsize
We have 
\begin{align*}
  I&=  \<  \int _{-\infty} ^{x } \partial _c (u_c(x'),n_c(x') )^\intercal   dx'       , \zeta_{B}\widetilde{V} \> \\& +  \<  \int _{-\infty} ^{x }   \left [ \left .   \partial _c (u_c(x'),n_c(x') )^\intercal \right |_{c=c_0} -  \partial _c (u_c(x'),n_c(x') )^\intercal   \right ]   , \zeta_{B}\widetilde{V}\> =:I_1+I_2.  
\end{align*}
By   $\< \eta _1 [c], \zeta_{B}\widetilde{V}\> =0$, \eqref{eq:ker2} and  \eqref{eq:mod111},  $|I_1|$ is  bounded above by the right hand side in \eqref{eq:disczbv} and
\begin{align*}
  I_2&= (c-c_0)   \int _0 ^1   \<  \int _{-\infty} ^{x }   \left .   \partial _c (u_c(x'),n_c(x') )^\intercal \right |_{c=c_0+s(c-c_0)} dx' ,    \zeta_{B}\widetilde{V}\> ds
\end{align*}
so that by  inequality  \eqref{eq:solwave3} we have 
\begin{align*}
  |I _2| \lesssim  |c-c_0|   \left \|     \frac{\zeta_{B}}{\zeta_{A}}    \right  \| _{L^2}   \left \|       \zeta_{A} \widetilde{V}  (\vartheta _{1A_1} + \vartheta _{2A_1}) \widetilde{V}   \right  \| _{L^2}\lesssim \delta B ^{1 /2 }   (  \| V  \|_{  { \Sigma }_{1A A_1}} + \| V  \|_{  { \Sigma }_{2A A_1}}  ).
\end{align*}
 
\end{proof}

We now focus on the term $\widetilde{W}$   in \eqref{eq:deftildw}. Applying the projection $Q $   to equation \eqref{eq:prekato1}
we obtain
\begin{align}
    \partial _t\widetilde{W}  &=  \mathcal{L}  _{c_0} \widetilde{W}  + [ \zeta  _B,\mathcal{L}  _{c_0} ]\widetilde{W} \nonumber
\\& +  Q\zeta  _B    (\dot D-c) \( \widetilde{S}_c+ \widetilde{V} \)'- Q\zeta  _B  \dot c \partial _c \widetilde{S}_c   \label{eq:gKdV22}
\\&
+   Q  \zeta_{B} \(   \mathcal{L}_{c } - \mathcal{L} _{c _0}  \) \widetilde{V}   \label{eq:gKdV23}  \\&  - Q \zeta  _B \partial _x \sigma _1 \( \nabla_{\widetilde{U}} E(S_c+V)      -  \nabla_{\widetilde{U}} E(S_c ) - \nabla ^2_{\widetilde{U}U} E(S_c )V \)       \label{eq:gKdV24}   \\&   + Q \zeta  _B \partial _x\( 0,  R_{h_c} (-1)  e^{\phi _c} \(  e^{V _\phi}-1-V _\phi \)  \) ^\intercal    \label{eq:gKdV25}
\end{align}
where we use \eqref{eq:defhc}.
We next consider
\begin{align}\label{eq:expv1}
  \widetilde{W} &=e^{  t \mathcal{L} _{c _0}}  \widetilde{W}(0) +   \int _{0}^t e^{ (t-s) \mathcal{L} _{c _0}}Q [\zeta_{B}, \mathcal{L} _{c _0} ]\widetilde{V }ds \\& \quad+     \int _{0}^t e^{  (t-s)\mathcal{L}_{c _0} } \text{lines \eqref{eq:gKdV22}--\eqref{eq:gKdV25}} ds.\label{eq:expv2}
\end{align}
  The proof of   Lemmas \ref{lem:w0} and \ref{lem:gKdV22}   is  the same  of  the analogous ones in   \cite{CM25D1}. In both cases the first inequality follows from $ \| R _{h_c}(-1)\| _{\mathcal{L}(\widetilde{\Sigma} )}\lesssim 1$.

\begin{lemma}\label{lem:w0} We have
   \begin{align}\label{eq:w0} \| R _{h_c}(-1) e^{  t \mathcal{L}_{c _0} }   \widetilde{W}(0)\| _{L^2 (\R _+, \widetilde{\Sigma})}\lesssim
      \| e^{  t \mathcal{L}_{c _0} }   \widetilde{W}(0)\| _{L^2 (\R _+, \widetilde{\Sigma})}\le  \sqrt{\delta}.
   \end{align}
\end{lemma}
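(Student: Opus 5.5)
The first inequality is immediate: $R_{h_c}(-1)$ is bounded on $\widetilde{\Sigma}$ uniformly in $c$ near $\mathsf{V}$. This follows from the kernel bound \eqref{eq:resolvat1}, $|R_{h_c}(-1)(x,y)|\le Ce^{-|x-y|}$, together with
\[
\frac{\sech(\varepsilon\kappa x)}{\sech(\varepsilon\kappa y)}\lesssim e^{\varepsilon\kappa|x-y|}.
\]
Since $\varepsilon\kappa<1/2$, the weighted kernel is dominated by $e^{-|x-y|/2}$, and Young's inequality gives the bound. We apply this pointwise in $t$ and then take the $L^2(\R_+)$ norm.

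For the second inequality we use the exponential decay of Theorem \ref{thm:pwdisp} in $L^2_a$, with $a=c\sqrt{\varepsilon}$ for a fixed admissible $c$. Note that $\widetilde{W}(0)=Q\widetilde{W}(0)$ because $Q=Q_{[c_0]}$ is a projection. Hence
\[
\|e^{t\mathcal{L}_{c_0}}\widetilde{W}(0)\|_{L^2_a}\le C_1(a)e^{-C_1(a)t}\|\widetilde{W}(0)\|_{L^2_a}.
\]
One point needs care: Theorem \ref{thm:pwdisp} gives $L^2_a\subset\widetilde{\Sigma}$ only for $a<\varepsilon\kappa$, while here $a\sim\sqrt{\varepsilon}\gg\varepsilon\kappa$. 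I would avoid this by bounding the $\widetilde{\Sigma}$ norm directly by a weighted norm that is controlled. Since $\sech(\varepsilon\kappa x)\le 2e^{-\varepsilon\kappa|x|}$, we have $\|\cdot\|_{\widetilde{\Sigma}}\lesssim\|\cdot\|_{L^2_{b}}+\|\cdot\|_{L^2_{-b}}$ for any $0<b\le\varepsilon\kappa$. The dispersive estimate with weight $b$ small must then be used in both directions. Since \cite{BK22ARMA} establishes estimates on $L^2_a$ only for $a>0$, I would treat the $x<0$ side by interpolating $\|\cdot\|_{\widetilde{\Sigma}}\lesssim\|\cdot\|_{L^2_a}$. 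This holds because on $x>0$ we have $\sech(\varepsilon\kappa x)\lesssim e^{ax}$ trivially, and on $x<0$ we have $e^{ax}\ge e^{-a|x|}$, which is smaller than $\sech$. The $x<0$ side is therefore the real issue.

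I expect the honest approach, as in \cite{CM25D1}, is to integrate in time using the smoothing identity already established in the proof of Proposition \ref{prop:smooth}:
\[
\|e^{t\mathcal{L}_{c_0}}Q w_0\|_{L^2(\R_+,\widetilde{\Sigma})}\le C(a)\|w_0\|_{L^2_a},
\]
which is stated there verbatim. So the plan is to invoke that inequality with $w_0=\zeta_B\widetilde{V}(0)$ and obtain
\[
\|e^{t\mathcal{L}_{c_0}}\widetilde{W}(0)\|_{L^2(\R_+,\widetilde{\Sigma})}\le C(a)\|Q\|_{\mathcal{L}(L^2_a)}\|\zeta_B\widetilde{V}(0)\|_{L^2_a}.
\]
Here $Q$ is bounded on $L^2_a$ because $\eta_i[c_0]\in L^2_{-a}$. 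This holds for $\eta_2$ by its exponential decay. For $\eta_1$, the function tends to a constant as $x\to+\infty$, where $e^{-ax}$ decays, and is exponentially small as $x\to-\infty$, so $\eta_1\in L^2_{-a}$ as well.

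Finally, $\zeta_B e^{ax}\lesssim 1$ provided $a<1/B$. This holds in the regime \eqref{eq:relABg} if $a$ is chosen with $a\le B^{-1}$; alternatively one uses $\|e^{ax}\zeta_B\|_{L^\infty}\le C(B)$, which is finite whenever $a<B^{-1}$. Then, by \eqref{eq:h1errorbounds},
\[
\|\zeta_B\widetilde{V}(0)\|_{L^2_a}\lesssim\|\widetilde{V}(0)\|_{L^2}\lesssim B^{1/2}\delta,
\]
and $C(a)B^{1/2}\delta\le\sqrt{\delta}$ by $\log\delta^{-1}\gg\log\epsilon^{-1}\gg A\gg B$. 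The main obstacle is the compatibility of the weights: $a$ must be admissible for Theorem \ref{thm:pwdisp}, small enough that $e^{ax}\zeta_B$ is bounded, and such that $L^2_a$ controls $\widetilde{\Sigma}$ (which needs $a<\varepsilon\kappa$). If Theorem \ref{thm:pwdisp} only holds for $a$ comparable to $\sqrt{\varepsilon}$, I would first try to choose $\kappa$ and $B$ relative to $\varepsilon$ so that these constraints are compatible, as the paper implicitly does. Failing that, I would absorb the discrepancy into the large constant $C(a)$, which is dominated by the smallness of $\delta$.
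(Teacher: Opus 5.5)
Your proof is correct and follows the route the paper takes by deferring to \cite{CM25D1}: boundedness of $R_{h_c}(-1)$ on $\widetilde{\Sigma}$ gives the first inequality, and the second comes from the homogeneous smoothing bound in the proof of Proposition \ref{prop:smooth} (the $L^2_a$ decay of Theorem \ref{thm:pwdisp}) applied to $\widetilde{W}(0)=Q\zeta_B\widetilde{V}(0)$, with $\|\zeta_B\widetilde{V}(0)\|_{L^2_a}\lesssim\|\widetilde{V}(0)\|_{L^2}\lesssim B^{1/2}\delta$ and the smallness of $\delta$. Your weight-compatibility worry is moot, because Theorem \ref{thm:pwdisp} allows any $c\in(0,\sqrt{2\mathsf{V}/3})$, so for fixed $\varepsilon$ you simply take $0<a<\min\{\varepsilon\kappa,B^{-1}\}$ (and below the decay rates of $\xi_i[c_0]$, $\eta_i[c_0]$), exactly as in Proposition \ref{prop:smooth}; note, however, that your fallback of absorbing a mismatch into $C(a)$ would not have worked, since for $a>\varepsilon\kappa$ the embedding $L^2_a\subset\widetilde{\Sigma}$ genuinely fails as $x\to-\infty$ (your sentence claiming it holds there is inverted).
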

\qed

\begin{lemma}\label{lem:gKdV22} We have
   \begin{align}\label{eq:gKdV221} \left \| R _{h_c}(-1)  \int _{0}^t e^{  (t-s)\mathcal{L}_{ c _0 } }   \text{line  \eqref{eq:gKdV22}} ds\right  \| _{L^2 (I, \widetilde{\Sigma})} &\lesssim
     \left  \|  \int _{0}^t e^{  (t-s)\mathcal{L}_{ c _0 } }   \text{line  \eqref{eq:gKdV22}} ds\right \| _{L^2 (I, \widetilde{\Sigma})}\\& \lesssim   \|  \dot D -c \| _{L^2 \( I  \) }+ \|  \dot  c \| _{L^2 \( I  \) }     .\nonumber
   \end{align}
\end{lemma}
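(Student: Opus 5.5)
The first inequality in \eqref{eq:gKdV221} follows at once from $\| R _{h_c}(-1)\| _{\mathcal{L}(\widetilde{\Sigma} )}\lesssim 1$. This operator bound is a consequence of the kernel estimate \eqref{eq:resolvat1}. The kernel decays like $e^{-|x-y|}$, while the weight $\sech(\varepsilon\kappa x)$ varies only on the scale $(\varepsilon\kappa)^{-1}\gg1$. Hence $\sech(\varepsilon\kappa x)\sech^{-1}(\varepsilon\kappa y)\lesssim e^{\varepsilon\kappa|x-y|}$, and the conjugated kernel is still bounded by a fixed multiple of $e^{-|x-y|/2}$, so Schur's test applies.

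For the second inequality, write line \eqref{eq:gKdV22} as $(\dot D-c)(s)\,g_1(s)+\dot c(s)\, g_2(s)$. Here
\[
g_1:=Q\zeta_B(\widetilde S_c+\widetilde V)' , \qquad g_2:=-Q\zeta_B\partial_c\widetilde S_c .
\]
The term with $\zeta_B\widetilde V'$ is the delicate one, since it carries a derivative of $\widetilde V$ and $\widetilde V$ is only $H^1$. I would avoid it by using $\zeta_B \widetilde V'=(\zeta_B\widetilde V)'-\zeta_B'\widetilde V$. Then I would use \eqref{eq:pwdisp1} in $L^2_a$, with $a$ chosen in $(0,\varepsilon\kappa)$ so that $L^2_a\subset\widetilde\Sigma$. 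Together with Young's inequality in time, this gives for any $g\in L^\infty(I,L^2_a)$
\[
\Big\|\int_0^t e^{(t-s)\mathcal L_{c_0}}Q\,\alpha(s)g(s)\,ds\Big\|_{L^2(I,\widetilde\Sigma)}\lesssim \|e^{-C_1(a)t}\|_{L^1}\,\|\alpha\|_{L^2(I)}\,\sup_s\|g(s)\|_{L^2_a}.
\]
This is exactly the form of the desired bound, once $\sup_s\|g_j(s)\|_{L^2_a}\lesssim1$ is verified.

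For the soliton pieces $\zeta_B\widetilde S_c'$ and $\zeta_B\partial_c\widetilde S_c$, the bound $\sup_s\|g_j(s)\|_{L^2_a}\lesssim1$ holds by the exponential decay in Theorem \ref{thm:solwave} and Lemma \ref{lem:solwave4}, since the rate $\sqrt\varepsilon\,\alpha$ exceeds $a=c\sqrt\varepsilon$ for suitable constants. For $\zeta_B'\widetilde V$, note $e^{ax}\zeta_B\in L^\infty$ because $a\ll B^{-1}$, and use \eqref{eq:h1errorbounds}. The boundedness of $Q$ on $L^2_a$ follows from the decomposition \eqref{eq:specdec}.

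The main obstacle is the derivative term $\partial_x(\zeta_B\widetilde V)$, since $e^{t\mathcal L_{c_0}}$ is not smoothing. I would handle it by the structure $\mathcal L_{c_0}=-\partial_x[\cdots]$: $\partial_x$ essentially commutes with the semigroup up to bounded, exponentially localized lower-order terms. Alternatively, one can use that $\widetilde V\in H^1$ with $\|\widetilde V\|_{H^1}\lesssim B^{1/2}\delta$, so that $\|(\zeta_B\widetilde V)'\|_{L^2_a}\lesssim B^{1/2}\delta\lesssim 1$ directly. Since only the size of $g$ in $L^\infty_tL^2_a$ enters, this direct $H^1$ bound suffices, which is why the argument is identical to \cite{CM25D1}.
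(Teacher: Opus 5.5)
Your proof of the first inequality, via Schur's test on the conjugated kernel from \eqref{eq:resolvat1}, is exactly the paper's reason. For the second inequality the paper only refers to \cite{CM25D1}. Your mechanism is a sound one: exponential decay \eqref{eq:pwdisp1} in $L^2_a\subset\widetilde\Sigma$, Young's inequality in time, and control of $\zeta_B\widetilde V'$ only through $\sup_t\|\widetilde V(t)\|_{H^1}\lesssim B^{1/2}\delta$. The rewriting $\zeta_B\widetilde V'=(\zeta_B\widetilde V)'-\zeta_B'\widetilde V$ and the unproved claim that $\partial_x$ nearly commutes with $e^{t\mathcal L_{c_0}}$ are unnecessary; drop them.

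However, one step fails as written: your choice of the weight $a$. To have $e^{ax}\zeta_B\in L^\infty$ you need $a\le B^{-1}$, and you then use this same $a$ for every piece of line \eqref{eq:gKdV22}. The constants in \eqref{eq:pwdisp1} depend on $a$. The ratio of prefactor to decay rate must degenerate as $a\to0^+$, since there is no decay in unweighted $L^2$; typically it grows like $a^{-1}$. So for the soliton pieces $(\dot D-c)\zeta_B\widetilde S_c'$ and $\dot c\,\zeta_B\partial_c\widetilde S_c$ your implicit constant depends on $B$, roughly like $B$. This is not harmless. In the proof of Lemma \ref{prop:smooth11} this lemma is combined with \eqref{eq:discrest1D}, where $\|\dot D-c\|_{L^2(I)}$ gains only $B^{-1/2}$. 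A constant of order $B$ turns that gain into $B^{1/2}$, and the absorption in the proof of Proposition \ref{prop:continuation} breaks down.

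The fix is to split line \eqref{eq:gKdV22} and use two weights.
\begin{itemize}
\item For the soliton pieces, take $a_1\in(0,\varepsilon\kappa)$ depending only on $\varepsilon$. Since $\zeta_B\le 1$, and $\widetilde S_{c(s)}'$, $\partial_c\widetilde S_{c(s)}$ are bounded by $e^{-\gamma\sqrt{\varepsilon}|x|}$ for some $\gamma>0$ uniformly in $s$ (Theorem \ref{thm:solwave}, Lemma \ref{lem:solwave4}), $\sup_s\|g_j(s)\|_{L^2_{a_1}}$ and the resulting constant are independent of $B$. Alternatively, apply Proposition \ref{prop:smooth} directly, since these terms are dominated pointwise by $(|\dot D-c|+|\dot c|)(t)\,e^{-\gamma\sqrt{\varepsilon}|x|}$.
\item Only the term $(\dot D-c)\zeta_B\widetilde V'$ needs $a_2\le\min(\varepsilon\kappa,B^{-1})$. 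There the $B$-dependent constant multiplies $\sup_t\|\zeta_B\widetilde V'(t)\|_{L^2_{a_2}}\lesssim B^{1/2}\delta$. This product is small because $\delta$ is chosen after $B$ in \eqref{eq:relABg}.
\end{itemize}
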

\qed

The following  is more complicated and weaker than the corresponding lemma in   \cite{CM25D1}, and this is the reason why in \S \ref{sec:virial} here we had a 3rd virial inequality.

\begin{lemma}\label{lem:gKdV23} We have
   \begin{align}\label{eq:gKdV231}\left \| R _{h_c}(-1)  \int _{0}^t e^{  (t-s)\mathcal{L}_{ c _0 } }   \text{line  \eqref{eq:gKdV23}} ds\right  \| _{L^2 (I, \widetilde{\Sigma})} & \lesssim  \sqrt{ \delta}  (\| V \| _{L^2(I,  { \Sigma }_{1A A_1}   )}+\| V \| _{L^2(I,  { \Sigma }_{2A A_1}   )}). 
   \end{align}
\end{lemma}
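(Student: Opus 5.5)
Since $\| R _{h_c}(-1)\| _{\mathcal{L}(\widetilde{\Sigma} )}\lesssim 1$, the left hand side of \eqref{eq:gKdV231} is bounded by the same norm without $R _{h_c}(-1)$. The plan is to reduce that norm to the smoothing estimate of Proposition \ref{prop:smooth} (with $c=c_0$), which requires placing the forcing term in $L^{1,1}(\R, L^2(I))$. The forcing is $f=Q\zeta_B(\mathcal{L}_c-\mathcal{L}_{c_0})\widetilde V$. By \eqref{eq:lineariz} the difference of the operators is
\begin{align*}
\mathcal{L}_c-\mathcal{L}_{c_0}=-\partial_x\Big[ (L_c-L_{c_0}) + \big((-\partial_x^2+e^{\phi_c})^{-1}-(-\partial_x^2+e^{\phi_{c_0}})^{-1}\big)\left(\begin{array}{cc}0&0\\1&0\end{array}\right)\Big].
\end{align*}
This is a derivative of an operator with coefficients that are small, $O(|c-c_0|)=O(\delta)$ by \eqref{eq:h1errorbounds}, and exponentially localized. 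Here I use Lemma \ref{lem:solwave4}, together with the fact that $L_c-L_{c_0}$ contains the constant $-(c-c_0)$ times the identity. The second bracket equals $R_{h_c}(-1)(e^{\phi_{c_0}}-e^{\phi_c})R_{h_{c_0}}(-1)$, which is localized and smoothing by \eqref{eq:resolvat1}.

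The main obstacle is the outer $\partial_x$ falling on $\widetilde V$, since $\widetilde V$ is only controlled in $L^2$ and Proposition \ref{prop:smooth} gives no gain of derivatives for $e^{t\mathcal{L}_{c_0}}$. I would not put the derivative into $f$. Instead I would exploit that $\partial_x$ on the left commutes, to leading order, with the propagator, and absorb it with the outer $R_{h_c}(-1)$. Concretely, I write $\zeta_B\partial_x G = \partial_x(\zeta_B G) - \zeta_B' G$ with $G=[\cdots]\widetilde V$. For the $\partial_x(\zeta_B G)$ piece I use the identity $\partial_x e^{t\mathcal{L}_{c_0}}=e^{t\mathcal{L}_{c_0}}\partial_x+\int_0^t e^{(t-s)\mathcal{L}_{c_0}}[\partial_x,\mathcal{L}_{c_0}]e^{s\mathcal{L}_{c_0}}ds$, together with the fact that $[\partial_x,\mathcal{L}_{c_0}]$ involves only derivatives of $S_{c_0}$, which are exponentially localized and $O(\varepsilon^{3/2})$. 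This moves the derivative outside, where $R_{h_c}(-1)\partial_x$ is bounded on $\widetilde{\Sigma}$ by \eqref{eq:resolvat1}. The projection $Q$ is handled by noting that $P$ maps into spans of $\xi_i[c_0]$, which are smooth, and that $[Q,\partial_x]$ is a finite rank operator with smooth localized range and dual functions $\eta_i'$.

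For the remaining terms, which contain no derivative on $\widetilde V$, the forcing has the form $h\,\widetilde V$ with $h$ of size $\delta$ and exponentially decaying at rate $\gtrsim\sqrt{\varepsilon}$, or of size $\delta/B$ and supported where $\zeta_B'\ne 0$. Its $L^{1,1}(\R, L^2(I))$ norm is bounded by
\begin{align*}
\delta\,\|\langle x\rangle h\zeta_A^{-1}\|_{L^2}\,\|\zeta_A\widetilde V\|_{L^2(I,L^2)}\lesssim \delta\, C(\varepsilon)\,(\| V \| _{L^2(I,  { \Sigma }_{1A A_1}   )}+\| V \| _{L^2(I,  { \Sigma }_{2A A_1}   )}),
\end{align*}
after inserting $1=\vartheta_{1A_1}+\vartheta_{2A_1}$. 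Proposition \ref{prop:smooth} then closes, and $\delta C(\varepsilon)B^{O(1)}\le\sqrt\delta$ by \eqref{eq:relABg}. If the commutator-with-$\partial_x$ trick runs into trouble, because the iterated integral needs another application of the smoothing estimate, my fallback is to use Proposition \ref{prop:smooth2} directly on the resolvent composed with $\partial_x$. The aim there would be a bound on $R_{h_c}(-1)R_{\mathcal{L}_{c_0}}(\im\lambda)Q\partial_x$ from $L^{1,1}$ to $\widetilde\Sigma$, obtained via the Jost function representation.
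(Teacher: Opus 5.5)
\textbf{There is a genuine gap: the step that moves $\partial_x$ outside the propagator does not close.}

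The commutator $[\partial_x,\mathcal{L}_{c_0}]$ is not of order zero. From \eqref{eq:lineariz}, $[\partial_x,\mathcal{L}_{c_0}]u=-L_{c_0}'u'-L_{c_0}''u$, plus terms in which the derivative lands on $R_{h_{c_0}}(-1)$. Its coefficients are localized and $O(\varepsilon^{3/2})$, as you say, but it still differentiates $e^{s\mathcal{L}_{c_0}}Q\zeta_B G$.

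So your remainder $\int_0^t e^{(t-s)\mathcal{L}_{c_0}}[\partial_x,\mathcal{L}_{c_0}]e^{s\mathcal{L}_{c_0}}(\cdots)\,ds$ again has a propagator acting on the derivative of a quantity controlled only in $L^2$. That is exactly the situation you were trying to avoid. Proposition \ref{prop:smooth} needs a derivative-free forcing in $L^{1,1}(\R,L^2(I))$, and no derivative-gaining smoothing estimate for $e^{t\mathcal{L}_{c_0}}$ is available. Iterating the identity only regenerates the same structure at each step.

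There is a second problem: $[\partial_x,\mathcal{L}_{c_0}]$ does not preserve $\mathrm{Ran}\,Q$. The propagator in the remainder therefore also acts on the generalized kernel, where it grows linearly in time. The Jost-function fallback is a hope, not an argument.

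Your opening reduction, discarding $R_{h_c}(-1)$, is also unusable for this term. The outer $R_{h_c}(-1)$ is precisely what must absorb the derivative.

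\textbf{The missing idea is to trade $\partial_x$ for $\mathcal{L}_{c_0}$ itself, rather than commuting it through the semigroup.} Since $J>0$, $L_{c_0}$ is invertible. The divergence form $\mathcal{L}_{c_0}=-\partial_x[L_{c_0}+R_{h_{c_0}}(-1)N]$, with $N=\begin{pmatrix}0&0\\1&0\end{pmatrix}$, gives for $H=\zeta_B(L_{c_0}-L_c)\widetilde V$
\begin{align*}
\partial_x H=-\mathcal{L}_{c_0}L_{c_0}^{-1}H-\partial_x R_{h_{c_0}}(-1)NL_{c_0}^{-1}H .
\end{align*}
Now $\mathcal{L}_{c_0}$ commutes exactly with $e^{(t-s)\mathcal{L}_{c_0}}$ and with $Q$, so it comes out of the Duhamel integral. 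There $R_{h_c}(-1)\mathcal{L}_{c_0}$ is bounded on $\widetilde{\Sigma}$, because $R_{h_c}(-1)\partial_x$ has an exponentially localized kernel. In the second term the derivative lands on a resolvent.

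The remaining pieces are the $\zeta_B'(L_{c_0}-L_c)\widetilde V$ term and the resolvent-difference part; in the latter, the second resolvent identity puts $\partial_x$ directly on $R_{h_c}(-1)$. All forcings are then derivative-free, of size $O(|c-c_0|)$, and localized by $\zeta_B$ or by $\phi_c-\phi_{c_0}$. Proposition \ref{prop:smooth} then gives a bound $B^2\delta(\cdots)\lesssim\sqrt{\delta}(\cdots)$, which is the paper's argument. No analysis of $[Q,\partial_x]$ is needed.
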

 \begin{proof} We have
\begin{align*}
   &  \(   \mathcal{L}_{c } - \mathcal{L} _{c _0}  \) \widetilde{V} = \partial _x ( {L}_{c _0 } -  {L} _{c }  ) \widetilde{V}+\partial _x  (R _{h_{c _0 }} (-1) -  R _{h_{c   }}) (-1)
   \begin{pmatrix}
                                                    0 & 0 \\
                                                   1 & 0 \\
                                                  \end{pmatrix}
                                                 \widetilde{V} =\mathcal{A} +\mathcal{B}.
                                           \end{align*}
Omitting the irrelevant constant matrix,   the 2nd resolvent identity and \eqref{eq:smooth1} give \small
\begin{align*}  &
    \left  \| Q\zeta  _B \mathcal{B}\right \| _{L^2 (I, \widetilde{\Sigma})}    \lesssim \| \zeta  _B  \partial _x  R _{h_{c   }} (-1) \(  e^{\phi _c}- e^{\phi _{c _0}}   \) R _{h_{c _0 }} (-1) \widetilde{V}  \| _{ L^{ 1, 1} \( \R   ,    L^{ 2}\( I\)\)} \\& \lesssim B  \|   \(   {\phi _c}-  {\phi _{c _0}}   \) R _{h_{c _0 }} (-1) \widetilde{V}  \| _{ L^{ 1 } \( \R   ,    L^{ 2}\( I\)\)}\le  B  \|  \zeta _A ^{-1}   \(   {\phi _c}-  {\phi _{c _0}}   \)\| _{ L^{ 2} \( \R   ,    L^{ \infty }\( I\)\)}    \|   \zeta _A \widetilde{V}  \| _{ L^{ 2 } \( \R   ,    L^{ 2}\( I\)\)} \\& \lesssim \| c-c_0\| _{  L^{ \infty }\( I\)}  (\| V \| _{L^2(I,  { \Sigma }_{1A A_1}   )}+\| V \| _{L^2(I,  { \Sigma }_{2A A_1}   )}) 
\end{align*} \normalsize
 which takes care of the contribution to \eqref{eq:gKdV231} of $\mathcal{B}$.  The contribution  from $\mathcal{A}$ can  be bounded above by 
 \begin{align*}& 
    \left \| R _{h_c}(-1)  \int _{0}^t e^{  (t-s)\mathcal{L}_{ c _0 } } Q\zeta _B ' ({L}_{c _0 } -  {L} _{c }) \widetilde{V} ds\right  \| _{L^2 (I, \widetilde{\Sigma})}\\& +  \left \| R _{h_c}(-1)  \int _{0}^t e^{  (t-s)\mathcal{L}_{ c _0 } } Q    R _{h_{c   }} (-1)
   \begin{pmatrix}
                                                    0 & 0 \\
                                                   1 & 0 \\
                                                  \end{pmatrix}\partial _x
                                                 \zeta _B{L}_{c _0 }^{-1}({L}_{c _0 } -  {L} _{c }) \widetilde{V} ds\right  \| _{L^2 (I, \widetilde{\Sigma})} \\& +  \left \| R _{h_c}(-1) \mathcal{L}_{ c _0 } \int _{0}^t e^{  (t-s)\mathcal{L}_{ c _0 } } Q\zeta _B  {L}_{c _0 }^{-1} ({L}_{c _0 } -  {L} _{c }) \widetilde{V} ds\right  \| _{L^2 (I, \widetilde{\Sigma})} =:\sum_{i=1,2,3}A_i.
 \end{align*}
 By \eqref{eq:smooth1}, \eqref{eq:h1errorbounds}, \eqref{eq:resolvat1}     \small
 \begin{align*}
   A_1+A_2&\lesssim \|  (c-c_0) \zeta _B  \widetilde{V}    \| _{ L^{ 1, 1}  \( \R   ,    L^{ 2}\( I\) \) } \lesssim  \| c-c_0 \| _{ L^{ \infty}\( I\)}    \left  \|  \frac{\zeta _B}{\zeta _A} \right  \| _{  L^{ 1, 1} \( \R \)}   \(       \| V \| _{L^2(I,    \Sigma  _{1A A_1}   )} +   \| V \| _{L^2(I,   \Sigma  _{2A A_1}   )} \) \\& \lesssim B^2 \delta   \(       \| V \| _{L^2(I,    \Sigma  _{1A A_1}   )} +   \| V \| _{L^2(I,   \Sigma  _{2A A_1}   )} \) .
 \end{align*}
 \normalsize 
 Finally, we similarly have   
\begin{align*}
   A_3&\lesssim   \left \|   \int _{0}^t e^{  (t-s)\mathcal{L}_{ c _0 } } Q\zeta _B  {L}_{c _0 }^{-1} ({L}_{c _0 } -  {L} _{c }) \widetilde{V} ds\right  \| _{L^2 (I, \widetilde{\Sigma})} \\&  \lesssim B^2 \delta   \(       \| V \| _{L^2(I,    \Sigma  _{1A A_1}   )} +   \| V \| _{L^2(I,   \Sigma  _{2A A_1}   )} \) .
\end{align*}

 \end{proof}

\begin{lemma}\label{lem:gKdV24} We have
   \begin{align}\label{eq:gKdV241}  \left  \| R _{h_c}(-1)  \int _{0}^t e^{  (t-s)\mathcal{L} _{c _0}} \text{line  \eqref{eq:gKdV24}} ds\right \| _{L^2 (I, \widetilde{\Sigma})}& \lesssim 
     \left  \|  \int _{0}^t e^{  (t-s)\mathcal{L} _{c _0}} \text{line  \eqref{eq:gKdV24}} ds\right \| _{L^2 (I, \widetilde{\Sigma})}\\& \lesssim   \delta ^{   \frac{1}{2}   }  (\| V \| _{L^2(I,  { \Sigma }_{1A A_1}   )}+\| V \| _{L^2(I,  { \Sigma }_{2A A_1}   )}). \nonumber
   \end{align}
\end{lemma}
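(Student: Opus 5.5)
The first inequality in the statement is immediate once we know $\| R _{h_c}(-1)\| _{\mathcal{L}(\widetilde{\Sigma} )}\lesssim 1$. This follows from the kernel bound \eqref{eq:resolvat1}: the kernel decays like $e^{-|x-y|}$, while the weight $\sech (\varepsilon\kappa x)$ varies on the much longer scale $(\varepsilon\kappa)^{-1}$, so Young's inequality gives a uniform bound. The real work is the second inequality. For it I plan to apply the Kato smoothing estimate \eqref{eq:smooth1} of Proposition \ref{prop:smooth} with $c=c_0$ and forcing
\begin{equation*}
f:=-\zeta _B \partial _x \sigma _1 N,\qquad N:=\nabla_{\widetilde{U}} E(S_c+V)      -  \nabla_{\widetilde{U}} E(S_c ) - \nabla ^2_{\widetilde{U}U} E(S_c )V .
\end{equation*}
The projection $Q$ is already built into \eqref{eq:smooth1}. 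This reduces the claim to bounding $\| f\| _{L^{1,1}(\R , L^2(I))}$. Since $Q$ and $\sigma_1$ are harmless, the target is
\begin{equation*}
\int _\R \< x\> \zeta _B(x) \, \| \partial _x N(\cdot ,x)\| _{L^2(I)}\, dx \lesssim \delta ^{1/2} \(\| V \| _{L^2(I,  { \Sigma }_{1A A_1}   )}+\| V \| _{L^2(I,  { \Sigma }_{2A A_1}   )}\).
\end{equation*}

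From the explicit computation of $N$ in \S\ref{sec:discr}, $N$ is a smooth function of $(x,V_n,V_u)$ vanishing quadratically in $\widetilde{V}$: its entries are $V_u^2/2$, $V_nV_u$, and $K$ times a Taylor remainder of $\log$ that is $O(V_n^2)$. Note also that $N$ has no $V_\phi$ component. Therefore, pointwise,
\begin{equation*}
|\partial _x N|\lesssim |\widetilde{V}|\,|\widetilde{V}'| + |n_c'|\,|\widetilde{V}|^2 ,
\end{equation*}
using $\|n_c\|_{L^\infty}\ll 1$ and $\|\widetilde{V}\|_{L^\infty}\lesssim B^{1/2}\delta$ by \eqref{eq:h1errorbounds}. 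For fixed $x$, the $L^2(I)$ norm of the first term is at most $\|\widetilde{V}(\cdot,x)\|_{L^2(I)}$ times the $L^\infty_t$ norm of $\widetilde{V}'(\cdot,x)$. That is the wrong way round, so I will apply Minkowski and Cauchy--Schwarz in $x$ at fixed $t$ before taking the time norm:
\begin{equation*}
\int _\R \< x\> \zeta _B |\widetilde{V}||\widetilde{V}'|dx\le \left\| \frac{\< x\> \zeta _B}{\zeta _A}\right\| _{L^\infty}\| \zeta _A \widetilde{V}\| _{L^2}\| \widetilde{V}'\| _{L^2}.
\end{equation*}
Here $\< x\> \zeta _B/\zeta _A\lesssim B$ because $A\gg B$. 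Moreover $\| \widetilde{V}'(t)\| _{L^2}\lesssim B^{1/2}\delta$ uniformly in $t$ by \eqref{eq:h1errorbounds}, which is where the hypothesis $U\in C^0([0,\infty),H^1)$ is used. Finally, $\| \zeta _A \widetilde{V}\| _{L^2}\le \|V\|_{\Sigma_{1AA_1}}+\|V\|_{\Sigma_{2AA_1}}$ by inserting the partition $1=\vartheta _{1A_1}+\vartheta _{2A_1}$. The second term $|n_c'||\widetilde{V}|^2$ is handled the same way, with $\|\widetilde{V}\|_{L^\infty}$ in place of $\|\widetilde{V}'\|_{L^2}$.

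Taking the $L^2(I)$ norm in $t$ then yields a bound $B^{3/2}\delta\,(\| V \| _{L^2(I,  \Sigma _{1A A_1})}+\| V \| _{L^2(I,  \Sigma _{2A A_1})})$. By \eqref{eq:relABg} we have $B^{3/2}\delta\le\delta^{1/2}$, which gives the lemma. One technical point is that \eqref{eq:smooth1} is stated with the mixed norm $L^{1,1}(\R,L^2(\R_+))$, that is $L^2$ in time inside the $x$-integral. The Minkowski step $\| \int g\,dx\| _{L^2_t}\le \int \| g\| _{L^2_t}dx$ goes the wrong direction for what I wrote above. So the step I expect to be the main obstacle is arranging the Cauchy--Schwarz correctly for this mixed norm: I must keep the $x$-integral outside the time norm and still put the $L^\infty_t$ control on the derivative factor.

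My first attempt at this obstacle is to bound $\| \widetilde{V}(\cdot,x)\widetilde{V}'(\cdot,x)\| _{L^2(I)}$ by $\| \widetilde{V}(\cdot ,x)\| _{L^2(I)}\sup_t |\widetilde{V}'(t,x)|$. This needs $L^\infty_x$ control of $\widetilde{V}'$, which is not available. If that fails, I will avoid the derivative on $N$ altogether, in the spirit of the term $A_3$ in Lemma \ref{lem:gKdV23}. I would write $\zeta_B\partial_x \sigma_1 N=\partial_x(\zeta_B\sigma_1 N)-\zeta_B'\sigma_1 N$ and express $\partial_x$ through $\mathcal{L}_{c_0}$ as $\partial _x g=-\mathcal{L}_{c_0}M^{-1}g$. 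Here $M:=L_{c_0}+R_{h_{c_0}}(-1)\begin{pmatrix}0&0\\1&0\end{pmatrix}$ is invertible because $\det L_{c_0}=(u_{c_0}-c_0)^2-K$ stays away from $0$. The remaining $\mathcal{L}_{c_0}$ would then be absorbed into the operator $R_{h_c}(-1)$ in front. If I go this way, only the weaker bound with $R_{h_c}(-1)$ in front would be proved, which is exactly what is used in \S\ref{sec:smooth}. For the undifferentiated factors, the pointwise estimate $|N|\lesssim |\widetilde{V}|^2\le \|\widetilde{V}\|_{L^\infty_{t,x}}|\widetilde{V}|$ gives the $L^{1,1}_xL^2_t$ bound directly, with constant $B\cdot B^{1/2}\delta$.
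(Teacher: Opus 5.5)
Your first inequality, and your main plan for the second, coincide with the paper's proof. The paper applies \eqref{eq:smooth1} to $\partial_x O(\zeta_B V^2)$ and bounds its $L^{1,1}(\R,L^2(I))$ norm by $\|\zeta_B/\zeta_A\|_{L^{\infty,1}}\|V\|_{L^\infty(I,H^1)}(\|V\|_{L^2(I,\Sigma_{1AA_1})}+\|V\|_{L^2(I,\Sigma_{2AA_1})})\lesssim B^{3/2}\delta(\cdots)$. That is exactly the Cauchy--Schwarz at fixed $t$ you flag, and the paper adds nothing to justify the order of the norms.

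Your objection is correct, and it is not a technicality. With the $x$-integral outside and $L^2(I)$ inside, the product $\widetilde V\widetilde V'$ needs one of two things, and neither is available:
\begin{itemize}
\item $\zeta_A\widetilde V'\in L^2_{t,x}$, which the $\Sigma_{iAA_1}$ norms do not control, since they only see $V_\phi'$;
\item $\sup_t|\widetilde V'(t,x)|\in L^2_x$, which $L^\infty_tH^1_x$ does not control. This is why your ``first attempt'' fails.
\end{itemize}
Concretely, take $\widetilde V(t,x)=\ell^{1/2}g((x-vt)/\ell)$ for $0\le t\le 1/v$, sweeping $[0,1]$, where all weights are $\approx 1$. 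Then $\|\widetilde V(t)\|_{H^1}\sim 1$ and $\|\widetilde V\|_{L^2_{t,x}}\sim \ell v^{-1/2}$, while $\int_0^1\|(\widetilde V\widetilde V')(\cdot,x)\|_{L^2_t}\,dx\sim \ell^{1/2}v^{-1/2}$. So the estimate fails by a factor $\ell^{-1/2}$, and neither argument uses anything from the equation to exclude such profiles. Hence your proposal does not prove the second inequality in \eqref{eq:gKdV241}, and the paper's displayed argument does not either.

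Your fallback is the right repair. It is the device the paper itself uses for $A_3$ in Lemma \ref{lem:gKdV23}. It yields $\|R_{h_c}(-1)\int_0^te^{(t-s)\mathcal L_{c_0}}\,\text{line \eqref{eq:gKdV24}}\,ds\|_{L^2(I,\widetilde\Sigma)}\lesssim\delta^{1/2}(\cdots)$. That is all \S\ref{sec:smooth} needs: as stated there, one only has to bound $\|R_{h_c}(-1)\widetilde V\|_{L^2(I,\widetilde\Sigma)}$. It does not bound the middle quantity, however.

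One step of the fallback is wrong: $M=L_{c_0}+R_{h_{c_0}}(-1)\begin{pmatrix}0&0\\1&0\end{pmatrix}$ is not invertible. Differentiating the once-integrated profile equations gives $L_{c_0}\xi_1[c_0]=(0,-\phi_{c_0}')^\intercal$. Differentiating the Poisson equation gives $R_{h_{c_0}}(-1)n_{c_0}'=\phi_{c_0}'$. Hence $M\xi_1[c_0]=0$, which is exactly why $\mathcal L_{c_0}\xi_1[c_0]=0$. The fact that $\det L_{c_0}\neq 0$ says nothing about the nonlocal part. Instead write, as in the $A_2$/$A_3$ splitting of the paper, $\partial_x g=-\mathcal L_{c_0}L_{c_0}^{-1}g-\partial_xR_{h_{c_0}}(-1)\begin{pmatrix}0&0\\1&0\end{pmatrix}L_{c_0}^{-1}g$. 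The last term puts no derivative on $g$ and is harmless by \eqref{eq:resolvat1}.

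Finally, integrating the undifferentiated terms in $x$ requires the $L^2_x$ norm of the weight, not its $L^\infty_x$ norm. The cost is therefore $\|\langle x\rangle\zeta_B/\zeta_A\|_{L^2}\,B^{1/2}\delta\sim B^2\delta$ rather than $B^{3/2}\delta$, which is still $\ll\delta^{1/2}$.
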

\begin{proof} Again, the first line follows from \eqref{eq:resolvat1}. 
    Schematically, using Proposition \ref{prop:smooth},  the  term on the right in line \eqref{eq:gKdV241}  is 
\begin{align*}&
   \left  \|  \int _{0}^t e^{  (t-s)\mathcal{L} _{ c _0 }} \partial _x O( \zeta _B V    ^2 )     \right \| _{L^2 (I, \widetilde{\Sigma})} \lesssim      \left \|      \partial _x O( \zeta _B V    ^2 ) \right \| _{ L^{ 1, 1} \( \R   ,    L^{ 2}\( I \)\)}    \\& \lesssim \left \|   \frac{\zeta _B}{\zeta _A} \right \|   _{L^{ \infty , 1} \( \R    \)} 
 \left \|  V  \right \|   _{L^{ \infty } \( I, H^1    \)}   (\| V \| _{L^2(I,  { \Sigma }_{1A A_1}   )}+\| V \| _{L^2(I,  { \Sigma }_{2A A_1}   )})\\& \lesssim B ^{3/2}\delta  (\| V \| _{L^2(I,  { \Sigma }_{1A A_1}   )}+\| V \| _{L^2(I,  { \Sigma }_{2A A_1}   )}) .
\end{align*}

 \end{proof}

By \eqref{eq:resolvat1} and   $ e^{V _\phi}-1-V _\phi = O(V _\phi ^2)$  it is clear that the following  is true.

\begin{lemma}\label{lem:gKdV25} We have
   \begin{align}\label{eq:gKdV251}   \left  \|  \int _{0}^t e^{  (t-s)\mathcal{L} _{c _0}} \text{line  \eqref{eq:gKdV25}} ds\right \| _{L^2 (I, \widetilde{\Sigma})}&\lesssim
     \left  \|  \int _{0}^t e^{  (t-s)\mathcal{L} _{c _0}} \text{line  \eqref{eq:gKdV25}} ds\right \| _{L^2 (I, \widetilde{\Sigma})}\\& \lesssim   \delta ^{   \frac{1}{2}   }  (\| V \| _{L^2(I,  { \Sigma }_{1A A_1}   )}+\| V \| _{L^2(I,  { \Sigma }_{2A A_1}   )}).\nonumber 
   \end{align}
\end{lemma}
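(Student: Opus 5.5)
The strategy is the one used for Lemma \ref{lem:gKdV24}: bound the Duhamel integral with the Kato smoothing estimate of Proposition \ref{prop:smooth}, after first disposing of the outer resolvent. The first inequality in \eqref{eq:gKdV251} is the uniform bound $\|R_{h_c}(-1)\|_{\mathcal{L}(\widetilde{\Sigma})}\lesssim 1$. This bound follows from the kernel estimate \eqref{eq:resolvat1}, since $e^{-|x-y|}\sech(\varepsilon\kappa y)^{-1}\sech(\varepsilon\kappa x)\lesssim e^{-|x-y|/2}$ for $\varepsilon\kappa<1/2$, combined with Young's inequality. So it remains to bound the bare Duhamel term.

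By \eqref{eq:smooth1}, applied with $c_0$ in place of $c$, and since line \eqref{eq:gKdV25} already contains $Q$, it suffices to estimate
\begin{align*}
\left\| \zeta_B\, \partial_x R_{h_c}(-1)\, e^{\phi_c}\left(e^{V_\phi}-1-V_\phi\right)\right\|_{L^{1,1}(\R, L^2(I))}.
\end{align*}
Write $g:=e^{\phi_c}(e^{V_\phi}-1-V_\phi)$. Then $|g|\lesssim V_\phi^2$ pointwise, because $\|V_\phi\|_{L^\infty}\lesssim\|V_\phi\|_{H^1}\lesssim B^{1/2}\delta\ll1$ by \eqref{eq:h1errorbounds}. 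By \eqref{eq:resolvat1} with $a=1$, the kernel of $\partial_xR_{h_c}(-1)$ is bounded by $C\langle x\rangle e^{-|x-y|}$. The weight $\langle x\rangle\zeta_B(x)$ is moved onto $y$ using $\langle x\rangle\lesssim\langle y\rangle\langle x-y\rangle$ and $\zeta_B(x)\lesssim e^{|x-y|/B}\zeta_B(y)$, which gives
\begin{align*}
\langle x\rangle^{2}\zeta_B(x)\,e^{-|x-y|}\lesssim \langle y\rangle^{2}\zeta_B(y)\,e^{-|x-y|/2}.
\end{align*}
Minkowski and Young in $x$ then reduce the quantity to
\begin{align*}
\left\|\langle y\rangle^2\zeta_B V_\phi^2\right\|_{L^1(\R,L^2(I))}\le \left\|\langle y\rangle^2\frac{\zeta_B}{\zeta_A}\right\|_{L^\infty}\,\|V_\phi\|_{L^\infty(I, L^2)}\,\left\|\zeta_A V_\phi\right\|_{L^2(I,L^2)}.
\end{align*}
Here I take the $L^\infty$ norm of $V_\phi$ in time and $L^2$ in space, and put the remaining factor into $L^2$ in time.

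Each factor is now controlled. Since $2B<A$, $\|\langle y\rangle^2\zeta_B/\zeta_A\|_{L^\infty}\lesssim B^2$. Next, $\|V_\phi\|_{L^\infty(I,L^2)}\lesssim B^{1/2}\delta$ by \eqref{eq:h1errorbounds} and the elliptic equation. Finally, the partition of unity gives $\|\zeta_AV_\phi\|_{L^2(I,L^2)}\le\|V\|_{L^2(I,\Sigma_{1AA_1})}+\|V\|_{L^2(I,\Sigma_{2AA_1})}$. The total is $\lesssim B^{5/2}\delta(\cdots)\le\delta^{1/2}(\cdots)$, because $\log\delta^{-1}\gg B$ by \eqref{eq:relABg}.

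The only step needing care is the weight transfer through the $\partial_x$ kernel, where \eqref{eq:resolvat1} loses a factor $\langle x\rangle$. This loss is absorbed by the polynomial weight, which costs only powers of $B$, and any power of $B$ is harmless against $\delta^{1/2}$. If \eqref{eq:resolvat1} were too lossy, I would instead use the representation \eqref{eq:resolvKern} with the uniform bounds of Lemma \ref{lerm:jost} to get a loss-free exponential kernel bound for $\partial_xR_{h_c}(-1)$.
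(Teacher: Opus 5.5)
Your route is the one the paper intends. Its proof of this lemma is only the remark that \eqref{eq:resolvat1} and $e^{V_\phi}-1-V_\phi=O(V_\phi^2)$ suffice, i.e.\ Kato smoothing \eqref{eq:smooth1} as in Lemma \ref{lem:gKdV24}. Several of your steps are correct:
\begin{itemize}
\item your treatment of the first inequality, read as intended with $R_{h_c}(-1)$ on the left;
\item the reduction via \eqref{eq:smooth1};
\item the kernel bound for $\partial_xR_{h_c}(-1)$;
\item the transfer of $\langle x\rangle^2\zeta_B(x)$ onto $y$.
\end{itemize}
The step that fails is the final H\"older estimate
\[
\|\langle y\rangle^2\zeta_BV_\phi^2\|_{L^1(\R,L^2(I))}\le\Big\|\langle y\rangle^2\frac{\zeta_B}{\zeta_A}\Big\|_{L^\infty}\|V_\phi\|_{L^\infty(I,L^2)}\|\zeta_AV_\phi\|_{L^2(I,L^2)} ,
\]
which is false as a general inequality. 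In $L^1(\R,L^2(I))$ the time norm sits inside the $y$-integral, so $\sup_t\|V_\phi(t)\|_{L^2}$ cannot be factored out. Your right-hand side only controls the smaller norm $L^2(I,L^1(\R))$, and Minkowski's inequality goes the wrong way for you. Concretely, take a bump of height $\eta^{-1/2}$ and width $\eta$ sweeping with speed $v$ across $0\le y\le B$. It has $\|V_\phi(t)\|_{L^2}\sim 1$ and $\|\zeta_AV_\phi\|_{L^2(I\times\R)}\sim (B/v)^{1/2}$. But $\|V_\phi(\cdot,y)^2\|_{L^2(I)}\sim(\eta v)^{-1/2}$ for every such $y$. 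So the left side exceeds the right side by a factor $\sim (B/\eta)^{1/2}$, which is unbounded as $\eta\to0$. Control of one factor in $L^\infty(I,L^2)$ is therefore not enough, whatever power of $B$ you are willing to lose.

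The fix uses the $L^\infty$ bound you already invoked to get $|g|\lesssim V_\phi^2$. For each $y$ one has $\|V_\phi(\cdot,y)^2\|_{L^2(I)}\le \|V_\phi\|_{L^\infty(I\times\R)}\|V_\phi(\cdot,y)\|_{L^2(I)}$. Cauchy--Schwarz in $y$ then gives
\[
\|\langle y\rangle^2\zeta_BV_\phi^2\|_{L^1(\R,L^2(I))}\le\Big\|\langle y\rangle^2\frac{\zeta_B}{\zeta_A}\Big\|_{L^2(\R)}\|V_\phi\|_{L^\infty(I\times\R)}\|\zeta_AV_\phi\|_{L^2(I\times\R)}\lesssim B^{3}\delta\big(\|V\|_{L^2(I,\Sigma_{1AA_1})}+\|V\|_{L^2(I,\Sigma_{2AA_1})}\big).
\]
This uses $\|\langle y\rangle^2\zeta_B/\zeta_A\|_{L^2}\lesssim B^{5/2}$ for $A>2B$, and $\|V_\phi\|_{L^\infty}\lesssim\|V_\phi\|_{H^1}\lesssim B^{1/2}\delta$ from \eqref{eq:h1errorbounds}. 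Since $B^3\delta\le\delta^{1/2}$ by \eqref{eq:relABg}, the lemma follows with only this change. The solution is taken in $L^\infty(I,H^1)$, as in Lemma \ref{lem:gKdV24}, rather than in $L^\infty(I,L^2)$.
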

\qed

The most delicate contributor in the formula \eqref{eq:gKdV22}--\eqref{eq:gKdV25} is the commutator.
We have
\begin{align}&\label{eq:comm}
   [ \zeta  _B,\mathcal{L}  _{c_0} ] \widetilde{V}=  \zeta_{B}' 
\begin{pmatrix}
   -c & 1 \\
     K & -c
 \end{pmatrix} \widetilde{V}  +  \zeta_{B}'
\begin{pmatrix}
   u_c & n_c \\
    - \frac{Kn_c}{1+n_c} &  u_c
 \end{pmatrix} \widetilde{V}    \\&  - \partial _x R_{h_c} (-1)  \(  \zeta _B ''+2 \zeta _B ' \partial _x   \)  R_{h_c} (-1)  \begin{pmatrix}
 0 & 0 \\ 1 & 0
 \end{pmatrix}
        \widetilde{V}   =: D_1+D_2+D_3.\nonumber 
\end{align}
The easiest term is $D_2$ for which we have for some fixed  $\alpha>0$ and by  Proposition \ref{prop:smooth} \small
\begin{align}\label{eq:estD2}
    &\left  \|  \int _{0}^t e^{  (t-s)\mathcal{L} _{c _0}} D_2 ds\right \| _{L^2 (I, \widetilde{\Sigma})} \lesssim    
     \| \< x \>  e^{-\alpha\sqrt{ \varepsilon}|x|}   \zeta _B ' \widetilde{V} \| _{ L^{ 1 } \( \R   ,    L^{ 2}\( I\)\)}\\&  \le C_\varepsilon B^{-1}  \left  \|  \frac{\zeta _B }{\zeta _A }  \right \| _{L^\infty }   \left  \|    \zeta _A (\vartheta _{1A_1} + \vartheta _{2A_1})  \widetilde{V} \right \| _{L^2 (I\times \R) }\lesssim   B^{-1}(\| V \| _{L^2(I,  { \Sigma }_{1A A_1}   )}+\| V \| _{L^2(I,  { \Sigma }_{2A A_1}   )})  \nonumber
\end{align}\normalsize
where here and below   by    $ \| R _{h_c}(-1)  \| _{ \mathcal{L}( \widetilde{\Sigma}   )}\lesssim 1$  we can ignore $R _{h_c}(-1)$.
We write  \small
\begin{align*}
 D_3&=  - \partial _x  R_{h_c} (-1)  \(  \zeta _B ''+2 \zeta _B ' \partial _x   \) \( 0,      V_\phi \)^\intercal \\& - \partial _x R_{h_c} (-1)  \(  \zeta _B ''+2 \zeta _B ' \partial _x   \)  \( 0,  R_{h_c} (-1)   e^{\phi _c} \(  e^{V _\phi}-1-V _\phi \)  \) ^\intercal =:D_{31}+D_{32}.
\end{align*}
\normalsize
By   Proposition \ref{prop:smooth} and \eqref{eq:resolvat1}
\begin{align*}
    &\left  \|  \int _{0}^t e^{  (t-s)\mathcal{L} _{c _0}} D_{32} ds\right \| _{L^2 (I, \widetilde{\Sigma})} \lesssim
     \|  \< x \>    D_{32} \| _{ L^{ 1 } \( \R   ,    L^{ 2}\( I\)\)}\\&  \nonumber \lesssim   \left  \|  \< x \> ^2 e^{-|x|} *   \( \zeta _B ''    e^{-|x|} *  e^{\phi _c} O \(  V _\phi ^2  \) \)    \right \| _{ L^{ 1 } \( \R   ,    L^{ 2}\( I\)\)}  \\&  +    \left  \|  \< x \> ^2 e^{-|x|} *  \(\zeta _B '  \< x \>  e^{-|x|} * e^{\phi _c} O \(  V _\phi ^2  \)\)     \right \| _{ L^{ 1 } \( \R   ,    L^{ 2}\( I\)\)}  =: D_{321} +D_{322}   . \nonumber
\end{align*}
Then \small
\begin{align} \nonumber
  D_{321}&\lesssim  B^{-2}  \left  \|  \< x \> ^2  \zeta  _{2B}     \zeta _{2B}^{-1}  e^{-|x|} *     \zeta _{2B}\zeta _{2B}     e^{-|x|} *   \zeta _{2B}^{-1}  \zeta _{2B} \|   O \(  V _\phi ^2  \) \|_{L^{ 2}\( I\)}    \right \| _{ L^{ 1 } \( \R   \)}\\& \lesssim   B^{-2}  \left  \|  \< x \> ^2  \zeta  _{2B}  \right \| _{ L^{ 2 } \( \R   \)}   \left  \|    \zeta _{2B}^{-1}  e^{-|x|} *    \( \zeta _{2B} \sqcup \) \right \| _{\mathcal{ L}\( L^{ 2 } \( \R   \) \)} ^2  \left  \|     \frac{\zeta  _{2B}}{\zeta  _{A}} \zeta  _{A}(\vartheta _{1A_1} + \vartheta _{2A_1})    V _\phi ^2   \right \| _{ L^{ 2 } \( I\times \R   \)}\nonumber\\& \lesssim B^{1/2}  \left  \|      V _\phi   \right \| _{ L^{ \infty } \( I\times \R   \)}   (\| V \| _{L^2(I,  { \Sigma }_{1A A_1}   )}+\| V \| _{L^2(I,  { \Sigma }_{2A A_1}   )})\nonumber\\&  \lesssim B^{-1}(\| V \| _{L^2(I,  { \Sigma }_{1A A_1}   )}+\| V \| _{L^2(I,  { \Sigma }_{2A A_1}   )}) \label{eq:estD321}
\end{align}
\normalsize
and 
\small
\begin{align} \nonumber
 & D_{322} \lesssim  B^{-1}  \left  \|  \< x \> ^2  \zeta  _{2B}     \zeta _{2B}^{-1}  e^{-|x|} *     \< x \> \zeta _{2B}\zeta _{2B}     e^{-|x|} *   \< x \>^{-1}  \< x \> \zeta _{2B}^{-1}  \zeta _{2B} \|   O \(  V _\phi ^2  \) \|_{L^{ 2}\( I\)}    \right \| _{ L^{ 1 } \( \R   \)}\\& \lesssim   B^{ -1}  \left  \|  \< x \> ^2  \zeta  _{2B}  \right \| _{ L^{ 2 } \( \R   \)}   \left  \|   \< x \>  \zeta _{2B}  e^{-|x|} *   \(  \< x \> ^{-1}\zeta _{2B}^{-1} \sqcup \) \right \| _{\mathcal{ L}\( L^{ 2 } \( \R   \) \)}    \left  \|   \< x \>   \frac{\zeta  _{2B}}{\zeta  _{\frac{A}{2}}} \zeta  _{\frac{A}{2}}     V _\phi ^2   \right \| _{ L^{ 2 } \( I\times \R   \)}\nonumber\\& \lesssim  B^{ \frac{3}{2}}  \left  \|     \zeta _{A}   \right \| _{ L^{ 2 } \(   \R   \)}   \left  \|      V _\phi   \right \| _{ L^{ \infty } \( I\times \R   \)}  \left  \|     \zeta _{A} (\vartheta _{1A_1} + \vartheta _{2A_1})    V _\phi  \right \| _{ L^{ 2 } \( I\times \R   \)} 
  \nonumber\\&  \lesssim B^{-1}(\| V \| _{L^2(I,  { \Sigma }_{1A A_1}   )}+\| V \| _{L^2(I,  { \Sigma }_{2A A_1}   )}) . \label{eq:estD322}
\end{align}
\normalsize
The estimate of the term $ D_{31}$, which is linear in $V$, requires more care.
We write 
\begin{align*}
  D_{31}&= - \partial _x  R_{h_c} (-1)    \zeta _B ''      V_\phi  - \partial _x  R_{h_c} (-1)      \zeta _B ' \partial _x        V_\phi =:  D_{311} + D_{312} 
\end{align*} with a  harmless abuse of notation.  By $ \partial _x  R_{h_c} (-1)=[ \partial _x , R_{h_c} (-1)]+ R_{h_c} (-1)\partial _x$
we have 
\begin{align*}
   D_{311}&=  R_{h_c} (-1)   \(   e^{\phi _c} \) ' R_{h_c} (-1) \zeta _B '' V_\phi   - R_{h_c} (-1)   \zeta _B '''      V_\phi +R_{h_c} (-1)   \zeta _B ''      V_\phi ' .
\end{align*}
Then, schematically,
\begin{align}\nonumber
    &\left  \|  \int _{0}^t e^{  (t-s)\mathcal{L} _{c _0}} D_{311} ds\right \| _{L^2 (I, \widetilde{\Sigma})} \lesssim
     \|  \< x \>    D_{311} \| _{ L^{ 1 } \( \R   ,    L^{ 2}\( I\)\)}\\&  \nonumber \lesssim  B ^{-2} \left  \|  \< x \>   e^{-|x|} *   \(   \< x \> ^{-1}  \< x \>\zeta _B      e^{-|x|} *     \(  V _\phi   + V _\phi '  \) \)    \right \| _{ L^{ 1 } \( \R   ,    L^{ 2}\( I\)\)}  \\&  \nonumber    \lesssim   B ^{-2} \left  \|    \< x \>\zeta _B      e^{-|x|} * \(  \< x \>^{-1}\zeta _B ^{-1} \< x \>\zeta _B      \(  V _\phi   + V _\phi '  \) \)    \right \| _{ L^{ 1 } \( \R   ,    L^{ 2}\( I\)\)}\\& \nonumber \lesssim B ^{-2} \left  \|     \< x \>\zeta _B      \(  V _\phi   + V _\phi '  \)     \right \| _{ L^{ 1 } \( \R   ,    L^{ 2}\( I\)\)}\\& \lesssim  B ^{-2} \left  \|     \< x \>\zeta  _{2B}        \right \| _{     L^{ 2}\( I\) }   \left  \|   \frac{ \zeta  _{2B}}{\zeta  _{A}}  \zeta  _{A} (\vartheta _{1A_1} + \vartheta _{2A_1}) \(  V _\phi   + V _\phi '  \)     \right \| _{ L^{ 2 } \( I\times \R   \)}\nonumber \\& \lesssim B ^{-1/2} (\| V \| _{L^2(I,  { \Sigma }_{1A A_1}   )}+\| V \| _{L^2(I,  { \Sigma }_{2A A_1}   )}) .\label{eq:estD311}
\end{align}
By an elementary computation  
\begin{align*}
   D_{312}&=  R_{h_c} (-1)       e^{\phi _c}\phi _c 'R_{h_c} (-1) \zeta _B '' V_\phi    + R_{h_c} (-1)   e^{\phi _c}\phi _c ' R_{h_c} (-1) \( \zeta _B '   V_\phi \) ' \\& + R_{h_c} (-1)  \( \zeta _B ''   V_\phi \) '  - R_{h_c} (-1)\( e^{\phi _c}-1   \) \zeta _B '   V_\phi\\&  -R_{h_c} (-1) \zeta _B '   V_\phi   + \zeta _B '   V_\phi =:\sum _{k=1}^{6}D_{312k}.
\end{align*}
It is easy to show 
 \begin{align}
    &\left  \|  \int _{0}^t e^{  (t-s)\mathcal{L} _{c _0}}D_{312k} ds\right \| _{L^2 (I, \widetilde{\Sigma})} \lesssim
      B ^{-1/2} (\| V \| _{L^2(I,  { \Sigma }_{1A A_1}   )}+\| V \| _{L^2(I,  { \Sigma }_{2A A_1}   )}) \text{ for all }k\le 4.\label{eq:est312k}
\end{align}
We will prove the following lemma in   \S \ref{sec:gKdV21hard}.
\begin{lemma}\label{lem:gKdV21hard} For $D =D_1$ in \eqref{eq:comm} and for $D= D_{3125},     D_{3126}$      we have
   \begin{align}\label{eq:gKdV21hard1}
     \left  \|  \int _{0}^t e^{  (t-s)\mathcal{L}_{c _0}} Q  D      ds\right \| _{L^2 (I, \widetilde{\Sigma})}\lesssim   A_ 1^{3/2}B ^{-1 }    \| V \| _{L^2(I,  { \Sigma }_{1A A_1}   )}+B ^{ 1/2 } \| V \| _{L^2(I,  { \Sigma }_{2A A_1}   )} .
   \end{align}
 \end{lemma}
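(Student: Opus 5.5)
The terms $D_1=\zeta_B'\begin{pmatrix}-c&1\\K&-c\end{pmatrix}\widetilde V$, $D_{3125}=-R_{h_c}(-1)\zeta_B'V_\phi$ and $D_{3126}=\zeta_B'V_\phi$ share one feature: they are linear in $V$, supported essentially where $|\zeta_B'|\sim B^{-1}\zeta_B$, and carry no $\varepsilon$-small or exponentially localized coefficient. A crude use of Proposition \ref{prop:smooth} would cost $\|\langle x\rangle\zeta_B'\|$ in $L^1$–type norms, which is of size $B$ and is too large. So instead of the $L^{1,1}$ bound I will use a refined form of the smoothing estimate. This refined estimate comes from the resolvent representation in \eqref{eq:fourtran}, and it accounts separately for the region $|x|\le A_1$ and the region $|x|\ge A_1$. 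For this I insert the partition $1=\vartheta_{1A_1}+\vartheta_{2A_1}$ in front of $\widetilde V$ (and $V_\phi$), writing $D=D^{(1)}+D^{(2)}$, with $D^{(i)}$ containing $\vartheta_{iA_1}$.

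\emph{The inner piece $D^{(1)}$.} Here $|x|\le 2A_1\ll B$. On this region $\chi$-cutoff considerations give $\zeta_B'=O(B^{-1})$, since $\zeta_B'$ vanishes for $|x|\le B$ by \eqref{def:zetaC}, and $A_1=B^{3/5}<B$. Hence $\zeta_B'\vartheta_{1A_1}\equiv 0$ for $B$ large. At first sight this kills $D^{(1)}$ entirely, but the weights overlap through $\vartheta_{1A_1}$ only up to $2A_1$. I will nevertheless keep a safe bound. Since $\|\langle x\rangle\vartheta_{1A_1}\zeta_A^{-1}\|_{L^2}\lesssim A_1^{3/2}$, Proposition \ref{prop:smooth} together with Cauchy–Schwarz gives a contribution $\lesssim A_1^{3/2}\|\zeta_B'\|_{L^\infty}\|V\|_{L^2(I,\Sigma_{1AA_1})}\lesssim A_1^{3/2}B^{-1}\|V\|_{L^2(I,\Sigma_{1AA_1})}$. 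This is exactly the first term in \eqref{eq:gKdV21hard1}, and it suggests this is the intended bookkeeping.

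\emph{The outer piece $D^{(2)}$.} This is the main obstacle. Here $\langle x\rangle\zeta_B'$ is not integrable with a good constant: $\|\langle x\rangle\zeta_B'\zeta_A^{-1}\|_{L^2}\sim B^{1/2}$ after using $|\zeta_B'|\lesssim B^{-1}\zeta_B$ and $\zeta_B/\zeta_A\le e^{-|x|/(2B)}$. Concretely, $\|\langle x\rangle B^{-1}e^{-|x|/(2B)}\|_{L^2}\sim B^{-1}B^{3/2}=B^{1/2}$. Thus Proposition \ref{prop:smooth} in the form $L^{1,1}\to L^2\widetilde\Sigma$, followed by Cauchy–Schwarz in $x$, gives
\begin{align*}
\Big\|\int_0^te^{(t-s)\mathcal L_{c_0}}QD^{(2)}ds\Big\|_{L^2(I,\widetilde\Sigma)}\lesssim \|\langle x\rangle\zeta_B'\zeta_A^{-1}\|_{L^2}\,\|\vartheta_{2A_1}\zeta_A V\|_{L^2(I\times\R)}\lesssim B^{1/2}\|V\|_{L^2(I,\Sigma_{2AA_1})} .
\end{align*}
For $D_{3125}$ I first remove $R_{h_c}(-1)$ using \eqref{eq:resolvat1}, which conjugates boundedly by the weights $\langle x\rangle\zeta_{2B}$, exactly as in \eqref{eq:estD311}. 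The point $\lambda=0$ causes no trouble here, since it is handled inside Proposition \ref{prop:smooth2}, which I take as given.

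The genuine difficulty is whether the $L^{1,1}$ weight $\langle x\rangle$ is really necessary near $\lambda=0$, that is, whether Proposition \ref{prop:smooth2} can only be applied with the $\langle x\rangle$ loss. If a better bound were needed, I would try to exploit that $\zeta_B'\vartheta_{2A_1}$ is odd-like and of mean size $B^{-1}$, subtracting the projection onto the Jost function at $\lambda=0$. For the stated inequality, however, the crude $B^{1/2}$ factor suffices, because \eqref{eq:sec:1virial12} supplies $A_1^{-1}$ and $B^{1/2}A_1^{-1}=B^{-1/10}$.
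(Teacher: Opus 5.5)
Your proposal has a genuine gap in the ``inner piece'', and it comes from misreading the partition of unity. In Notation \ref{not:notation}, $\vartheta_1$ is decreasing with $\vartheta_1=1$ on $(-\infty,1]$. So $\vartheta_{1A_1}$ is supported on the whole half-line $(-\infty,2A_1]$, not on $[-2A_1,2A_1]$; only $\vartheta_{2A_1}$ is localized, to $[A_1,+\infty)$. (Also, $\zeta_B'$ does not vanish for $|x|\le B$: in \eqref{def:zetaC} the cutoff is $\chi(x)$, not $\chi(x/B)$, so $|\zeta_B'|\sim B^{-1}e^{-|x|/B}$ for all $|x|\ge 2$.)

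Hence your estimate $\|\langle x\rangle\vartheta_{1A_1}\zeta_A^{-1}\|_{L^2}\lesssim A_1^{3/2}$ is false; the left side is infinite, since $\zeta_A^{-1}$ grows on $\R_-$. Keeping $\zeta_B'$ inside the norm does not rescue it: for $x\le -2A_1$ one has $\langle x\rangle|\zeta_B'|\vartheta_{1A_1}\zeta_A^{-1}\ge \langle x\rangle B^{-1}e^{-|x|/B}$, whose $L^2$ norm there is $\sim B^{1/2}$. Since $\vartheta_{2A_1}=0$ on that region, it is seen only by $\Sigma_{1AA_1}$. Proposition \ref{prop:smooth} followed by Cauchy--Schwarz against $\|\zeta_A\widetilde V\|_{L^2}$ therefore gives at best $B^{1/2}\|V\|_{L^2(I,\Sigma_{1AA_1})}$, because the dual weight $\|\langle x\rangle\zeta_B'\zeta_A^{-1}\|_{L^2(\R_-)}$ really has that size.

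That is fatal. In the proof of Proposition \ref{prop:continuation}, \eqref{eq:sec:1virial11} controls $\|V\|_{L^2(I,\Sigma_{1AA_1})}$ by $\|V_\phi\|_{L^2(I,\widetilde\Sigma)}$ with an $O(1)$ constant. So the coefficient of the $\Sigma_{1AA_1}$ term in \eqref{eq:gKdV21hard1} must be $o(1)$ as $B\to\infty$.

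The missing idea is that the loss of the weight $\langle x\rangle$ is neither global nor symmetric. This must be extracted from the structure of the resolvent; Proposition \ref{prop:smooth} used as a black box cannot give it. The paper proceeds as follows.
\begin{itemize}
\item It splits in frequency. For $|\tau|>\varepsilon_4$, Lemma \ref{lem:smooth2large} gives an unweighted $L^1\to\widetilde\Sigma$ bound. For $|\tau|<\varepsilon_4$, \eqref{eq:estRgood} shows that $\widehat R-R^{(1)}_{233}-R^{(1)}_{333}$, composed with $Q_{[c]}$, is also bounded from unweighted $L^1$ into $\widetilde\Sigma$. These pieces cost only $\|\zeta_B'\zeta_A^{-1}\|_{L^2}\lesssim B^{-1/2}$.
\item The weight survives only in the explicit terms $R^{(1)}_{233}$ and $R^{(1)}_{333}$, through $|e^{-\mu_2(\im\tau)y}-1|\lesssim|\tau|\,|y|$. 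There it is multiplied by the cutoff $\widetilde\phi(y)$, which decays rapidly as $y\to-\infty$, so only $\|\langle y\rangle\widetilde F\|_{L^1(\R_+)}$ is needed; cf. \eqref{eq:r2331}.
\item That term is split into $\R_-$, $\R_+\cap\mathrm{supp}\,\vartheta_{1A_1}$ and $\R_+\cap\mathrm{supp}\,\vartheta_{2A_1}$. These give $B^{-1}$, then $B^{-1}A_1^{3/2}$, then $B^{1/2}$ against $\Sigma_{2AA_1}$. The bound $B^{-1}A_1^{3/2}$ is now legitimate, because on $\R_+$ one integrates only over $[0,2A_1]$.
\item Together with the separately treated piece $R^{(1)}_{233}P\zeta_B'\widetilde V$, this yields \eqref{eq:gKdV21hard1}.
\end{itemize}
Your outer-piece bound $B^{1/2}\|V\|_{L^2(I,\Sigma_{2AA_1})}$ is consistent with this. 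Without the one-sided structure, however, the left half-line cannot be closed.
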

\qed

\textit{Proof of Lemma \ref{prop:smooth11}.} From   Lemmas \ref{lem:disczbv}--\ref{lem:gKdV21hard}  and Lemma \ref{lem:lemdscrt}
we have
\begin{align*}
  \| \widetilde{W} \| _{L^2(I, \widetilde{\Sigma}   )} \lesssim \( A_ 1^{3/2}B ^{-1 }  +B ^{-1/2 }  \)  \| V \| _{L^2(I,  { \Sigma }_{1A A_1}   )}+B ^{ 1/2 } \| V \| _{L^2(I,  { \Sigma }_{2A A_1}   )} .
\end{align*}
 Using  \eqref{eq:relABg}, decomposition \eqref{eq:defw} and inequality \eqref{eq:disczbv}, we have the following
\begin{align*}
   \| \widetilde{V} \| _{L^2(I, \widetilde{\Sigma}   )} &\le      \| (1-\zeta_{B}) \widetilde{V}   \| _{L^2(I, \widetilde{\Sigma}   )}  + \| \zeta_{B} \widetilde{V}  \| _{L^2(I, \widetilde{\Sigma}   )}   \\& \lesssim B ^{-1} \(  \| V \| _{L^2(I,  { \Sigma }_{1A A_1}   )} + \| V \| _{L^2(I,  { \Sigma }_{2A A_1}   )}    \) + \| P \zeta_{B}\widetilde{V}  \| _{L^2(I, \widetilde{\Sigma}   )} +\|   \widetilde{W} \| _{L^2(I, \widetilde{\Sigma}   )}  \\& \lesssim  A_ 1^{3/2}B ^{-1 }    \| V \| _{L^2(I,  { \Sigma }_{1A A_1}   )}+B ^{ 1/2 } \| V \| _{L^2(I,  { \Sigma }_{2A A_1}   )}  .
\end{align*}
From   \eqref{EP2} and    \eqref{eq:resolvat1}    we have  the following, which yields \eqref{eq:sec:smooth11},
\begin{align*}
    \|  {V}_\phi  \| _{L^2(I, \widetilde{\Sigma}   )} & \lesssim    \| (-\partial _x ^2 + e^{\phi _c} ) ^{-1}  {V}_n  \| _{L^2(I, \widetilde{\Sigma}   )} +  \| (-\partial _x ^2 + e^{\phi _c} ) ^{-1}  {V}_\phi ^2  \| _{L^2(I, \widetilde{\Sigma}   )} \\&   \lesssim    \| {V}_n \| _{L^2(I, \widetilde{\Sigma}   )} +  \|    {V}_\phi    \| _{L^\infty (I\times \R   )}  \|    {V}_\phi    \| _{L^2(I, \widetilde{\Sigma}   )}.
\end{align*}

\qed

\section{The Jost functions}\label{sec:jost}

For $\mathcal{L} =\mathcal{L} _c$,    we consider here the equation
\begin{align}\label{eq:resoleq}
(\lambda  - \mathcal{L} ) \widetilde{U}  =\widetilde{F}   \text{ in } L^2(\R , \C ^2) \text{ with }  \widetilde{U}=(\dot n , \dot u ) ^\intercal .
\end{align}
We set
  \begin{equation}\label{Poisson_ODE}
\partial_x\dot{\phi} =: \dot{\psi} , \quad \partial_x\dot{\psi}  =  e^{\phi_c}\dot{\phi} - \dot{n}.
\end{equation}
 Then,    letting $\mathbf{U}:=(\dot{n},\dot{u},\dot{\phi},\dot{\psi})^\intercal $, we obtain the system
 \begin{equation}\label{ODE_LinEP}
\( \frac{d }{dx} - A(x,\lambda,\varepsilon) \)
\mathbf{U}  = \mathbf{F}   := ( L _c^{-1}\widetilde{F},0,0)^\intercal  
\end{equation}
 with the coefficient matrix, for $L=L_c$,
\begin{equation}\label{ODE_LinEP1}
A=A(x,\lambda,\varepsilon):=
 \left(
\begin{array}{c|c}
L^{-1} & \begin{array}{cc}
						0 & 0 \\
						0 & 0
					\end{array} \\
		\hline
\begin{array}{cc}
0 & 0 \\
0 & 0
\end{array}
&
\begin{array}{cc}
1 & 0 \\
0 & 1
\end{array}
\end{array}
\right)
\left(
\begin{array}{c|c}
-\lambda I_2 - \partial_xL & \begin{array}{cc}
						0 & 0 \\
						0 & -1
					\end{array} \\
		\hline
\begin{array}{cc}
0 & 0 \\
-1 & 0
\end{array}
&
\begin{array}{cc}
0 & 1 \\
e^{\phi_c} & 0
\end{array}
\end{array}
\right)
\end{equation}
where $A(x,\lambda,\varepsilon ) =A_1(x,\varepsilon ) + \lambda A_2(x,\varepsilon )$, with
\begin{subequations}\label{A_Decompose}
\begin{align*}
 A_1 & :=   
 \begin{pmatrix}
\frac{(c-u_c)\partial_xu_c}{J} - \frac{K\partial_x n_c}{J(1+n_c)} & \frac{(c-u_c)\partial_xn_c}{J} + \frac{(1+n_c)\partial_xu_c}{J} & 0 & \frac{1+n_c}{J} \\
\frac{K\partial_x u_c}{J(1+n_c)} -\frac{K(c-u_c)\partial_xn_c}{J(1+n_c)^2} & \frac{K\partial_xn_c}{J(1+n_c)} + \frac{(c-u_c)\partial_xu_c}{J} & 0 & \frac{c-u_c}{J} \\
0 & 0 & 0 & 1 \\
-1 & 0 & e^{\phi_c} & 0
\end{pmatrix},
\\
A_2& :=
\left(
\begin{array}{c|c}
-L^{-1} & \mathbf{0}_2 \\
\hline
\mathbf{0}_2 & \mathbf{0}_2
\end{array}
\right) = \scriptsize\frac{1}{J}\left(
\begin{array}{c|c}
\begin{array}{cc}
c-u_c & 1+n_c \\
\dfrac{K}{1+n_c} & c-u_c
\end{array} & \mathbf{0}_2 \\
\hline
\mathbf{0}_2 & \mathbf{0}_2
\end{array}
\right)
\end{align*}
\end{subequations}
with  $\mathbf{0}_2$ the $2\times 2$ zero matrix and with \begin{equation}\label{Useful_Id3}
J=J(x,\varepsilon):=(c-u_c(x))^2-K>0.
\end{equation}
We will focus now   on
\begin{align}
  \label{eq:lineq1} \( \frac{d }{dx} - A(x,\lambda,\varepsilon) \)
\mathbf{U}  = 0 \text{ for } \lambda \in \overline{\C_+}    .
\end{align}
Notice that if we set for $\psi _c:=\phi '_c$
\begin{align}\label{eq:bigxi}
  \Xi _1[c]:= ( \xi _1 [c]^{\intercal} , \phi '_c , \psi '_c) ^\intercal \text{ and } \Xi _2[c]:= ( \xi _2 [c]^{\intercal} , \partial _c\phi  _c , \partial _c\psi  _c)^\intercal
\end{align}
then
\begin{align}\label{eq:bigxi1}
 \( \frac{d }{dx} - A_1(x, \varepsilon) \)
 \Xi _1[c]  = 0 \text{ and  }   \( \frac{d }{dx} - A_1(x, \varepsilon) \)
 \Xi _2[c]  = -A_2(x, \varepsilon)\Xi _1[c] .
\end{align}
Similarly,  proceeding like   Bae and Kwon \cite[p. 272]{BK22ARMA}  from \eqref{eq:dualkernel}  we obtain
\begin{align}\label{eq:bigeta1}
 \( \frac{d }{dx} + A_1(x, \varepsilon) ^\intercal  \)
 E _1[c]  = 0 \text{ and  }   \( \frac{d }{dx} + A_1(x, \varepsilon)  ^\intercal \)
 E _2[c]  =  A_2(x, \varepsilon) ^\intercal E _1[c]  ,
\end{align}
where for $j=1,2$  
\begin{align}  \label{eq:bigeta12}   E _j[c] = \left(
                                                 \begin{array}{c}
                                                    L _c    ^{\intercal}     \eta _j[c]  \\
                                                    \widetilde{\phi} _j \\
                                                    \widetilde{\psi} _j \\
                                                 \end{array}
                                               \right)  \text{ with } \partial _x  \left(
                                                 \begin{array}{c} 
                                                    \widetilde{\phi} _j \\
                                                    \widetilde{\psi} _j \\
                                                 \end{array}
                                               \right) = \left(
                                                           \begin{array}{cccc}
                                                             0 & 0 & 0 & -e^{\phi _c} \\
                                                             0 & 1 & -1 & 0 \\
                                                           \end{array}
                                                         \right)   \left(
                                                 \begin{array}{c}
                                                       \eta _j[c]  \\
                                                    \widetilde{\phi} _j \\
                                                    \widetilde{\psi} _j \\
                                                 \end{array}
                                               \right) .
\end{align}
We can consider $\displaystyle A^{\infty}( \lambda,\varepsilon):=\lim _{x\to \infty}A(x,\lambda,\varepsilon)$. Then   \begin{equation}\label{A_Asymptotic}
A^\infty(\lambda,\varepsilon )
:=
\begin{pmatrix}
\dfrac{c\lambda}{c^2-K} & \dfrac{\lambda}{c^2-K} & 0 & \dfrac{1}{c^2-K} \\
\dfrac{K \lambda}{c^2-K} & \dfrac{c\lambda}{c^2-K} & 0 & \dfrac{c}{c^2-K} \\
0 & 0 & 0 & 1 \\
-1 & 0 & 1 & 0
\end{pmatrix}  \ .
\end{equation}
We will write
\begin{align}
  \label{eq:pertA} & A(x,\lambda,\varepsilon)=  A^{\infty}( \lambda,\varepsilon) +   q (x,\lambda , \varepsilon)  \text{ where} \\&   A^{\infty}  ( \lambda,\varepsilon)=  A^{\infty} _1(  \varepsilon) +\lambda A^{\infty} _2(  \varepsilon)     \text{ and} \nonumber  \\&   q (x,\lambda , \varepsilon)=  q _1(x  , \varepsilon) +\lambda q _2(x  , \varepsilon)  \nonumber
\end{align}
and where
\small
\begin{align}
  \label{eq:estqj}\text{$ |\partial ^{l}_x  q _j(x  , \varepsilon)|\le C_l \varepsilon^{1+\frac{j  }{2}} e^{-\alpha \sqrt{\varepsilon} |x|}$  with fixed constants $C_l$ for all $l\ge 0$ and for a fixed $ \alpha >0$.}
\end{align}\normalsize
The   eigenvalues $\mu$ of $A^\infty$ are the zeros of the characteristic polynomial
\begin{equation}\label{dispersEP}
\begin{split}
d(\mu) = d(\mu,\lambda,\varepsilon)
& := \textnormal{det}\,\left(\mu I-A^\infty(\lambda,\varepsilon) \right) \\
& =(c^2-K)^{-1}\left( (\mu^2-1)\left[(\lambda - c\mu)^2 - K \mu^2 \right] + \mu^2 \right).
\end{split}
\end{equation}
This is equivalent to the fact that  they satisfy   one of the equations
\begin{equation}\label{Charact1}
d_{\pm}(\mu)=d_{\pm}(\mu,\varepsilon ):= \mu \left(c \pm \sqrt{\frac{1}{1-\mu^2}+ K}\, \right) = \lambda.
\end{equation}
For each $\varepsilon > 0$ and $\lambda \in \mathbb{C}$, $d(\mu)$ has four zeros $\mu_j$ $(j=1,2,3,4)$.  For any
$\mu_j$ we set
\begin{equation}\label{LRVec_A}
\mathbf{v}_j  := \left(1,\; \frac{c\mu_j - \lambda}{\mu_j} , \; \frac{1}{1-\mu_j^2}, \;\frac{\mu_j}{1-\mu_j^2}  \right) ^\intercal, \quad \mathbf{w}_j  := \frac{\boldsymbol{\pi}_j}{\< \boldsymbol{\pi}_j ,  \mathbf{v}_j \> _{\C ^4}},
\end{equation}
where $\< \mathbf{w}_i ,  \mathbf{v}_j \> _{\C ^4}=\delta _{i,j}$  and
\begin{subequations}\label{eigenvec_A}
\begin{align}
 \boldsymbol{\pi}_j & :=  \left(\left( \frac{c\lambda}{\mu_j} - (c^2-K)\right)(1-\mu_j^2), \; -\lambda\frac{1-\mu_j^2}{\mu_j}, \; 1, \;\mu_j \right) ^\intercal , \label{eigenvec_A1} \\
 \< \boldsymbol{\pi}_j ,  \mathbf{v}_j \> _{\C ^4} &  =  \frac{\lambda^2(1-\mu_j^2)}{\mu_j^2} - (c^2-K)(1-\mu_j^2) +\frac{1+\mu_j^2}{1-\mu_j^2}. \label{eigenvec_A2}
\end{align}
\end{subequations}Using the column vectors in \eqref{LRVec_A}  we consider the matrices
\begin{align}
  \label{eq:defVW}\mathbf{V}(\lambda,\varepsilon):= (\mathbf{v}_{ 1}, \mathbf{v}_{ 2}, \mathbf{v}_{ 3}, \mathbf{v}_{ 4})(\lambda,\varepsilon) \text{ and }\mathbf{W}(\lambda,\varepsilon):= (\mathbf{w}_{ 1}, \mathbf{w}_{ 2}, \mathbf{w}_{ 3}, \mathbf{w}_{ 4})(\lambda,\varepsilon).
\end{align}
We define also 
\begin{align}\label{eq:boldmu}&
  \boldsymbol{\mu}(\lambda,\varepsilon) := \diag(\mu_{ 1}, \mu_{ 2}, \mu_{ 3},\mu_{ 4})(\lambda,\varepsilon) \text{ and}\\ & \mathbf{M}(\lambda,\varepsilon) := \diag(m_{ 1}, m_{ 2}, m_{ 3},m_{ 4})(\lambda,\varepsilon) \text{ with } \label{mj} \\ & m_j=m_j(\lambda,\varepsilon ) := (c+(-1)^j\sqrt{K}) |1-\mu_j^2|^{\frac{1}{2}}, \quad (j=1,2,3,4).\nonumber
\end{align}
Setting also \begin{equation}\label{V0W0}
\widetilde{\mathbf{V}}  := \mathbf{V}  \mathbf{M}, \quad
 \widetilde{\mathbf{W}}  := \mathbf{W} \mathbf{M}^{-1} 
\end{equation}
 we have 
\begin{equation}\label{W0V01}
\widetilde{\mathbf{W}}^{\intercal}\widetilde{\mathbf{V}}=1, \quad \widetilde{\mathbf{W}}^{\intercal} A^\infty  \widetilde{\mathbf{V}} = \boldsymbol{\mu},
\end{equation}
and,  using the $q$ in \eqref{eq:pertA},
\begin{equation}\label{DecompW0(A-Ainf)V0}
\widetilde{\mathbf{W}}^{\intercal}  q     \widetilde{\mathbf{V}} = \lambda \widetilde{\mathbf{W}}^{\intercal}  q _2 \widetilde{\mathbf{V}} + \widetilde{\mathbf{W}} ^{\intercal}q _1\widetilde{\mathbf{V}}.
\end{equation} Bae and Kwon \cite{BK22ARMA} show that it is possible to take \begin{subequations} \label{EigenSpliting1}
\begin{align}
& \Re\mu_1 < 0 = \Re\mu_2 = \Re\mu_3 < \Re \mu_4, & \text{when} \;  \Re\lambda =0, \label{EigenSpliting1_1} \\
& \Re\mu_1 < 0 < \Re\mu_j, \quad  (j=2,3,4), & \text{when} \; \Re\lambda >0. \label{EigenSpliting1_2}
\end{align}
\end{subequations}
Elementary computations show that $\lambda $ near 0 we can choose
\begin{align}\label{EigenSpliting1_3}
  \mu_2 = \frac{\lambda}{c+\mathsf{V}}(1+o_{\lambda}(1)) \text{ and } \mu_3 = \frac{\lambda}{ \varepsilon \mathsf{V}}(1+o_{\lambda}(1))
\end{align}
and with the above choices
\begin{align}\label{EigenSpliting1_4}
 0<\Re  \mu_2  <  \Re  \mu_3  \text{ for } \Re\lambda >0.
\end{align}
We also have
\begin{align}
   \label{eq:sum_of_mus}
   & \mu_1    + \mu_2   + \mu_3 + \mu_4  =\dfrac{2c\lambda}{c^2-K}   .
\end{align}
We will need the facts and  formulas stated in the  following elementary lemma.

\begin{lemma}\label{lem:elest}
\begin{enumerate}[label = (\roman*)]
\item We have
\begin{align}
  & \mu_4 (0, \varepsilon) =\sqrt{2\mathsf{V} \varepsilon }      \(1- \frac{3+4K}{4\mathsf{V}}  \varepsilon +     O(\varepsilon ^2)\) , \label{eq:mu40}\\&  \partial _{\lambda}\mu_4 (0, \varepsilon) =- \frac{1}{2\varepsilon}   (1+O(\varepsilon)) . \label{eq:mu41}
\end{align} 
 
  \item  For $\lambda \in \im \R \backslash \{ 0   \}$, the four eigenvalues in \eqref{EigenSpliting1_1} are distinct. 
  \item We have
  \begin{equation}\label{mu1_mu4}
   \mu  _1 \(  - \lambda , \varepsilon   \)=-\mu  _4 \(   \lambda ,\varepsilon \)  \quad \text{for } \lambda \in  \im \R.
 \end{equation}
\end{enumerate}

\end{lemma}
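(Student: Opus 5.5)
The plan is to work directly from the characteristic polynomial \eqref{dispersEP} and its factorized form \eqref{Charact1}. For (i) I will compute $\mu_4(0,\varepsilon)$ explicitly and obtain $\partial_\lambda\mu_4(0,\varepsilon)$ by implicit differentiation. For (ii) I will use monotonicity of the dispersion relations \eqref{Charact1} restricted to the imaginary axis. For (iii) I will use a parity symmetry of $d$.

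\emph{Step (i).} At $\lambda=0$ we have $d(\mu,0,\varepsilon)=(c^2-K)^{-1}\mu^2\big[(\mu^2-1)(c^2-K)+1\big]$. Hence the nonzero roots satisfy
$$\mu^2=1-\frac{1}{c^2-K}=\frac{2\mathsf{V}\varepsilon+\varepsilon^2}{1+2\mathsf{V}\varepsilon+\varepsilon^2},$$
since $c^2-K=1+2\mathsf{V}\varepsilon+\varepsilon^2$. By \eqref{EigenSpliting1_1}, $\mu_4(0,\varepsilon)$ is the positive root. Expanding gives $\mu_4(0,\varepsilon)=\sqrt{2\mathsf{V}\varepsilon}\,\big(1+\tfrac{\varepsilon}{4\mathsf{V}}-\mathsf{V}\varepsilon+O(\varepsilon^2)\big)$, and $4\mathsf{V}^2-1=3+4K$ yields \eqref{eq:mu40}. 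For the derivative, note that $\mu_4$ lies on the branch $g(\mu):=\mu\big(c-s(\mu)\big)=\lambda$ with $s(\mu)=\sqrt{(1-\mu^2)^{-1}+K}$. Indeed $s(\mu_4(0))=c$ is exactly the relation above. Since $s'(\mu)=\mu(1-\mu^2)^{-2}s^{-1}$ and $c-s=0$ at $\mu_4(0)$, we get
$$g'(\mu_4(0))=-\frac{\mu_4(0)^2}{c\,(1-\mu_4(0)^2)^2}=-2\varepsilon\,(1+O(\varepsilon)).$$
Because $g'\neq 0$, the implicit function theorem applies and gives $\partial_\lambda\mu_4(0,\varepsilon)=1/g'(\mu_4(0))$, which is \eqref{eq:mu41}.

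\emph{Step (ii).} Let $\lambda=\im\tau$ with $\tau\ne0$. By \eqref{EigenSpliting1_1}, $\mu_1$ and $\mu_4$ have real parts of opposite strict sign, so each differs from the others. It remains to show $\mu_2\neq\mu_3$, where both are purely imaginary. Write $\mu=\im\nu$ with $\nu\in\R$. Then \eqref{Charact1} becomes $h_\pm(\nu):=\nu\big(c\pm s\big)=\tau$ with $s=s(\nu)=\sqrt{(1+\nu^2)^{-1}+K}$.
\begin{itemize}
\item A common root of the two branches would need $\nu s=0$, i.e.\ $\nu=0$, which forces $\tau=0$. (The possibility $s=0$ is excluded on this axis, since $s\ge\sqrt{K}>0$.)
\item Each branch is strictly increasing. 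For $h_-$, $h_-'=c-s+\nu^2(1+\nu^2)^{-2}s^{-1}>0$ because $s\le\mathsf{V}<c$. For $h_+$, $h_+'=c+s-\nu^2(1+\nu^2)^{-2}s^{-1}$, and $s(c+s)\ge s^2\ge(1+\nu^2)^{-1}\ge\nu^2(1+\nu^2)^{-2}$.
\end{itemize}
So each branch contributes at most one simple imaginary root, and the two branches contribute different roots. Therefore $\mu_2\ne\mu_3$.

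\emph{Step (iii).} Since $d$ depends on $\lambda$ and $\mu$ only through $(\lambda-c\mu)^2$ and $\mu^2$, we have $d(-\mu,-\lambda,\varepsilon)=d(\mu,\lambda,\varepsilon)$. Hence the roots for $-\lambda$ are the negatives of the roots for $\lambda$. For $\lambda\in\im\R$ the value $-\mu_4(\lambda)$ has strictly negative real part. By \eqref{EigenSpliting1_1}, $\mu_1(-\lambda)$ is the unique root with negative real part, which gives \eqref{mu1_mu4}. At $\lambda=0$ the same holds by the explicit formula from Step (i).

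I expect the main obstacle to be bookkeeping rather than any conceptual difficulty. One must check that the labeling of branches in \eqref{EigenSpliting1_1} (which root is $\mu_4$, and continuity of the labels in $\lambda$) is consistent with the choices above. One must also verify the positivity estimate for $h_+'$ uniformly in $K>0$.
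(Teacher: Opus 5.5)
Your proposal is correct and follows essentially the paper's proof. Part (i) is the same computation of $\mu_4(0,\varepsilon)^2=1-(c^2-K)^{-1}$ combined with implicit differentiation of the $d_-$ branch. Part (ii) rests on the same observation, that a common imaginary root of the two branches forces $\nu\sqrt{K+(1+\nu^2)^{-1}}=0$. Part (iii) identifies $-\mu_4(\lambda)$ with $\mu_1(-\lambda)$ as the unique root with negative real part, which is the paper's argument phrased through the symmetry $d(-\mu,-\lambda)=d(\mu,\lambda)$ rather than through the $d_-$ branch. Your extra step, strict monotonicity of $h_\pm$ on the imaginary axis, is a small but useful refinement: it shows every imaginary root is simple without having to know beforehand that $\mu_2$ lies on the $d_+$ branch and $\mu_3$ on the $d_-$ branch, which the paper takes from \eqref{EigenSpliting1_3} and from Bae--Kwon.
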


\begin{proof}
The starting point for  \eqref{eq:mu40} is the   equation     $d_{-}(\mu,\varepsilon )=\lambda$, see  Bae and Kwon \cite[formula (3.19)]{BK22ARMA}, that $\mu_4 (\lambda, \varepsilon)$ needs to satisfy,  so that for $\lambda =0$ and after elementary computations
\begin{align*}                 
\mu_4 ^2  = 1 - \frac{1}{c^2-K} = 1 -  \frac1{  1+2\mathsf{V}   \varepsilon   + \varepsilon  ^2}  =      2\mathsf{V}   \varepsilon   +  \( 1-4\mathsf{V}^2 \)  \varepsilon  ^2  +O( \epsilon ^3)
\end{align*}
so that taking square root we obtain  \eqref{eq:mu40}. 
 We obtain \eqref{eq:mu41} by implicit differentiation of $d_{-}(\mu,\varepsilon )=\lambda$  at $\lambda =0$ and   by
  \begin{equation*}
    1=  -\frac{\mu_4^2\partial_\lambda \mu_4}{c(1-\mu_4^2)^2} =  \mu _4^2 (0)   \partial _{\lambda}\mu_4   (0 )      \( -\frac{1}{ \mathsf{V}} + O(\varepsilon) + O( \mu^2_4(0)) \) .
  \end{equation*}
To prove the second statement, notice that for $\im \tau _j(\tau )=    \mu _j(\in \im \R  ) $  and for the choice in    \eqref{EigenSpliting1_3}  we have 
\begin{align*}&
  \tau _2 \(   c + \sqrt{K+ \frac{1}{1+\tau _2 ^2}} \) =\tau  \text{ and}\\& \tau _3 \(   c - \sqrt{K+ \frac{1}{1+\tau _3 ^2}} \) =\tau
\end{align*}
so that subtracting, if $\tau _2=\tau _3$  we have the following, possible only for $\tau _2=\tau _3=0$,  
\begin{align*}
  \tau _2     \sqrt{K+ \frac{1}{1+\tau _2 ^2}} + \tau _3   \sqrt{K+   \frac{1}{1+\tau _3 ^2}}=   2  \tau _2     \sqrt{K+ \frac{1}{1+\tau _2 ^2}} =0 .
\end{align*}
Next, for $\lambda \in \im \R $, we know from \eqref{Charact1} that  $\mu_1 (  -\lambda )$ and $-\mu  _4 (   \lambda   )$ are both solutions of
  \begin{align*}
    \mu  \left(c - \sqrt{\frac{1}{1-\mu^2 }+ K}\, \right) = -\lambda .
  \end{align*}
  In fact, $\mu_1,\mu_3,\mu_4$ satisfy $d_-(\mu,\varepsilon)=\lambda$ (see \cite{BK22ARMA}, Proof of Lemma 3.3) and \eqref{EigenSpliting1_1} holds. Hence, $\mu_1(-\lambda) \neq \mu_3(-\lambda)$ and $\mu_1(-\lambda) \neq \mu_4(-\lambda)$, and we obtain  \eqref{mu1_mu4}. 
\end{proof}

An elementary   computation shows that for $\mathbf{V}$ the matrix in \eqref{eq:defVW}
\begin{align}\label{eq:detv}
  \det \mathbf{V}  = -4 \mu_1^2  (1+O(\varepsilon)) \text{ at }\lambda =0.
\end{align}  Since   by  Lemma \ref{lem:elest} the  four eigenvalues are distinct  for  $\lambda \in \im \R \backslash \{ 0   \}$, it follows
\begin{equation}\label{eq:detvlambda}
   \det  \mathbf{V}  \neq 0  \text{ for all  }\lambda \in \im \R .
\end{equation}
It is elementary that solutions of
\begin{align}
  \label{eq:linequn1} \( \frac{d }{dx} - A^{\infty} ( \lambda,\varepsilon) \)
\mathbf{Y} ^{(0)}  = 0 \text{ for } \lambda \in \overline{\C_+}
\end{align}
form the vector space $\Span \{ \mathbf{Y} ^{(0)}_{j}: 1\le j \le 4   \} $  with
\begin{align*}
  \mathbf{Y} ^{(0)}_{j} (x) := e^{\mu _j (\lambda , \varepsilon )x} \mathbf{v}_j (\lambda , \varepsilon )
\end{align*}
and that  the
solutions of
\begin{align}
  \label{eq:linequn21} \( \frac{d }{dx} + \( A^{\infty} ( \lambda,\varepsilon) \) ^\intercal \)
\mathbf{Z} ^{(0)}  = 0 \text{ for } \lambda \in \overline{\C_+}
\end{align}
form the vector space $\Span \{ \mathbf{Z} ^{(0)}_{j}: 1\le j \le 4   \} $  with
\begin{align*}
  \mathbf{Z} ^{(0)}_{j} (x) := e^{-\mu _j (\lambda , \varepsilon )x} w_j (\lambda , \varepsilon ).
\end{align*}
We look for Jost functions of
\begin{equation}\label{eq:lineq1}
  \( \frac{d }{dx} - A(x,\lambda,\varepsilon) \)
\mathbf{Y}  =0
\end{equation}
  that is for appropriate solutions of   \eqref{eq:lineq1} of the form
$  f_j (x,\lambda , \varepsilon ) := e^{\mu _j (\lambda , \varepsilon )x}   \mathbf{m}_j (x,\lambda , \varepsilon ) $
with 
\begin{align}
   \label{eq:limm1}   & \lim _{x\to +\infty}\mathbf{m}_1 (x,\lambda , \varepsilon )=\mathbf{v}_1(\lambda , \varepsilon ) \text{ and with}\\&  \label{eq:limmj} \lim _{x\to -\infty}\mathbf{m}_j (x,\lambda , \varepsilon )=\mathbf{v}_j(\lambda , \varepsilon ) \text{ for }j\ge 2. 
\end{align}

\begin{lemma}\label{lem:estimate_f1}
    There exists a  solution  $f_1(x , \lambda , \varepsilon ) = e^{\mu_1 x} \mathbf{m}_1(x , \lambda , \varepsilon )$    of  \eqref{eq:lineq1}
    such that for any $m,n\ge 0$  there is a constant $C_{mn}(\varepsilon)$ such that   for all $\lambda \in \overline{\C _+} $  with $|\lambda |\le 1$    and $x\in \R$
    \begin{equation}\label{eq:m1_estimate_partial}
        \left |     \partial_\lambda^n \partial_x^{m} \(   \mathbf{{m}}_1(x , \lambda , \varepsilon  ) -\mathbf{v}_1 ( \lambda , \varepsilon  ) \)  \right |  \leq C_{mn}  (\varepsilon)      \sum _{i=1,2}    \int _{x}^{+\infty } |q_i  (x', \varepsilon ) | dx' \text{ . }
    \end{equation}

\end{lemma}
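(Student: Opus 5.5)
We construct $\mathbf{m}_1$ as the solution of a Volterra integral equation posed from $+\infty$. Writing $A=A^\infty+q$ as in \eqref{eq:pertA}, the function $\mathbf{m}_1$ must solve
\begin{equation*}
\partial_x \mathbf{m}_1=(A^\infty-\mu_1)\mathbf{m}_1+q\,\mathbf{m}_1 .
\end{equation*}
We diagonalize $A^\infty$ with the matrices $\widetilde{\mathbf{V}},\widetilde{\mathbf{W}}$ of \eqref{V0W0}--\eqref{W0V01}. This is legitimate near $\lambda=0$ by \eqref{eq:detv}, and for all $\lambda\in\im\R$ by \eqref{eq:detvlambda}. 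For $\Re\lambda>0$ and $|\lambda|\le1$ we extend it by continuity, since the eigenvalues stay distinct for $|\lambda|\le 1$ in $\overline{\C_+}$ up to a possible shrinking of the disc; this point must be checked using \eqref{EigenSpliting1_2}--\eqref{EigenSpliting1_4}. In these coordinates the equation becomes
\begin{equation*}
\mathbf{m}_1(x)=\mathbf{v}_1-\int_x^{+\infty}\widetilde{\mathbf{V}}\,\diag\big(e^{(\mu_j-\mu_1)(x-y)}\big)_{j}\,\widetilde{\mathbf{W}}^\intercal q(y,\lambda,\varepsilon)\,\mathbf{m}_1(y)\,dy .
\end{equation*}
The key point is that $\mu_1$ has the smallest real part, by \eqref{EigenSpliting1_1}--\eqref{EigenSpliting1_2}. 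Hence $\Re(\mu_j-\mu_1)\ge0$ for all $j$, and for $x<y$ every exponential factor is bounded by $1$. The kernel is therefore bounded by $C(\varepsilon)|q(y)|$ uniformly in $\lambda\in\overline{\C_+}$ with $|\lambda|\le1$.

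By \eqref{eq:estqj} we have $q\in L^1$, indeed with exponential decay at rate $\alpha\sqrt\varepsilon$. Standard Volterra iteration then gives existence and uniqueness of a bounded $\mathbf{m}_1$ with $\sup_x|\mathbf{m}_1|\le C e^{C\int_\R|q|}$. Feeding this bound back into the equation yields
\begin{equation*}
|\mathbf{m}_1(x)-\mathbf{v}_1|\le C(\varepsilon)\sum_{i=1,2}\int_x^{+\infty}|q_i|,
\end{equation*}
which is the case $m=n=0$. Continuity and analyticity in $\lambda$ in the interior follow from uniform convergence of the Neumann series. The $x$-derivatives are read off from the differential equation: $\partial_x(\mathbf{m}_1-\mathbf{v}_1)=(A^\infty-\mu_1)(\mathbf{m}_1-\mathbf{v}_1)+q\mathbf{m}_1$, using $(A^\infty-\mu_1)\mathbf{v}_1=0$. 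The right side is again bounded by the tail of $|q|$ for the first summand (via the $m=0$ bound) and by $|q(x)|\le C\int_x^\infty|q|$ for the second. The last inequality holds because $q$ and its derivatives decay exponentially, so $|q(x)|$ is controlled by its tail integral, with constants depending on $\varepsilon$. Induction on $m$, using the bounds on $\partial_x^l q$ in \eqref{eq:estqj}, handles higher $m$.

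The main obstacle is $\lambda$-differentiation. Differentiating the kernel in $\lambda$ produces factors $(x-y)\partial_\lambda(\mu_j-\mu_1)$ multiplying the exponential, and these grow linearly in $|x-y|$. They also appear in the $\partial_\lambda$-derivatives of $\widetilde{\mathbf V},\widetilde{\mathbf W}$, and of $\mu_1$, which is smooth near $0$ since it is a simple root; note also that $\partial_\lambda\mu_3\sim \varepsilon^{-1}$ by \eqref{EigenSpliting1_3}. Since $|\partial_\lambda^n e^{(\mu_j-\mu_1)(x-y)}|\le C_n(\varepsilon)\<x-y\>^n$ for $x<y$, the differentiated kernel is bounded by $C\<y-x\>^n|q(y)|$.

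We handle this growth by absorbing the polynomial factor into the exponential decay. For $x\ge0$ we have $\<y-x\>^n\le \<y\>^n$, and $\<y\>^n e^{-\alpha\sqrt\varepsilon|y|}\le C_n(\varepsilon)e^{-\frac\alpha2\sqrt\varepsilon|y|}$, so the tail integral is preserved up to an $\varepsilon$-dependent constant. We can also note that the right side of \eqref{eq:m1_estimate_partial} is comparable to $e^{-\alpha\sqrt\varepsilon x^+}$. For $x<0$ the right side is bounded below by a positive constant depending on $\varepsilon$, so it suffices to bound $\partial_\lambda^n\mathbf{m}_1$ uniformly. Here the growth $\<x\>^n$ is a genuine difficulty, because $\Re(\mu_j-\mu_1)$ may vanish only when $\Re\lambda=0$ and $j=2,3$ as well.

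We attempt this first by iterated differentiation of the Volterra equation, arguing inductively on $n$ with a weighted sup norm $\sup_x \<x^-\>^{-n}|\cdot|$. If that fails, we exploit the fact that $\mu_j-\mu_1$ has real part bounded below by $\Re(\mu_4-\mu_1)>0$ for $j=4$. For $j=2,3$ we split the integral at $y=0$: on $y<0$ we have $|q(y)|\le Ce^{-\alpha\sqrt\varepsilon|y|}$, which still beats polynomials in $|x-y|\le|x|+|y|$ only up to $\<x\>^n$. In that case we conclude with the weaker bound and must argue that the statement's constant $C_{mn}(\varepsilon)$ permits it. Resolving this $x\to-\infty$ behaviour of $\lambda$-derivatives is the step I expect to be hardest.
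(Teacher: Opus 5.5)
Your set-up is the standard one the paper has in mind with \eqref{eq:integral_eq_m1}: the Volterra equation from $+\infty$, the $m=n=0$ bound, and $x$-derivatives via the ODE. But there is a genuine gap. The proposal stops exactly where an idea is needed, and the fallback you offer does not prove the lemma. For $x<0$ the right-hand side of \eqref{eq:m1_estimate_partial} is bounded above by $C(\|q_1\|_{L^1}+\|q_2\|_{L^1})$. So the statement requires $\sup_{x\in\R}|\partial_\lambda^n\mathbf m_1|<\infty$, and a bound growing like $\langle x\rangle^n$ is strictly weaker.

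The missing observation is that $\Re(\mu_j-\mu_1)$ never vanishes for $j\neq1$. Your claim that it may vanish for $j=2,3$ on $\im\R$ confuses $\Re\mu_j=0$ with $\Re(\mu_j-\mu_1)=0$. By \eqref{EigenSpliting1_1}--\eqref{EigenSpliting1_2}, $\Re\mu_1<0\le\Re\mu_j$ for $j\ge2$ on $K:=\{\lambda\in\overline{\C_+}:|\lambda|\le1\}$. Moreover $\mu_1$ is the unique root of the monic quartic $d$ in the open left half-plane, hence continuous (indeed analytic, being simple). Compactness then gives $\Re(\mu_j-\mu_1)\ge-\Re\mu_1\ge c_0(\varepsilon)>0$ on $K$; at $\lambda=0$, $\mu_1=-\mu_4(0,\varepsilon)\approx-\sqrt{2\mathsf V\varepsilon}$ by \eqref{mu1_mu4} and \eqref{eq:mu40}.

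Consequently, for $x<y$ and $j\neq1$ one has $|\partial_\lambda^k e^{(\mu_j-\mu_1)(x-y)}|\le C_k(\varepsilon)\langle y-x\rangle^k e^{-c_0(y-x)}\le C_k'(\varepsilon)$. The $j=1$ part of the kernel is $\mathbf v_1\mathbf w_1^\intercal$, which does not depend on $x-y$ at all. So every $\lambda$-derivative of the kernel is bounded by $C_n(\varepsilon)(|q_1(y)|+|q_2(y)|)$ uniformly in $x<y$. Induction on $n$ and Gronwall in the Volterra equation for $\partial_\lambda^n\mathbf m_1$ then give $\sup_{x\in\R}|\partial_\lambda^n\mathbf m_1|\le C_n(\varepsilon)$. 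Reinserting this yields \eqref{eq:m1_estimate_partial} for all $x$ at once, with no splitting at $x=0$, no weights, and no absorption into the decay of $q$.

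The point you flagged as ``must be checked'' actually fails: the eigenvalues are not distinct on all of $K$. For real $\lambda>0$, $d_-(\mu)\approx\varepsilon\mu-\mu^3/(2\mathsf V)$, so $d_-(\mu)=\lambda$ has a double root $\mu_3=\mu_4=\mu^*\approx\sqrt{2\mathsf V\varepsilon/3}$ at $\lambda^*=d_-(\mu^*)\approx\frac23\varepsilon\mu^*$. This is the threshold behind the restriction on $a$ in Theorem \ref{thm:pwdisp}. For $\lambda\neq0$ each eigenspace of $A^\infty$ is one-dimensional, so at $\lambda^*$ there is a Jordan block. There $\det\mathbf V=0$, $\mathbf w_3,\mathbf w_4$ blow up, and $\partial_\lambda\mu_3,\partial_\lambda\mu_4$ are unbounded. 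Hence your term-by-term bound on $\partial_\lambda^n e^{(\mu_j-\mu_1)(x-y)}$ breaks down near $\lambda^*$, and shrinking the disc would lose the stated range $|\lambda|\le1$.

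Only the isolation of $\mu_1$ matters. Write the kernel as $\mathbf v_1\mathbf w_1^\intercal+e^{(A^\infty-\mu_1)(x-y)}(1-\mathbf v_1\mathbf w_1^\intercal)$. Express the second term as $\frac{1}{2\pi\im}\oint e^{(z-\mu_1)(x-y)}(z-A^\infty)^{-1}dz$ over a fixed contour in $\{\Re z>-c_0/4\}$ enclosing $\mu_2,\mu_3,\mu_4$. This is analytic in $\lambda$, its $\lambda$-derivatives are $O(\langle y-x\rangle^k e^{-3c_0(y-x)/4})$, and it coincides with the kernel of \eqref{eq:integral_eq_m1} wherever $\mathbf V$ is invertible.

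Finally, for $m\ge1$ the pointwise term $q(x)\mathbf m_1(x)$ requires $|\partial_x^l q(x)|\lesssim_\varepsilon\sum_i\int_x^{+\infty}|q_i|$. This does not follow from the upper bounds \eqref{eq:estqj} alone, as you suggest. It needs a lower bound on the tail, e.g.\ from the positive entry $e^{\phi_c}-1$ of $q_1$, which decays at the exact rate $\mu_4(0,\varepsilon)$ (cf.\ Remark \ref{rem:expnS}).
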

\

We skip the elementary and standard proof of Lemma \ref{lem:estimate_f1}, referring to \cite[Sect. 7]{Bealsbook1988}, to the classical \cite{DT1979} and to \cite[Lemma 8.1]{CM25D1}. We only point out
 that   $\mathbf{m}_1$ solves the
   Volterra equation
   \begin{equation}\label{eq:integral_eq_m1}
          \mathbf{m}_1(x , \lambda , \varepsilon  )  = \mathbf{v}_1 ( \lambda , \varepsilon  )  - \sum _{j=1}^{4}  \mathbf{v}_j  ( \lambda , \varepsilon  )    \int_x^{+\infty}  e^{(\mu_{ {j}}   -\mu_{1} ) (x-y)}\< q(y,\lambda , \varepsilon  ) \mathbf{m}_1(y , \lambda , \varepsilon  ) , \mathbf{w} _j   \> _{\C ^4}    d y .
    \end{equation}

  \begin{lemma}\label{lem:estimate_f4}
    There exists a  solution  $f_4(x , \lambda , \varepsilon ) = e^{\mu_4 x} \mathbf{m}_4(x , \lambda , \varepsilon )$    of  \eqref{eq:lineq1}
    such that for any $m,n \ge 0$  there is a constant $C_{mn}(\varepsilon)$ such that   for all $\lambda \in \overline{\C _+} $  with $|\lambda |\le 1$   and $x\in \R$
    \begin{equation}\label{eq:m4_estimate_partial}
        \left |     \partial_x^{n}    \partial_\lambda^m \(   \mathbf{m}_4(x , \lambda , \varepsilon  ) -\mathbf{v}_4 ( \lambda , \varepsilon  )  \)  \right |  \leq C_{mn} (\varepsilon) \sum _{i=1,2}       \int _{-\infty }^{x }|q_i  (x', \varepsilon ) | dx' \text{  .}
    \end{equation}

\end{lemma}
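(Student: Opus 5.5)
The approach mirrors Lemma \ref{lem:estimate_f1}, with the roles of $\pm\infty$ exchanged: $\mathbf{m}_4$ is normalized at $-\infty$, where $e^{\mu_4 x}$ is the decaying exponential since $\Re\mu_4>0$. I would construct $\mathbf{m}_4$ as the solution of the Volterra equation
\begin{equation*}
\mathbf{m}_4(x)=\mathbf{v}_4+\sum_{j=1}^{4}\mathbf{v}_j\int_{-\infty}^{x} e^{(\mu_j-\mu_4)(x-y)}\langle q(y,\lambda,\varepsilon)\mathbf{m}_4(y),\mathbf{w}_j\rangle_{\C^4}\,dy .
\end{equation*}
A direct differentiation, using $A^\infty\mathbf{v}_j=\mu_j\mathbf{v}_j$ and $\sum_j\mathbf{v}_j\mathbf{w}_j^\intercal=1$, shows that any bounded solution of this equation gives a solution $f_4=e^{\mu_4x}\mathbf{m}_4$ of \eqref{eq:lineq1} satisfying \eqref{eq:limmj}.

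The key check is that all kernels $e^{(\mu_j-\mu_4)(x-y)}$ with $y<x$ are bounded by $1$. This requires $\Re\mu_j\le\Re\mu_4$ for every $j$ and all $\lambda\in\overline{\C_+}$ with $|\lambda|\le1$. For $\Re\lambda=0$ this is \eqref{EigenSpliting1_1}. For $\Re\lambda>0$ the case $j=1$ is immediate, and for $j=2,3$ I would use \eqref{EigenSpliting1_4} together with a continuity/ordering argument from \cite{BK22ARMA} giving $\Re\mu_3\le\Re\mu_4$.

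One caveat: where some $\Re\mu_j$ coincides with $\Re\mu_4$, or at $\lambda=0$ where $\mu_2=\mu_3=0$ and $\mathbf{v}_2,\mathbf{v}_3$ degenerate, the basis $\mathbf{v}_j,\mathbf{w}_j$ is singular. There I would switch to the normalized $\widetilde{\mathbf{V}},\widetilde{\mathbf{W}}$ from \eqref{V0W0}, or use a Riesz-projection form of the kernel, to keep the coefficients bounded. Alternatively, I could note that the constants are allowed to depend on $\varepsilon$ and appeal to \eqref{eq:detvlambda} away from $0$.

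With bounded kernels and $\|q\|_{L^1}$ finite by \eqref{eq:estqj}, the Neumann series converges. The standard Gronwall argument for Volterra equations (as in \cite{DT1979}, \cite[Sect. 7]{Bealsbook1988}) then gives $|\mathbf{m}_4|\le C$ and
\begin{equation*}
|\mathbf{m}_4-\mathbf{v}_4|\le C\int_{-\infty}^x|q|,
\end{equation*}
which is the case $m=n=0$.

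For $x$-derivatives, I would differentiate the integral equation: $\partial_x\mathbf{m}_4=(A-\mu_4)\mathbf{m}_4=(A^\infty-\mu_4)(\mathbf{m}_4-\mathbf{v}_4)+q\,\mathbf{m}_4$. The bound on $|\partial_x\mathbf{m}_4|$ then follows from the previous step and from $|q|\lesssim\int_{-\infty}^x|q|$, which holds for $x\le0$ by the exponential profile \eqref{eq:estqj}. For $x>0$, the tail $\int_{-\infty}^x|q|$ is bounded below by a fixed constant, so the bound is trivial there. Higher derivatives follow by induction, differentiating this ODE.

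For $\lambda$-derivatives, I would differentiate the Volterra equation in $\lambda$. Each $\partial_\lambda$ falling on a kernel produces a factor $\partial_\lambda(\mu_j-\mu_4)(x-y)$. This linear growth in $|x-y|$ is the main obstacle, because the kernel no longer decays when $\Re\mu_j=\Re\mu_4$. I would handle it using the exponential decay of $q$: the factor $(x-y)^k$ is absorbed via $\int_{-\infty}^x(x-y)^k|q(y)|\,dy$. For $x\le0$ this is at most $C\int_{-\infty}^x|q|$, since $q$ has exponential profile with rate $\alpha\sqrt\varepsilon$ and the constant depends on $\varepsilon$. For $x>0$, I would split the integral at $0$ and use $|e^{(\mu_j-\mu_4)(x-y)}|\le1$ together with the boundedness of the integrand's $y$-moments. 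If the growth in $x$ is an issue there, I would instead exploit that when $j=1$ the kernel is strictly decaying. For $j=2,3$ on $\Re\lambda=0$, I would use that the derivative $\partial_\lambda\mu_j$ stays bounded for $|\lambda|\le1$ by Lemma \ref{lem:elest}.

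Derivatives of $q$, $\mathbf{v}_j$ and $\mathbf{w}_j$ in $\lambda$ are smooth and bounded by \eqref{eq:pertA} and \eqref{eq:detvlambda}. The same Gronwall iteration then closes the estimate for every $m,n$.
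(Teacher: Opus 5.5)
Your construction is the paper's: the Volterra equation \eqref{eq:integral_eq_m4}, iterated and then differentiated. The paper gives no detail beyond calling it analogous to Lemma \ref{lem:estimate_f1}. However, the two points you flag yourself are not actually resolved, and one of them cannot be resolved on the whole region as stated.

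First, you cannot get the ordering $\Re\mu_3\le\Re\mu_4$ on all of $\overline{\C_+}\cap\{|\lambda|\le1\}$ from Bae--Kwon, and it is false there. Both $\mu_3$ and $\mu_4$ solve $d_-(\mu)=\lambda$ in \eqref{Charact1}. For real $\mu\in(0,\mu_4(0,\varepsilon))$ one has $0<d_-(\mu)\le\varepsilon\mu$, and $d_-$ vanishes at both endpoints. So $d_-$ has an interior maximum $\lambda_*\lesssim\varepsilon^{3/2}$, and as real $\lambda\uparrow\lambda_*$ the two real roots $\mu_3<\mu_4$ collide. For $\lambda\neq0$ an eigenvector of $A^\infty(\lambda)$ is determined by its eigenvalue: its first entry can be normalized to $1$ and the rest is forced, as in \eqref{LRVec_A}. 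Hence the collision is a Jordan block. Then $\langle\boldsymbol{\pi}_4,\mathbf{v}_4\rangle_{\C^4}\to0$, $\mathbf{w}_4$ blows up, and the full kernel $\sum_j\mathbf{v}_j\mathbf{w}_j^{\intercal}e^{(\mu_j-\mu_4)t}=e^{(A^\infty-\mu_4)t}$ grows like $t$. So $\mathbf{m}_4$ in general grows linearly as $x\to+\infty$. The rescaling \eqref{V0W0} is diagonal and cannot remove a Jordan block, and a Riesz projection onto $\{\mu_3,\mu_4\}$ has the same growth. This is exactly where the analogy with $f_1$ breaks. $\mu_1$ is the only root with negative real part, hence always simple and strictly separated, whereas $\mu_4$ is one of three roots in the right half-plane. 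What the Volterra argument does give is the estimate for $\lambda\in\im\R$, $|\lambda|\le1$, or on a thin strip of $\overline{\C_+}$ around that segment. That is also all that is used later (Lemmas \ref{lem:bdm2}, \ref{lem:b13at0} and Section \ref{sec:proofsmooth2}).

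Second, even on that segment your treatment of the factors $(x-y)^k$ produced by $\partial_\lambda^k$ does not close for $x>0$. Indeed $\int_{-\infty}^x(x-y)^k|q(y)|\,dy\sim x^k\|q\|_{L^1}$ as $x\to+\infty$; splitting at $0$ does not change this, and boundedness of $\partial_\lambda\mu_j$ is beside the point. You use strict decay of the kernel for $j=1$, but it holds equally for $j=2,3$. By \eqref{EigenSpliting1_1}, continuity of the simple root $\mu_4$ and compactness, $\gamma(\varepsilon):=\min\{\Re\mu_4(\lambda,\varepsilon):\lambda\in\im\R,\ |\lambda|\le1\}>0$, while $\Re\mu_j\le0$ for $j\neq4$. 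Hence $|(x-y)^ke^{(\mu_j-\mu_4)(x-y)}|\le k!\,\gamma^{-k}$ for $x>y$ and all $j\neq4$; the case $\Re\mu_j=\Re\mu_4$ you worry about never occurs there. The $j=4$ kernel is identically $1$ and produces no factor $(x-y)$. Then every term of the differentiated Volterra equation is bounded by $C(\varepsilon)\sum_{i=1,2}\int_{-\infty}^x|q_i|$ directly, without any profile argument, and Gronwall closes the induction. The basis $\mathbf{v}_j,\mathbf{w}_j$ is uniformly nondegenerate there by \eqref{eq:detv}--\eqref{eq:detvlambda}. In particular $A^\infty(0)$ is diagonalizable, so there is no degeneracy at $\lambda=0$.

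Finally, for $n\ge1$ the term $q(x)\mathbf{m}_4(x)$ requires $|\partial_x^lq(x)|\lesssim\int_{-\infty}^x|q|$ for $x\le0$. This does not follow from \eqref{eq:estqj}, which is only an upper bound. It uses that $S_c$ has an exact exponential tail of rate $\mu_4(0,\varepsilon)$, cf. Remark \ref{rem:expnS}.
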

It is  completely analogous to Lemma \ref{lem:estimate_f1}  but with
 \begin{equation}\label{eq:integral_eq_m4}
          \mathbf{m}_4(x , \lambda , \varepsilon  )  = \mathbf{v}_4 ( \lambda , \varepsilon  )  + \sum _{j=1}^{4}  \mathbf{v}_j  ( \lambda , \varepsilon  )    \int_{-\infty} ^x  e^{(\mu_{ {j}}   -\mu_{4} ) (x-y)}\< q(y,\lambda , \varepsilon  ) \mathbf{m}_4(y , \lambda , \varepsilon  ) , \mathbf{w} _j   \> _{\C ^4}    d y .
    \end{equation}

    \begin{lemma}\label{lem:symjost1} Consider the diagonal matrix $D_{3,1}=\diag (1,1,1,-1)$. Then for $\lambda \in \im \R$  we have
    \begin{equation}\label{eq:symjost11}
        f_1(x , -\lambda , \varepsilon )=D_{3,1} f_4(-x , \lambda , \varepsilon ).
    \end{equation}

    \end{lemma}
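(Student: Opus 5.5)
The plan is to derive \eqref{eq:symjost11} from a reflection symmetry of the system \eqref{eq:lineq1}, combined with uniqueness of the solution of the Volterra equation \eqref{eq:integral_eq_m1}.

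\emph{Step 1: the reflection symmetry of $A$.} We show that for $\lambda\in\im\R$
\begin{equation*}
 -D_{3,1}A(-x,\lambda,\varepsilon)D_{3,1}=A(x,-\lambda,\varepsilon).
\end{equation*}
By Theorem \ref{thm:solwave}, $n_c,u_c,\phi_c$ are even, so $\partial_x n_c$ and $\partial_x u_c$ are odd while $J$ is even. Conjugation by $D_{3,1}=\diag(1,1,1,-1)$ changes the sign of the entries in the fourth row and in the fourth column, except the $(4,4)$ entry. We then check the entries of $A_1$ block by block.
\begin{itemize}
\item The upper left $2\times2$ block of $A_1$ is built from odd functions, so $-A_1(-x)$ restricted to it equals $A_1(x)$.
\item The entries $(1,4)$, $(2,4)$, $(3,4)$, $(4,1)$, $(4,3)$ are even functions, and each gets two sign changes, one from $D_{3,1}$ and one from the overall minus. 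So they are preserved.
\item All remaining entries vanish.
\end{itemize}
For $A_2$, the only nonzero block is the upper left one, whose entries are even. Hence $-\lambda A_2(-x)=(-\lambda)A_2(x)$, which is exactly the required change $\lambda\mapsto-\lambda$. As a check, the same computation on \eqref{A_Asymptotic} gives $-D_{3,1}A^\infty(\lambda)D_{3,1}=A^\infty(-\lambda)$.

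It follows that if $\mathbf Y$ solves \eqref{eq:lineq1} with parameter $\lambda$, then $\mathbf Z(x):=D_{3,1}\mathbf Y(-x)$ solves it with parameter $-\lambda$, because $\mathbf Z'(x)=-D_{3,1}A(-x,\lambda)D_{3,1}\mathbf Z(x)$.

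\emph{Step 2: matching exponents and asymptotic vectors.} Apply Step 1 to $f_4(\cdot,\lambda,\varepsilon)$ and set $\mathbf Z(x)=D_{3,1}f_4(-x,\lambda,\varepsilon)$. By \eqref{mu1_mu4} we have $-\mu_4(\lambda)=\mu_1(-\lambda)$, so
\begin{equation*}
\mathbf Z(x)=e^{\mu_1(-\lambda)x}\,D_{3,1}\mathbf m_4(-x,\lambda,\varepsilon).
\end{equation*}
By \eqref{eq:limmj}, $D_{3,1}\mathbf m_4(-x,\lambda)\to D_{3,1}\mathbf v_4(\lambda)$ as $x\to+\infty$. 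Writing $\mu_1:=\mu_1(-\lambda)=-\mu_4(\lambda)$, a direct substitution in \eqref{LRVec_A} gives
\begin{equation*}
\mathbf v_1(-\lambda)=\Big(1,\tfrac{c\mu_4-\lambda}{\mu_4},\tfrac{1}{1-\mu_4^2},-\tfrac{\mu_4}{1-\mu_4^2}\Big)^\intercal=D_{3,1}\mathbf v_4(\lambda).
\end{equation*}
Thus $\mathbf Z$ has the normalization \eqref{eq:limm1} that characterizes $f_1(\cdot,-\lambda)$.

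\emph{Step 3: uniqueness (main obstacle).} It remains to show that $f_1(\cdot,-\lambda)$ is the only solution with this normalization. Asymptotics alone are not enough, since $f_1$ is the most decaying solution and other solutions might share its limit. I would therefore work with the integral equations rather than with asymptotics.

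First I would check that the conjugation matches the eigenvector families: $D_{3,1}\mathbf v_j(\lambda)$ is a multiple of $\mathbf v_{\sigma(j)}(-\lambda)$, where the permutation $\sigma$ is induced by $\mu\mapsto-\mu$ on the roots of $d(\mu,\lambda)$ and $d(\mu,-\lambda)$. The dual vectors $\mathbf w_j$ should transform in the same way, with the inverse multiple. With this, the change of variables $y\mapsto-y$ turns \eqref{eq:integral_eq_m4} into exactly \eqref{eq:integral_eq_m1} at $-\lambda$, with unknown $D_{3,1}\mathbf m_4(-x,\lambda)$. The step needing care is that the rank-one projector term $\mathbf v_j\langle\,\cdot\,,\mathbf w_j\rangle$ is invariant under this relabelling, so that the permutation mixes no terms.

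Uniqueness of bounded solutions of \eqref{eq:integral_eq_m1} follows from the usual Volterra iteration. This uses $\Re(\mu_j-\mu_1)\ge0$ by \eqref{EigenSpliting1_1} and the integrability \eqref{eq:estqj}, and it is the iteration behind Lemma \ref{lem:estimate_f1}. Hence $\mathbf m_1(x,-\lambda)=D_{3,1}\mathbf m_4(-x,\lambda)$, which is \eqref{eq:symjost11}.

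At $\lambda=0$ the exponents $\mu_2=\mu_3$ coincide and the eigenvector bookkeeping degenerates. There I would instead conclude by continuity in $\lambda$, using the $\partial_\lambda$ bounds of Lemmas \ref{lem:estimate_f1} and \ref{lem:estimate_f4}.
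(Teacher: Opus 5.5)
Your argument is correct, and its core is the paper's. Steps 1--2 are exactly the paper's computation: parity of $(n_c,u_c,\phi_c)$ makes $D_{3,1}f_4(-x,\lambda,\varepsilon)$ a solution of \eqref{eq:lineq1} at $-\lambda$, and this is combined with \eqref{mu1_mu4}. You differ only in the closing step.

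There, your stated reason for rejecting the paper's ``comparison of asymptotic behaviors'' is backwards. Since $\Re\mu_1(-\lambda)<\Re\mu_j(-\lambda)$ for $j\ge2$, $f_1(\cdot,-\lambda)$ is the \emph{recessive} solution at $+\infty$. It is therefore uniquely fixed by $e^{-\mu_1x}f_1\to\mathbf v_1$. Indeed, write any solution as $a_1f_1+\sum_{j\ge2}a_jF_j$ with the $F_j$ of Lemma \ref{lem:extrajost}; the normalization then forces $a_1=1$ and $a_2=a_3=a_4=0$. Uniqueness fails only when one normalizes a dominant solution, as for $f_1$ at $-\infty$ in \eqref{eq:limm1-infty}.

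Your Volterra route is still a valid alternative, and more self-contained, since it avoids invoking a basis of Jost solutions at $+\infty$. It does, however, need the bookkeeping you only sketch, and that bookkeeping holds with multiplier exactly $1$:
\begin{itemize}
\item \eqref{LRVec_A}, \eqref{eigenvec_A1} and \eqref{eigenvec_A2} are invariant under $(\mu,\lambda)\mapsto(-\mu,-\lambda)$ apart from a sign change in the fourth component. Hence $D_{3,1}\mathbf v_j(\lambda)=\mathbf v_{\sigma(j)}(-\lambda)$ and $D_{3,1}\mathbf w_j(\lambda)=\mathbf w_{\sigma(j)}(-\lambda)$.
\item Step 1 applied to both $A$ and $A^\infty$ gives $D_{3,1}q(-y,\lambda)D_{3,1}=-q(y,-\lambda)$.
\item $D_{3,1}\mathbf m_4(-\cdot,\lambda)$ is bounded by \eqref{eq:m4_estimate_partial}, so uniqueness of bounded solutions of \eqref{eq:integral_eq_m1} applies.
\end{itemize}
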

    \begin{proof}
     We omit the $\varepsilon$ for simplicity. Utilizing the fact that $( n_c,u_c,\phi _c)$ is even in $x$, so that $L_c $ is even and $L' _c $ is odd, it is elementary to see that
      \begin{align*}
        \frac{d}{dx} \(  f_4(-x ,  \lambda    )  \) &=\left(
\begin{array}{c|c}
-L^{-1}_c(x)L'_c(x) &  L^{-1}_c(x) \left (\begin{array}{cc}
						0 & 0 \\
						0 & 1
					\end{array}\right ) \\
		\hline
\begin{array}{cc}
0 & 0 \\
1 & 0
\end{array}
& \begin{array}{cc}
0 & -1 \\
-e^{\phi_c(x)} & 0
\end{array}
\end{array}
\right) f_4(-x ,  \lambda    )\\&  -  \lambda\left(
\begin{array}{c|c}
-L^{-1}_c(x) & \mathbf{0}_2 \\
\hline
\mathbf{0}_2 & \mathbf{0}_2
\end{array}
\right) f_4(-x ,  \lambda    )
      \end{align*}
  which leads to,  see also \cite{BK22ARMA}, Remark 3  p.275,
 \begin{align*}
     \frac{d}{dx}\(  D_{3,1}f_4(-x ,  \lambda    )  \) = \big(A_1(x) -\lambda A_2(x) \big)D_{3,1}f_4(-x ,  \lambda    ).
 \end{align*}
By \eqref{mu1_mu4} and by comparing the asymptotic behaviors of $f_1$ and $f_4$, we obtain \eqref{eq:symjost11}.

    \end{proof}

    \begin{lemma} \label{lem:jostlambda0} There are $\alpha ( \varepsilon )\neq 0$  and $\beta ( \varepsilon ) $
    such that for $j=1$ and 4 we have   
   \begin{align}
       \label{eq:jostlambda01}&  f_j(x,0,\varepsilon )=(-1) ^{j}\alpha ( \varepsilon )   \Xi _1[c]   \text{ and }\\  \label{eq:jostlambda02}&  \partial _\lambda  f_j(x,0,\varepsilon )=(-1) ^{j+1}\alpha ( \varepsilon )  \Xi _2[c]     +\beta ( \varepsilon ) \Xi _1[c] .
   \end{align} 
\end{lemma}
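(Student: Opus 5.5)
At $\lambda=0$ the system \eqref{eq:lineq1} becomes $\mathbf{Y}'=A_1(x,\varepsilon)\mathbf{Y}$. By \eqref{eq:bigxi1}, $\Xi_1[c]$ solves it. The plan is to identify $f_1(\cdot,0,\varepsilon)$ and $f_4(\cdot,0,\varepsilon)$ through their asymptotics, and then differentiate the equation in $\lambda$.

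\emph{Step 1: identify $f_4(\cdot,0,\varepsilon)$.} By \eqref{EigenSpliting1_3}, at $\lambda=0$ we have $\mu_2=\mu_3=0$, and $\mu_4(0,\varepsilon)>0$ by \eqref{eq:mu40}; also $\mu_1(0)=-\mu_4(0)$ by \eqref{mu1_mu4}. Lemma \ref{lem:estimate_f4} says $f_4(x,0)=e^{\mu_4x}(\mathbf v_4+o(1))$ as $x\to-\infty$, so $f_4(\cdot,0)$ is the solution decaying at $-\infty$ at the exponential rate $\mu_4(0)$. I will show that the space of solutions of $\mathbf{Y}'=A_1\mathbf{Y}$ decaying at $-\infty$ is one-dimensional. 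By a Levinson-type argument, every solution is asymptotically a combination of $e^{\mu_jx}\mathbf v_j$ plus polynomially bounded terms coming from the double eigenvalue $0$, and only the $\mu_4$ mode decays as $x\to-\infty$. Since $S_c$ is smooth and exponentially decaying with rate $\alpha\sqrt\varepsilon=\mu_4(0)$ (Remark \ref{rem:expnS}), $\Xi_1[c]=S_c'$ (extended by $\phi_c',\psi_c'$) also decays at $-\infty$. Hence $f_4(\cdot,0)=\alpha(\varepsilon)\Xi_1[c]$ for some scalar, and $\alpha\neq0$ because $f_4\not\equiv0$ (its limit profile $\mathbf v_4\ne0$). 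Adjusting the sign convention of $\alpha$ gives \eqref{eq:jostlambda01} for $j=4$.

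\emph{Step 2: the case $j=1$.} Apply Lemma \ref{lem:symjost1} at $\lambda=0$: $f_1(x,0)=D_{3,1}f_4(-x,0)=\alpha D_{3,1}\Xi_1[c](-x)$. The components of $S_c$ are even, so $n_c',u_c',\phi_c'$ are odd and $\psi_c'=\phi_c''$ is even. Therefore $D_{3,1}\Xi_1[c](-x)=-\Xi_1[c](x)$, which gives the sign $(-1)^j$.

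\emph{Step 3: the $\lambda$-derivative.} Differentiating \eqref{eq:lineq1} in $\lambda$ at $\lambda=0$ (allowed by the smoothness in Lemmas \ref{lem:estimate_f1}--\ref{lem:estimate_f4}) gives $(\partial_x-A_1)\partial_\lambda f_j(\cdot,0)=A_2f_j(\cdot,0)=(-1)^j\alpha A_2\Xi_1$. By \eqref{eq:bigxi1}, $(-1)^{j+1}\alpha\Xi_2$ is a particular solution of this equation. So $\partial_\lambda f_4(\cdot,0)+\alpha\Xi_2$ solves the homogeneous equation. For $j=4$ I must show this difference decays at $-\infty$, so that Step 1 forces it to be a multiple $\beta\Xi_1$. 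Here $\partial_\lambda f_4=xe^{\mu_4x}\partial_\lambda\mu_4\,\mathbf m_4+e^{\mu_4x}\partial_\lambda\mathbf m_4$ grows at most like $|x|e^{\mu_4x}$, which still decays at $-\infty$. For $\Xi_2=\partial_cS_c$, Lemma \ref{lem:solwave4} gives exponential decay.

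\emph{Step 4: $j=1$ and matching $\beta$.} Differentiate the symmetry \eqref{eq:symjost11} in $\lambda$: $\partial_\lambda f_1(x,0)=-D_{3,1}\partial_\lambda f_4(-x,0)$. Since $\partial_cS_c$ is even, $D_{3,1}\Xi_2[c](-x)=\Xi_2[c](x)$, while $D_{3,1}\Xi_1(-x)=-\Xi_1(x)$. This yields $\partial_\lambda f_1=\alpha\Xi_2+\beta\Xi_1$, with the same $\beta$ and with the sign pattern in \eqref{eq:jostlambda02}.

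\emph{Main obstacle.} The hard part is the uniqueness claim in Steps 1 and 3: that solutions of the $\lambda=0$ system decaying at $-\infty$ form a one-dimensional space, even though the rate $\mu_4(0)$ coincides with the decay rate of the perturbation $q$ in \eqref{eq:estqj}. I would handle it by integrating the system once using the conserved structure. At $\lambda=0$ the first two equations integrate to $L_c\widetilde{\mathbf U}+(0,\dot\phi)^\intercal=\mathrm{const}$, which reduces to a second-order scalar equation for $\dot\phi$ with one decaying branch at $-\infty$ and one growing branch. This avoids any fine Levinson asymptotics.
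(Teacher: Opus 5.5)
Your proposal is correct and follows essentially the paper's route: you identify the Jost function at $\lambda=0$ with a multiple of $\Xi_1[c]$ by comparing asymptotics, pass to the other index through \eqref{eq:symjost11} and the parities of $S_c$, and obtain \eqref{eq:jostlambda02} by differentiating in $\lambda$, with $\Xi_2[c]$ from \eqref{eq:bigxi1} as a particular solution plus uniqueness of the decaying homogeneous solution. The differences are minor. The paper starts from $f_1$ (decay at $+\infty$) and obtains $f_4$ by symmetry, whereas you start from $f_4$; and you make explicit the one-dimensionality of the decaying solution space, which the paper leaves implicit in ``comparing asymptotic behaviors''. That step is less of an obstacle than you fear: $A^\infty(0,\varepsilon)$ is diagonalizable (its zero eigenvalue is semisimple, with eigenvectors $(1,0,1,0)^\intercal$ and $(0,1,0,0)^\intercal$) and $q_1$ is integrable, so Levinson's theorem applies directly and produces no polynomial terms.
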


  \begin{proof}
  We omit the $\varepsilon$ for simplicity. Identity \eqref{eq:jostlambda01}  for $j=1$   for some $\alpha ( \varepsilon ) \neq 0$
    follows immediately  {comparing asymptotic behaviors.}  Identity \eqref{eq:jostlambda01}  for $j=4$  is obtained    from the case  $j=1$  using \eqref{eq:symjost11}. 
     To prove \eqref{eq:jostlambda02}, we differentiate \eqref{eq:lineq1} in $\lambda$, and we see that
  \begin{equation}\label{eq:jostlambda03}
     \( \frac{d }{dx} - A_1(x  ) \)\partial _\lambda  f_4(x,0  ) = A_2(x  ) f_4(x,0  ) = A_2(x  )\alpha \Xi_1[c],
  \end{equation}
  where we have used \eqref{eq:jostlambda01}.
  Multiplying the second identity in  \eqref{eq:bigxi1} by $\alpha$, and then adding the resulting identity and \eqref{eq:jostlambda03}, we have
  \[
  \( \frac{d }{dx} - A_1(x  ) \) \big( \partial _\lambda   f_4(x,0  ) + \alpha \Xi_2[c] \big) = 0.
  \]
  By comparing the asymptotic behavior, we conclude
   \eqref{eq:jostlambda02} for $j=4$ for some $\beta (\varepsilon )$. From \eqref{eq:symjost11} we have
\begin{align*}
  \partial _\lambda  f_1(x,0  )
  =  - D_{3,1}  \partial _\lambda  f_4(-x,0  )
  =  \alpha D_{3,1} 
  \begin{pmatrix} 
  \partial _cn_c  (-x) \\
  \partial _cu_c  (-x) \\
  \partial _c\phi_c  (-x) \\
  \partial _c\psi_c  (-x)
  \end{pmatrix}
 + \beta D_{3,1} 
 \begin{pmatrix}
   n_c ' (-x) \\
   u_c'  (-x) \\
   \phi_c'  (-x) \\
   \psi_c ' (-x)
 \end{pmatrix}
\end{align*}
   which yields  \eqref{eq:jostlambda02} for $j=1$ by the fact that  $ \partial _c ^{n}( n_c,u_c,\phi _c)$ are for $n=0,1$  even in $x$ while $\partial _c ^{n}\psi _c$ is odd and that $  ( n_c',u_c',\phi _c')$ is odd while $\psi_c '$  is even.  

  \end{proof}

We have now an  analogue of \cite[Lemma 8.3]{CM25D1}. The following functions do not satisfy  good bounds  for $x\in \R$ and they are different from the functions in  \eqref{eq:limmj}, but they will be nonetheless useful.
\begin{lemma}\label{lem:estimate_tildef2} For $k=2,3$ there exist  solutions  $  \widehat{{f}}_k(x , \lambda , \varepsilon )= e^{\mu_kx}  \widehat{{\mathbf{m}}}_k(x , \lambda , \varepsilon )) $  and $ x_0(\varepsilon)\in \R $ such that  for all for all $\lambda \in \overline{\C _+} $  with $|\lambda |\le 1$   we have
\begin{align}\label{eq:estimate_tildef21}&
\|   \widehat{\mathbf{m}} _k(\cdot  ,\lambda , \varepsilon  )\| _{L^\infty\( (  -\infty,  x_0(\varepsilon) )  \) } \le 2
\text{ and }\\&
\lim_{x\to-\infty} \widehat{\mathbf{m}}_k(x ,\lambda , \varepsilon   ) =\mathbf{v}_k \ .\label{eq:estimate_tildef22}
\end{align}
There are  constants $  C_{\alpha\beta} (\varepsilon ,x_0)$ such that for $\alpha +\beta \ge 1  $
\begin{align} \label{eq:estimate_tildef232}
  \|  \partial_x^{n} \partial_\lambda^m   \widehat{ {\mathbf{m}}}_k(\cdot  , \lambda , \varepsilon )\| _{L^\infty\( (  -\infty, x_0(\varepsilon)  )  \) } \le C_{mn}(\varepsilon)     \text{  } .
\end{align}
Finally, there are constants  $  C_{mn }  (\varepsilon ,x_0) $ such that for $x\le x_0$
\begin{align}\label{eq:estimate_tildef233} &  |   \partial_x^{n} \partial_\lambda^m \( \widehat{ {\mathbf{m}}}_k(x  , \lambda , \varepsilon ) -\mathbf{v}_k \)    |\le C_{ mn }(\varepsilon)    e^{-  \sqrt{\varepsilon} \alpha  |x|}  .
\end{align}

\end{lemma}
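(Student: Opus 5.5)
Following \cite[Lemma 8.3]{CM25D1}, I would build $\widehat{f}_k$ for $k=2,3$ by a Volterra equation posed on a half line $(-\infty,x_0]$ and anchored at $x=-\infty$; this is why only half-line bounds are claimed. Writing $\widehat{\mathbf m}_k=e^{-\mu_k x}\widehat f_k$, the natural ansatz is
\begin{align*}
\widehat{\mathbf m}_k(x)=\mathbf v_k+\sum_{j=1}^4 \mathbf v_j\int_{-\infty}^{x} e^{(\mu_j-\mu_k)(x-y)}\<q(y,\lambda,\varepsilon)\widehat{\mathbf m}_k(y),\mathbf w_j\>_{\C^4}\,dy .
\end{align*}
Integrating from $-\infty$ for every $j$ does not work: for $j$ with $\Re(\mu_j-\mu_k)<0$ the exponential grows as $y\to-\infty$ at fixed $x$, so the kernel is not bounded. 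For those $j$ I would instead integrate over $[x,x_0]$, with kernel $-\int_x^{x_0}$. With this choice, using \eqref{EigenSpliting1}, the kernels are uniformly bounded for $y\le x\le x_0$ or $x\le y\le x_0$ respectively. The cases are: $j=1$ goes to $\int_x^{x_0}$; $j=4$ goes to $\int_x^{x_0}$; $j\in\{2,3\}$ with $\Re\mu_j\ge\Re\mu_k$ goes to $\int_x^{x_0}$; $j$ with $\Re\mu_j\le\Re\mu_k$ goes to $\int_{-\infty}^x$; in particular $j=k$ is taken from $-\infty$. By \eqref{EigenSpliting1_4} the ordering of $\Re\mu_2,\Re\mu_3$ is fixed for $\Re\lambda>0$, and they coincide on $\im\R$, so the assignment is consistent.

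The key point is that the weights $\mathbf v_j,\mathbf w_j$ and the resulting kernel bound, say $C(\varepsilon)$, may blow up like negative powers of $\varepsilon$; this is also where \eqref{eq:detvlambda} is needed. Since the equation lives on $(-\infty,x_0]$ and \eqref{eq:estqj} gives $\int_{-\infty}^{x_0}|q|\le C\varepsilon^{3/2}\alpha^{-1}\varepsilon^{-1/2}e^{\alpha\sqrt\varepsilon x_0}$, I would choose $x_0(\varepsilon)$ very negative. Then the operator norm on $L^\infty(-\infty,x_0)$ is at most $1/2$, and a contraction gives \eqref{eq:estimate_tildef21} with the bound $2$. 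This gives existence only on $(-\infty,x_0]$; I then extend $\widehat f_k$ to all of $\R$ by solving the ODE \eqref{eq:lineq1}, with no bounds claimed there. This also explains why these are not the true Jost functions of \eqref{eq:limmj}.

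Next I would verify the limit \eqref{eq:estimate_tildef22} and the decay \eqref{eq:estimate_tildef233}. For the terms $\int_{-\infty}^x$ the kernel is bounded, so they are $O(\int_{-\infty}^x|q|)=O(e^{-\alpha\sqrt\varepsilon|x|})$. For the terms $\int_x^{x_0}$ the kernel satisfies $\Re(\mu_j-\mu_k)<0$ strictly (for $j=1,4$), so $e^{(\mu_j-\mu_k)(x-y)}$ decays as $y-x$ grows. Splitting the integral at $y=x/2$ gives a bound $e^{-\alpha\sqrt\varepsilon|x|/2}$ plus an exponentially small piece. Decreasing the decay rate slightly is harmless, or one may redefine $\alpha$.

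The main obstacle is the $j\in\{2,3\}$, $j\neq k$ term when $\lambda\in\im\R$. There $\Re(\mu_j-\mu_k)=0$, so the kernel does not decay; and near $\lambda=0$ we have $\mu_2-\mu_3\to0$ while $\mathbf v_2,\mathbf v_3$ nearly collide. I would handle this with the $\int_{-\infty}^x$ choice for the nondecaying oscillatory kernel, which still yields $O(\int_{-\infty}^x|q|)$. The near-collision is controlled through the uniform invertibility of $\mathbf V$ on $|\lambda|\le1$, which follows from \eqref{eq:detv} and \eqref{eq:detvlambda} together with continuity, so the $\mathbf w_j$ stay bounded in $\lambda$ for fixed $\varepsilon$. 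For the $\lambda$-derivatives in \eqref{eq:estimate_tildef232} and \eqref{eq:estimate_tildef233} I differentiate the integral equation. Each derivative produces factors $(x-y)^m\partial_\lambda^m\mu_j$, which are absorbed by the exponential decay of $q$. Since $|x-y|\le|y|$ plus the distance to $x_0$, this gives polynomial weights against $e^{-\alpha\sqrt\varepsilon|y|}$, costing only constants $C_{mn}(\varepsilon)$. The $x$-derivatives then follow from the equation itself.
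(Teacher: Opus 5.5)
Your strategy is what the paper intends by pointing to \cite[Lemma 8.3]{CM25D1}: a Volterra equation for $\widehat{\mathbf m}_k$ on $(-\infty,x_0]$, a contraction once $x_0(\varepsilon)\ll-1$, then extension of $\widehat f_k$ by the ODE. But the step that carries all the content is sign-reversed and fails as written. That step is the choice of which $j$ are integrated from $-\infty$ and which over $[x,x_0]$. For $y\le x$ the kernel $e^{(\mu_j-\mu_k)(x-y)}$ is bounded iff $\Re(\mu_j-\mu_k)\le 0$; it decays, rather than grows, as $y\to-\infty$ when $\Re(\mu_j-\mu_k)<0$. For $x\le y\le x_0$ it is bounded, uniformly in $x\le x_0$, iff $\Re(\mu_j-\mu_k)\ge 0$. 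You send $j=1$ to $[x,x_0]$, which contradicts your own last rule. For $\lambda\in\im\R$ the kernel there has modulus $e^{|\Re\mu_1|(y-x)}$, so the $j=1$ term has size about $e^{|\Re\mu_1|(x_0-x)}\int_{x_0-1}^{x_0}|q|$, which is unbounded as $x\to-\infty$. The operator is then not bounded on $L^\infty(-\infty,x_0)$, and there is no contraction. The same slip appears in your decay step: the $[x,x_0]$ term decays because $\Re(\mu_4-\mu_k)>0$, not $<0$. The correct choice is $j=1$ and $j=k$ from $-\infty$, and $j=4$ through $-\int_x^{x_0}$.

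The remaining index in $\{2,3\}$ is also mishandled. Your two paragraphs prescribe different limits for it, and no single choice works on all of $\overline{\C_+}\cap\{|\lambda|\le 1\}$.
For $k=2$, $j=3$, your first rule ($\Re\mu_3\ge\Re\mu_2$, hence $-\int_x^{x_0}$) produces on $\im\R$ the term $-\mathbf v_3 e^{(\mu_3-\mu_2)x}\int_x^{x_0}e^{-(\mu_3-\mu_2)y}\langle q\widehat{\mathbf m}_2,\mathbf w_3\rangle_{\C^4}\,dy$. This is a unimodular oscillation times a quantity tending to a generically nonzero constant, so \eqref{eq:estimate_tildef22} fails.
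On $\im\R$ this term must instead be taken from $-\infty$, where the kernel has modulus one. For $\Re\lambda>0$, however, that kernel grows like $e^{\Re(\mu_3-\mu_2)(x-y)}$, and \eqref{eq:estqj} absorbs it only while $\Re(\mu_3-\mu_2)<\alpha\sqrt\varepsilon$. So your claim that the assignment is consistent is false. The corrected argument gives a construction valid, and analytic in $\lambda$, only in a thin neighbourhood of $\im\R\cap\{|\lambda|\le1\}$. That is all that is used later: Lemma~\ref{lem:bdm2}, Lemma~\ref{lem:b13at0}, and the Taylor remainders along $[0,\lambda]\subset\im\R$ in Section~\ref{sec:proofsmooth2}.
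With these corrections, your contraction, $\lambda$-derivative and decay arguments go through; the slight loss in the decay rate you allow for is appropriate for the $j=4$ term. A minor point: $\mathbf v_2$ and $\mathbf v_3$ do not nearly collide at $\lambda=0$. Indeed $\mathbf v_2(0)=(1,-\mathsf V,1,0)^\intercal$ and $\mathbf v_3(0)=(1,\mathsf V,1,0)^\intercal$, consistent with \eqref{eq:detv}.
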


We skip the proof,   similar to \cite[Lemma 8.3]{CM25D1}. 
 For $\Re \lambda >0$, by the  general theory  in   \cite{Bealsbook1988}, it is possible to prove the existence of the Jost functions in   \eqref{eq:limmj} with   $\mathsf{m}_k (\cdot , \lambda , \varepsilon ) \in L^\infty (\R )$ for $k=2,3$. 
However, for our purposes we prefer to follow the discussion in  \cite[Sect. 8]{CM25D1}.
In order to do this we introduce some other Jost functions.  The proof of the following   is standard.

\begin{lemma}\label{lem:extrajost} For all $j=1,2,3,4$  there exist solutions  $ F_j(x , \lambda , \varepsilon )= e^{\mu_j x} M_j(x , \lambda  , \varepsilon)  $   of \eqref{eq:lineq1}   such that for any  $\alpha, \beta \ge 0$  there is a constant $C_{\alpha\beta  }(\varepsilon)$ such that  for  $\Re \lambda \ge 0  $ and $|\lambda |\le 1$  we have
\begin{align} \label{lem:extrajost1}&
   |    \partial_\lambda^\beta \partial_x^{\alpha} \( M_1(x ,  \lambda  , \varepsilon )- \mathbf{v}_1(  \lambda , \varepsilon ) \)  |  \leq  C_{\alpha, \beta}(\varepsilon)    \sum _{i=1,2}       \int _{-\infty }^{x }|q_i  (x', \varepsilon ) | dx' \text{  and}\\& |    \partial_\lambda^\beta \partial_x^{\alpha} \( M_j(x , \lambda  , \varepsilon  )-\mathbf{v}_j(  \lambda , \varepsilon )\)  |  \leq  C_{\alpha, \beta}(\varepsilon)    \sum _{i=1,2}       \int ^{+\infty }_{x }|q_i  (x', \varepsilon ) | dx' \text{  for  all $j=2,3,4$.}  \label{lem:extrajost234}
\end{align}
\end{lemma}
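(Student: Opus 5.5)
The plan is to construct each $M_j$ as the fixed point of a Volterra integral equation, as in the proofs of Lemmas \ref{lem:estimate_f1} and \ref{lem:estimate_f4}, with the integration directions reversed. For $j=1$ we integrate from $-\infty$; for $j=2,3,4$ we integrate from $+\infty$. Concretely, for $j=2,3,4$ we write
\begin{equation*}
M_j(x)=\mathbf{v}_j-\sum_{k=1}^{4}\mathbf{v}_k\int_x^{+\infty}e^{(\mu_k-\mu_j)(x-y)}\<q(y,\lambda,\varepsilon)M_j(y),\mathbf{w}_k\>_{\C^4}dy .
\end{equation*}
For $j=1$ we use the analogous formula with $\int_{-\infty}^x$ and a plus sign. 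The formula is valid because $F_j=e^{\mu_jx}M_j$ then solves \eqref{eq:lineq1}. This follows from \eqref{W0V01} together with $A=A^\infty+q$ from \eqref{eq:pertA}, and it requires $\<\mathbf{w}_k,\mathbf{v}_l\>=\delta_{kl}$ and $\det\mathbf{V}\neq0$, the latter guaranteed by \eqref{eq:detvlambda}.

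The key point is that every kernel $e^{(\mu_k-\mu_j)(x-y)}$ is bounded on the relevant half-line. For $j=1$ with $y<x$, boundedness requires $\Re(\mu_k-\mu_1)\ge0$, which holds for all $k$ by \eqref{EigenSpliting1} since $\mu_1$ has the smallest real part. For $j=4$ with $y>x$, it requires $\Re(\mu_k-\mu_4)\le0$, which holds since $\mu_4$ has the largest real part. Here I need to check that $\Re\mu_4\ge\Re\mu_3\ge\Re\mu_2$ for $\Re\lambda>0$, using \eqref{EigenSpliting1_4} and continuity in $\lambda$, with $|\lambda|\le1$ and $\varepsilon$ small.

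The cases $j=2,3$ are the ones I expect to be the main obstacle, because integrating from $+\infty$ produces growing kernels for $k=4$ (and $k=3$ when $j=2$). I would handle this by splitting the Volterra equation. For the indices $k$ with $\Re\mu_k>\Re\mu_j$ I would instead integrate from $-\infty$, which turns the equation into a Fredholm-type equation on $\R$. It is then solvable for $x\ge x_1(\varepsilon)$ with $x_1$ large, because $\int_{x_1}^\infty|q|$ is small. This gives existence only on $[x_1,\infty)$, but that is harmless: we extend by solving the ODE, and we treat the bound \eqref{lem:extrajost234} for $x<x_1$ separately.

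If literal validity of \eqref{lem:extrajost234} for all $x\in\R$ with $j=2,3$ is needed, I would fall back on the following argument. Since $\mu_2,\mu_3$ are purely imaginary when $\Re\lambda=0$, the whole kernel is bounded up to factors $e^{\Re(\mu_4-\mu_j)(x-y)}$. These factors can be absorbed because $|q|\lesssim e^{-\alpha\sqrt\varepsilon|y|}$ by \eqref{eq:estqj}, while $\Re\mu_4\approx\sqrt{2\mathsf V\varepsilon}$ by \eqref{eq:mu40}. Choosing the weight exponent below $\alpha\sqrt\varepsilon$ (this is where the constants $C(\varepsilon)$ enter) keeps the iteration convergent, with a Neumann series bounded by $e^{C\|q\|_{L^1}}$.

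Once existence is settled, the estimates follow in order. The bound for $\beta=\alpha=0$ comes from Gronwall or iteration of the Volterra equation, together with uniform bounds on $\mathbf{v}_k,\mathbf{w}_k$ for $|\lambda|\le1$. Those uniform bounds hold because the $\mu_k$ stay distinct and $\<\boldsymbol\pi_k,\mathbf{v}_k\>\neq0$ near $\lambda=0$, which uses \eqref{eq:detv}. The $\partial_x$ derivatives come from differentiating the equation, i.e.\ from $M_j'=(A^\infty-\mu_j)(M_j-\mathbf{v}_j)+qM_j$ and induction. The $\partial_\lambda$ derivatives come from differentiating the integral equation, which yields the same Volterra operator plus a source term. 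The polynomial factors $(x-y)^m$ produced by differentiating the exponentials are absorbed by the exponential decay of $q$ in \eqref{eq:estqj}, at the cost of $\varepsilon$-dependent constants. The resulting estimate is bounded by the tail $\int|q_1|+|q_2|$ on the appropriate half-line, as claimed.
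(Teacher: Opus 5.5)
There is a genuine gap. The exponential dichotomy is applied with the wrong sign, so the kernels you declare bounded are exactly the ones that grow.

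Since $|e^{(\mu_k-\mu_j)(x-y)}|=e^{\Re(\mu_k-\mu_j)(x-y)}$:
\begin{itemize}
\item For $j=1$ and $y<x$ you need $\Re(\mu_k-\mu_1)\le 0$. But $\Re\mu_k>\Re\mu_1$ for every $k\neq1$.
\item For $j=4$ and $y>x$ you need $\Re(\mu_k-\mu_4)\ge0$, which fails for every $k\neq4$.
\end{itemize}
That is precisely why \eqref{eq:integral_eq_m1} integrates $\mathbf m_1$ from $+\infty$ and \eqref{eq:integral_eq_m4} integrates $\mathbf m_4$ from $-\infty$. Your $F_1$ and $F_4$ are the \emph{dominant} solutions at $-\infty$ and $+\infty$ respectively. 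So no Volterra equation issued from the normalizing end is available, and the Gronwall/Neumann-series step has nothing to act on.

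The same reversal affects $j=2,3$. Integrating from $+\infty$, the kernels with $\Re\mu_k\ge\Re\mu_j$ (in particular $k=4$) are bounded. The growing one is $k=1$, plus $k=2$ for $j=3$ when $\Re\lambda>0$. Moving the $k=4$ component to $\int_{-\infty}^x$ makes a good kernel grow and leaves the bad one in place.

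The fallback does not help either. $q$ contains $e^{\phi_c}-1$, so it decays no faster than $e^{-\mu_4(0,\varepsilon)|x|}$ (Remark \ref{rem:expnS}). The rates to be absorbed are $\Re(\mu_4-\mu_1)\approx 2\mu_4(0,\varepsilon)$ for $j=1$, and $-\Re\mu_1\approx\mu_4(0,\varepsilon)$ for $j=2,3$, $k=1$ on $\im\R$. Even a convergent $\int_{-\infty}^x e^{(\mu_k-\mu_1)(x-y)}g(y)\,dy$ still grows like $e^{\Re(\mu_k-\mu_1)x}$ as $x\to+\infty$.

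What does work is a Levinson-type splitting on a half-line.
\begin{itemize}
\item For $j\ge2$, on $[x_0,+\infty)$, integrate from $+\infty$ the components with $\Re(\mu_k-\mu_j)\ge0$. Integrate from $x_0$ the remaining ones: $k=1$ for $j=2,3$, and $k=1,2,3$ for $j=4$, on $\im\R$.
\item For $j=1$, on $(-\infty,x_0]$, integrate the $\mathbf w_1$-component from $-\infty$ and the others from $x_0$.
\end{itemize}
All kernels are then bounded by $1$. The map is a contraction in $L^\infty$ once $|x_0|$ is so large that $\sum_k|\mathbf v_k||\mathbf w_k|\int|q|$ over the half-line is small. $F_j$ is then extended to $\R$ through the ODE.

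The bounds obtained this way hold on the half-line only. This is all that is used: Lemma \ref{lem:bdm2} invokes this lemma for $x\ge0$. They cannot hold on all of $\R$ uniformly near $\lambda=0$. If $M_1$ were bounded on $\R$, then $F_1=O(e^{\Re\mu_1 x})$ at $+\infty$ would force $F_1\in\Span\{f_1\}$. Hence $F_1=f_1/D(\lambda)$ by \eqref{eq:limm1-infty}, which is impossible at $\lambda=0$ by \eqref{eq:derevans0}.

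Two further points:
\begin{itemize}
\item The tail form $C\int_x^{+\infty}|q|$ of the bound, for components integrated from $x_0$, needs $\Re(\mu_j-\mu_k)$ to dominate the decay rate of $q$. This is borderline here as $\lambda\to0$ (both are $\approx\mu_4(0,\varepsilon)$ for $j=2,3$, $k=1$), so it requires an actual argument rather than Gronwall.
\item The $\mu_k$ do not stay distinct on $\{\Re\lambda\ge0,\ |\lambda|\le1\}$. The equation $d_-(\mu)=\lambda$ has a double root in $(0,\mu_4(0,\varepsilon))$ at $\lambda=\max_{(0,\mu_4(0,\varepsilon))}d_-\in(0,\varepsilon)$. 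There $\mu_3=\mu_4$ and $\mathbf w_3,\mathbf w_4$ blow up, so uniform bounds on the $\mathbf w_k$ are only available near $\im\R$.
\end{itemize}
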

\qed

There are also Jost solutions analogous to those of Lemmas \ref{lem:estimate_f1},  \ref{lem:estimate_f4}, \ref{lem:estimate_tildef2} and \ref{lem:extrajost}    for    system \begin{align}
  \label{eq:lineq1ad} \( \frac{d }{dx} + A^\intercal (x,\lambda,\varepsilon) \)
\mathbf{U}  = 0 \text{ for } \lambda \in \overline{\C_+} .
\end{align} 
We single out here solutions to  \eqref{eq:lineq1ad} which behave like 
\begin{align} &
  \label{eq:g1}   g_1(x, \lambda , \varepsilon )\sim e^{-\mu _1(  \lambda , \varepsilon ) x} \mathbf{w}_1(  \lambda , \varepsilon ) \text{ as  }   x\to -\infty   \text{ and}\\ &
  \label{eq:g4}   g_4(x, \lambda , \varepsilon )\sim e^{-\mu _4(  \lambda , \varepsilon ) x} \mathbf{w}_4(  \lambda , \varepsilon ) \text{ as  }   x\to +\infty .
\end{align}
It is elementary that   $x\to \< \mathbf{u}(x) , \mathbf{v}(x)\> _{\C ^4} $ is constant if $\mathbf{u}$ solves  \eqref{eq:lineq1}  and $\mathbf{v}$ solves  \eqref{eq:lineq1ad}.
 Then  
\begin{align}
  \label{eq:evans} D(\lambda , \varepsilon  ): = \<  f_1(x, \lambda , \varepsilon ),g_1(x, \lambda , \varepsilon )\> _{\C^4}
\end{align}
is constant in $x$ and, since $f_1(x, \lambda , \varepsilon )\sim e^{ \mu _1  x} \mathbf{v}_1 $ as $x\to -\infty$ with $  \<  \mathbf{v}_1,\mathbf{w}_1\>=1$, it
 is the Evans function of  \eqref{eq:lineq1},   Pego and Weinstein \cite [Definition 1.8] {PegoWei1}.  By   Bae and Kwon \cite {BK22ARMA}   for $\varepsilon$
 small enough
 \begin{align}
  \label{eq:derevans0}D(0, \varepsilon)= \partial_{\lambda} D  (0, \varepsilon) =0 \text{ and }\partial_{\lambda} ^2D (0, \varepsilon)\neq 0
\end{align}
and      that $D(\lambda , \varepsilon)\neq 0$ for any nonzero $\lambda \in \overline{\C _+}$.
By Pego and Weinstein \cite[ Formula (1.28)]{PegoWei1}
\begin{align}\label{eq:limm1-infty}
   \lim _{x\to -\infty}  e^{-\mu_ {1}x}  f_1(x, \lambda , \varepsilon ) =D(\lambda , \varepsilon )  \mathbf{v}_1(\lambda , \varepsilon).
\end{align}
Now, from $ \widehat{{f}}_2(\cdot ,\lambda  , \varepsilon ), \widehat{f}_3(\cdot  ,\lambda , \varepsilon) , {f}_4(\cdot  ,\lambda , \varepsilon) \in \mathrm{Span}\{f_1(\cdot  ,\lambda , \varepsilon),F_2(\cdot  ,\lambda , \varepsilon),F_3(\cdot  ,\lambda , \varepsilon) , F_4(\cdot  ,\lambda , \varepsilon)\}$  we have
\begin{align}
      \widehat{{f}}_j(x,\lambda  , \varepsilon )&=   c_{j1}(\lambda   , \varepsilon )f_1(x,\lambda  , \varepsilon) + \sum _{k=2}^{4} c_{jk}(\lambda   , \varepsilon )F_k(x,\lambda  , \varepsilon) \text{ for $j=2,3$ and}\label{eq:connect2}\\
    f_4 (x,\lambda  , \varepsilon )&=   c_{41}(\lambda   , \varepsilon )f_1(x,\lambda  , \varepsilon) + \sum _{k=2}^{4} c_{4k}(\lambda   , \varepsilon )F_k(x,\lambda  , \varepsilon).\label{eq:connect3}
\end{align}

\begin{lemma}\label{lem:c23vanish} The
$c_{jk}(\lambda  , \varepsilon )$ ($j=2,3,4$, $k=1,2,3,4$)   in \eqref{eq:connect2} are analytic in $\lambda$ and jointly continuous in $( \lambda  , \varepsilon)$.
Furthermore, we have $c_{44}(\lambda   , \varepsilon )=D(-\lambda  , \varepsilon)$ for $\lambda \in \im \R$ and $c_{j4}(0 , \varepsilon )=0$  for $j=2,3$.
\end{lemma}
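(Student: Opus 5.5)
The plan is to extract each connection coefficient by pairing with solutions of the adjoint system \eqref{eq:lineq1ad}, using that $\langle \mathbf{u},\mathbf{v}\rangle_{\C^4}$ is independent of $x$ when $\mathbf{u}$ solves \eqref{eq:lineq1} and $\mathbf{v}$ solves \eqref{eq:lineq1ad}.

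\textbf{Dual basis.} Besides $g_1$ and $g_4$ in \eqref{eq:g1}--\eqref{eq:g4}, take the adjoint analogues $G_k\sim e^{-\mu_k x}\mathbf{w}_k$ of the functions $F_k$ of Lemma \ref{lem:extrajost}. For $k\ge 2$ they are normalized at $+\infty$; $G_1$ is normalized at $-\infty$, matching $f_1$. Comparing asymptotics at the appropriate end, using $\langle \mathbf{v}_i,\mathbf{w}_j\rangle=\delta_{ij}$ and the exponential decay \eqref{eq:estqj} of $q$, one gets that for $k,l\ge 2$ the pairing $\langle F_k,G_l\rangle$ is independent of $x$. Letting $x\to+\infty$, the phase $e^{(\mu_k-\mu_l)x}$ has modulus one on $\im\R$, so the pairing must be constant and equals $\delta_{kl}$. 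The same argument gives $\langle f_1,G_l\rangle=0$ for $l\ge 2$, since $\Re\mu_1<\Re\mu_l$.

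Pairing \eqref{eq:connect2}--\eqref{eq:connect3} with $G_k$ then yields
\[
c_{jk}=\langle \widehat f_j,G_k\rangle \text{ or } \langle f_4,G_k\rangle \quad\text{for } k\ge 2,
\]
and $c_{j1}$ is obtained similarly after pairing with a dual element of $f_1$. Since $F_k$, $G_k$, $f_1$, $f_4$ and $\widehat f_j$ are analytic in $\lambda$ and jointly continuous in $(\lambda,\varepsilon)$ (Volterra equations with parameters, Lemmas \ref{lem:estimate_f1}--\ref{lem:extrajost}), and a constant pairing may be evaluated at a fixed $x$, the regularity claims follow. Analyticity for $\Re\lambda>0$ and continuity up to $\im\R$ are what the lemmas provide; near $\lambda=0$ one uses that $\det\mathbf{V}\ne0$ by \eqref{eq:detvlambda}, so the $\mathbf{w}_j$ stay regular.

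\textbf{The identity $c_{44}(\lambda)=D(-\lambda)$.} We have $c_{44}=\langle f_4,G_4\rangle$, computed at $+\infty$ where $G_4=g_4$. Compare with $D(-\lambda)=\langle f_1(\cdot,-\lambda),g_1(\cdot,-\lambda)\rangle$. Lemma \ref{lem:symjost1} gives $f_1(x,-\lambda)=D_{3,1}f_4(-x,\lambda)$. The adjoint version of that argument should give $g_1(x,-\lambda)=D_{3,1}^{-1}g_4(-x,\lambda)$, possibly up to a sign convention, using \eqref{mu1_mu4} and the corresponding relation $\mathbf{w}_1(-\lambda)=D_{3,1}\mathbf{w}_4(\lambda)$; since $D_{3,1}^2=1$ this is also $D_{3,1}g_4(-x,\lambda)$. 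Substituting, $D(-\lambda)=\langle f_4(-x,\lambda),g_4(-x,\lambda)\rangle$, which equals $c_{44}$ by $x$-independence. The point to check is that $\mathbf{v}_1(-\lambda)=D_{3,1}\mathbf{v}_4(\lambda)$ follows from \eqref{LRVec_A} with $\mu_1(-\lambda)=-\mu_4(\lambda)$, and that the normalizations agree.

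\textbf{Vanishing $c_{j4}(0)=0$ for $j=2,3$.} This is the main obstacle. At $\lambda=0$ we have $\mu_2=\mu_3=0$. The functions $\widehat f_j(\cdot,0)$ are bounded as $x\to-\infty$, while $f_1(\cdot,0)$ is proportional to $\Xi_1$ by \eqref{eq:jostlambda01} and decays at both ends. The plan is to show that at $\lambda=0$ the coefficient $c_{j4}$ measures growth like $e^{\mu_4x}$ at $+\infty$, i.e.
\[
c_{j4}(0)=\lim_{x\to+\infty}e^{-\mu_4x}\langle \widehat f_j,G_4\rangle,
\]
and to evaluate this pairing instead at $x\to-\infty$. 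There $G_4(\cdot,0)$ should be expressible through $E_1[c]$, the adjoint kernel element of \eqref{eq:bigeta1}, which is bounded. If $G_4(0)$ is a combination of $E_1$ and other adjoint solutions decaying at $-\infty$ against bounded $\widehat f_j$, the constant pairing reduces to $\langle \mathbf{v}_j(0),\text{const}\rangle$. This is then to be shown to vanish by an explicit computation with \eqref{LRVec_A} at $\lambda=0$, using $\mathbf{v}_2(0),\mathbf{v}_3(0)\propto(1,c,1,0)$-type vectors and the first component structure of $E_1$. An alternative I would try first is a direct argument: $\widehat f_j(\cdot,0)$ solves $\frac{d}{dx}\mathbf{U}=A_1\mathbf{U}$, whose solutions bounded at $-\infty$ are explicitly known. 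They are constants lifted through integrals of the kernel, coming from the conserved quantities of the stationary ODE. One then checks that such solutions stay bounded at $+\infty$ as well, so the $F_4$ component, which grows like $e^{\mu_4(0)x}$ with $\mu_4(0)>0$, must vanish.
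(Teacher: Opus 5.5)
The genuine gap is the vanishing $c_{j4}(0,\varepsilon)=0$ for $j=2,3$.

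\textbf{The pairing formula.} Your displayed identity $c_{j4}(0)=\lim_{x\to+\infty}e^{-\mu_4x}\langle\widehat f_j,G_4\rangle$ is wrong as written. The pairing is constant in $x$ and $\mu_4(0,\varepsilon)>0$, so the right side is $0$ for trivial reasons. The correct identity is simply $c_{j4}(\lambda)=\langle\widehat f_j,g_4\rangle$.

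\textbf{The missing idea.} You never identify $g_4(\cdot,0)$. At $\lambda=0$ the adjoint solutions decaying at $+\infty$ form a line, since among the exponents $-\mu_k(0)$ only $-\mu_4(0)$ has negative real part. This line contains the adjoint zero mode built from $\eta_2[c]=\theta_3(u_c,n_c)^\intercal$, with $\mathcal L_c^*\eta_2[c]=0$, i.e.\ the paper's $E_2[c]$ (see \eqref{eq:Z14at0}). The indices in \eqref{eq:bigeta1} are interchanged relative to \eqref{eq:dualkernel}, which is presumably what sent you to $E_1$. Because $E_2[c]$ also decays at $-\infty$, while $\widehat f_j(\cdot,0)$ is bounded there by Lemma \ref{lem:estimate_tildef2}, letting $x\to-\infty$ in the constant pairing gives $c_{j4}(0)=0$ at once. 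This is the paper's proof.

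\textbf{Why your fallback fails.} The plan of an ``explicit computation'' of $\langle\mathbf v_j(0),\mathrm{const}\rangle$ cannot replace this step. The limit at $-\infty$ of a $\lambda=0$ adjoint solution bounded there lies in $\ker A^\infty(0,\varepsilon)^\intercal=\Span\{\mathbf w_2(0),\mathbf w_3(0)\}$, say $a\mathbf w_2(0)+b\mathbf w_3(0)$. By $\langle\mathbf v_j,\mathbf w_k\rangle=\delta_{jk}$, its pairing with $\mathbf v_2(0)$, resp.\ $\mathbf v_3(0)$, is exactly $a$, resp.\ $b$. So the computation yields the claim for both $j$ only if the limit is $0$. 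That is, only if you already know $g_4(\cdot,0)\to0$ at $-\infty$, which is precisely the missing fact.

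\textbf{Your second route.} It is sound in principle but, as stated, begs the question. Boundedness of $\widehat f_j(\cdot,0)$ on $\R$ is equivalent to $c_{j4}(0)=0$, and ``one then checks'' is the whole proof. It can be completed as follows.
\begin{enumerate}
\item At $\lambda=0$ the system reads $L_c\widetilde U+(0,\dot\phi)^\intercal=C$ with $C\in\C^2$ constant, and $-\dot\phi''+W\dot\phi=g_C$.
\item Here $W=e^{\phi_c}-(1+n_c)/J$ and $g_C=[L_c^{-1}C]_1$ are even.
\item The homogeneous equation has the odd decaying solution $\phi_c'$ and an even solution growing at both ends.
\item By variation of parameters, every solution bounded at $-\infty$ acquires the growing mode at $+\infty$ with coefficient proportional to $\int_\R\phi_c'g_C\,dx$. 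This vanishes by parity.
\end{enumerate}
This solvability condition carries the same information as the doubly decaying adjoint solution $E_2[c]$ used in the paper.

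\textbf{The other parts.} Your argument for $c_{44}(\lambda,\varepsilon)=D(-\lambda,\varepsilon)$ via $g_1(x,-\lambda)=D_{3,1}g_4(-x,\lambda)$ is correct. From \eqref{LRVec_A}--\eqref{eigenvec_A2} and \eqref{mu1_mu4} one gets $\mathbf w_1(-\lambda)=D_{3,1}\mathbf w_4(\lambda)$, and $x\mapsto D_{3,1}g(-x)$ maps adjoint solutions at $\lambda$ to adjoint solutions at $-\lambda$. It is a valid alternative to the paper's reading of $c_{44}\mathbf v_4$ off $\lim_{x\to+\infty}\mathbf m_4$ together with \eqref{eq:symjost11} and \eqref{eq:limm1-infty}.

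In the dual-basis step, however, your limit argument does not give $\langle F_4,G_l\rangle=0$ for $l=2,3$. The factor $e^{(\mu_4-\mu_l)x}$ grows, and $G_l$ normalized at $+\infty$ is only determined modulo $g_4$. Define the $G_l$ directly as a dual basis instead, e.g.\ via Hodge stars of wedge products over the Wronskian, as the paper does for the $Z_i$. The pairings you actually use, $\langle f_1,g_4\rangle=0$ and $\langle F_k,g_4\rangle=\delta_{k4}$, are correct.
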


\begin{proof}
     We skip the standard  proof of the regularity  of  $c_{jk}(\lambda  , \varepsilon )$, see \cite{CM25D1}.  By the symmetry \eqref{eq:symjost11}, \eqref{mu1_mu4},  and by \eqref{eq:limm1-infty}, we have that for
$  \lambda \in \im \R $,
\begin{align*}  
  c_{44}(\lambda   , \varepsilon )\mathbf{v}_4(  \lambda , \varepsilon )&=\lim  _{x\to +\infty}  \mathbf{m}_4(x,\lambda  , \varepsilon )  =    \lim  _{x\to -\infty}  D_{3,1} \mathbf{m}_1(x,-\lambda    , \varepsilon ) \\& \nonumber =D(-\lambda  , \varepsilon  )D_{3,1} \mathbf{v}_1( - \lambda , \varepsilon ) =  D(-\lambda  , \varepsilon  ) \mathbf{v}_4(  \lambda , \varepsilon ).
\end{align*}
This yields  $c_{44}(\lambda   , \varepsilon )=D(-\lambda  , \varepsilon)$. Next, for $j=2,3$  we have 
\begin{align*}
   \<  \widehat{f}_j(x, \lambda , \varepsilon ),g_4(x, \lambda , \varepsilon )\> _{\C^4} =\lim _{x\to + \infty}  \<  \widehat{f}_j(x, \lambda , \varepsilon ),g_4(x, \lambda , \varepsilon )\> _{\C^4} = c_{j4}(\lambda   , \varepsilon ) .
\end{align*}
On the other hand,   $g_4(x, 0 , \varepsilon )   $    is proportional to $E_2[c]$.  This follows  from \eqref{eq:bigeta1} and the fact that $E_2 [c]$ is up to constant factor  the only solution of \eqref{eq:lineq1ad} with $E_2 [c]\xrightarrow{x \to - \infty}0$ (see also the comment under \eqref{eq:adjker} and the fact that there are solutions of \eqref{eq:lineq1ad} blowing up as $x \to - \infty$  similar to those in    Lemma  \ref{lem:extrajost}). 
Then  we also have 
\begin{align*}
   c_{j4}(0 , \varepsilon ) =  \lim _{x\to -\infty}  \<  \widehat{f}_j(x, 0 , \varepsilon ),g_4(x, 0, \varepsilon )\> _{\C^4} =0.
\end{align*}
\end{proof}
Next, we set 
\begin{align}\label{eq:defc0}
    c_j(\lambda , \varepsilon):=\frac{\lambda c_{j4}(\lambda , \varepsilon )}{D(-\lambda , \varepsilon)}  \text{ for }j=2,3
\end{align}
and we define 
\begin{align}
  \label{eq:deff2}
  f_{j}(x,\lambda  , \varepsilon ) := \lambda  \widehat {f}_{j}(x,\lambda    , \varepsilon  ) - c_j(\lambda , \varepsilon) {f}_{4}(x,\lambda  , \varepsilon ) \text{ for $j=2,3$.}
\end{align}
Then, we claim  we have
\begin{equation}\label{eq:f23adjusted}
  f_j(\cdot ,\lambda  , \varepsilon )\in \mathrm{Span}\{f_1(\cdot ,\lambda  , \varepsilon ),F_2(\cdot ,\lambda  , \varepsilon ) ,F_3(\cdot ,\lambda  , \varepsilon ) \}
\end{equation}
Indeed, from \eqref{eq:connect2}, \eqref{eq:connect3}, \eqref{eq:defc0} and Lemma \ref{lem:c23vanish},
\begin{align*}
    f_j(x,\lambda , \varepsilon )=&\left(\lambda c_{j1}(\lambda , \varepsilon )-c_j(\lambda , \varepsilon )c_{41}(\lambda , \varepsilon)\right) f_1(x,
    \lambda , \varepsilon )+   \sum _{k=2,3}  \left(\lambda c_{jk}(\lambda)-c_j(\lambda)c_{4k}(\lambda)\right) F_k(x,\lambda , \varepsilon)\\&
    +\cancel{\left(\lambda c_{j4}(\lambda , \varepsilon   )-c_j(\lambda   , \varepsilon  )D(-\lambda , \varepsilon    )\right)} F_4(x,\lambda , \varepsilon    ).
\end{align*}
 For $  \mathbf{m}_j(x,\lambda , \varepsilon ) := e^{-x\mu _j (\lambda  , \varepsilon)} f_j(x,\lambda , \varepsilon ) $ with $j=2,3$ we have the following.

\begin{lemma}
  \label{lem:bdm2}  For any preassigned $a_0>0$  there are constants     $C_{\alpha\beta}(\varepsilon)>0$ such that \begin{align} \label{eq:estm21}
   |  \partial_x^\alpha  \partial_\lambda^\beta   \mathbf{m}_j(x  , \lambda  , \varepsilon) |  \le C _{\alpha\beta}  (\varepsilon)   \text{ for all $x\in \R$ and all $\lambda \in \(  -\im a_0, \im a_0\) $ and for $j=2,3$}.
\end{align}
\end{lemma}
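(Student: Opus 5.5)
We bound $\mathbf{m}_j$ separately on $(-\infty,x_0]$ and on $[x_0,+\infty)$, with $x_0=x_0(\varepsilon)$ from Lemma \ref{lem:estimate_tildef2}. On each half-line we use whichever representation of $f_j$ has only non-growing oscillatory factors there. The two representations are
\begin{align*}
 f_j &= \lambda \widehat f_j - c_j f_4 ,\\
 f_j &= a_{j1}f_1+a_{j2}F_2+a_{j3}F_3 ,
\end{align*}
the second coming from \eqref{eq:f23adjusted}. Here $a_{jk}(\lambda,\varepsilon)=\lambda c_{jk}-c_jc_{4k}$ for $k=1,2,3$. Before this we check that all coefficients are smooth in $\lambda$ on $(-\im a_0,\im a_0)$, or on the compact neighborhood of $\lambda=0$ where the expansions \eqref{EigenSpliting1_3} hold; away from $0$ all eigenvalues are distinct and nondegenerate, so regularity there is routine.

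The first step is the regularity of $c_j=\lambda c_{j4}/D(-\lambda,\varepsilon)$. By Lemma \ref{lem:c23vanish}, $c_{j4}$ is analytic with $c_{j4}(0,\varepsilon)=0$. By \eqref{eq:derevans0}, $D(-\lambda,\varepsilon)=\lambda^2 d(\lambda,\varepsilon)$ with $d(0,\varepsilon)\neq0$, and $D(-\lambda,\varepsilon)\neq 0$ for $\lambda\in\im\R\setminus\{0\}$. Hence
\begin{align*}
 c_j=\frac{c_{j4}(\lambda,\varepsilon)/\lambda}{d(\lambda,\varepsilon)}
\end{align*}
is analytic near $0$ and smooth on $\im\R$. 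Only finitely many $\lambda$-derivatives are needed on the compact set $[-\im a_0,\im a_0]$, so they are bounded by constants $C(\varepsilon)$. It follows that all $a_{jk}$ are smooth and bounded together with their $\lambda$-derivatives.

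On $x\le x_0$ we write
\begin{align*}
 \mathbf{m}_j=\lambda\widehat{\mathbf{m}}_j-c_j\,e^{(\mu_4-\mu_j)x}\mathbf{m}_4 .
\end{align*}
The first term is controlled by Lemma \ref{lem:estimate_tildef2}. For the second, $\Re(\mu_4-\mu_j)=\Re\mu_4>0$ on $\im\R$ by \eqref{EigenSpliting1_1}, and by \eqref{eq:mu40} this stays bounded below by some $c(\varepsilon)>0$ near $0$. So $e^{(\mu_4-\mu_j)x}$ decays as $x\to-\infty$. Differentiating in $\lambda$ only produces polynomial factors in $x$, which this exponential absorbs. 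Lemma \ref{lem:estimate_f4} bounds $\mathbf{m}_4$ and its derivatives. This gives the bound on $(-\infty,x_0]$.

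On $x\ge x_0$ we write
\begin{align*}
 \mathbf{m}_j=a_{j1}e^{(\mu_1-\mu_j)x}\mathbf{m}_1+\sum_{k=2,3}a_{jk}e^{(\mu_k-\mu_j)x}M_k .
\end{align*}
Since $\Re\mu_1<0$ strictly, the $f_1$ term decays as $x\to+\infty$, again absorbing the polynomial factors from $\partial_\lambda$, and Lemma \ref{lem:estimate_f1} bounds $\mathbf{m}_1$. The term with $k=j$ is bounded directly by Lemma \ref{lem:extrajost}.

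The main obstacle is the cross term $k\neq j$ with factor $e^{(\mu_k-\mu_j)x}$, where $\Re(\mu_k-\mu_j)=0$ on $\im\R$. Each $\partial_\lambda$ produces a factor $x\,\partial_\lambda(\mu_k-\mu_j)$, which grows linearly as $x\to+\infty$. The estimate \eqref{eq:estm21} therefore has to come from the coefficient: $a_{jk}$ must vanish, or the cross contributions must cancel, so that no secular growth appears. We plan to exploit the $\lambda$-factor built into \eqref{eq:deff2}. At $\lambda=0$ we have $\mu_2=\mu_3=0$ by \eqref{EigenSpliting1_3}, and the cross coefficients are $O(\lambda)$ with $\mu_k-\mu_j=O(\lambda)$, so products like $x\lambda\,e^{(\mu_k-\mu_j)x}$ need care. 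We would first check whether $a_{jk}$ with $k\neq j$ vanishes identically by comparing asymptotics as $x\to-\infty$: $\widehat f_j\sim e^{\mu_jx}\mathbf{v}_j$ there, and $F_k$ is selected at $+\infty$. If $a_{jk}\neq0$ in general, we fall back on the weaker reading used in \cite[Lemma 8.4]{CM25D1}: uniform bounds for $\beta=0$, and for $\beta\ge1$ bounds holding with $\lambda$ in compact subsets. We expect this is what the paper's proof does, following \cite{CM25D1}.
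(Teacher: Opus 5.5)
Your decomposition is the paper's. Its proof is one sentence: for $x\le 0$ the bound follows from $f_j=\lambda\widehat f_j-c_jf_4$ with Lemmas \ref{lem:estimate_f4} and \ref{lem:estimate_tildef2}, and for $x\ge 0$ from \eqref{eq:f23adjusted} with Lemmas \ref{lem:estimate_f1} and \ref{lem:extrajost}. Splitting at $x_0$ instead of $0$ only adds a compact interval. Your treatment of $c_j=\lambda c_{j4}/D(-\lambda,\varepsilon)$ is fine, as is your handling of the left half-line and of the $f_1$ and $k=j$ terms on the right.

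The gap is the step you leave open, $\beta\ge1$ on $x\ge x_0$. It is also the step the paper asserts without argument; the paper does not fall back on any weaker reading. Your diagnosis is correct. For $\{j,k\}=\{2,3\}$ and $\lambda\in\im\R$, $\partial_\lambda\mathbf m_j$ contains the term $a_{jk}\,x\,\partial_\lambda(\mu_k-\mu_j)\,e^{(\mu_k-\mu_j)x}M_k$. Here $|e^{(\mu_k-\mu_j)x}|=1$ and $M_k\to\mathbf v_k\neq0$, while every other term is bounded or exponentially decaying as $x\to+\infty$. So nothing cancels it, and $\sup_{x\in\R}|\partial_\lambda\mathbf m_j(x,\lambda,\varepsilon)|=\infty$ at every $\lambda$ where $a_{jk}(\lambda,\varepsilon)\neq0$ and $\partial_\lambda(\mu_k-\mu_j)\neq0$. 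The second condition holds near $0$ by \eqref{EigenSpliting1_3}.

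Neither of your proposed ways out closes this.

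Comparing asymptotics at $-\infty$ cannot give $a_{jk}=0$. $\widehat f_j$ is normalized at $-\infty$, while $a_{jk}$ is a connection coefficient at $+\infty$: the amplitude $q$ transfers from the $\mu_j$ channel into the $\mu_k$ channel. These channels are coupled; for example the $(2,3)$ entry of $S_1$ in \eqref{S1sym} is $K(q_2)_{12}-(q_2)_{21}=Kn_c(2+n_c)/(J(1+n_c))$. The only vanishing you get for free is at $\lambda=0$. By \eqref{eq:jostlambda01}, $f_4(\cdot,0,\varepsilon)=-f_1(\cdot,0,\varepsilon)$, so $c_{42}(0,\varepsilon)=c_{43}(0,\varepsilon)=0$ in \eqref{eq:connect3} and hence $a_{jk}=O(\lambda)$. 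That does not help at fixed $\lambda\neq0$.

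Your fallback of restricting $\lambda$ to compact sets acts on the wrong variable. $\lambda$ already ranges over a compact segment; the loss is in $x$.

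What the argument (yours and the paper's) actually yields is $|\partial_x^\alpha\partial_\lambda^\beta\mathbf m_j|\le C_{\alpha\beta}(\varepsilon)\langle x\rangle^{\beta}$, with no growth for $x\le x_0$. To get \eqref{eq:estm21} as stated for $\beta\ge1$ you would have to prove $a_{23}\equiv a_{32}\equiv0$. Neither you nor the paper does this, and no structural reason for it is apparent. If you settle for the weaker bound, the later uses of \eqref{eq:estm21} must be redone with these polynomial weights: in \eqref{eq:b13_estimate_partial}, and in the estimate of $A(\lambda,x,y)$ in the proof of Lemma \ref{lem:smooth2small}.
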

\begin{proof}
   For $x\le 0$ this follows from Lemmas \ref{lem:estimate_f4}  and \ref{lem:estimate_tildef2} and from \eqref{eq:f23adjusted} and Lemmas \ref{lem:estimate_f1} and  \ref{lem:extrajost}  for $x\ge 0$.
\end{proof}
Let us set  now
 \begin{align}
   \label{eq:deftildew}\widehat{W}(x  , \lambda , \varepsilon   ):= \det \(    {f}_1(x  , \lambda , \varepsilon   ), \widehat{f}_2(x  , \lambda , \varepsilon   ), \widehat{f}_3(x  ,\lambda , \varepsilon  ) ,{f}_4(x  , \lambda , \varepsilon   ) \) .
 \end{align}
 For the $J$ in  \eqref{Useful_Id3} and the $\mathbf{V}$  in \eqref{eq:defVW}, we claim
 \begin{align}\label{eq:wronsk1}  \widehat{W}(x  , \lambda , \varepsilon   ) =    J(x) e^{2\lambda \int _0 ^x  \frac{c-u_c(y)}{J(y)} dy}     D(\lambda , \varepsilon ) \frac{\det \mathbf{V}}{c^2-K} .
\end{align}
 To see this notice that for some $ C(\lambda , \varepsilon )$ constant in  $x$ we have
 \begin{equation*}
   \widehat{W}(x  , \lambda , \varepsilon   )  =   C(\lambda , \varepsilon )  J(x) e^{2\lambda \int _0 ^x  \frac{c-u_c(y)}{J(y)} dy} .
 \end{equation*}
For $*$ the Hodge star operator in  \eqref{eq:hodgestar}, for $j=1$ we have    \small
 \begin{equation*}
   \frac{d}{dx}  \<  {f}_j(x  , \lambda , \varepsilon   ), Z_4(x, \lambda , \varepsilon   )\> _{\C^4} \equiv 0 \text{ for } Z_i(x, \lambda , \varepsilon   ):=\frac{*(  \widehat{f}_2(x, \lambda , \varepsilon   )\wedge \widehat{f} _3(x, \lambda , \varepsilon   )\wedge f_i(x, \lambda , \varepsilon   ))}{ J(x) e^{2\lambda \int _0 ^x  \frac{c-u_c(y)}{J(y)} dy}}  \ , \ i=1,4.
 \end{equation*}\normalsize
Since the above equality is also true for $j\neq 1$ we conclude that
  $Z_4(x, \lambda , \varepsilon   )$ solves \eqref{eq:lineq1ad}.  By \eqref{eq:sum_of_mus} the following asymptotics is elementary for $x\to -\infty$,
\small
\begin{equation} \label{eq:oscz4}
   Z_4(x, \lambda , \varepsilon   ) \sim     \frac{ *( \mathbf{v}_2\wedge \mathbf{v}_3\wedge \mathbf{v}_4)}{-(c^2-K)} e^{\(\mu _1(\lambda , \varepsilon   )+\mu _2(\lambda , \varepsilon   )+\mu _2(\lambda , \varepsilon   ) -\frac{2\lambda c}{c^2-K} \)x }  =  \frac{ *( \mathbf{v}_2\wedge \mathbf{v}_3\wedge \mathbf{v}_4)}{ -(c^2-K) }e^{-\mu _1(  \lambda , \varepsilon ) x} .
\end{equation}\normalsize
We claim that for    small $\lambda$ and   $(i_1,i_2,i_3,i_4)$ a permutation of $( 1, 2, 3, 4)$, we have
\begin{equation}\label{eq:strar3vec}
  *( \mathbf{v} _{i_1}\wedge \mathbf{v}_{i_2}\wedge \mathbf{v}_{i_3}) =   \det (\mathbf{v}_{i_1}, \mathbf{v}_{i_2}, \mathbf{v}_{i_3}, \mathbf{v}_{i_4}) \mathbf{w}_{i_4} .
\end{equation}
Indeed,  by $** =- 1$  in $\wedge ^3 (\C ^4)$
\begin{align*}&
  \<    \mathbf{v}_{i }, *( \mathbf{v} _{i_1}\wedge \mathbf{v}_{i_2}\wedge \mathbf{v}_{i_3}) \> _{\C ^4}=  - \det ( \mathbf{v}_{i } ,\mathbf{v}_{i_1}, \mathbf{v}_{i_2}, \mathbf{v}_{i_3}  ) 
= \left\{
\begin{array}{ll}
 0, & \hbox{ if } i=i_1,i_2,i_3 \\
\det (\mathbf{v}_{i_1}, \mathbf{v}_{i_2}, \mathbf{v}_{i_3}, \mathbf{v}_{i_4}) , & \hbox{if }i=i_4.\end{array}
\right.
\end{align*}
Since   $  ( \mathbf{w}_i ) _{i=1}^{4}$ for small $\lambda $ is a basis and in view of \eqref{LRVec_A}--\eqref{eigenvec_A2}  we conclude \eqref{eq:strar3vec}.
Hence the following, which implies  \eqref{eq:wronsk1},
\begin{equation}\label{eq:z4g1}
   Z_4(x, \lambda , \varepsilon   ) =  \frac{\det \mathbf{V}}{c^2-K} g_1(x, \lambda , \varepsilon ) .
\end{equation}
Notice that  $Z_1$  similarly satisfies  \eqref{eq:lineq1ad}.  We have  for some $\widehat{{\alpha}}(\varepsilon) \neq 0$
\begin{align}\label{eq:Z14at0}
   Z_1(x, 0 , \varepsilon   )=  -Z_4(x, 0 , \varepsilon   )  = - \widehat{{\alpha}}(\varepsilon) E_2 [c] .
\end{align}
Differentiating in $\lambda$  in \eqref{eq:lineq1ad} for $i=1,4$  we get
\begin{align*}
    \( \frac{d }{dx} + A^\intercal  _1(x, \varepsilon) \) \partial _{\lambda}  Z_i(x, 0 , \varepsilon   ) =-  A^\intercal  _2\(x, \varepsilon  \)    Z_i(x, 0 , \varepsilon   ) = -(-1)^{i} \widehat{{\alpha}}(\varepsilon)  A^\intercal  _2\(x, \varepsilon  \)   E_2 [c]  .
\end{align*}
Since by the definition of $Z_i$ and by Lemmas  \ref{lem:jostlambda0} and \ref {lem:estimate_tildef2}  we have $\partial _{\lambda}  Z_i(x, 0 , \varepsilon   )\xrightarrow{x\to -\infty}0$, it follows that there exist constants $\widehat{\beta}_i(\varepsilon)$ such that
\begin{align}\label{eq:partlambdaz}
  \partial _{\lambda}  Z_i(x, 0 , \varepsilon   ) = (-1)^{i} \widehat{\alpha}(\varepsilon)  E _1[c] +   \widehat{\beta}_i(\varepsilon) E _2[c].
\end{align}
We claim now that,  for ${\alpha}(\varepsilon)$  and ${\beta}(\varepsilon)$ the constants in \eqref{eq:jostlambda02},  we have
\begin{align} \label{eq:imprelat}
  -2\beta (\varepsilon)   \widehat{\alpha}(\varepsilon) +   {\alpha}(\varepsilon) \(   \widehat{\beta}_1(\varepsilon)  +\widehat{\beta}_4(\varepsilon)  \) =0.
\end{align}
This follows using \eqref{eq:jostlambda01}, \eqref{eq:jostlambda02},     \eqref{eq:Z14at0}  and   \eqref{eq:partlambdaz},  
\begin{align*} & \(   \widehat{\beta}_1(\varepsilon)  +\widehat{\beta}_4(\varepsilon)  \)  E _2[c] =   \partial _{\lambda}  Z_1(x, 0 , \varepsilon   ) + \partial _{\lambda}  Z_4(x, 0 , \varepsilon   )\\& = \frac{*(  \widehat{f}_2(x, 0 , \varepsilon   )\wedge \widehat{f} _3(x, 0 , \varepsilon   )\wedge  \partial _\lambda (f_1(x, 0 , \varepsilon   ) +f_4(x, 0 , \varepsilon   ) )   )}{ J(x)  } \\& = 2\frac{\beta (\varepsilon)}{{\alpha}(\varepsilon) }   \frac{*(  \widehat{f}_2(x, 0 , \varepsilon   )\wedge \widehat{f} _3(x, 0 , \varepsilon   )\wedge  f_4(x, 0 , \varepsilon   )     )}{ J(x)  } =2\frac{\beta (\varepsilon)}{{\alpha}(\varepsilon) }  Z_4(x, 0 , \varepsilon   ) =  2\frac{\beta (\varepsilon)   \widehat{{\alpha}}(\varepsilon)}{{\alpha}(\varepsilon) }    E_2 [c] .
\end{align*}
The proof of Proposition \ref{prop:Proposition 1.1} is completed by the following claim.
\begin{claim} \label{claim:noeig}For $\varepsilon >0$
 small enough if  $\lambda \in \im \R $ is a nonzero eigenvalue then $D(\lambda , \varepsilon   )=0$.
\end{claim}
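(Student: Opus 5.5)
Let $\lambda=\im\tau\in\im\R$, $\tau\neq0$, be an eigenvalue, so $\mathcal{L}_c\widetilde{U}=\lambda\widetilde{U}$ for some $0\neq\widetilde{U}\in(H^1)^2$. Set $\dot\phi=(-\partial_x^2+e^{\phi_c})^{-1}\dot n$ and $\dot\psi=\dot\phi'$. Then $\mathbf{U}=(\dot n,\dot u,\dot\phi,\dot\psi)^\intercal$ is a nonzero solution of \eqref{eq:lineq1} lying in $L^2(\R,\C^4)$. To see this, undo the leading $\partial_x$ in $\mathcal{L}_c$. The eigenvalue equation has the form $\partial_x G=-\lambda\widetilde{U}$, where $G$ is an $L^2$ function. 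Hence $G$ equals a constant plus $-\lambda\int\widetilde{U}$. Since $G\to0$ at $\pm\infty$ in a weak sense, the integration constant vanishes. Invertibility of $L_c$ (from $J>0$) then gives \eqref{ODE_LinEP} with $\mathbf{F}=0$. In the same way, $\int_\R\widetilde{U}=0$ because $\lambda\neq0$.

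The plan is to show that $\mathbf{U}$ is a multiple of $f_1$ and also a multiple of $f_4$, and then use \eqref{eq:limm1-infty}. By Lemma~\ref{lem:elest}(ii) the four exponents are distinct. By \eqref{EigenSpliting1_1} we have $\Re\mu_1<0$, $\Re\mu_2=\Re\mu_3=0$ and $\Re\mu_4>0$.

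\textbf{Behaviour at $+\infty$.} By Lemma~\ref{lem:extrajost}, $\{f_1,F_2,F_3,F_4\}$ is a basis of solutions, and $M_j\to\mathbf{v}_j$ as $x\to+\infty$ with an integrable error, since $q$ decays exponentially by \eqref{eq:estqj}. Write $\mathbf{U}=a_1f_1+\sum_{k\ge2}a_kF_k$. Then $a_4=0$, since otherwise $\mathbf{U}$ grows like $e^{\Re\mu_4 x}$. The remaining part $a_2F_2+a_3F_3$ behaves like $a_2e^{\mu_2x}\mathbf{v}_2+a_3e^{\mu_3x}\mathbf{v}_3+o(1)$. This is a superposition of two plane waves with distinct purely imaginary frequencies and linearly independent vectors (by \eqref{eq:detvlambda}). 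Such a function is square integrable on $(R,\infty)$ only if $a_2=a_3=0$. The elementary argument: average $|\cdot|^2$ over long intervals; the cross terms oscillate away, leaving $|a_2\mathbf{v}_2|^2+|a_3\mathbf{v}_3|^2$, which must be $0$. Hence $\mathbf{U}=a_1f_1$ with $a_1\neq0$.

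\textbf{Behaviour at $-\infty$.} Using the analogous basis there, namely $f_4$ together with solutions asymptotic to $e^{\mu_j x}\mathbf{v}_j$ as $x\to-\infty$ for $j=1,2,3$, the same reasoning gives $\mathbf{U}=b_4f_4$. Such a basis is obtained from the symmetry \eqref{eq:symjost11}, or from Lemma~\ref{lem:estimate_tildef2} and its $\mu_1$-analogue. In particular $\mathbf{U}$ has no component along the mode growing like $e^{\mu_1x}$ as $x\to-\infty$. On the other hand, \eqref{eq:limm1-infty} gives $e^{-\mu_1x}f_1\to D(\lambda,\varepsilon)\mathbf{v}_1$. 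If $D(\lambda,\varepsilon)\neq0$, then $\mathbf{U}=a_1f_1$ would blow up like $e^{\Re\mu_1x}$ as $x\to-\infty$, contradicting $\mathbf{U}\in L^2$. Hence $D(\lambda,\varepsilon)=0$.

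\textbf{Main obstacle.} The delicate point is that \eqref{eq:limm1-infty} comes from Pego–Weinstein, whose setting is $\Re\lambda>0$. On $\im\R$ one must check two things: that $f_1$ still admits a clean decomposition as $x\to-\infty$ with coefficient $D$ on the $e^{\mu_1x}\mathbf{v}_1$ mode, and that the oscillatory modes $\mu_2,\mu_3$ do not spoil this. I would argue directly with the constant pairing. By \eqref{eq:evans}, $D=\langle f_1,g_1\rangle_{\C^4}$, and $g_1\sim e^{-\mu_1x}\mathbf{w}_1$ as $x\to-\infty$. Since $\langle\mathbf{v}_j,\mathbf{w}_1\rangle=\delta_{j1}$, the pairing picks out exactly the $e^{\mu_1x}\mathbf{v}_1$ coefficient of $f_1$. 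If that coefficient vanishes, $D=0$. Otherwise $\mathbf{U}\propto f_1$ is not in $L^2$ near $-\infty$. This avoids needing \eqref{eq:limm1-infty} on the imaginary axis; the only uniformity required is the existence of the $-\infty$ Jost basis, which Lemma~\ref{lem:extrajost} supplies with the roles of $\pm\infty$ swapped.
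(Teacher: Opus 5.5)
Your proof is correct. It reaches $D(\lambda,\varepsilon)=0$ by a somewhat different mechanism than the paper.

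\textbf{The paper's route.} The paper argues that an $L^2$ solution forces the left-hand side of \eqref{eq:wronsk1}, namely $\widehat W=\det(f_1,\widehat f_2,\widehat f_3,f_4)$, to vanish. The reason is left tacit: the eigenfunction is proportional to both $f_1$ and $f_4$. Since the $\mu_j$ are distinct for $\lambda\in\im\R\setminus\{0\}$, one has $\det\mathbf V\neq0$, and $D=0$ is then read off from \eqref{eq:wronsk1}.

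\textbf{Your route.} You prove $\mathbf U=a_1f_1$ explicitly. Your averaging argument that removes the two neutral modes $\mu_2,\mu_3$ at $+\infty$ is exactly the step the paper leaves unstated. You then identify $D=\langle f_1,g_1\rangle_{\C^4}$ with the coefficient of the growing mode $e^{\mu_1x}\mathbf v_1$ of $f_1$ at $-\infty$. The $x$-independent pairing together with $g_1\sim e^{-\mu_1x}\mathbf w_1$ makes this identification rigorous on $\im\R$. That coefficient must vanish because $\mathbf U$ is square integrable near $-\infty$.

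\textbf{What each buys.} Your route does not need the Wronskian identity \eqref{eq:wronsk1}. It also avoids the auxiliary solutions $\widehat f_2,\widehat f_3$ of Lemma \ref{lem:estimate_tildef2}, which are only constructed for $|\lambda|\le1$. At $-\infty$ you only use the absence of the growing mode, so your step $\mathbf U=b_4f_4$ is superfluous. The paper's route is a one-line consequence of \eqref{eq:wronsk1}, which it needs anyway for the resolvent representation in Section \ref{sec:proofsmooth2}.

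\textbf{Minor remarks.}
\begin{itemize}
\item The integration step in your first paragraph is unnecessary. Expanding the derivative in $\mathcal L_c\widetilde U=\lambda\widetilde U$ and applying $L_c^{-1}$ directly gives the first two rows of \eqref{ODE_LinEP}. The identity $\int_\R\widetilde U=0$ is never used.
\item Both your argument and the paper's use asymptotic bases of Jost solutions at each end for every nonzero $\lambda\in\im\R$. Lemma \ref{lem:extrajost}, however, is stated only for $|\lambda|\le1$. For fixed $\lambda$ with distinct $\mu_j$ and exponentially decaying $q$, this is standard Levinson theory, so it is not a real gap.
\end{itemize}
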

\begin{proof}
 If $\lambda \in \im \R $ is an eigenvalue, then we have a solution of \eqref{eq:resoleq} for $\widetilde{F}=0$  with $\widetilde{U}\in  L^2(\R , \C ^2)$. For $ \widetilde{U}=(\dot n , \dot u ) ^\intercal$
and defining $U^\intercal := (\widetilde{U} ^\intercal , \dot \phi  , \dot \psi ) $ using \eqref{Poisson_ODE}, we obtain a solution $ {U}\in  L^2(\R , \C ^4)$  of \eqref{eq:lineq1}. If a nonzero $U$ of this type, then  the left hand side in \eqref{eq:wronsk1} needs to equal 0. In Bae and Kwon \cite[p. 273]{BK22ARMA} it is remarked
that when  the $(\mu _j) _{j=1}^4$ are distinct the $(\mathbf{v} _j) _{j=1}^4$  are a basis in $\C ^4$. From Bae and Kwon \cite[Lemma 3.3]{BK22ARMA} the only two possible coincident
roots are $\mu _2(\lambda , \varepsilon   )$ and $\mu _3(\lambda , \varepsilon   )$.  From $d _+ (\mu _2)= \lambda = d _- (\mu _3)$  it is easily seen that  $\mu _2(\lambda , \varepsilon   )\neq \mu _3(\lambda , \varepsilon   )$ for any nonzero  $\lambda \in \im \R $. Hence from   \eqref{eq:wronsk1} we conclude that if  $\lambda \in \im \R $ is an eigenvalue then  $ D(\lambda , \varepsilon   )=0$.
\end{proof}

Set now for $i=2,3$
\begin{align}\label{eq:defhatZ12}
  \widehat{Z}_i(x, \lambda , \varepsilon   )&:=\frac{*(  \widehat{f}_i(x, \lambda , \varepsilon   )\wedge  {f} _1(x, \lambda , \varepsilon   )\wedge f_4(x, \lambda , \varepsilon   ))}{ J(x) e^{2\lambda \int _0 ^x  \frac{c-u_c(y)}{J(y)} dy}}   \text{ and }   \\  {Z}_i(x, \lambda , \varepsilon   )&:=\frac{*(   {f}_i(x, \lambda , \varepsilon   )\wedge  {f} _1(x, \lambda , \varepsilon   )\wedge f_4(x, \lambda , \varepsilon   ))}{ J(x) e^{2\lambda \int _0 ^x  \frac{c-u_c(y)}{J(y)} dy}} .\label{eq:defZ12}
\end{align}

\begin{lemma}\label{lem:b13at0}
  The following facts are true.
\begin{description}
  \item[i]   We have $\partial ^{a}_\lambda  \widehat{Z}_i(x, 0 , \varepsilon   )=0$ for $a=0,1 $   and for both $i=2,3$.
  \item[ii] For any      $n, m \ge 0$  there is a constant $C_{n,m}$ such that   for all   $\lambda \in \im \R $    and $x\in \R$
    \begin{equation}\label{eq:b13_estimate_partial}
        \left |     \partial_x^{n}    \partial_\lambda^m \(    e^{ \mu _3 (\lambda , \varepsilon )x }    {Z}_2(x, \lambda , \varepsilon   )   \)  \right |  + \left |     \partial_x^{n}    \partial_\lambda^m \(    e^{ \mu _2 (\lambda , \varepsilon )x }    {Z}_3(x, \lambda  , \varepsilon   )   \)  \right |  \leq C_{n,m}      .
    \end{equation}
  \item[iii]   There exists    constants $a_0^{(i)} , b_0^{(i)} ,b_1^{(i)}\in \C$  such that  for $i=2,3$ and for   $a_0^{(i)}=c  b_0^{(i)}\neq 0$ 
\begin{equation}\label{eq::b13at03}
  \partial ^{2}_\lambda   Z_i(x, 0 , \varepsilon   ) = \left(
                                                 \begin{array}{c}
                                                    L _c    ^{\intercal}      (a_0^{(i)},b_0^{(i)} ) ^\intercal  \\
                                                    b_0^{(i)} \\
                                                    0 \\
                                                 \end{array}
                                               \right)  + b_1^{(i)} E _2[c] ,
\end{equation} 
\end{description}
\end{lemma}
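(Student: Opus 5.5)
For \textbf{i}, I will use Lemma \ref{lem:jostlambda0}. At $\lambda=0$ we have $f_1(\cdot,0)=-\alpha\Xi_1[c]$ and $f_4(\cdot,0)=\alpha\Xi_1[c]$, so $f_1\wedge f_4=0$ at $\lambda=0$. This gives $\widehat Z_i(x,0,\varepsilon)=0$.

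For the first $\lambda$-derivative, the normalizing factor $J e^{2\lambda\int\cdots}$ is smooth and nonvanishing. Differentiating the numerator with the Leibniz rule, every term that keeps both $f_1(\cdot,0)$ and $f_4(\cdot,0)$ vanishes. What remains is
\begin{equation*}
\widehat f_i\wedge\big(\partial_\lambda f_1\wedge f_4+f_1\wedge\partial_\lambda f_4\big)
=\alpha\,\widehat f_i\wedge\big(\partial_\lambda f_1-\partial_\lambda f_4\big)\wedge\Xi_1 ,
\end{equation*}
with everything evaluated at $\lambda=0$. By \eqref{eq:jostlambda02},
\begin{equation*}
\partial_\lambda f_1-\partial_\lambda f_4=(\alpha\Xi_2+\beta\Xi_1)-(-\alpha\Xi_2+\beta\Xi_1)=2\alpha\Xi_2 .
\end{equation*}
So the first derivative equals $2\alpha^2\,\widehat f_i\wedge\Xi_2\wedge\Xi_1$. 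This does not vanish automatically. I therefore have to show that $\widehat f_i(\cdot,0)\in\Span\{\Xi_1,\Xi_2\}$ for $i=2,3$.

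That is plausible. At $\lambda=0$ we have $\mu_2=\mu_3=0$, so $\widehat f_i(\cdot,0)$ is a solution that is bounded as $x\to-\infty$ and tends to $\mathbf v_i(0)$. The bounded solutions of \eqref{eq:lineq1} at $\lambda=0$ near $-\infty$ form a three-dimensional space, spanned by $\Xi_1$, $\Xi_2$ and a third one. Since $\mu_2(0)=\mu_3(0)=0$, we get $\mathbf v_2(0)=\mathbf v_3(0)$, and both equal the common limit of $\Xi_2$ (which is the limit $\partial_c$ of a constant state). I expect to pin down $\widehat f_i(\cdot,0)$ by pairing with $g_4(\cdot,0)\propto E_2[c]$ and with $E_1[c]$, using $c_{j4}(0)=0$ from Lemma \ref{lem:c23vanish}. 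If membership in the span fails, I will fall back on writing $\partial_\lambda\widehat Z_i(\cdot,0)$ as a solution of \eqref{eq:lineq1ad} that decays at $-\infty$. Such a solution is proportional to $E_2$, and the constant is then read off from the limit $x\to+\infty$.

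For \textbf{ii}, I substitute $f_i=e^{\mu_ix}\mathbf m_i$, $f_1=e^{\mu_1x}\mathbf m_1$ and $f_4=e^{\mu_4x}\mathbf m_4$. The exponential factor of $Z_2$ becomes
\begin{equation*}
\exp\Big(\big(\mu_2+\mu_1+\mu_4\big)x-2\lambda\int_0^x\frac{c-u_c}{J}\Big).
\end{equation*}
By \eqref{eq:sum_of_mus}, $\mu_2+\mu_1+\mu_4=\frac{2c\lambda}{c^2-K}-\mu_3$, and $\frac{c-u_c}{J}-\frac{c}{c^2-K}$ is exponentially decaying. Hence $e^{\mu_3x}Z_2$ is a bounded smooth factor (bounded because $\lambda\in\im\R$) times $*(\mathbf m_2\wedge\mathbf m_1\wedge\mathbf m_4)/J$. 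The $\lambda$-derivatives of the extra exponent grow at most polynomially in $x$, but they are multiplied by integrals of decaying functions, so they stay controlled. The bounds then follow from Lemmas \ref{lem:estimate_f1}, \ref{lem:estimate_f4} and \ref{lem:bdm2}.

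For \textbf{iii}, I note that $Z_i$ solves \eqref{eq:lineq1ad}, by the same argument used for $Z_4$. By \eqref{eq:deff2}, $f_i=\lambda\widehat f_i-c_if_4$, and the $f_4$ part drops out of the wedge with $f_4$. So $Z_i=\lambda\widehat Z_i$, and by \textbf{i},
\begin{equation*}
\partial_\lambda^2Z_i(\cdot,0)=2\,\partial_\lambda\widehat Z_i(\cdot,0),
\end{equation*}
which is again a solution of the adjoint system at $\lambda=0$. The adjoint kernel at $\lambda=0$ contains the constant vectors $\big(L_c^\intercal(a_0,b_0)^\intercal,b_0,0\big)^\intercal$. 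These correspond to $\mathcal L_c^*(a_0,b_0)^\intercal=0$, and they are compatible with \eqref{eq:bigeta12} exactly when the $\widetilde\phi$ component equals $b_0$ and the $\widetilde\psi$ component is zero. So the bounded adjoint solutions at $\lambda=0$ are combinations of these constant vectors and $E_2$.

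It remains to show that $a_0=cb_0\neq0$. I will do this by matching the behavior as $x\to+\infty$, using the asymptotics of $\mathbf w_2,\mathbf w_3$ at $\lambda=0$ and the relation \eqref{eq:imprelat}. The main obstacle is this identification of the constants, together with the span claim needed in \textbf{i}.
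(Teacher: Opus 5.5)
Your argument for \textbf{ii} is the paper's. Note also that $\int_0^x[\frac{c}{c^2-K}-\frac{c-u_c}{J}]\,dy$ is uniformly bounded in $x$, so its $\lambda$-derivatives cause no growth at all.

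In \textbf{i}, however, the obstruction you found comes from a sign slip. Since $f_1(\cdot,0)=-\alpha\Xi_1$, one has $f_1\wedge\partial_\lambda f_4=-\alpha\,\Xi_1\wedge\partial_\lambda f_4=+\alpha\,\partial_\lambda f_4\wedge\Xi_1$. Hence at $\lambda=0$, by \eqref{eq:jostlambda02},
\[
\partial_\lambda f_1\wedge f_4+f_1\wedge\partial_\lambda f_4=\alpha\,(\partial_\lambda f_1+\partial_\lambda f_4)\wedge\Xi_1=2\alpha\beta\,\Xi_1\wedge\Xi_1=0 .
\]
That one line is the paper's proof, and nothing about $\widehat f_i(\cdot,0)$ is needed. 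The repair you propose is false for three reasons:
\begin{itemize}
\item $\Xi_2$ does not solve \eqref{eq:lineq1} at $\lambda=0$; by \eqref{eq:bigxi1} it solves the inhomogeneous equation.
\item $\Xi_1,\Xi_2\to0$ at $-\infty$, whereas $\widehat f_i(\cdot,0)\to\mathbf v_i(0)\neq0$.
\item $\mathbf v_2(0)=(1,-\mathsf V,1,0)^\intercal\neq\mathbf v_3(0)=(1,\mathsf V,1,0)^\intercal$, because by \eqref{Charact1} $\lambda/\mu_2\to c+\mathsf V$ while $\lambda/\mu_3\to c-\mathsf V$. This is consistent with \eqref{eq:detv}.
\end{itemize}
Your fallback, applied to your (wrong) expression, would produce a nonzero multiple of $E_2[c]$. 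It would therefore contradict \textbf{i} rather than prove it.

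In \textbf{iii}, your correct identity $Z_i=\lambda\widehat Z_i$ combined with \textbf{i} gives $\partial_\lambda^2Z_i(\cdot,0)=2\partial_\lambda\widehat Z_i(\cdot,0)=0$. So on your route $a_0^{(i)}\neq0$ is impossible. The quantity with real content is $\partial_\lambda^2\widehat Z_i(\cdot,0)=\frac13\partial_\lambda^3Z_i(\cdot,0)$. This is what the paper's argument actually computes: its $x\to-\infty$ limit carries no factor $\lambda$, so it is the limit of $e^{\mu_2x}\widehat Z_3$. It is also what \S\ref{sec:proofsmooth2} uses, through $\partial_z^3Z_3|_{z=0}$.

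By \textbf{i}, $\partial_\lambda^2\widehat Z_i(\cdot,0)$ is a bounded solution of \eqref{eq:lineq1ad} at $\lambda=0$, hence of the displayed form. What you are missing is the mechanism that pins down the constants. It works at $x\to-\infty$, not at $+\infty$, where $\widehat f_i$ is not controlled:
\begin{itemize}
\item By \eqref{eq:limm1-infty}, $f_1\sim D(\lambda)e^{\mu_1x}\mathbf v_1$.
\item By \eqref{eq:sum_of_mus} and \eqref{eq:strar3vec}, $e^{\mu_2x}\widehat Z_3\to\frac{D(\lambda)}{c^2-K}\det(\mathbf v_3,\mathbf v_1,\mathbf v_4,\mathbf v_2)\,\mathbf w_2$, up to a factor $1+O(\lambda)$.
\item After justifying the exchange of $\partial_\lambda^2$ with the limit, $D(0)=\partial_\lambda D(0)=0\neq\partial_\lambda^2D(0)$ from \eqref{eq:derevans0}, together with $\det\mathbf V(0)\neq0$, gives $b_0\neq0$.
\end{itemize}
The relation \eqref{eq:imprelat} plays no role here; it serves the cancellation $B=0$ in Lemma \ref{lem:smooth2small}.

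Finally, the link between $a_0$ and $b_0$ is then forced by $\mathbf w_2(0)\propto\boldsymbol\pi_2(0)=(c\mathsf V+K,-(c+\mathsf V),1,0)^\intercal$, whose second entry is nonzero. Comparing with $(-ca_0+Kb_0,\,a_0-cb_0,\,b_0,\,0)^\intercal$ gives $a_0^{(3)}=-\mathsf V b_0^{(3)}$, and similarly $a_0^{(2)}=\mathsf V b_0^{(2)}$ from $\mathbf w_3(0)$. So $a_0\neq0$ follows from $b_0\neq0$, but $a_0=cb_0$ itself is not something you can prove.
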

\begin{proof}
 Case $a=0$   in  \textbf{i} follows immediately from \eqref{eq:jostlambda01}. Taking the $\lambda $ derivative at $\lambda =0$ we get
\begin{align*}
  & \partial _\lambda \( \frac { \widehat{f}_i \wedge  {f} _1 \wedge f_4  }{J(x) e^{2\lambda \int _0 ^x  \frac{c-u_c(y)}{J(y)} dy}}  \)   = J ^{-1}  \widehat{f}_i \wedge  \partial _\lambda ({f} _1 \wedge f_4)
\end{align*}
where   at $\lambda =0$,  by Lemma \ref{lem:jostlambda0} and by the antisymmetry of the wedge product,
\begin{align*}
  \partial _\lambda ({f} _1 \wedge f_4) & =   \partial _\lambda {f} _1 \wedge f_4  +   {f} _1 \wedge      \partial _\lambda f_4 =  \(- \alpha  \Xi _2     +\cancel{\beta   \Xi _1 }   \)   \wedge \alpha  \Xi _1  - \alpha  \Xi _1    \wedge    \(  \alpha  \Xi _2     +\cancel{\beta   \Xi _1 }    \) =0.
\end{align*}
 The estimates for ${Z}_3$  in \eqref{eq:b13_estimate_partial} follow from the following, \small
\begin{align*}
  e^{ \mu _2 (\lambda , \varepsilon )x }  \frac{   ({f}_3 \wedge  {f} _1 \wedge f_4)(x, \lambda , \varepsilon   ) }{ J(x) e^{2\lambda \int _0 ^x  \frac{c-u_c(y)}{J(y)} dy}}  =  \frac{    \exp \(2\lambda \int _0 ^x \left [ \dfrac{ c }{c^2-K} -  \dfrac{c-u_c(y)}{J(y)}  \right ] dy\)   }{J(x)}     ( \mathbf{m} _3 \wedge   \mathbf{m}  _1 \wedge \mathbf{m}_4)(x, \lambda , \varepsilon   ) ,
\end{align*}\normalsize
which follows from \eqref{eq:sum_of_mus}, and from \eqref{eq:m1_estimate_partial},   \eqref{eq:m4_estimate_partial}  and  \eqref{eq:estm21}.  The proof of \eqref{eq:b13_estimate_partial}    for ${Z}_2$ is similar.
  By \textbf{i} we have  $\partial ^{2}_\lambda  Z_3 | _{\lambda =0}=0= \partial ^{2}_\lambda   \(e^{ \mu _2  x }Z_3 \)  | _{\lambda =0} $ is an $L^\infty (\R , \C^4)$ solution of  the adjoint equation
\eqref{eq:lineq1ad} for $\lambda =0$. Therefore  there exist $a_0^{(i)} , b_0^{(i)} ,b_1^{(i)}\in \C $ such that
\begin{align*}
  \partial ^{2}_\lambda   \(e^{ \mu _2  x }Z_3 \) | _{\lambda =0} =    \left(
                                                 \begin{array}{c}
                                                    L _c    ^{\intercal}      (a_0^{(3)} , b_0^{(3)}) ^\intercal  \\
                                                    b_0^{(3)} \\
                                                    0 \\
                                                 \end{array}
                                               \right)  +b_1^{(3)}E _2[c].
\end{align*}
Now,   we have, not writing the $\varepsilon$ and for a moment assuming without discussion the last equality,
\begin{align} \label{lem:b13at01}
   &     \left(
                                                 \begin{array}{c}
                                                   \left(
                  \begin{array}{cc}
                    -c & K \\
                    1 & -c \\
                  \end{array}
                \right)  \left(
                           \begin{array}{c}
                             a_0^{(3)} \\
                             b_0^{(3)} \\
                           \end{array}
                         \right)
 \\
                                                    b_0^{(3)} \\
                                                    0 \\
                                                 \end{array}
                                               \right) =     \lim _{x\to -\infty} \partial ^{2}_\lambda   \(e^{ \mu _2  x }Z_3(x,\lambda ) \) |_{\lambda =0}  =  \partial ^{2}_\lambda  \lim _{x\to -\infty}   (e^{ \mu _2  x }{Z}_3(x,\lambda ))|_{\lambda =0}
\end{align}
where by \eqref{lem:estimate_f1} and \eqref{eq:limm1-infty}, by      \eqref{lem:estimate_f4}, \eqref{eq:estimate_tildef22},  \eqref {eq:deff2} and \eqref {eq:strar3vec}
\begin{align*}
  \lim _{x\to -\infty} e^{ \mu _2  x } Z_3(x,\lambda ) = \frac{D(\lambda )   }{c^2-K}   *( \mathbf{v} _{3}\wedge \mathbf{v}_{1}\wedge \mathbf{v}_{4}) = \frac{D(\lambda )   }{c^2-K}     \det (\mathbf{v}_{3}, \mathbf{v}_{1}, \mathbf{v}_{4}, \mathbf{v}_{2}) \mathbf{w}_{2},
\end{align*}
so that
\begin{align*}
    \left(
                                                 \begin{array}{c}
                                                   \left(
                  \begin{array}{cc}
                    -c & K \\
                    1 & -c \\
                  \end{array}
                \right)  \left(
                           \begin{array}{c}
                             a_0^{(3)} \\
                             b_0^{(3)} \\
                           \end{array}
                         \right)
 \\
                                                    b_0^{(3)} \\
                                                    0 \\
                                                 \end{array}
                                               \right) =   -    \frac{\partial ^2 _\lambda D (0, \varepsilon )   }{c^2-K}      \det (\mathbf{v}_{1} (0 , \varepsilon ), \mathbf{v}_{2}(0 , \varepsilon ), \mathbf{v}_{3}(0 , \varepsilon ), \mathbf{v}_{4}(0 , \varepsilon ))  \mathbf{w}_{2}(0 , \varepsilon )  
\end{align*}
so that
 \begin{align*}&
    b_0^{(3)} =  -   \frac{\partial ^2 _\lambda D (0, \varepsilon )   }{(c^2-K) \< \boldsymbol{\pi}_2 ,  \mathbf{v}_2 \> _{\C ^4} | _{\lambda =0} }      \det (\mathbf{v}_{1} (0 , \varepsilon ), \mathbf{v}_{2}(0 , \varepsilon ), \mathbf{v}_{3}(0 , \varepsilon ), \mathbf{v}_{4}(0 , \varepsilon ))  \text{ and } a_0^{(3)}=c b_0^{(3)} .
 \end{align*}
A similar result is  obtained for $ a_0^{(2)}$ and $ b_0^{(2)}$. Now we need to justify the last equality in  \eqref{lem:b13at01}. It is an elementary consequence of \eqref{eq:m4_estimate_partial}, \eqref{eq:estimate_tildef232} and   the fact, elementary to check from \eqref {eq:integral_eq_m1}, that
\begin{align*}
   \partial ^{n}_\lambda  \lim _{x\to -\infty} \mathbf{m}_1(x , \lambda , \varepsilon  ) =   \lim _{x\to -\infty}  \partial ^{n}_\lambda \mathbf{m}_1(x , \lambda , \varepsilon  ) .
\end{align*}


\end{proof}

\section{Proof of Proposition \ref{prop:smooth2} }\label{sec:proofsmooth2}

It is enough to show using the notation in \eqref{ODE_LinEP} that   there exists a constant $C(  \varepsilon)$ such that for $P_{[c]} \widetilde{F}=0$
\begin{align}\label{eq:pfsmooth21}&
       \left \|         \mathbf{U}  \right \| _{       \widetilde{\Sigma}    }\le C(  \varepsilon)  \| \widetilde{F} \| _{   L ^{1,1}\( \R\)    }  \text{ for all $\lambda \in \im \R \backslash \{  0 \}$}.
   \end{align}
 We set
 \begin{align}\label{eqvarpartilde}
    \mathbf{U}=\widehat{R}(\lambda) \widetilde{F} = \sum _{j=1}^{4}\widehat{R}_j(\lambda) \widetilde{F}   \end{align}
  where for  $ \mathbf{F}   := ( L ^{-1}\widetilde{F},0,0)^\intercal$ like in \eqref{ODE_LinEP},
    \begin{align}& \widehat{R}_1(\lambda) \widetilde{F}=
    f_1(x,\lambda , \varepsilon ) \int_  {-\infty} ^x  \frac{ \det \( \mathbf{F} ,   \widehat{f}_2(y,\lambda , \varepsilon ) ,  \widehat{f}_3(y,\lambda , \varepsilon ) ,   {f}_4(y,\lambda , \varepsilon )   \)}{\widehat{W}(y  , \lambda , \varepsilon   )} dy \label{eq:r1}
 \end{align}
 and for $j\ge 2$
\begin{align}& \widehat{R}_2(\lambda) \widetilde{F}= -
    \widehat{f}_2(x,\lambda , \varepsilon ) \int_  {x}  ^{+\infty}  \frac{ \det \( {f}_1(y,\lambda , \varepsilon )  ,  \mathbf{F} ,  \widehat{f}_3(y,\lambda , \varepsilon ) ,   f _4(y,\lambda , \varepsilon )   \)}{\widehat{W}(y  , \lambda , \varepsilon   )} dy   \label{eq:r2}  \\&  \widehat{R}_3(\lambda) \widetilde{F}= -
    \widehat{f}_3(x,\lambda , \varepsilon ) \int_  {x}  ^{+\infty}  \frac{ \det \( {f}_1(y,\lambda , \varepsilon )  ,  \widehat{f}_2(y,\lambda , \varepsilon ) ,  \mathbf{F}   ,   {f}_4(y,\lambda , \varepsilon )   \)}{\widehat{W}(y  , \lambda , \varepsilon   )} dy \label{eq:r3} \\&  \widehat{R}_4(\lambda) \widetilde{F}= -
     {f}_4(x,\lambda , \varepsilon ) \int_  {x}  ^{+\infty}  \frac{ \det \( {f}_1(y,\lambda , \varepsilon )  ,  \widehat{f}_2(y,\lambda , \varepsilon ) ,  \widehat{f}_3(y,\lambda , \varepsilon ) , \mathbf{F}       \)}{\widehat{W}(y  , \lambda , \varepsilon   )} dy . \label{eq:r4}
 \end{align}
 Formula \eqref{eqvarpartilde} follows from the variation of the parameters formula.
 For $\mathbf{G} := \det (\mathbf{v}_1, \mathbf{v}_2, \mathbf{v}_3, \mathbf{v}_4) $
 \begin{align*}&
    \widehat{R}_1(\lambda) \widetilde{F}=  \frac{ (c^2-K) }{ D(\lambda , \varepsilon ) \mathbf{G} }  f_1(x,\lambda , \varepsilon )  \int_  {-\infty} ^x  \<  \mathbf{F}(y), Z_4(y, \lambda , \varepsilon   )\> _{\C^4} dy  \text{ and}\\&   \widehat{R}_4(\lambda) \widetilde{F}=  \frac{ (c^2-K) }{ D(\lambda , \varepsilon ) \mathbf{G} }  f_4(x,\lambda , \varepsilon )  \int_  {x}  ^{+\infty}  \<  \mathbf{F}(y), Z_1(y, \lambda , \varepsilon   )\> _{\C^4} dy .
 \end{align*}
 Similarly, by \eqref{eq:defhatZ12}
  \begin{align*}&
    \widehat{R}_2(\lambda) \widetilde{F}=  \frac{ (c^2-K) }{ D(\lambda , \varepsilon ) \mathbf{G} }  \widehat{f}_2(x,\lambda , \varepsilon )  \int ^  {+\infty} _x  \<  \mathbf{F}(y), \widehat{Z}_2(y, \lambda , \varepsilon   )\> _{\C^4} dy  \text{ and}\\&   \widehat{R}_3(\lambda) \widetilde{F}=  -  \frac{ (c^2-K) }{ D(\lambda , \varepsilon ) \mathbf{G} }  \widehat{f}_3(x,\lambda , \varepsilon )  \int_  {x}  ^{+\infty}  \<  \mathbf{F}(y), \widehat{Z}_3(y, \lambda , \varepsilon   )\> _{\C^4} dy .
 \end{align*}
We  distinguish between $\lambda$ close to 0 and away from 0.  We start with the  small $\lambda$  case.
Setting \begin{align}
   \label{eq:defw} {W}(x  , \lambda , \varepsilon   ):= \det \(    {f}_1(x  , \lambda , \varepsilon   ),  {f}_2(x  , \lambda , \varepsilon   ),  {f}_3(x  ,\lambda , \varepsilon  ) ,{f}_4(x  , \lambda , \varepsilon   ) \) ,
 \end{align}
then $ {W}(x  , \lambda , \varepsilon   )= \lambda ^2 \widehat{{W}}(x  , \lambda , \varepsilon   )$. Then, still from the variation of parameter formula we have
\begin{align}\label{eqvarpar}
    \mathbf{U}= {R}(\lambda) \widetilde{F} = \sum _{j=1}^{4} {R}_j(\lambda) \widetilde{F}   \end{align}
with the ${R}_j(\lambda)$ defined like the in \eqref{eq:r1}--\eqref{eq:r4} but with $\widehat{f}_2$, $\widehat{f}_3$ and $\widehat{W}$ replaced by
$f_2$, $f_3$ and $W$.

\begin{lemma}\label{lem:smooth2small} There exists an $a _4(\varepsilon )>0$  and a  $C( \varepsilon)>0$ such that
  \begin{align}\label{eq:smooth21small}
     \sup  _{\tau  \in  (-a _4(\varepsilon ),a _4(\varepsilon )  )\backslash \{  0 \} } \left \|    \widehat{R}  (\im  \tau )  Q _{[c]}    \right \| _{ \mathcal{L}\( L ^{1,1}\( \R\),   \widetilde{\Sigma} \)   }\le C( \varepsilon).
   \end{align}

\end{lemma}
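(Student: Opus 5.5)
We will prove \eqref{eq:smooth21small} by working with the representation \eqref{eqvarpartilde}, written through the $Z$-functions, and treating the $1/D(\lambda,\varepsilon)$ singularity at $\lambda=0$ by Taylor expansion. By \eqref{eq:derevans0} we have $D(\im\tau,\varepsilon)=\tfrac12\partial_\lambda^2D(0,\varepsilon)(\im\tau)^2(1+O(\tau))$, so $|D(\im \tau,\varepsilon)|\gtrsim_\varepsilon \tau^2$ for $|\tau|<a_4(\varepsilon)$. Since $\widehat{R}(\lambda)$ is the resolvent on the range of $Q_{[c]}$, we may take $\widetilde F$ with $\<\eta_1[c],\widetilde F\>=\<\eta_2[c],\widetilde F\>=0$. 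These conditions translate, after passing to $\mathbf F=(L_c^{-1}\widetilde F,0,0)^\intercal$ and using \eqref{eq:bigeta12}, into $\<\mathbf F, E_1[c]\>_{L^2}=\<\mathbf F,E_2[c]\>_{L^2}=0$. The only first-component parts of $E_j$ that pair with $\mathbf F$ are $L_c^\intercal\eta_j$.

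The key step is to expand each kernel $Z_1,Z_4,\widehat Z_2,\widehat Z_3$ to second order in $\lambda$ at $0$, and to use these orthogonality relations to remove the powers $\lambda^{-2},\lambda^{-1}$. For $\widehat R_1,\widehat R_4$ we use \eqref{eq:Z14at0} and \eqref{eq:partlambdaz}: the zeroth order term of $Z_i$ is a multiple of $E_2$ and the first order term lies in $\Span\{E_1,E_2\}$. Hence, for example,
\[
\int_{-\infty}^x\<\mathbf F,Z_4\>dy=-\int_x^{+\infty}\<\mathbf F, Z_4-\lambda\partial_\lambda Z_4(0)-Z_4(0)\>dy .
\]
The remainder is $O(\lambda^2)$ times a function with controlled growth, by Taylor's formula in $\lambda$ and the bounds of Lemmas \ref{lem:estimate_f1}, \ref{lem:estimate_f4}, \ref{lem:estimate_tildef2}. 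The growth is at most polynomial in $\<y\>$, which is absorbed by the $L^{1,1}$ weight and the $\sech(\varepsilon\kappa x)$ weight of $\widetilde\Sigma$.

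The complication is that the truncation must be done on the correct side, $\int_{-\infty}^x$ versus $\int_x^\infty$, depending on where $f_1$ or $f_4$ is bounded. Otherwise the factor $e^{\mu_1 x}$ times a growing integral is not bounded. On $\im\R$ we have $\Re\mu_1<0<\Re\mu_4$, so $f_1$ decays at $+\infty$ and $f_4$ at $-\infty$. Wherever the bad side appears, we will instead pair $\widehat R_1$ and $\widehat R_4$ together. Their leading singular parts cancel exactly thanks to Lemma \ref{lem:jostlambda0} ($f_1(0)=-f_4(0)$) together with the relation \eqref{eq:imprelat} between $\alpha,\beta,\widehat\beta_1,\widehat\beta_4$. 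This relation is precisely what kills the $\lambda^{-1}$ cross term.

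For $\widehat R_2,\widehat R_3$, Lemma \ref{lem:b13at0}\textbf{i} gives $\widehat Z_i=O(\lambda^2)$, so $\widehat Z_i/D$ is bounded. However, $\widehat f_2,\widehat f_3$ are only controlled for $x\le x_0$. For $x>x_0$ we will rewrite, via \eqref{eq:deff2}, $\lambda\widehat f_j=f_j+c_jf_4$, where the $f_j$ are bounded by Lemma \ref{lem:bdm2}. We then use $Z_j$ and the bounds \eqref{eq:b13_estimate_partial}. The $c_jf_4$ pieces combine with $\widehat R_4$; here $c_j$ is bounded because $c_{j4}(0)=0$ and $D(-\lambda)\sim\lambda^2$ (Lemma \ref{lem:c23vanish}). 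The resulting $\lambda^{-2}\partial_\lambda^2Z_i(0)$ term is handled by \eqref{eq::b13at03}. Its non-$E_2$ part pairs with $\mathbf F$ to give $\<\widetilde F,(a_0,b_0)\>$ with constant $(a_0,b_0)$. With $a_0=cb_0$, this is a combination of $\int \widetilde F$ expressible through $\eta_1$-orthogonality, and we expect it to cancel against the matching term coming from $\widehat R_4$.

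The main obstacle is this bookkeeping of cancellations at orders $\lambda^{-2}$ and $\lambda^{-1}$ across the four pieces, in particular the verification that the constants $\alpha,\beta,\widehat\alpha,\widehat\beta_i,b_0^{(i)}$ match. I would first carry out the expansion at $x\to\pm\infty$ to identify the coefficients, then check the cancellation pointwise in $x$. Once the singular terms are removed, the uniform bound follows from Taylor remainders and Young-type estimates against $\<y\>|\widetilde F(y)|$.
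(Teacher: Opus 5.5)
There is a genuine gap. It sits in the $\widehat R_2,\widehat R_3$ part, at exactly the step where you ``expect'' a cancellation.

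Your treatment of $\widehat R_1,\widehat R_4$ is essentially the paper's. The first-order Taylor part in $\lambda$ is removed on the range of $Q_{[c]}$ using $\langle\mathbf F,E_1[c]\rangle=\langle\mathbf F,E_2[c]\rangle=0$, $f_1(\cdot,0)=-f_4(\cdot,0)$ and \eqref{eq:imprelat}. Your displayed identity, however, is not correct. It also moves the integral to $(x,+\infty)$, where $f_1(x,\lambda)Z_4(y,\lambda)\sim e^{\mu_1(x-y)}$ grows. One should instead expand the product $f_1(x,\lambda)\langle\mathbf F(y),Z_4(y,\lambda)\rangle$ on $y<x$, where the remainder is $\lesssim(|x|+|y|)^4e^{-\sqrt{\mathsf{V}\varepsilon}|x-y|}$.

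For $\widehat R_2$, pass to $f_2,f_3$; the paper uses $\widehat R(\lambda)=R(\lambda)$ globally, so no split at $x_0$ is needed. The kernel is then $f_2(x,\lambda)Z_3(y,\lambda)/(\lambda^2D(\lambda))$ on $y>x$, with $Z_3=\lambda\widehat Z_3=O(\lambda^3)$ and $\lambda^2D\sim\lambda^4$. Since $\mu_2\in\im\R$, there is no decay in $y-x$. Moreover $Z_3$ carries the phase $e^{-\mu_2(\lambda)y}$, so a fourth-order Taylor remainder of $Z_3$ in $\lambda$ costs $|y|^4$. That growth is not ``absorbed by the $L^{1,1}$ weight'', which supplies only one power of $\langle y\rangle$. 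The paper instead Taylor expands $f_2(x,\lambda)\,e^{\mu_2(\lambda)y}Z_3(y,\lambda)$ and keeps $e^{-\mu_2(\lambda)y}$ exact. The $\lambda$-derivatives of that product are bounded in $y$ by \eqref{eq:b13_estimate_partial} and polynomial in $x$, which $\sech(\varepsilon\kappa x)$ absorbs.

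After that, besides a harmless remainder, one is left with the exact correction $\frac{\lambda^3}{3!}(e^{-\mu_2(\lambda)y}-1)f_2(x,0)\partial_\lambda^3Z_3(y,0)$. Here $f_2(x,0)=-c_2(0,\varepsilon)f_4(x,0,\varepsilon)$ is a multiple of $\Xi_1[c]$. After division by $\lambda^2D$, its $E_2$-part is harmless because of exponential decay in $y$. Its $(a_0^{(3)},b_0^{(3)})$-part, however, is up to a constant the kernel $\lambda^{-1}(e^{-\mu_2(\lambda)y}-1)\,\Xi_1[c](x)\,\langle(c,1)^\intercal,\widetilde F(y)\rangle$ on $y>x$. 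These are the terms $R^{(1)}_{233}$, and $R^{(1)}_{333}$ with $\mu_3$.

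This term does not cancel, for two reasons. First, the conditions $\langle\widetilde F,\eta_j[c]\rangle=0$ are $\lambda$-independent pairings over $\R$, and $\eta_1[c]$ is not even a constant vector, since it vanishes at $-\infty$. Here the weight $e^{-\mu_2(\lambda)y}-1$ depends on $\lambda$ and the integral runs over $(x,+\infty)$. Second, the kernels of $\widehat R_1,\widehat R_4$ decay exponentially in $|x-y|$, so they cannot offset a kernel of size $|\lambda|^{-1}$ at $x$ near $0$, $y\sim|\lambda|^{-1}$. What cancels in the total is only the Taylor-polynomial part: this is the content of $\widehat R^{(0)}_2=\widehat R^{(0)}_3=0$ and $(\widehat R^{(0)}_1+\widehat R^{(0)}_4)Q_{[c]}=0$.

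The missing step is to bound the residual term directly, using $|e^{-\mu_2y}-1|\le|\mu_2||y|$ and $|\mu_2(\im\tau)|\lesssim|\tau|$. This bounds it by $C(\varepsilon)\|\langle y\rangle\widetilde F\|_{L^1}$. That is exactly why the lemma holds on $L^{1,1}$ rather than $L^1$ (compare Lemma \ref{lem:smooth2large} and \eqref{eq:estRgood}). It is also why these terms must be handled separately in the proof of Lemma \ref{lem:gKdV21hard}.
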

\begin{proof}
We introduce
 \begin{align}& \label{eq:rhat10}
    \widehat{R}_1^{(0)}(\lambda) \widetilde{F}=  \frac{  c^2-K  }{ D(\lambda , \varepsilon ) \mathbf{G} }\sum _{a=0}^{1}  \lambda ^a  \int_  {-\infty} ^x     \partial ^a _\lambda \(  f_1(x,\lambda , \varepsilon ) \<  \mathbf{F}(y), Z_4(y, \lambda , \varepsilon   )\> _{\C^4}\) | _{\lambda =0} dy  \text{,}\\&   \widehat{R}_4^{(0)}(\lambda) \widetilde{F}=  \frac{ c^2-K  }{ D(\lambda , \varepsilon ) \mathbf{G} }  \sum _{a=0}^{1}  \lambda ^a    \int_  {x}  ^{+\infty}  \partial ^a _\lambda \(   f_4(x,\lambda , \varepsilon )   \<  \mathbf{F}(y), Z_1(y, \lambda , \varepsilon   )\> _{\C^4}   \) | _{\lambda =0} dy .\label{eq:rhat40}
 \end{align}
Similarly we define for $i=2,3$
\begin{align*}
   \widehat{R}_i^{(0)}(\lambda) \widetilde{F}= (-1) ^i  \frac{ c^2-K  }{ D(\lambda , \varepsilon ) \mathbf{G} }  \sum _{a=0}^{1}  \lambda ^a    \int_  {x}  ^{+\infty}  \partial ^a _\lambda \(   \widehat{f}_i(x,\lambda , \varepsilon )   \<  \mathbf{F}(y), Z_i(y, \lambda , \varepsilon   )\> _{\C^4}   \) | _{\lambda =0} dy .
\end{align*}
Notice that   $ \widehat{R}_i^{(0)}(\lambda) \widetilde{F}= 0$     for     $i=2,3$        by Lemma \ref{lem:b13at0}.  Next, we claim that, if we assume \eqref{eq:imprelat}, we have
\begin{equation}\label{eq:smooth21canc}
 \(   \widehat{R}_1^{(0)}(\lambda)   +   \widehat{R}_4^{(0)}(\lambda)\) \circ Q_{[c]}=0.
\end{equation}
We have
\begin{align*}
   &  \frac{ D(\lambda , \varepsilon ) \mathbf{G} } { c^2-K  }   \(  \widehat{R}_1^{(0)}(\lambda)  +    \widehat{R}_4^{(0)}(\lambda)  \)     \widetilde{F}   =A+ \lambda B \text{ where }
   A=
   f_1(x,0 , \varepsilon )   \widehat{{\alpha}}(\varepsilon)\<  \mathbf{F} , E_2 [c]\> _{L^2(\R , \C^4 ) } \text{ and }    \\&  B  = \int_  {-\infty} ^x     \partial   _\lambda    f_1(x,0 , \varepsilon ) \<  \mathbf{F}(y), Z_4(y, 0 , \varepsilon   )\> _{\C^4} dy    +  \int_  {x}  ^{+\infty}   \partial   _\lambda    f_4(x,0 , \varepsilon ) \<  \mathbf{F}(y), Z_1(y, 0 , \varepsilon   )\> _{\C^4} dy  \\&   + \int_  {-\infty} ^x        f_1(x,0 , \varepsilon ) \<  \mathbf{F}(y),  \partial   _\lambda  Z_4(y, 0 , \varepsilon   )\> _{\C^4} dy    +  \int_  {x}  ^{+\infty}        f_4(x,0 , \varepsilon ) \<  \mathbf{F}(y),  \partial   _\lambda Z_1(y, 0 , \varepsilon   )\> _{\C^4} dy
\end{align*}
where to get $A$ in the first line we used $L_1(x,0 , \varepsilon ) =- L_4(x,0 , \varepsilon )$  for $L=f,Z$  and $Z_4(\cdot , 0 , \varepsilon   )  =  \widehat{{\alpha}}(\varepsilon)E_2 [c]$.  Now we claim $A=0$. This follows from   \eqref{eq:bigeta12}
\begin{align}& \label{eq:smooth21canc1} \<  \mathbf{F} , E_i [c]\> _{L^2(\R , \C^4 ) }  =   \<    L _c  ^{-1}  \widetilde{F} ,   L _c   ^{\intercal}   \eta _i[c] \> _{L^2(\R , \C^2 ) }    =   \<       \widetilde{F} ,      \eta _i[c] \> _{L^2(\R , \C^2 ) }  =0 \text{ for }i=1,2
\end{align}
by $ P_{[c]}  \widetilde{F}=0$.   Next we claim $B=0$.  We have
\begin{align*}
  B&=         \widehat{{\alpha}}   \( -\alpha  \Xi _2[c]     +\beta   \Xi _1[c]    \) \int_  {-\infty} ^x      \<  \mathbf{F}(y),   E_2 [c]\> _{\C^4} dy    -  \widehat{{\alpha}}    \(  \alpha  \Xi _2[c]     +\beta   \Xi _1[c]    \) \int_  {x}  ^{+\infty}    \<  \mathbf{F}(y),E_2 [c] \> _{\C^4} dy  \\&    - \alpha    \Xi _1[c ]\int_  {-\infty} ^x       \<  \mathbf{F}(y), \widehat{\alpha}   E _1[c] +   \widehat{\beta}_4  E _2[c] \> _{\C^4} dy    +  \alpha    \Xi _1[c ]  \int_  {x}  ^{+\infty}      \<  \mathbf{F}(y),  -\widehat{\alpha}   E _1[c] +   \widehat{\beta}_1  E _2[c]\> _{\C^4} dy \\& = -  \alpha   \widehat{{\alpha}}   \( \Xi _1[c]  +\Xi _2[c] \)  \<  \mathbf{F} , E_1 [c]\> _{L^2(\R , \C^4 ) }  + \( \widehat{{\alpha}}  \beta   - {{\alpha}}  \widehat{\beta}_4  \)  \Xi _1[c]    \<  \mathbf{F} , E_2 [c]\> _{L^2(\R , \C^4 ) }   \\& +\( \alpha (\widehat{\beta}_1 +\widehat{\beta}_2 )   -2 \widehat{{\alpha}} \beta  \)  \Xi _1[c] \int_  {x}  ^{+\infty}      \<  \mathbf{F}(y),    E _2[c]\> _{\C^4} dy
\end{align*}
where  $B=0$ because the last term  is null by   \eqref{eq:imprelat} and \eqref{eq:smooth21canc1}.
 Let us now write
 \begin{align*}
    \widehat{R} (\lambda) \widetilde{F}  = \frac{\widehat{N}(\lambda)\widetilde{F}}{D(\lambda , \varepsilon ) \mathbf{G}} = \frac{ {N}(\lambda)\widetilde{F}}{\lambda ^2D(\lambda , \varepsilon ) \mathbf{G}}= {R} (\lambda) \widetilde{F}.
 \end{align*}
 All is left to bound is
 \begin{align*}
    \widehat{R} ^{(1)}(\lambda) \widetilde{F} :&=   \widehat{R} (\lambda) \widetilde{F} - \frac{ \sum _{a=0} ^{1} \lambda ^a  \partial _\lambda ^a \widehat{N}(0)\widetilde{F}}{D(\lambda , \varepsilon ) \mathbf{G}}  =   {R} (\lambda) \widetilde{F} - \frac{ \sum _{a=0} ^{3} \frac{\lambda ^a }{a!} \partial _\lambda ^a N(0)\widetilde{F}}{\lambda ^2 D(\lambda , \varepsilon ) \mathbf{G}}  
 \end{align*}
so that it is enough to bound \small
  \begin{align*}& {R}_1^{(1)}(\lambda) \widetilde{F}:=\frac{ (c^2-K) }{\lambda ^2 D(\lambda , \varepsilon ) \mathbf{G} }
  \int _0 ^\lambda dz \frac{(\lambda -z )^3}{2}  \int_  {-\infty} ^x   \partial _z ^4\(   f_1(x,z, \varepsilon )\frac{ \det \( \mathbf{F} ,    {f}_2(y,z , \varepsilon ) ,  {f}_3(y,z , \varepsilon ) ,   {f}_4(y,z , \varepsilon )   \)}{ J(y) e^{2z \int _0 ^y  \frac{c-u_c(y')}{J(y')} dy'}  } \) dy
 \end{align*}\normalsize
 and similarly  for $j\ge 2$   \small
\begin{align*}&{R}_2^{(1)}(\lambda) \widetilde{F}:=\frac{ -(c^2-K) }{\lambda ^2 D(\lambda , \varepsilon ) \mathbf{G} }
  \int _0 ^\lambda dz \frac{(\lambda -z )^3}{2}  \int_  {x}  ^{+\infty}  \partial _z ^4\(
     {f}_2(x,z , \varepsilon )  \frac{ \det \( {f}_1(y,z , \varepsilon )  ,  \mathbf{F} ,   {f}_3(y,z , \varepsilon ) ,   f _4(y,z , \varepsilon )   \)}{ J(y) e^{2z \int _0 ^y  \frac{c-u_c(y')}{J(y')} dy'}  }  \) dy   \\&   {R}_3^{(1)}(\lambda) \widetilde{F}:=\frac{ -(c^2-K) }{\lambda ^2 D(\lambda , \varepsilon ) \mathbf{G} }
  \int _0 ^\lambda dz \frac{(\lambda -z )^3}{2}  \int_  {x}  ^{+\infty}  \partial _z ^4\(
     {f}_3(x,z , \varepsilon )   \frac{ \det \( {f}_1(y,z , \varepsilon )  ,   {f}_2(y,z , \varepsilon ) ,  \mathbf{F}   ,   {f}_4(y,z , \varepsilon )   \)}{ J(y) e^{2z \int _0 ^y  \frac{c-u_c(y')}{J(y')} dy'}  }  \) dy   \\&   {R}_4^{(1)}((\lambda) \widetilde{F}:=\frac{ -(c^2-K) }{\lambda ^2 D(\lambda , \varepsilon ) \mathbf{G} }
  \int _0 ^\lambda dz \frac{(\lambda -z )^3}{2}  \int_  {x}  ^{+\infty}  \partial _z ^4\(
     {f}_4(x,z , \varepsilon )   \frac{ \det \( {f}_1(y,z, \varepsilon )  ,   {f}_2(y,z, \varepsilon ) ,   {f}_3(y,z , \varepsilon ) , \mathbf{F}       \)}{ J(y) e^{2z \int _0 ^y  \frac{c-u_c(y')}{J(y')} dy'}  }  \) dy .
 \end{align*}\normalsize
Here the terms ${R}_1^{(1)}(\lambda) \widetilde{F}$ and ${R}_4^{(1)}(\lambda) \widetilde{F}$ are the simpler ones, while the others two are subtler to treat. As we  noticed already    in \eqref {eq:rhat10}
we have  \small
\begin{align*}
  {R}_1^{(1)}(\lambda) \widetilde{F} =\frac{ -1 }{\lambda ^2 D(\lambda , \varepsilon )  }
  \int _0 ^\lambda dz \frac{(\lambda -z )^3}{2}  \int_  {x}  ^{+\infty}  \partial _z ^4\(
      e  ^{\mu _1(z , \varepsilon ) (x-y) }  \mathbf{m}_1(x,z , \varepsilon )  \<  \mathbf{F}(y), \mathbf{n}_1(y,z , \varepsilon )   )\> _{\C^4}\)  dy
\end{align*}
where we used the function  $g_1$ introduced right after \eqref{eq:lineq1ad}
\begin{align*}
  Z_4(y, \lambda , \varepsilon ) = \frac{\mathbf{G}}{c^2-K}g_1(y, \lambda , \varepsilon )=  \frac{\mathbf{G}}{c^2-K}    e^{-\mu _1(  \lambda , \varepsilon ) y}  \mathbf{n}_1 (y, \lambda , \varepsilon )
\end{align*}
 and where $\mathbf{n}_1$ satisfies estimates similar to \eqref{lem:estimate_f4} (but with $\mathbf{v}_4$ replaced by  $\mathbf{w}_1$).  Since for $\lambda$ and $z$ sufficiently close to 0 we can assume
$\Re  \mu _1(z , \varepsilon )\le -\sqrt{ \mathsf{V} \varepsilon } $
from \eqref{EigenSpliting1_3}, \eqref{eq:sum_of_mus}  and \eqref{eq:mu40}, we conclude that
\begin{align}\label{eq:estR1(1)}
  \left |{R}_1^{(1)}(\lambda) \widetilde{F} (x) \right | \lesssim     \int_  {x}  ^{+\infty}    (|x|+|y|) ^4    \exp \(- \sqrt{ \mathsf{V} \varepsilon }   |x-y|\)  |\widetilde{F} (y) | dy
\end{align}
so that  for $j=1$ the following is true,
\begin{align}\label{eq:estRj(1)}
   \| {R}_j^{(1)}(\lambda) \widetilde{F}  \| _{\widetilde{\Sigma}}\lesssim  C _{\varepsilon} \| \widetilde{F}\| _{L^1(\R )}  \text{ for $j=1$ and $j=4$}.
\end{align}
It is possible to prove similarly the  estimate for $j=4$. We focus now on the remaining terms   ${R}_2^{(1)}(\lambda) \widetilde{F}$  and ${R}_3^{(1)}(\lambda) \widetilde{F}$.  We can write  schematically
\begin{align*}&
    {R}_2^{(1)}(\lambda) \widetilde{F}  =  \frac{  -(c^2-K) }{\lambda ^2 D(\lambda , \varepsilon ) \mathbf{G} }  \int_  {x}  ^{+\infty}
   \mathbf{a}(x,y, \lambda )  : \mathbf{F}(y)   dy \text{ where}
\\& \mathbf{a}(x,y, \lambda )  =
     {f}_2(x,\lambda  , \varepsilon )     Z_3(y,z , \varepsilon )  - \frac{\lambda ^3}{3!} c_2(0,\varepsilon )
     {f}_3(x,0  , \varepsilon )      \partial _z ^3 Z_3(y,z , \varepsilon )| _{z=0}     .
\end{align*}
We have  \small
\begin{align*} &
  \mathbf{a}(x,y, \lambda )=    A(\lambda , x,y) +B(\lambda , x,y)   \text{ where}    \\&  A(\lambda , x,y) =
  e^{-\mu _2(\lambda , \varepsilon ) y}  \(     {f}_2(x,\lambda  , \varepsilon )    e^{ \mu _2(\lambda , \varepsilon ) y}  Z_3(y,\lambda , \varepsilon ) -  \frac{\lambda ^3}{3!}
     {f}_2(x,0  , \varepsilon )      \partial _z ^3\(   e^{ \mu _2(z , \varepsilon ) y}  Z_3(y,z , \varepsilon )   \) | _{z=0} \)
\\& B(\lambda , x,y) = \frac{\lambda ^3}{3!} c_2(0,\varepsilon ) \( e^{-\mu _2(\lambda  , \varepsilon ) y} -1  \)    {f}_4(x,0  , \varepsilon )       \partial _z ^3 Z_3(y,z , \varepsilon )| _{z=0}  .
\end{align*}
\normalsize
 Since
\begin{align*}
   A(\lambda , x,y) =  \int _0 ^\lambda dz \frac{(\lambda -z )^3}{2}  \partial _z ^4\(   e^{ \mu _2(z , \varepsilon ) x}  \mathbf{m } _2(x,z  , \varepsilon )    e^{ \mu _2(z , \varepsilon ) y}  Z_3(y,z , \varepsilon ) \)
\end{align*}
by \eqref{eq:estm21} and   \eqref{eq:b13_estimate_partial}  we have
\begin{align*}
  |A(\lambda , x,y)|\lesssim   |\lambda |^4  (1+|x|)^4
\end{align*}
so that, if we set 
\begin{align*}&
    {R} _{21}^{(1)}(\lambda) \widetilde{F}  :=  \frac{  -(c^2-K) }{\lambda ^2 D(\lambda , \varepsilon ) \mathbf{G} }  \int_  {x}  ^{+\infty}
   A(\lambda , x,y)  : \mathbf{F}(y)    dy
\end{align*}
we have 
\begin{align}\label{eq:R121}
  \left \|  {R} _{21}^{(1)}(\lambda) \widetilde{F}  \right \| _{\widetilde{\Sigma}}   \lesssim   \|    \widetilde{{F}} \| _{L^1(\R )}   .
\end{align}
By    \eqref{eq:jostlambda01}, \eqref{eq:deff2},  \eqref{eq:defhatZ12},  \eqref{eq:defZ12} and  \eqref{eq::b13at03}   we have
\begin{align*}&
  B(\lambda , x,y) = B_1(\lambda , x,y) +B_2(\lambda , x,y)    \text{ with }\\& B_1(\lambda , x,y) = \frac{3\lambda ^3}{2} c_2(0,\varepsilon )  b_1^{(3)} \alpha ( \varepsilon )  \( e^{-\mu _2(\lambda  , \varepsilon ) y} -1  \)    \Xi _1[x,c]   E _2[y,c] \text{ and} \\& B_2(\lambda , x,y) = \frac{3\lambda ^3}{2} c_2(0,\varepsilon )    \alpha ( \varepsilon )  \( e^{-\mu _2(\lambda  , \varepsilon ) y} -1  \)    \Xi _1[x,c]  \left ( \begin{array}{c}
                                            L _c    ^{\intercal}      (a_0^{(3)},b_0^{(3)} ) ^\intercal  \\
                                                    b_0^{(3)} \\
                                                    0 \\
                                                 \end{array}
                                               \right)  .
\end{align*}
 Then
\begin{align*}
  \left |  \frac{  -(c^2-K) }{\lambda ^2 D(\lambda , \varepsilon ) \mathbf{G} }  \int_  {x}  ^{+\infty}B_1(\lambda , x,y) : \mathbf{F}(y)   dy  \right | &\lesssim   \left |  \frac{   \lambda  \mu _2(\lambda  , \varepsilon ) }{  D(\lambda , \varepsilon ) } \Xi _1[x,c]   \int_  {x}  ^{+\infty}  |\< y\>    E _2[y,c] \mathbf{F}(y)  |  dy  \right |
\\& \lesssim   \int_  {x}  ^{+\infty}    |   \widetilde{{F}} (y) |  dy 
\end{align*} 
so that, if we set
\begin{align*}&
    {R} _{22}^{(1)}(\lambda) \widetilde{F}  :=  \frac{  -(c^2-K) }{\lambda ^2 D(\lambda , \varepsilon ) \mathbf{G} }  \int_  {x}  ^{+\infty}
   B_1(\lambda , x,y) : \mathbf{F}(y)    dy
\end{align*}
we have
\begin{align}\label{eq:R122}
  \left \|  {R} _{22}^{(1)}(\lambda) \widetilde{F}  \right \| _{\widetilde{\Sigma}}   \lesssim   \|    \widetilde{{F}} \| _{L^1(\R )}   .
\end{align}
 By Lemma \ref{lem:b13at0} and by $ \mathbf{F}   := ( L ^{-1}\widetilde{F},0,0)^\intercal$ we have \small
\begin{align*}
  \frac{  -(c^2-K) }{\lambda ^2 D(\lambda , \varepsilon ) \mathbf{G} }  B_2(\lambda , x,y) : \mathbf{F}(y) = \frac{  -(c^2-K) \lambda }{ 2 D(\lambda , \varepsilon ) \mathbf{G} } c_2(0,\varepsilon )    \alpha ( \varepsilon )  \( e^{-\mu _2(\lambda  , \varepsilon ) y} -1  \)   b_0^{(3)}(\varepsilon )  \Xi _1[x,c]   \< (c,1) ^\intercal , \widetilde{{F}} (y) \> .
\end{align*} \normalsize
Then we set
\begin{align}\label{eq:R123}&
    {R} _{23}^{(1)}(\lambda) \widetilde{F}  :=  \frac{  -(c^2-K) }{\lambda ^2 D(\lambda , \varepsilon ) \det \mathbf{V}(\lambda , \varepsilon ) }  \int_  {x}  ^{+\infty}
   B_1(\lambda , x,y) : \mathbf{F}(y)    dy.
\end{align}
Let now $\widetilde{\phi} (x) :=    \frac{ \int _{-\infty} ^{x }    \psi_{\mathrm{KdV}}   ( y)  dy}{ \int _{\R }      \psi_{\mathrm{KdV}}   ( y)  dy} $
and write 
\begin{align*}
   {R} _{23}^{(1)}(\lambda) \widetilde{F}& =   \frac{  -(c^2-K) }{\lambda ^2 D(\lambda , \varepsilon ) \det \mathbf{V}(\lambda , \varepsilon ) }  \int_  {x}  ^{+\infty}
  (1- \widetilde{\phi} (y) )  B_1(\lambda , x,y) : \mathbf{F}(y)    dy \\&   + \frac{   (c^2-K) }{\lambda ^2 D(\lambda , \varepsilon ) \det \mathbf{V}(\lambda , \varepsilon ) }  \int_  {-\infty} ^x   \widetilde{\phi} (y)  B_1(\lambda , x,y) : \mathbf{F}(y)    dy       \\& -  \frac{   (c^2-K) }{\lambda ^2 D(\lambda , \varepsilon ) \det \mathbf{V}(\lambda , \varepsilon ) }    \int_  {\R }   \widetilde{\phi} (y)  B_1(\lambda , x,y) : \mathbf{F}(y)    dy =\sum _{j=1}^{3}  {R} _{23j}^{(1)}(\lambda) \widetilde{F}  .
\end{align*}
  Then, using the rapid decay to 0 of $1- \widetilde{\phi}$ for $y\to +\infty$,  we have 
\begin{align}
    \left \|  {R} _{231}^{(1)}(\lambda) \widetilde{F}  \right \| _{\widetilde{\Sigma}}&  \lesssim  \left |  \frac{   \lambda  \mu _2(\lambda  , \varepsilon ) }{  D(\lambda , \varepsilon ) } \right | \| \< x \>  \Xi _1[x,c] \| _{\widetilde{\Sigma}} \left \|   \< x \> ^{-1}  \int_  {x}  ^{+\infty} \< y\>  (1- \widetilde{\phi} (y) )  |\mathbf{F}(y)| dy \right \| _{L^\infty (\R )} \nonumber \\& \lesssim   \|  \widetilde{{F}} \| _{L^1(\R )}   , \label{eq:r2311}
\end{align}
and, using the rapid decay to 0 of $ \widetilde{\phi}$ for $y\to -\infty$,
\begin{align}
    \left \|  {R} _{232}^{(1)}(\lambda) \widetilde{F}  \right \| _{\widetilde{\Sigma}}&  \lesssim  \left |  \frac{   \lambda  \mu _2(\lambda  , \varepsilon ) }{  D(\lambda , \varepsilon ) } \right | \| \< x \>  \Xi _1[x,c] \| _{\widetilde{\Sigma}} \left \|   \< x \> ^{-1}  \int_  {-\infty} ^x  \< y\>    \widetilde{\phi} (y)    |\mathbf{F}(y)| dy \right \| _{L^\infty (\R )} \nonumber \\& \lesssim   \|      \widetilde{{F}} \| _{L^1(\R )}   , \label{eq:r2321}
\end{align}
while 
\begin{align}
    \left \|  {R} _{233}^{(1)}(\lambda) \widetilde{F}  \right \| _{\widetilde{\Sigma}}&  \lesssim  \left |  \frac{   \lambda  \mu _2(\lambda  , \varepsilon ) }{  D(\lambda , \varepsilon ) } \right | \|    \Xi _1[x,c] \| _{\widetilde{\Sigma}} \left \|   \< x \> ^{-1}  \int_  {\R }    \< y \>    \widetilde{\phi} (y)    |\mathbf{F}(y)| dy \right \| _{L^\infty (\R )} \nonumber \\& \lesssim   \|      \widetilde{{F}} \| _{L^1(\R _- )}  +   \| \< y \>      \widetilde{{F}} \| _{L^1(\R _+ )}   . \label{eq:r2331}
\end{align}
Notice that we have proved
\begin{align}\label{eq:estR1good}
  \left \| \( {R} _{2 }^{(1)}(\lambda)    -  {R} _{233}^{(1)}(\lambda) \) \widetilde{F} ) \right \| _{\widetilde{\Sigma}}   \lesssim   \|    \widetilde{{F}} \| _{L^1(\R )}   .
\end{align}
The discussion for ${R}_3^{(1)}(\lambda) \widetilde{F}$ is similar to that for ${R}_2^{(1)}(\lambda) \widetilde{F}$.
We have    schematically
\begin{align*}&
    {R}_3^{(1)}(\lambda) \widetilde{F}  =  \frac{  -(c^2-K) }{\lambda ^2 D(\lambda , \varepsilon ) \mathbf{G} }  \int_  {x}  ^{+\infty}
   \mathbf{b}(x,y, \lambda )  : \mathbf{F}(y)   dy \text{ where}
\\& \mathbf{b}(x,y, \lambda )  =
     {f}_3(x,\lambda  , \varepsilon )     Z_2(y,z , \varepsilon )  - \frac{\lambda ^3}{3!} c_3(0,\varepsilon )
     {f}_3(x,0  , \varepsilon )      \partial _z ^3 Z_2(y,z , \varepsilon )| _{z=0}      
\end{align*}
where we write  
\small
\begin{align*} &
  \mathbf{b}(x,y, \lambda )=    \widetilde{A}(\lambda , x,y) +\widetilde{B}(\lambda , x,y)   \text{ where}    \\&  \widetilde{A}(\lambda , x,y) =
  e^{-\mu _3(\lambda , \varepsilon ) y}  \(     {f}_3(x,\lambda  , \varepsilon )    e^{ \mu _3(\lambda , \varepsilon ) y}  Z_2(y,\lambda , \varepsilon ) -  \frac{\lambda ^3}{3!}
     {f}_3(x,0  , \varepsilon )      \partial _z ^3\(   e^{ \mu _3(z , \varepsilon ) y}  Z_2(y,z , \varepsilon )   \) | _{z=0} \)
\\& \widetilde{B}(\lambda , x,y) = \frac{\lambda ^3}{3!} c_3(0,\varepsilon ) \( e^{-\mu _3(\lambda  , \varepsilon ) y} -1  \)    {f}_4(x,0  , \varepsilon )       \partial _z ^3 Z_2(y,z , \varepsilon )| _{z=0}  .
\end{align*}
\normalsize Exactly as before, if we set
\begin{align*}&
    {R} _{31}^{(1)}(\lambda) \widetilde{F}  :=  \frac{  -(c^2-K) }{\lambda ^2 D(\lambda , \varepsilon ) \mathbf{G} }  \int_  {x}  ^{+\infty}
   \widetilde{A}(\lambda , x,y)  : \mathbf{F}(y)    dy
\end{align*}
we have
\begin{align}\label{eq:R131}
  \left \|  {R} _{31}^{(1)}(\lambda) \widetilde{F}  \right \| _{\widetilde{\Sigma}}   \lesssim   \|    \widetilde{{F}} \| _{L^1(\R )}   .
\end{align}
By    \eqref{eq:jostlambda01}, \eqref{eq:deff2},  \eqref{eq:defhatZ12},  \eqref{eq:defZ12} and  \eqref{eq::b13at03}   we have
\begin{align*}&
  \widetilde{B}(\lambda , x,y) = \widetilde{B}_1(\lambda , x,y) +\widetilde{B}_2(\lambda , x,y)    \text{ with }\\& \widetilde{B}_1(\lambda , x,y) = \frac{3\lambda ^3}{2} c_3(0,\varepsilon )  b_1^{(2)} \alpha ( \varepsilon )  \( e^{-\mu _2(\lambda  , \varepsilon ) y} -1  \)    \Xi _1[x,c]   E _2[y,c] \text{ and} \\&  \widetilde{B}_2(\lambda , x,y) = \frac{3\lambda ^3}{2} c_2(0,\varepsilon )    \alpha ( \varepsilon )  \( e^{-\mu _2(\lambda  , \varepsilon ) y} -1  \)    \Xi _1[x,c]  \left ( \begin{array}{c}
                                            L _c    ^{\intercal}      (a_0^{(2)},b_0^{(2)} ) ^\intercal  \\
                                                    b_0^{(2)} \\
                                                    0 \\
                                                 \end{array}
                                               \right)   
\end{align*}
and for \begin{align*}&
    {R} _{32}^{(1)}(\lambda) \widetilde{F}  :=  \frac{  -(c^2-K) }{\lambda ^2 D(\lambda , \varepsilon ) \mathbf{G} }  \int_  {x}  ^{+\infty}
   \widetilde{B}_1(\lambda , x,y) : \mathbf{F}(y)    dy
\end{align*}
we have
\begin{align}\label{eq:R132}
  \left \|  {R} _{32}^{(1)}(\lambda) \widetilde{F}  \right \| _{\widetilde{\Sigma}}   \lesssim   \|    \widetilde{{F}} \| _{L^1(\R )}   .
\end{align}
 By Lemma \ref{lem:b13at0} and by $ \mathbf{F}   := ( L ^{-1}\widetilde{F},0,0)^\intercal$ we have \small
\begin{align*}
  \frac{  -(c^2-K) }{\lambda ^2 D(\lambda , \varepsilon ) \mathbf{G} }  \widetilde{B}_2(\lambda , x,y) : \mathbf{F}(y) = \frac{  -(c^2-K) \lambda }{ 2 D(\lambda , \varepsilon ) \mathbf{G} } c_3(0,\varepsilon )    \alpha ( \varepsilon )  \( e^{-\mu _3(\lambda  , \varepsilon ) y} -1  \)   b_0^{(2)}(\varepsilon )  \Xi _1[x,c]   \< (c,1) ^\intercal , \widetilde{{F}} (y) \> .
\end{align*} \normalsize
 Then we set
\begin{align}\label{eq:R133}&
    {R} _{33}^{(1)}(\lambda) \widetilde{F}  :=  \frac{  -(c^2-K) }{\lambda ^2 D(\lambda , \varepsilon ) \det \mathbf{V}(\lambda , \varepsilon ) }  \int_  {x}  ^{+\infty}
   \widetilde{B}_2(\lambda , x,y) : \mathbf{F}(y)    dy
\end{align}
  and we write
\begin{align*}
   {R} _{33}^{(1)}(\lambda) \widetilde{F}& =   \frac{  -(c^2-K) }{\lambda ^2 D(\lambda , \varepsilon ) \det \mathbf{V}(\lambda , \varepsilon ) }  \int_  {x}  ^{+\infty}
  (1- \widetilde{\phi} (y) )  \widetilde{B}_2(\lambda , x,y) : \mathbf{F}(y)    dy \\&   + \frac{   (c^2-K) }{\lambda ^2 D(\lambda , \varepsilon ) \det \mathbf{V}(\lambda , \varepsilon ) }  \int_  {-\infty} ^x   \widetilde{\phi} (y)  \widetilde{B}_2(\lambda , x,y) : \mathbf{F}(y)    dy       \\& -  \frac{   (c^2-K) }{\lambda ^2 D(\lambda , \varepsilon ) \det \mathbf{V}(\lambda , \varepsilon ) }    \int_  {\R }   \widetilde{\phi} (y)  \widetilde{B}_2(\lambda , x,y) : \mathbf{F}(y)    dy =\sum _{j=1}^{3}  {R} _{33j}^{(1)}(\lambda) \widetilde{F}  .
\end{align*}
Then like for \eqref{eq:r2311}--\eqref{eq:r2331} we have 
\begin{align}
    \left \|  {R} _{331}^{(1)}(\lambda) \widetilde{F}  \right \| _{\widetilde{\Sigma}}&  \lesssim   \|  \widetilde{{F}} \| _{L^1(\R )}   , \label{eq:r3311}  \\
    \left \|  {R} _{332}^{(1)}(\lambda) \widetilde{F}  \right \| _{\widetilde{\Sigma}}&  \lesssim    \|      \widetilde{{F}} \| _{L^1(\R )}   \text{ and}  \label{eq:r3321}\\
    \left \|  {R} _{333}^{(1)}(\lambda) \widetilde{F}  \right \| _{\widetilde{\Sigma}}&  \lesssim      \|      \widetilde{{F}} \| _{L^1(\R _- )}  +   \| \< y \>      \widetilde{{F}} \| _{L^1(\R _+ )}   . \label{eq:r3331}
\end{align}
Notice that we have proved
\begin{align}\label{eq:estR3good}
  \left \| \( {R} _{3 }^{(1)}(\lambda)    -  {R} _{333}^{(1)}(\lambda) \) \widetilde{F} ) \right \| _{\widetilde{\Sigma}}   \lesssim   \|    \widetilde{{F}} \| _{L^1(\R )}   .
\end{align}
Collecting the above results we conclude that we have also proved the following improved estimate
\begin{align}\label{eq:estRgood}
 \sup  _{\tau  \in  (-a _4(\varepsilon ),a _4(\varepsilon )  )\backslash \{  0 \} } \left \|   \( \widehat{R}  (\im  \tau )-     {R} _{233}^{(1)}(\im  \tau )  -  {R} _{333}^{(1)}(\im  \tau) \)   Q _{[c]}     \right \| _{ \mathcal{L}\( L ^{1 }\( \R\),   \widetilde{\Sigma} \)   }\le C( \varepsilon).
\end{align}

\end{proof}

In the next section we prove the following.

\begin{lemma}\label{lem:smooth2large} There exists a  $C(   \varepsilon  )>0$ such that
  \begin{align}\label{eq:smooth21large}
     \sup  _{ \tau \in \R  \backslash (-a _4(\varepsilon ),a _4(\varepsilon )  )  } \left \|     R   (\im  \tau  )      \right \| _{ \mathcal{L}\( L^1\( \R\),   \widetilde{\Sigma} \)   }\le   C(    \varepsilon )   .
   \end{align}

\end{lemma}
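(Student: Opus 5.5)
Away from $\lambda=0$ no singularity has to be projected out, so the plan is to bound the resolvent directly through the Jost solutions $f_1,F_2,F_3,f_4$ (or $f_1,f_2,f_3,f_4$), using the variation of parameters representation \eqref{eqvarpar}. I would split $\tau\in\R\setminus(-a_4,a_4)$ into a compact range $a_4\le|\tau|\le T_0$ and a high frequency range $|\tau|\ge T_0$, with $T_0=T_0(\varepsilon)$ large.

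\textbf{Compact range.} On $\lambda=\im\tau$ with $a_4\le|\tau|\le T_0$, Lemma \ref{lem:elest}(ii) gives distinct $\mu_j$, and \eqref{eq:detvlambda} gives $\det\mathbf V\ne0$. Proposition \ref{prop:Proposition 1.1} and the properties of $D$ quoted after \eqref{eq:derevans0} give $D(\lambda,\varepsilon)\ne0$. Everything is continuous in $\lambda$, so by compactness $|D|$ and $|\det\mathbf V|$ are bounded below uniformly. The Jost functions $f_1,f_4$ and the functions $F_k$ of Lemma \ref{lem:extrajost} come from Volterra equations like \eqref{eq:integral_eq_m1}. Since \eqref{eq:estqj} makes $q$ integrable, the same arguments extend to $|\lambda|\le T_0$ with constants depending on $T_0$. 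I would take the solutions decaying or bounded at $+\infty$ as $F_2,F_3,f_4$ (here $\Re\mu_{2,3}=0$ and $\Re\mu_4>0$) and at $-\infty$ as $f_1$, and write the resolvent kernel as in \eqref{eq:r1}--\eqref{eq:r4}. The Wronskian is again $J e^{2\lambda\int_0^x\frac{c-u_c}{J}}D\,\det\mathbf V/(c^2-K)$ by the computation behind \eqref{eq:wronsk1}.

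The kernel terms for $j=1$ and $j=4$ carry $e^{\mu_1(x-y)}$ or $e^{\mu_4(x-y)}$ on the correct side and so are bounded. The terms for $j=2,3$ carry oscillating factors $e^{\mu_{2,3}(x-y)}$ with $\Re\mu_{2,3}=0$, which are bounded but not decaying. All amplitudes $\mathbf m$ are uniformly bounded on all of $\R$, because each $f_j$ is a combination of bounded solutions once the connection coefficients are controlled. Hence the kernel satisfies $|K(x,y)|\le C(\varepsilon,T_0)$. This gives $\|R(\im\tau)F\|_{L^\infty}\lesssim\|F\|_{L^1}$, and then $\|\cdot\|_{\widetilde\Sigma}\le\|\sech(\varepsilon\kappa x)\|_{L^2}\|\cdot\|_{L^\infty}$ yields \eqref{eq:smooth21large} on this range. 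The factor $L^{-1}$ in $\mathbf F$ is harmless because $J>0$.

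\textbf{High frequency range.} This is the main obstacle: the constants above must not blow up as $|\tau|\to\infty$. I would first try a diagonalization with $\lambda$ treated as a large parameter. The eigenvalues split as $\mu_{2,3}\approx\lambda/(c\pm\sqrt{K})$, i.e. a hyperbolic pair growing linearly in $\lambda$, while $\mu_{1,4}\to\mp1$ come from the Poisson part. I would conjugate $A(x,\lambda)$ by an $x$-dependent matrix that diagonalizes $L_c^{-1}$ together with the elliptic block, producing $\frac{d}{dx}-\mathrm{diag}(\lambda\nu_\pm(x),\ldots)+O(1)$. The $O(1)$ remainder is integrable uniformly in $\lambda$, so a Volterra/WKB argument gives Jost amplitudes bounded uniformly for $|\tau|\ge T_0$, with Wronskian bounded below. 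The key fact is that $D(\lambda)$, suitably normalized, tends to a nonzero limit as $|\lambda|\to\infty$. For the transport modes this follows because the reflection coefficients are $O(1/\lambda)$ after one integration by parts. The kernel bound is then uniform in $\tau$, giving uniform $L^1\to L^\infty\subset\widetilde\Sigma$ bounds.

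If the normalization of $D$ at infinity proves delicate, I would fall back on energy estimates. Pairing $(\lambda-\mathcal L_c)\widetilde U=\widetilde F$ with a weight built from the symmetrizer $\nabla^2E$ should give a bound uniform in large $|\tau|$, in the spirit of Lemma \ref{lem:2stV1}.
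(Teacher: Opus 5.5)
There is a genuine gap, and it sits in the high-frequency range, which is exactly where uniformity in $\tau$ is at stake.

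\textbf{The high-frequency remainder is not integrable.} Suppose you conjugate blockwise, diagonalizing $L_c^{-1}(x)$ and the $(\dot\phi,\dot\psi)$ block separately. The remainder is then not integrable. The couplings between the two blocks are the $\dot\psi$ column $-L_c^{-1}e_2=J^{-1}(1+n_c,c-u_c)^\intercal$ and the entry $-1$ in the $\dot\psi'$ row, and both tend to nonzero constants as $x\to\pm\infty$. So no Volterra argument can be run. Near-identity transformations only push this coupling down to $O(|\lambda|^{-1})$, which is still non-integrable.

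\textbf{What the paper does instead.} It diagonalizes the limit $A^\infty(\lambda,\varepsilon)$ exactly, using the constant matrices $\widetilde{\mathbf V},\widetilde{\mathbf W}$ rescaled by $\mathbf M$, so that $\|\widetilde{\mathbf V}\|\lesssim|\lambda|$ and $\|\widetilde{\mathbf W}\|\lesssim|\lambda|^{-1}$. The remainder $q$ is then integrable, but it contains $\lambda q_2$, so a Gronwall bound would cost a factor growing exponentially in $|\lambda|$.

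The idea missing from your sketch is the structural fact \eqref{DecompW0R1V0} taken from Bae--Kwon: $\lambda\widetilde{\mathbf W}^\intercal q_2\widetilde{\mathbf V}=\frac{\lambda}{2\sqrt K}S_1+\widetilde q_2$. Here $S_1$ is real symmetric and acts only on the oscillatory modes $2,3$.
\begin{itemize}
\item For $\lambda\in\im\R$, $\boldsymbol\mu+\frac{\lambda}{2\sqrt K}S_1$ is skew-Hermitian on that block. This yields a propagator $\Xi(\varepsilon,x,y)$ bounded for $x\le y$ uniformly in $\lambda$.
\item $\widetilde q_2$ and $\widetilde{\mathbf W}^\intercal q_1\widetilde{\mathbf V}$ are $O(\varepsilon e^{-C\sqrt\varepsilon|x|})$ uniformly in $\lambda$.
\item A contraction in $L^\infty$ using $\|\widetilde q_2\|_{L^1}\lesssim\sqrt\varepsilon$ then gives $\|\mathbf U\|_{L^\infty}\lesssim\|\mathbf F\|_{L^1}$ directly. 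No lower bound on the Evans function is needed.
\end{itemize}

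\textbf{Your Wronskian step is unproved.} Your ``key fact'' about the normalized $D(\lambda)$ is asserted, not established. The justification you give, $O(1/\lambda)$ reflection for the transport modes, does not touch the pair $\mu_1,\mu_4\to\mp1$. That pair stays coupled with an $O(1)$ gap as $|\lambda|\to\infty$, so its nondegeneracy must come from smallness of $\varepsilon$ (or from positivity of $-\partial_x^2+e^{\phi_c}$).

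A WKB route could be rescued. You would diagonalize the frozen matrix $A(x,\lambda)$ exactly, reducing to $\mathbf V(\lambda)$ at $\pm\infty$. You would then prove that its transport eigenvalues are purely imaginary on $\im\R$, and bound $T^{-1}\partial_xT$ uniformly in $\lambda$. None of this is in the proposal, and the energy-estimate fallback does not by itself produce an $L^1\to\widetilde\Sigma$ bound.

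\textbf{The compact range.} Your treatment of $a_4\le|\tau|\le T_0$ is sound in substance and is a legitimate alternative to the paper's argument. The paper writes the system as $(1-R_0(\lambda)q)\mathbf U=R_0(\lambda)\mathbf F$ and uses compactness of $R_0q$ on $L^\infty$ together with the absence of embedded eigenvalues in $L^2_a$. You use nonvanishing of $D$ plus compactness; both rest on the same input. Two points need fixing, though.
\begin{itemize}
\item \emph{The branch is wrong as written.} $R_{\mathcal L_c}(\im\tau)$ in \eqref{eq:fourtran} is the boundary value from $\Re z>0$, i.e.\ the resolvent in $L^2_a$. The kernel must therefore use $f_1$ for $x>y$, and for $x<y$ the solutions bounded at $-\infty$, namely $\widehat f_2,\widehat f_3,f_4$, as in \eqref{eq:r1}--\eqref{eq:r4}. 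With $F_2,F_3$ (normalized at $+\infty$) you obtain a bounded Green function for the opposite boundary value, not the operator needed.
\item \emph{The amplitudes are not bounded as claimed.} $\widehat f_{2,3}$ do not have amplitudes bounded on $\R$, since they contain $c_{j4}F_4$, which grows at $+\infty$. You need the combinations \eqref{eq:deff2}, and this is where $D(-\lambda,\varepsilon)\neq0$ enters.
\end{itemize}
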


\section{Proof of Lemma \ref{lem:smooth2large}}\label{sec:smooth2large}

For the proof we need to distinguish between $|\lambda |\in [a _4(\varepsilon ), 1]$ and $|\lambda |\in [1, +\infty )$.
We start by assuming $|\lambda |\in [a _4(\varepsilon ), 1]$. The  equation
\eqref{ODE_LinEP} can be written as
\begin{align}\label{eq:resolveq}
  (1-R_0 (\lambda ) q ) \mathbf{U} = R_0 (\lambda )\mathbf{F}
\end{align}
where
\begin{align}\label{eq:resolveq2}
  R_0 (\lambda )g(x) := \mathbf{v}_1  \int _{-\infty}^x e^{\mu _1(x-y) } \< g,\mathbf{w}_1  \> _{\C ^4} dy - \sum _{j=2}^{4}\mathbf{v}_j  \int _{x} ^{+\infty}  e^{\mu _j(x-y) } \< g,\mathbf{w}_j  \> _{\C ^4} dy .
\end{align}
We have for some $C _{\varepsilon}>0$
\begin{align}\label{eq:resolveq3}
  \sup _{ \tau \in [a _4(\varepsilon ), 1]} \|  R_0 (\pm \im \tau )\| _{\mathcal{L}(L^1(\R ),L^\infty(\R ) ) )} \le C _{\varepsilon} .
\end{align}
The operator $R_0 (\lambda ) q$ is compact from $L^\infty(\R ) $ into itself, so equation \eqref {eq:resolveq} is not solvable
only if there is a nonzero $\mathbf{U}\in L^\infty(\R ) $ with $ \mathbf{U} = R_0 (\lambda ) q\mathbf{U}$. It is then easy to see that $U\in L^2 _a (\R )$ for some appropriate small $a>0$. That would make this $\lambda \in \im \R \backslash \{ 0  \}$ an eigenvalue for the operator $\mathcal{L}_c$  in $  L^2 _a (\R )$, which is shown not to be possible by Bae and Kwon \cite{BK22ARMA}. This completes the proof of Lemma   \ref{lem:smooth2large} for    $|\lambda |\in [a _4(\varepsilon ), 1]$

\noindent We turn now to the case when $|\lambda |\ge a_0$   for $a_0=1$. Here the lower bound $a_0 $ plays no particular role. Indeed 
the above argument proves the invertibility of $ 1-R_0 (\lambda ) q  $ for a
any $\lambda \neq 0$, but it does not provide bounds for the operators.  In   \cite{CM25D1} the behavior of these operators for $\lambda \to \infty$ is trivial, thanks to the particularly simple formulas for the $R_0(\lambda )$ for the KdV. However
the operator \eqref{eq:resolveq2} is  more complicated, in particular because of the asymptotic formulas (5.27a)--(5.27d), that is \begin{subequations}\label{mujlarge}
\begin{align}
 \text{for } j& =1,4, & \text{for } j& =2,3, \nonumber \\
\mu_j & = (-1)^{j} + O(|\lambda|^{-2}), &  \mu_j & = \frac{\lambda}{c+ (-1)^j\sqrt{K}}\left(1+ O(|\lambda|^{-2})\right),  \label{mujlarge1} \\
 \frac{c\mu_j-\lambda}{\mu_j} & = (-1)^{j+1}\lambda\left(1+O(|\lambda|^{-1})\right),  &   \frac{c\mu_j-\lambda}{\mu_j} & = (-1)^{j+1}\sqrt{K}\left(1+O(|\lambda|^{-2})\right), \label{mujlarge2}\\
1-\mu_j^2 & = \lambda^{-2}\left( 1+O(|\lambda|^{-1}) \right) , &  1-\mu_j^2 & = \frac{-\lambda^2}{(c+(-1)^j\sqrt{K})^2}\left(1+O(|\lambda|^{-2})\right),  \label{mujlarge3} \\
 \< \boldsymbol{\pi}_j  ,\mathbf{v}_j \> & =2\lambda^2\left(1+O(|\lambda|^{-1}) \right), & \< \boldsymbol{\pi}_j  ,\mathbf{v}_j \> & =  -\lambda^2 \left( \frac{2(-1)^{j}\sqrt{K}}{(c+(-1)^{j}\sqrt{K})}  + O(|\lambda|^{-1}) \right) \label{muj large4}
\end{align}
\end{subequations}
as $ \lambda  \to \infty$ uniformly in $\varepsilon$ which makes complicated a direct estimate of  \eqref{eq:resolveq2}.

\noindent To proceed we   rely  on Sect. 5.3 and 5.5  and Appendix 9.4  in Bae and Kwon \cite{BK22ARMA}.
 A direct computation in Bae and Kwon  \cite[Sect. 5.3]{BK22ARMA}
shows for the matrices in \eqref{DecompW0(A-Ainf)V0}  that
\begin{equation}\label{BdW0R2V0_1}
|(\widetilde{\mathbf{W}}^\intercal q _1(x) \widetilde{\mathbf{V}}) | \leq C \varepsilon  e^{-C\varepsilon ^{1/2}|x|}   \text{ for a fixed $C$ and for $\varepsilon \in (0, \varepsilon _0)$ with $\varepsilon _0>0$ small enough}
\end{equation}
and \begin{equation}\label{DecompW0R1V0}
\lambda \widetilde{\mathbf{W}}q _2(x) \widetilde{\mathbf{V}} = \frac{\lambda}{2\sqrt{K}} S_1 + \widetilde{q}_{ 2 },
\end{equation}
where $S_1=S_1(x,\varepsilon )$ is a symmetric and semi--positive matrix defined by
\begin{equation}\label{S1sym}
S_1:=
\begin{pmatrix}
0 & 0 & 0 & 0 \\
0 & 2\sqrt{K} (q _2)_{11}-K (q _2)_{12}-(q _2)_{21}  & K (q _2)_{12} - (q _2)_{21} & 0  \\
0 & K (q _2)_{12} - (q _2)_{21}  & 2\sqrt{K} (q _2)_{11}+K (q _2)_{12}+ (q _2)_{21} & 0 \\
0 & 0 & 0 & 0
\end{pmatrix}
\end{equation}
and $\widetilde{q}_{ 2 }$ is a matrix such that like in \eqref{BdW0R2V0_1}
\begin{equation}\label{bdR1jk}
|\widetilde{q}_{ 2 }(x , \lambda , \varepsilon)|\leq C \varepsilon  e^{-C\varepsilon ^{1/2}|x|} .
\end{equation}
Applying the matrices  in \eqref{V0W0} to  equation
\eqref{ODE_LinEP} we obtain, for $\boldsymbol{\mu}$ defined in \eqref{eq:boldmu},
 \begin{equation*}
\( \frac{d }{dx} -\boldsymbol{\mu}       -\frac{\lambda}{2\sqrt{K}} S_1\)
\widetilde{\mathbf{W}}^{\intercal}\mathbf{U}  =\widetilde{q}_2\widetilde{\mathbf{W}}^{\intercal}\mathbf{U}  +  \widetilde{\mathbf{W}}^{\intercal}\mathbf{F}.
\end{equation*}
Bae and Kwon \cite[Sect. 5.5]{BK22ARMA}   show that there exists a matrix with  $|\Xi (\varepsilon, x,y)|\le C $ for any $x\le y$ for fixed $C>0$ like in
\eqref{BdW0R2V0_1} where
\begin{align*}\widetilde{\mathbf{W}}^{\intercal}\mathbf{U}& = e_1 \int _{-\infty}^x e^{\mu _1(x-y) } \<  \widetilde{q}_{ 2 } \widetilde{\mathbf{W}}^{\intercal}\mathbf{U} + \widetilde{\mathbf{W}}^{\intercal}\mathbf{F},e_1  \> _{\C ^4} dy \\& - \int _{x} ^{+\infty} \Xi (\varepsilon, x,y) \(   \widetilde{q}_{ 2 } \widetilde{\mathbf{W}}^{\intercal}\mathbf{U} + \widetilde{\mathbf{W}}^{\intercal}\mathbf{F}  \) dy.
\end{align*}
Applying $\widetilde{\mathbf{V}}$ to this equation, by \eqref{W0V01} we obtain
\begin{align*} \mathbf{U}& = \widetilde{\mathbf{V}}e_1 \int _{-\infty}^x e^{\mu _1(x-y) } \<  \widetilde{q}_{ 2 } \widetilde{\mathbf{W}}^{\intercal}\mathbf{U} + \widetilde{\mathbf{W}}^{\intercal}\mathbf{F},e_1  \> _{\C ^4} dy \\& - \int _{x} ^{+\infty} \widetilde{\mathbf{V}}\Xi (\varepsilon, x,y) \(   \widetilde{q}_{ 2 } \widetilde{\mathbf{W}}^{\intercal}\mathbf{U} + \widetilde{\mathbf{W}}^{\intercal}\mathbf{F}  \) dy.
\end{align*}
where
\begin{align*} &
 \left  \|   \widetilde{\mathbf{V}}e_1 \int _{-\infty}^x e^{\mu _1(x-y) } \<   \widetilde{\mathbf{W}}^{\intercal}\mathbf{F},e_1  \> _{\C ^4} dy   -   \int _{x} ^{+\infty} \widetilde{\mathbf{V}}\Xi (\varepsilon, x,y)   \widetilde{\mathbf{W}}^{\intercal}\mathbf{F}    dy                   \right \| _{L^\infty (\R )}\\&  \lesssim \|  \widetilde{\mathbf{V}} \|    \ \|  \widetilde{\mathbf{W}} \|    \left  \|    \mathbf{F}                    \right \| _{L^1 (\R )}\lesssim  \left  \|    \mathbf{F}                    \right \| _{L^1 (\R )}
\end{align*}
where we used $\|  \widetilde{\mathbf{V}} \|\le C |\lambda|$  and $\|  \widetilde{\mathbf{W}} \|\le C |\lambda|^{-1}$  which follow from \eqref{LRVec_A}--\eqref{eigenvec_A2}, formulas \eqref{mujlarge} 
and by $m_j=O(\lambda^{-1})$ for $j=1$ or $4$ and   $m_j=O(\lambda )$ for $j=2,3$, the constants in \eqref{mj}. 
We have  
\begin{align*} &
 \left  \|   \widetilde{\mathbf{V}}e_1 \int _{-\infty}^x e^{\mu _1(x-y) } \<    \widetilde{q}_{ 2 } \widetilde{\mathbf{W}}^{\intercal}\mathbf{U},e_1  \> _{\C ^4} dy   -   \int _{x} ^{+\infty} \widetilde{\mathbf{V}}\Xi (\varepsilon, x,y)    \widetilde{q}_{ 2 } \widetilde{\mathbf{W}}^{\intercal}\mathbf{U}    dy                   \right \| _{L^\infty (\R )}\\&  \lesssim \|  \widetilde{\mathbf{V}} \|         \left  \|    \widetilde{q}_{ 2 } \widetilde{\mathbf{W}}^{\intercal}\mathbf{U}                     \right \| _{L^1 (\R )}\lesssim  \left  \|    \widetilde{q}_{ 2 }                    \right \| _{L^1 (\R )}  \left  \|    \mathbf{U}                    \right \| _{L^\infty (\R )}\lesssim \sqrt{\varepsilon}\left  \|    \mathbf{U}                    \right \| _{L^\infty (\R )}
\end{align*}
 similarly  where at the end we used \eqref{bdR1jk}.  Hence we have proved that the solution of \eqref{eq:resolveq} 
for  in the case $|\lambda|\ge 1$ satisfies $\left  \|    \mathbf{U}                    \right \| _{L^\infty (\R )}\le C  \left  \|    \mathbf{F}                    \right \| _{L^1 (\R )}$ with a fixed constant like in \eqref{BdW0R2V0_1}, that is 
\begin{align}\label{eq:resolveq4}
  \sup _{ \tau \in [  1, +\infty )} \|  R_0 (\pm \im \tau )\| _{\mathcal{L}(L^1(\R ),L^\infty(\R ) ) )} \le C   .
\end{align}
This and \eqref{eq:resolveq3}   complete the  proof of Lemma \ref{lem:smooth2large}.

\qed

\section{Proof of Lemma \ref {lem:gKdV21hard} }\label{sec:gKdV21hard}

We start by proving  \eqref{eq:gKdV21hard1} for $D=D_1$, defined in \eqref{eq:comm}. With a notational abuse we ignore the constant matrix in the definition of $ D_1$: this makes no material difference in the argument below. We proceed like in \cite{CM25D1} writing
for $\varepsilon _4$  the constant in \eqref{eq:smooth21small} and \eqref{eq:smooth21large},
\begin{align*}
    \int _{0}^t e^{  (t-s)\mathcal{L} _{c_0}} Q   \zeta_{B}'\widetilde{V}       ds &= (2\pi )^{-\frac{1}{2}}\int _{|\tau|< \varepsilon _4} e^{-\im \tau t}    R _{\mathcal{L} _{c_0}}(\im \tau ) Q \zeta _B ' \widehat{\widetilde{V}}(\tau )    d\tau\\& + (2\pi )^{-\frac{1}{2}}\int _{|\tau|> \varepsilon _4} e^{-\im \tau t}    R _{\mathcal{L} _{c_0}}(\im \tau ) Q \zeta _B ' \widehat{\widetilde{V}}(\tau )    d\tau=:T_1(t) \zeta_{B}'\widetilde{V}  + T_2(t) \zeta_{B}'\widetilde{V}.
\end{align*}
As in \cite{CM25D1},  proceeding like in \eqref{eq:fourtran} but using $ L^{ 1 }\( \R\)$    instead of $ L^{ 1,1 }\( \R\)$ thanks to Lemma \ref{lem:smooth2large}  we have
\begin{align*}&
     \left \| T_2  \zeta_{B}'\widetilde{V} \right \| _{L^2 \( I ,    \widetilde{\Sigma}\) }    \lesssim B^{-1/2}\(  \| V \| _{L^2(I,  { \Sigma }_{1A A_1} )} +
\|V \| _{L^2(I,  { \Sigma }_{2A A_1} )} \) .
\end{align*}
Next, we write \small
  \begin{align*}
    T_1(t) \zeta_{B}'\widetilde{V}  &= (2\pi )^{-\frac{1}{2}}\int _{|\tau|< \varepsilon _4} e^{-\im \tau t}    \(   R _{\mathcal{L} _{c_0}}(\im \tau )  -     {R} _{233}^{(1)}(\im  \tau )  -  {R} _{333}^{(1)}(\im  \tau)\)Q \zeta _B ' \widehat{\widetilde{V}}(\tau )    d\tau \\& + (2\pi )^{-\frac{1}{2}}\int _{|\tau|< \varepsilon _4} e^{-\im \tau t}       {R} _{233}^{(1)}(\im  \tau ) Q \zeta _B ' \widehat{\widetilde{V}}(\tau )    d\tau \\& + (2\pi )^{-\frac{1}{2}}\int _{|\tau|< \varepsilon _4} e^{-\im \tau t}     {R} _{333}^{(1)}(\im  \tau ) Q \zeta _B ' \widehat{\widetilde{V}}(\tau )    d\tau =: T _{11}(t) \zeta_{B}'\widetilde{V}  + T _{12}(t) \zeta_{B}'\widetilde{V} + T _{13}(t) \zeta_{B}'\widetilde{V}.
  \end{align*}
\normalsize
Like in  \cite{CM25D1},  like for $T_2$ we obtain
   \begin{align*}&
     \left \| T _{11}  \zeta_{B}'\widetilde{V} \right \| _{L^2 \( I ,    \widetilde{\Sigma}\) }  \lesssim B^{-1/2}\(  \|V\| _{L^2(I,  { \Sigma }_{1A A_1} )} +
\| V \| _{L^2(I,  { \Sigma }_{2A A_1} )} \) .
\end{align*}
We write here only the proof for $ T _{12}$ of the following inequality
\begin{equation}\label{eq:hard}
    \left \| T _{12}  \zeta_{B}'\widetilde{V} \right \| _{L^2 \( I ,    \widetilde{\Sigma}\) } +   \left \| T _{13}  \zeta_{B}'\widetilde{V} \right \| _{L^2 \( I ,    \widetilde{\Sigma}\) } \lesssim B ^{-1} A_1^{3/2}   \| V \| _{L^2(I,  { \Sigma }_{1A A_1} )} +B  ^{1/2}
\| V \| _{L^2(I,  { \Sigma }_{2A A_1} )},  
\end{equation}
    since for $ T _{13}$ the proof is similar and is similar to the analogous proof in \cite{CM25D1}.
We write 
\begin{align*}
   {R} _{233}^{(1)}(\im  \tau )Q\zeta_{B}'\widehat{\widetilde{V}} =   {R} _{233}^{(1)}(\im  \tau ) \zeta_{B}'\widehat{\widetilde{V}} -  {R} _{233}^{(1)}(\im  \tau ) P\zeta_{B}'\widehat{\widetilde{V}}.
\end{align*}
Omitting irrelevant factors,  schematically   we  have
\begin{align*}
    &  {R} _{233}^{(1)}(\im  \tau )(\im  \tau ) \zeta_{B}'\widehat{\widetilde{V}}(\tau) (x)  = \frac{ \Xi _1[x,c_0] }{\im \tau }: \int_  {\R }   \( e^{-\mu _2(\im  \tau) y} -1  \)   \widetilde{\phi} (y)\zeta_{B}' (y) \widehat{\widetilde{V}}(\tau , y) dy \\& =  \frac{ \Xi _1[x,c_0] }{\im \tau }:  \int_  {\R_- }   \( e^{-\mu _2(\im  \tau) y} -1  \)   \widetilde{\phi} (y)\frac{\zeta_{B}' (y)}{\zeta_{A}  (y)}\zeta_{A}  (y) \widehat{\widetilde{V}}(\tau , y) dy \\& + \frac{ \Xi _1[x,c_0] }{\im \tau }:  \int_  {\R _+}   \( e^{-\mu _2(\im  \tau) y} -1  \)   \widetilde{\phi} (y)\frac{\zeta_{B}' (y)}{\zeta_{A}  (y)}\zeta_{A}  (y) \vartheta _{1A_1}(y) \widehat{\widetilde{V}}(\tau , y) dy\\& + \frac{ \Xi _1[x,c_0] }{\im \tau }:  \int_  {\R _+}   \( e^{-\mu _2(\im  \tau) y} -1  \)   \widetilde{\phi} (y)\frac{\zeta_{B}' (y)}{\zeta_{A}  (y)}\zeta_{A}  (y) \vartheta _{2A_1}(y) \widehat{\widetilde{V}}(\tau , y) dy =:\sum _{j=1}^{3}\mathbf{A}_j (\tau , x).
\end{align*}
Verbatim from    \cite{CM25D1}
we have  \small
\begin{align*}&
  \|  \mathbf{A}_1 (\tau , x)\| _{L^2( |\tau|< \varepsilon _4, \widetilde{\Sigma}) }  \\& \lesssim  \left \| \frac{|\mu _2(\im  \tau)|}{|\tau|} \right \| _{L^\infty(|\tau|< \varepsilon _4  )}    \| \Xi _1[x,c_0]\| _{ \widetilde{\Sigma}} \|  \<y \> \widetilde{\phi}\| _{L^2(\R _-)}   \left  \| \frac{\zeta_{B}'  }{\zeta_{A}  }\right \| _{L^\infty(\R  )}
 \(  \| V \| _{L^2(I,  { \Sigma }_{1A A_1} )} +
\| V \| _{L^2(I,  { \Sigma }_{2A A_1} )} \) \\& \lesssim B ^{-1}  \(  \| V \| _{L^2(I,  { \Sigma }_{1A A_1} )} +
\|V \| _{L^2(I,  { \Sigma }_{2A A_1} )} \),
\end{align*}
\normalsize 
\begin{align*}&
  \|  \mathbf{A}_2 (\tau , x)\| _{L^2( |\tau|< \varepsilon _4, \widetilde{\Sigma}) }  \\& \lesssim  \left \| \frac{|\mu _2(\im  \tau)|}{|\tau|} \right \| _{L^\infty(|\tau|< \varepsilon _4  )}    \| \Xi _1[x,c_0]\| _{ \widetilde{\Sigma}} \|  \widetilde{\phi}\| _{L^\infty(\R )}   \left  \| \< y\> \frac{\zeta_{B}'  }{\zeta_{A}  }\right \| _{L^2\( 0, 2A_1  \)}
    \| V \| _{L^2(I,  { \Sigma }_{1A A_1} )}   \\&  \lesssim B ^{-1} A_1^{3/2}   \| V \| _{L^2(I,  { \Sigma }_{1A A_1} )}  \quad  \text{ and}
\end{align*}
\begin{align*}
  \|  \mathbf{A}_3 (\tau , x)\| _{L^2( |\tau|< \varepsilon _4, \widetilde{\Sigma}) } &\lesssim  \left \| \frac{|\mu _2(\im  \tau)|}{|\tau|} \right \| _{L^\infty(|\tau|< \varepsilon _4  )}    \| \Xi _1[x,c_0] \| _{ \widetilde{\Sigma}} \|  \widetilde{\phi}\| _{L^\infty(\R )} \\& \times \left  \| \< y\> \frac{\zeta_{B}'  }{\zeta_{A}  }\right \| _{L^2\(\R  \)}
    \| V \| _{L^2(I,  { \Sigma }_{2A A_1} )}    \lesssim B  ^{1/2}
\| V \| _{L^2(I,  { \Sigma }_{2A A_1} )}  .
\end{align*}
Finally, we have the following, also proved like   in \cite{CM25D1} for which we refer for the proof, 
\begin{align*}
   &\| {R} _{233}^{(1)}(\im  \tau ) P\zeta_{B}'\widehat{\widetilde{V}}\| _{L^2( |\tau|< \varepsilon _4, \widetilde{\Sigma}) }  \lesssim   B^{-1/2}  \(  \|v \| _{L^2(I,  { \Sigma }_{1A A_1} )} +
\|v \| _{L^2(I,  { \Sigma }_{2A A_1} )} \) .
\end{align*}
   \qed

\appendix
\section{Appendix: Proof of {Lemma}  \ref{lem:solwave4}}\label{app:A}

We follow the  sketch of proof in Remark 1 \cite{BK22ARMA}. 
First, from \cite{BK19JDE} we consider the following functions:
\begin{align*}
  & H(n,c):= \frac{c^2}{2}\(   1-\frac{1}{(1+n)^2}  \)- K \log (1+n) \ , \\& h(n,c):=   \partial _n H(n,c)=  \frac{c^2} {(1+n)^3}   - \frac{K }{1+n}  \ ,
  \\& g(n,c):=     \frac{c^2} { 1+n }   + {K } ({1+n}) + e^{H(n,c)}  \ , \\& E_c (x)  : =-\phi _c ' (x)    \ .
\end{align*}
Let $n^*_c:=n_c(0)$. Then there is the claim that $c\to n^*_c $ is smooth. This can be seen observing that
\begin{align*}
   g(n^*_c ,c)=  g(0 ,c)=c^2+K+1,
\end{align*}
see \cite[formula (2.12)]{BK19JDE},
and
\begin{align*}
  \partial _ng(n^*_c ,c)=- h(n^*_c,c)\( 1+ n^*_c- e^{H(n^*_c,c)}\) .
\end{align*}
For $n\ge 0$ we have
\begin{align*}
  1+ n - e^{H(n ,c)}=0 \text{ only for } n=0, n_{ce} (c)
\end{align*}
for a $0<n_{ce} (c)<n^*_c$, see \cite[p. 3462]{BK19JDE}.  Furthermore by \cite[formula (2.13)]{BK19JDE} we have $h(n,c)<0$ for $ n \in \(0, \frac{c}{\sqrt{K}}\)$. Since    \cite   {BK19JDE}  shows that
$n^*_c\in \(0, \frac{c}{\sqrt{K}}\)$, we can conclude that $ \partial _ng(n^*_c ,c)\neq  0$ and so the smoothness of  $c\to n^*_c $ follows from  the implicit function theorem. From formula (2,2)  in \cite   {BK19JDE},
\begin{align*}
   \left\{
     \begin{array}{ll}
       u_c(0) =  c\( 1 -\frac{1}{n^*_c}  \)   , &   \\
        \phi_c(0) = H(n^*_c ,c) , &
     \end{array}
   \right.
\end{align*}
it follows that $c\to S_c(0)$ is smooth. Then, since $ S_c (x)  = (n_c(x), u_c(x), \phi _c(x))^\intercal $ satisfies the ODE
\begin{align*}
    \left\{
        \begin{aligned}
            &-c {n}_c' =-  ( (1+n_c)  u_c) ' \\
            &-c {u}_c' =-\( \frac{u^2_c}{2} - K \log (1+n_c) -\phi _c\) '   , \\& -  \phi ''_c +e^{\phi _c} =1+n _c .
        \end{aligned}
    \right.
\end{align*}
we have that $(x,c)\to S_c(x)$ is smooth.  Up to this point we have followed Remark 1 \cite{BK22ARMA}.

 We set  $S^{\varepsilon}_R(x)=:(n^{\varepsilon}_R(\xi), u^{\varepsilon}_R(\xi), \phi ^{\varepsilon}_R (\xi))^\intercal$  where $\xi = \sqrt{\varepsilon}x$. Then as functions in the variable $\xi$,
in \cite[Sec. 4.2]{BK19JDE} the following formula is derived:
\begin{align} \label{eq:4.37}
  &  \left(
       \begin{array}{ccc}
         -\mathsf{V} & 1 & 0 \\
         K & -\(1+K\) ^{1/2} & 1 \\
         -1 & 0 &  1 \\
       \end{array}
     \right)   \left(
                 \begin{array}{c}
                   n^{\varepsilon}_R \\
                   u^{\varepsilon}_R \\
                   \phi^{\varepsilon}_R \\
                 \end{array}
               \right) =    \left(
                 \begin{array}{c}
                   \mathcal{M}_{1}^{\varepsilon} \\
                   \mathcal{M}_{2}^{\varepsilon} \\
                    \mathsf{V}\mathcal{M}_{1}^{\varepsilon} + \mathcal{M}_{2}^{\varepsilon} \\
                 \end{array}
               \right)
\end{align}
where
\begin{align*}
  \mathcal{M}_{1}^{\varepsilon}& = \varepsilon (1-\mathsf{V} \psi _{KdV}) n^{\varepsilon}_R- (\varepsilon \psi _{KdV}+n^{\varepsilon}_R)u^{\varepsilon}_R + \varepsilon^2(\psi _{KdV} - \mathsf{V}\psi _{KdV}^2 ) \text{  and}
\\
  \mathcal{M}_{2}^{\varepsilon} &= \varepsilon (1-\mathsf{V} \psi _{KdV}) u^{\varepsilon}_R - \frac{|u^{\varepsilon}_R|^2}{2} \\& + \frac{K}{2}(2\varepsilon \psi _{KdV} n^{\varepsilon}_R+|n^{\varepsilon}_R|^2 )+ \varepsilon^2\(  \mathsf{V}\psi _{KdV} -   \frac{\psi _{KdV}^2}{2}   \) \\& - K \left   [     \log (1+ \varepsilon \psi _{KdV} + n^{\varepsilon}_R  )- (\varepsilon \psi _{KdV} + n^{\varepsilon}_R) +\frac{(\varepsilon \psi _{KdV} + n^{\varepsilon}_R)^2}{2}   \right ] .
\end{align*}
Then
\begin{align*}
  \partial _c S^{\varepsilon}_R(x) = ( \partial _\varepsilon n^{\varepsilon}_R(\xi),\partial _\varepsilon u^{\varepsilon}_R(\xi), \partial _\varepsilon \phi ^{\varepsilon}_R (\xi))^\intercal    + 2^{-1}x\varepsilon ^{-1/2}( \partial _\xi n^{\varepsilon}_R,\partial _\xi u^{\varepsilon}_R(\xi), \partial _\xi \phi ^{\varepsilon}_R (\xi))^\intercal
\end{align*}
where
\begin{align*}
  |( \partial _\varepsilon n^{\varepsilon}_R(\xi),\partial _\varepsilon u^{\varepsilon}_R(\xi), \partial _\varepsilon \phi ^{\varepsilon}_R (\xi))^\intercal| \lesssim  \varepsilon e^{-\alpha |\xi|}
\end{align*}
is obtained differentiating  in $\varepsilon$  formula \eqref{eq:4.37} and applying \eqref{eq:solwave2} which directly implies also
\begin{align*}
  |x\varepsilon ^{-1/2}( \partial _\xi n^{\varepsilon}_R,\partial _\xi u^{\varepsilon}_R(\xi), \partial _\xi \phi ^{\varepsilon}_R (\xi))^\intercal| \lesssim  \varepsilon e^{-\frac{\alpha}{2} |\xi|}.
\end{align*}
Further differentiating in $\xi$   formula \eqref{eq:4.37}   completes the proof of {Lemma}  \ref{lem:solwave4}.  \qed

\section{Proof of Lemma~\ref{lem:mod1}}    \label{app:B}
\begin{proof}[Proof of Lemma~\ref{lem:mod1}]
    Set
    \begin{align*}
        F_B(\tilde D,c,\widetilde{U}):=\begin{pmatrix}
            B^{-1/2} \<e^{B^{1/2}\tilde{D}\partial_x}\widetilde{U}-\widetilde{S}_c,\zeta_B\eta_1[c]\>\\
            -\<e^{B^{1/2}\tilde{D}\partial_x}\widetilde{U}-\widetilde{S}_c,\eta_2[c]\>
        \end{pmatrix}.
    \end{align*}
    Then, we have
    \begin{align}
        F_B(0,c_0,\widetilde{S}_{c_0})&=0,\nonumber\\
        \|F_B(0,c_0,\widetilde{U})\|&\lesssim \|\widetilde{U}-\widetilde{S}_{c_0}\|_{L^2},\label{pr:ift1}
    \end{align}
    and
    \begin{align} \label{pr:ift2}
    F_B(\tilde{D},c,\widetilde{U}) = 
    F_B(0,c,e^{B^{1/2}\tilde{D}\partial_x}\widetilde{U}).
    \end{align}
    Further, we have
    \begin{align*}
        &D_{\tilde{D},c} F_B(\tilde{D},c,\widetilde{U})  
        \\&=
        \begin{pmatrix}
            -\<e^{B^{1/2}\tilde{D}\partial_x}\widetilde{U},(\zeta_B \eta_1[c])'\> & -B^{-1/2}\<\partial_c \widetilde{S}_c , \zeta_B\eta_1[c]\>+B^{-1/2}\<e^{B^{1/2}\tilde{D}\partial_x }\widetilde{U}-\widetilde{S}_c,\zeta_B \partial_c \eta_1[c]\>\\
            B^{1/2}\<e^{B^{1/2}\tilde{D}\partial_x}\widetilde{U}, \eta_2[c]'\>& \<\partial_c\widetilde{S}_c,\eta_2[c]\>-\<e^{B^{1/2}\tilde{D}\partial_x }\widetilde{U}-\widetilde{S}_c,\partial_c \eta_2[c]\>
        \end{pmatrix}
    \end{align*}
    and in particular, from \eqref{eq:xici}, \eqref{orth:xieta2} and \eqref{orth:xieta}, we have
    \begin{align*}
        D_{\tilde{D},c} F_B(0,c_0,\widetilde{S}_{c_0}) 
        &=
        \begin{pmatrix}
            \<\widetilde{S}_{c_0}',\zeta_B \eta_1[c]\> & -B^{-1/2}\<\partial_c\widetilde{S}_{c_0} , \zeta_B\eta_1[c]\>\\
            -B^{1/2}\<\widetilde{S}_{c_0}', \eta_2[c]\>& \<\partial_c\widetilde{S}_{c_0},\eta_2[c]\>
        \end{pmatrix}\\&
        =\begin{pmatrix}
            1 & 0\\0 & 1
        \end{pmatrix}
        +\begin{pmatrix}
            \<\xi_1[c_0],(\zeta_B-1)\eta_1[c_0]\> & B^{-1/2}\<\xi_2[c_0],(1-\zeta_B)\eta_1[c_0]\>\\
            0 & 0
        \end{pmatrix}
    \end{align*}
    Since the second term converges to $0$ as $B\to \infty$, we have $\| D_{\tilde{D},c}F_B(0,c_0,\widetilde{S}_{c_0})^{-1}\|\leq 2$  for sufficiently large $B$.
    Further, by
    \begin{align*}
         &D_{\tilde{D},c} F_B (\tilde{D},c,\widetilde{U}) - D_{\tilde{D},c} F_B(0,c_0,\widetilde{S}_{c_0}) 
         =
        D_{\tilde{D},c} F_B (\tilde{D},c,\widetilde{U}) -
        D_{\tilde{D},c}F_B(\tilde{D},c,e^{-B^{1/2}\tilde{D}\partial_x}\widetilde{S}_c)
        \\&\quad + D_{\tilde{D},c}F_B(\tilde{D},c,e^{-B^{1/2}\tilde{D}\partial_x}\widetilde{S}_c)- D_{\tilde{D},c} F_B(0,c_0,\widetilde{S}_{c_0})  \\&=\begin{pmatrix}
            -\<e^{B^{1/2}\tilde{D}\partial_x}\widetilde{U}-\widetilde{S}_c,(\zeta_B\eta_1[c])'\> & B^{-1/2}\<e^{B^{1/2}\tilde{D}\partial_x }\widetilde{U}-\widetilde{S}_c,\zeta_B \partial_c \eta_1[c]\>\\
            B^{1/2}\<e^{B^{1/2}\tilde{D}\partial_x}\widetilde{U}-\widetilde{S}_c,\eta_2[c]'\> & -\<e^{B^{1/2}\tilde{D}\partial_x }\widetilde{U}-\widetilde{S}_c,\partial_c \eta_2[c]\>
        \end{pmatrix}
        \\&\quad
        +\begin{pmatrix}
            \<\xi_1[c]-\xi_1[c_0],(\zeta_B-1)\eta_1[c_0]\> & B^{-1/2}\<\xi_2[c]-\xi_2[c_0],(1-\zeta_B)\eta_1[c_0]\>\\
            0 & 0
        \end{pmatrix}\\&\quad
        +\begin{pmatrix}
            \<\xi_1[c_0],(\zeta_B-1)(\eta_1[c]-\eta_1[c_0])\> & B^{-1/2}\<\xi_2[c_0],(1-\zeta_B)(\eta_1[c]-\eta_1[c_0])\>\\
            0 & 0
        \end{pmatrix},
    \end{align*}
    we have
    \begin{align*}
        \|D_{\tilde{D},c} F_B (\tilde{D},c,\widetilde{U}) - D_{\tilde{D},c} F_B(0,c_0,\widetilde{S}_{c_0}) \|\lesssim B^{1/2}\|\widetilde{U}-e^{-B^{1/2}\tilde{D}\partial_x}\widetilde{S}_c\|_{L^2} +|c-c_0|.
    \end{align*}
    Thus, for $(\tilde{D},c,\widetilde{U})\in D_{\R\times \R\times L^2(\R,\R^2)}((0,c_0,\widetilde{S}_{c_0}),\delta_1)$ with $\delta_1\ll B^{-1}$, we have 
    $$\|D_{\tilde{D},c} F_B (\tilde{D},c,\widetilde{U}) - D_{\tilde{D},c} F_B(0,c_0,\widetilde{S}_{c_0}) \|\leq 1/4.$$
    With such estimate combined with \eqref{pr:ift1}, we can apply implicit function theorem which shows the existence of $(\widetilde{D},c)\in C^1(D_{L^2(\R,\R^2)})(\widetilde{S}_{c_0},\delta_0)$ for $\delta_0\ll \delta_1$ satisfying $F_B(\tilde{D}(\widetilde{U}),c(\widetilde{U}),\widetilde{U})=0$ and $(\tilde{D}(\widetilde{U}),c(\widetilde{U}))\subset D_{\R^2}((0,c_0),\delta_1)$.
    From the symmetry \eqref{pr:ift2} of $F_B$, we can extend $(c,\widetilde{D})$ on $\mathcal{U}(c_0,\delta_0)$.
    Finally, setting $D=B^{-1/2}\tilde{D}$, we have the conclusion.
\end{proof}

		\bibliography{references}
		\bibliographystyle{plain}

Department of Mathematics, Kyungpook National University, 80 Daehakro, Bukgu, Daegu 41566 Korea.
{\it E-mail Address}: {\tt junsikbae@knu.ac.kr}

Department of Mathematics, Informatics and Geosciences,  University
of Trieste, via Valerio  12/1  Trieste, 34127  Italy.
{\it E-mail Address}: {\tt scuccagna@units.it}

Department of Mathematics and Informatics,
Graduate School of Science,
Chiba University,
Chiba 263-8522, Japan.
{\it E-mail Address}: {\tt maeda@math.s.chiba-u.ac.jp}

\end{document}